\numberwithin{equation}{section}
\theoremstyle{plain}
\newtheorem{theorem}{Theorem}[section]
\newtheorem{lemma}[theorem]{Lemma}
\newtheorem{proposition}[theorem]{Proposition}
\newtheorem{construction}[theorem]{Construction}
\theoremstyle{definition}
\newtheorem{remark}[theorem]{Remark}
\newtheorem*{remark*}{Remark}
\newtheorem{question}[theorem]{Question}
\newtheorem{subsec}[theorem]{}
\let\c@equation\c@theorem  
\newcommand{\fg}{\mathfrak g}
\newcommand{\fh}{\mathfrak h}
\DeclareMathOperator{\Spec}{Spec}
\DeclareMathOperator{\height}{height}
\DeclareMathOperator{\gldim}{gldim}
\DeclareMathOperator{\Ext}{Ext}
\DeclareMathOperator{\gr}{gr}
\DeclareMathOperator{\Aut}{Aut}
\DeclareMathOperator{\Kdim}{Kdim}
\DeclareMathOperator{\injdim}{injdim}
\DeclareMathOperator{\GKdim}{GKdim}
\DeclareMathOperator{\Hom}{Hom}
\newcommand{\fm}{\mathfrak{m}}
\newcommand{\id}{\operatorname{id}}
\newcommand{\ZZ}{{\mathbb Z}}
\DeclareMathOperator{\Fract}{Fract}
\newcommand\cO{{\mathcal O}}
\newcommand\xbar{\overline{x}}
\newcommand\ybar{\overline{y}}
\newcommand\Hbar{\overline{H}}
\newcommand\Deltabar{\overline{\Delta}}
\newcommand\Hzr{{\rm($H$)}}
\newcommand\nat{{\rm($\natural$)}}
\newcommand\Hzrnat{{\rm($H\natural$)}}
\newcommand\co{\operatorname{co}}
\newcommand\chr{\operatorname{char}}
\newcommand\eps{\epsilon}
\newcommand\Znonneg{\ZZ_{\ge0}}
\newcommand\Zpos{\ZZ_{>0}}
\newcommand\kx{k^{\times}}
\newcommand\gfrak{{\mathfrak g}}
\newcommand{\phihat}{\hat{\phi}}
\DeclareMathOperator{\io}{io}
\DeclareMathOperator{\pideg}{p.i.deg}
\begin{document}

\title[Hopf algebras of GK-dimension two]
{Noetherian Hopf algebra domains of Gelfand-Kirillov 
dimension two}

\author{K.R. Goodearl and J.J. Zhang}

\address{Goodearl: Department of Mathematics 
University of California at Santa Barbara,
Santa Barbara, CA 93106}

\email{goodearl@math.ucsb.edu} 

\address{Zhang: Department of Mathematics, Box 354350,
University of Washington, Seattle, Washington 98195, USA}

\email{zhang@math.washington.edu}

\begin{abstract}
We classify all noetherian Hopf algebras $H$ over an algebraically 
closed field $k$ of characteristic zero which are integral domains 
of Gelfand-Kirillov dimension two and satisfy the condition 
$\Ext^1_H(k,k)\neq 0$. The latter condition is conjecturally 
redundant, as no examples are known (among noetherian Hopf algebra 
domains of GK-dimension two) where it fails. 
\end{abstract}

\subjclass[2000]{16A39, 16K40, 16E10, 16W50}





\keywords{Hopf algebra, noetherian, Gelfand-Kirillov dimension}

\thanks{This research was partially supported by grants from the NSF
(USA) and by Leverhulme Research Interchange Grant F/00158/X (UK)}

\maketitle

{\small This paper is
dedicated to Susan Montgomery on the occasion of her 65th birthday.}


\setcounter{section}{-1}
\section{Introduction}

\bigskip

A study of general infinite dimensional noetherian Hopf algebras was 
initiated by Brown and the first-named author in 1997-98 \cite{BG1, Br2},
where  they summarized many nice properties of noetherian Hopf algebras,  
most of which are either PI (namely, satisfying a polynomial identity) 
or related to quantum groups, and posted  a list of interesting open 
questions. Some progress has been made, and  a few of the open questions 
have been solved since then. For example, the homological integral was 
introduced for Artin-Schelter Gorenstein noetherian Hopf algebras 
\cite{LWZ}, and it became a powerful tool in the classification of prime 
regular Hopf algebras of Gelfand-Kirillov dimension one \cite{BZ}. Some 
interesting examples were also discovered in \cite{BZ}. To further 
understand general infinite dimensional noetherian Hopf algebras, and to 
more easily reveal their structure, a large number of new examples would 
be extremely helpful. After the work \cite{BZ}, it is natural to 
consider Hopf algebras of Gelfand-Kirillov dimension two. 

In the present work, we aim to understand noetherian and/or affine Hopf
algebras $H$ under the basic assumption
\begin{enumerate}
\item[\Hzr]
{\sf $H$ is an integral domain with Gelfand-Kirillov dimension two, 
over an algebraically closed field $k$ of characteristic zero.}
\end{enumerate}
Our analysis requires an additional homological assumption, namely
\begin{enumerate}
\item[\nat]
{\sf $\Ext^1_H({}_Hk,{}_Hk) \ne 0$, where $_Hk$ denotes the trivial 
left $H$-module.}
\end{enumerate}
The condition \nat\ is equivalent to the condition that the 
corresponding quantum group contains a 
classical algebraic group of dimension 1 (see Theorem \ref{yythm3.9} 
for the algebraic version of this statement). The latter condition 
is analogous to  the fact that every quantum projective 2-space in 
the sense of Artin-Schelter \cite{ASc} contains a classical 
commutative curve of dimension 1.

The combination of \nat\ with \Hzr\ will be denoted \Hzrnat.
Our main result can be stated as follows.

\begin{theorem}
\label{yythm0.1} 
Let $H$ be a Hopf algebra satisfying \Hzrnat. Then $H$ is noetherian 
if and only if $H$ is affine, if and only if $H$ is isomorphic to 
one of the following:
\begin{enumerate}
\item[(I)]
The group algebra $k\Gamma$, where $\Gamma$ is either 
\begin{enumerate}
\item[(Ia)]
the free abelian group $\ZZ^2$, or 
\item[(Ib)]
the nontrivial semidirect product $\ZZ\rtimes\ZZ$.
\end{enumerate}
\item[(II)]
The enveloping algebra $U(\fg)$, where $\fg$ is either 
\begin{enumerate}
\item[(IIa)]
the $2$-dimensional abelian Lie algebra over $k$, or 
\item[(IIb)]
the Lie algebra over $k$ with basis $\{x,y\}$ and $[x,y]=y$.
\end{enumerate}
\item[(III)]
The Hopf algebras $A(n,q)$ from
Construction {\rm\ref{yycon1.1}}, for $n\ge0$.
\item[(IV)]
The Hopf algebras $B(n,p_0,\dots,p_s,q)$ from Construction 
{\rm\ref{yycon1.2}}.
\item[(V)] 
The Hopf algebras $C(n)$ from Construction
{\rm\ref{yycon1.4}}, for $n\ge2$.
\end{enumerate}
Aside from the cases $A(0,q)\cong A(0,q^{-1})$, the Hopf algebras 
listed above are pairwise non-isomorphic. 
\end{theorem}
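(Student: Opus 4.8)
The plan is to organize the argument around the structural invariants attached to $H$ under \Hzrnat, and to whittle the possibilities down to the five families by a case analysis on the "classical part." First I would use the hypothesis \nat\ to produce, via Theorem \ref{yythm3.9} (the algebraic version), a one-dimensional classical algebraic subgroup or Lie-algebra quotient sitting inside $H$; concretely, $\Ext^1_H({}_Hk,{}_Hk)\ne0$ yields a nontrivial algebra map from $H$ to $k[t]$ or $k[t^{\pm1}]$, equivalently a grouplike or primitive element controlling a Hopf quotient of GK-dimension one. The first main dichotomy is whether the relevant one-dimensional piece is a torus ($\mathbb{G}_m$, i.e.\ a grouplike generator) or the additive group ($\mathbb{G}_a$, i.e.\ a primitive generator). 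Tracking this through $H$ produces an exact sequence of Hopf algebras $k\to K\to H\to \overline{H}\to k$ with $\overline{H}$ of GK-dimension one and $K$ of GK-dimension one as well (a subalgebra that is itself a commutative Hopf algebra, hence $k[z^{\pm1}]$ or $k[z]$).

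Next I would analyze how the "base" $\overline{H}$ acts on $K$. Since $\overline{H}$ is an affine (or noetherian) Hopf domain of GK-dimension one, it is commutative — $k\Gamma$ for $\Gamma\in\{\ZZ,\ \text{finite extensions}\}$ or $U(\fg)$ for $\fg$ one-dimensional — and the extension data is an action of $\overline{H}$ by Hopf automorphisms on $k[z^{\pm1}]$ or $k[z]$ together with a Hochschild $2$-cocycle. The four combinations (torus-by-torus, torus-by-additive, additive-by-torus, additive-by-additive), refined by whether the action is trivial, give rise exactly to: (Ia)/(Ib) the group algebras $k\ZZ^2$ and $k(\ZZ\rtimes\ZZ)$; (IIa)/(IIb) the enveloping algebras $U(k^2)$ and $U(\fg)$ with $[x,y]=y$; and the "mixed" or "twisted" families $A(n,q)$, $B(n,p_0,\dots,p_s,q)$, $C(n)$, where the parameters $n$, $q$, $p_i$ encode the eigenvalue of the automorphism and the order data. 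Checking that each resulting Hopf algebra is noetherian and affine is routine (they are iterated Ore extensions or finite modules over such), and conversely any noetherian or affine $H$ forces the extension data to be of the restricted shape producing these lists — this is where the GK-dimension-two constraint is used decisively, to bound the action to a single unit/eigenvalue and to force $\overline{H}$ and $K$ to be exactly GK-dimension one.

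The hard part will be the rigidity step: showing that the action of $\overline{H}$ on $K$ cannot be "too complicated," i.e.\ that an automorphism of $k[z^{\pm1}]$ or $k[z]$ arising in a GK-dimension-two Hopf domain is forced to be $z\mapsto q z$ (plus, in the additive case, $z\mapsto z + c$ or $z \mapsto z$), and that the cocycle can be normalized away or is rigidly determined by a finite amount of data. This requires combining the Hopf compatibility (the action must respect the coproduct, counit, and antipode on $K$), a GK-dimension computation on the corresponding smash/crossed product, and the earlier structural results on GK-dimension-one Hopf domains. I expect the additive-by-additive and additive-by-torus cases to be the most delicate, since there the automorphism group is larger and one must rule out, for instance, unipotent actions of infinite order or non-semisimple mixed behavior; this is precisely the analysis that singles out the exceptional parameter constraints in Constructions \ref{yycon1.2} and \ref{yycon1.4}. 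Finally, the non-isomorphism assertion follows by computing discrete invariants — the abelianization $H/[H,H]H$, the group of grouplikes $G(H)$, the space of primitives $P(H)$, and the PI-degree where applicable — and checking these separate the families, with the only coincidence being the symmetry $A(0,q)\cong A(0,q^{-1})$ coming from $z\leftrightarrow z^{-1}$.
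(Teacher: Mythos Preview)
Your overall architecture---produce a GK-dimension-one Hopf quotient $\overline{H}$ via \nat, identify the subalgebra of coinvariants $K$, and then classify the extension---is exactly the paper's strategy. But there is a genuine gap in your claim that ``$K$\dots is itself a commutative Hopf algebra, hence $k[z^{\pm1}]$ or $k[z]$.'' The coinvariants for the right and left $\overline{H}$-coactions, which the paper calls $H_0$ and ${}_0H$, need not coincide, and it is only when $H_0={}_0H$ that one knows $H_0$ is a Hopf subalgebra (Lemma~\ref{yylem4.3}(c)). When $\overline{H}=k[t^{\pm1}]$ the equality $H_0={}_0H$ can fail, and in that case $H_0$ is a commutative domain of GK-dimension one but \emph{not} a Hopf subalgebra and \emph{not} $k[z]$ or $k[z^{\pm1}]$: it is a numerical semigroup ring $k[y^{m_1},\dots,y^{m_s}]\subset k[y]$ (Section~\ref{yysec7}, especially Lemma~\ref{yylem7.4}). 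This is precisely the mechanism that produces the family $B(n,p_0,\dots,p_s,q)$, and it does not fit into any of your four ``torus/additive by torus/additive'' boxes.

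So your case analysis is one case short. The paper's actual split is: (i) $\overline{H}=k[t^{\pm1}]$ with $H_0={}_0H$ (giving types (I), (III) via Theorem~\ref{yythm6.3}); (ii) $\overline{H}=k[t^{\pm1}]$ with $H_0\ne{}_0H$ (giving types (III), (IV) via Theorem~\ref{yythm7.6}); and (iii) $\overline{H}=k[t]$, where one first proves $H_0={}_0H$ always holds (Proposition~\ref{yyprop9.4}) and then splits on whether $H_0=k[y]$ or $k[y^{\pm1}]$ (Theorems~\ref{yythm10.3}, \ref{yythm11.3}). Your proposal would recover cases (i) and (iii) but not (ii), and your remark attributing the $B$ family to ``unipotent actions of infinite order'' misidentifies the source of the exotic behavior: it is the failure of the left and right coinvariants to agree, which forces $H_0$ to be singular. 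The non-isomorphism argument at the end is broadly right in spirit (and matches Proposition~\ref{yyprop1.6}).
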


The Hopf algebras $B(n,p_0,\dots,p_s,q)$ in Theorem \ref{yythm0.1}(IV)
provide a negative answer to \cite[Question 0.4]{WZ2} and 
\cite[Questions K and J]{Br3}; see Remark \ref{yyrem1.7}. 
Geometrically, Theorem \ref{yythm0.1} provides a list of 
2-dimensional quantum groups that are connected and satisfy the 
condition \nat.  

Following the above classification one can establish the
following common properties for these Hopf algebras. 

\begin{proposition}
\label{yyprop0.2}
Let $H$ be a Noetherian Hopf algebra satisfying \Hzrnat.
\begin{enumerate}
\item $\Kdim H=2$, and $\gldim H=2$ or $\infty$.
\item $H$ is Auslander-Gorenstein and GK-Cohen-Macaulay, with injective
dimension $2$.
\item $\Spec H$ has normal separation.
\item $H$ satisfies the strong second layer condition.
\item $\Spec H$ is catenary, and $\height(P)+\GKdim(H/P)=2$ for all prime
ideals $P$ of $H$.
\item $H$ satisfies the Dixmier-Moeglin equivalence.
\item
$H$ is a pointed Hopf algebra. 
\item
$\dim_k \Ext^1_H(_Hk,_Hk)=1$ if and only if $H$ is
not commutative. 
\end{enumerate}
\end{proposition}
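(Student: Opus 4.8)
The plan is to prove Proposition \ref{yyprop0.2} by going through the explicit classification in Theorem \ref{yythm0.1}, since each of the eight assertions is a statement that can be checked case by case on the finite list of algebras (I)--(V), together with the families $A(n,q)$, $B(n,p_0,\dots,p_s,q)$, $C(n)$. So the first move is simply: it suffices to verify (1)--(8) for each of the Hopf algebras appearing in the classification.

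For parts (1)--(6), the key observation is that every algebra on the list is built from iterated Ore extensions of commutative affine algebras (group algebras of polycyclic-by-finite groups, or enveloping algebras of solvable Lie algebras, or the specific Ore-type constructions in \ref{yycon1.1}--\ref{yycon1.4}). I would first record that each such $H$ is noetherian of GK-dimension two and that, being an iterated skew polynomial extension over $k$ of the right shape, it has $\Kdim H = 2$ and is Auslander-regular or Auslander-Gorenstein with injective dimension $2$, by the standard behaviour of these invariants under Ore extensions; GK-Cohen-Macaulayness follows likewise. This gives (1) and (2). The global dimension dichotomy in (1) is: when $H$ is a smooth (regular) algebra one gets $\gldim H = 2$, and in the non-regular cases (the ones with torsion in the relevant group or a non-reduced quotient) one gets $\gldim H = \infty$; this is a finite check. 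Parts (3), (4), (5), (6) are then consequences of general Hopf-algebra/noetherian-ring machinery applied to these concrete algebras: since $H$ is noetherian Auslander-Gorenstein and GK-Cohen-Macaulay of GK-dimension two, results of Brown--Goodearl and of the catenarity literature give normal separation of $\Spec H$, the strong second layer condition, catenarity with $\height P + \GKdim(H/P) = 2$, and the Dixmier--Moeglin equivalence; the main point is simply to cite the hypotheses verified in (1)--(2) and invoke those theorems. I would organize this so that (3)--(6) are one paragraph quoting the appropriate results with $H$ now known to satisfy their hypotheses.

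For (7), that $H$ is pointed, the cleanest route is again case analysis: group algebras of any group are pointed; enveloping algebras of Lie algebras are pointed (they are connected, hence pointed, Hopf algebras); and for the remaining families $A(n,q)$, $B(n,p_0,\dots,p_s,q)$, $C(n)$ one exhibits the coradical explicitly from the defining presentations (each of these is generated by a grouplike element together with skew-primitive elements, so the coalgebra is a union of a group algebra of its grouplikes with skew-primitives attached, and the coradical is the span of the grouplikes, which is a group algebra). Concretely I would point to the coproduct formulas in Constructions \ref{yycon1.1}, \ref{yycon1.2}, \ref{yycon1.4} to see that the subalgebra generated by the grouplike elements is the whole coradical. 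For (8), $\dim_k\Ext^1_H({}_Hk,{}_Hk) = 1$ iff $H$ is noncommutative: the commutative members of the list are exactly $k\ZZ^2$ and $U(\fg_{\mathrm{ab}})$ (cases Ia and IIa, and $A(0,1)$ which is one of these), and for those $\Ext^1_H(k,k) \cong \fm/\fm^2$ has dimension $2 = \GKdim H$; for every noncommutative algebra on the list one computes directly from the presentation (or from the minimal resolution of ${}_Hk$ coming from the Ore-extension structure) that $\Ext^1_H(k,k)$ is one-dimensional, which is also forced by \nat\ together with the fact that $\Ext^1$ of the trivial module is the space of degree-one elements in a connected-graded-type resolution. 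The inequality $\dim_k\Ext^1_H(k,k)\le 2$ holds since $\gldim$ is $2$ or the algebra has a length-two "almost projective" resolution of $k$ in low degrees, and it equals $2$ only in the commutative (polynomial/Laurent) cases.

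The main obstacle I expect is part (8) in the direction "noncommutative $\Rightarrow \dim_k\Ext^1 = 1$": one must rule out $\dim_k\Ext^1_H(k,k) = 2$ for noncommutative $H$ on the list, which amounts to showing that the relevant $2\times 2$ or larger "linear part" of the defining relations is never identically trivial on the augmentation ideal modulo its square — i.e., that at least one generator acts non-grouplike-trivially — for each noncommutative family; this is a genuine (if short) computation with each presentation rather than a citation. A secondary nuisance is bookkeeping in (1)--(2): verifying the exact injective dimension and the Auslander condition uniformly across all five families, for which I would lean on the Ore-extension stability of the Auslander-Gorenstein and Cohen-Macaulay properties and handle the finitely many "base cases" (group algebra of $\ZZ^2$ or $\ZZ\rtimes\ZZ$, $U(\fg)$) separately. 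Everything else is assembling citations once the homological hypotheses have been recorded.
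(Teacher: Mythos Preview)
Your overall strategy---reduce to the classification and verify each claim on the explicit list---is exactly what the paper does, and your treatment of (7) matches the paper's (generated by grouplikes and skew primitives, hence pointed via \cite[Lemma 5.5.1]{Mon}).

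The substantive gap is in your handling of (3) and (6). You assert that normal separation and the Dixmier--Moeglin equivalence follow from the homological package in (1)--(2) by ``general Hopf-algebra/noetherian-ring machinery,'' but no such theorem is available: Auslander--Gorenstein plus GK--Cohen--Macaulay does not imply normal separation, and Dixmier--Moeglin does not follow from these either. The paper proves (c) case by case---PI cases via module-finiteness over the center, the solvable enveloping algebra by a known result, $A(n,q)$ with $q$ not a root of unity via the normal element $y$, and $C(n)$ by an explicit analysis of which primes contain $y-\lambda$. Likewise (f) is proved case by case: constructibility gives the Nullstellensatz, and then PI, Dixmier's solvable result, a torus action for $A(n,q)$, and a direct argument for $C(n)$ handle the separate families. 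Your claims for (4) and (5) are fine, since those genuinely do follow from (c) together with the homological input (this is how the paper argues), but (c) and (f) themselves need the case analysis. This is the step your plan skips.

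Two smaller points. For (8), the paper avoids your proposed case-by-case $\Ext^1$ computation entirely: Proposition~\ref{yyprop3.4}(b) already gives $1\le e(H)\le \GKdim H=2$, and Proposition~\ref{yyprop3.6} shows $e(H)=2$ forces $H$ commutative (one needs $\bigcap_i\mathfrak m^i=0$, which holds for these domains). Your approach would work but is more labor than necessary. Also, the commutative members of the list include all $A(n,1)$, not just $A(0,1)$.
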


We conjecture that Proposition \ref{yyprop0.2}(a,b,c,d,e,f) hold for 
affine and/or noetherian prime Hopf algebras of GK-dimension two.
Let $A$ be the group algebra given in \cite[Example 8.5]{LWZ}; then
the quotient Hopf algebra $A/[A,A]$ is isomorphic to the group 
algebra $k(\ZZ/2\ZZ)^{3}$ which is finite dimensional. By
Theorem \ref{yythm3.9}, $\Ext^1_A(k,k)=0$, and hence \nat\ fails 
for this prime affine noetherian Hopf algebra of GK-dimension two.

The condition \nat\ may follow from \Hzr\ as we have no 
counterexamples. It is only natural to conjecture \nat\ when 
$\GKdim H \le 2$, since examples abound in higher GK-dimensions, 
starting with $U({\mathfrak{sl}}_2(k))$. Completely different
ideas are needed to handle Hopf algebras satisfying \Hzr\ but
not \nat. We emphasize this key question: 

\begin{question}
\label{yyque0.3} Does \nat\ follow from \Hzr?
\end{question}

\begin{subsec} 
\label{yysub0.4}
Let $k$ be a commutative base field which is algebraically closed of 
characteristic $0$. All algebras, tensor products, linear maps, and 
algebraic groups in the paper are taken over $k$. Throughout, $H$ 
will denote a Hopf algebra over $k$, but we do not impose any of the 
hypotheses such as \Hzr\ at first. Our reference for basic material 
and notation about Hopf algebras is \cite{Mon}. In particular, we 
denote all counits, comultiplications, and antipodes by the symbols 
$\eps$, $\Delta$, and $S$, respectively, sometimes decorated by 
subscripts indicating the name of the Hopf algebra under consideration. 
We use the notations $U(\fg)$, $k\Gamma$, and $\cO(G)$ for enveloping 
algebras, group algebras, and coordinate rings of algebraic groups, 
respectively, taken with their standard Hopf algebra structures.
\end{subsec}

\section{Constructions of Hopf algebras satisfying \Hzrnat}
\label{yysec1}

\bigskip

In this section, we construct three families of Hopf algebras which,
together with two group algebras and two enveloping algebras, cover
the Hopf algebras classified in Theorem \ref{yythm0.1}. All of them are
generated (as algebras) by grouplike and skew primitive elements, and only
one family requires more than two such generators (counting a grouplike
element and its inverse as one generator).

Recall that a {\it grouplike element\/} in a Hopf algebra $H$ is an
element $g$ such that $\eps(g)=1$ and $\Delta(g)= g\otimes g$. It is not
necessary to specify $S(g)$, since that is forced by the antipode axiom:
the equations $\eps(g)1= S(g)g= gS(g)$ require that
$S(g)=g^{-1}$. The set of grouplike elements in $H$ forms
a subgroup of the group of units of $H$, denoted $G(H)$. A {\it skew
primitive element\/} in $H$ is an element
$p$ such that
$\Delta(p)= g\otimes p+ p\otimes h$ for some grouplike elements $g$ and
$h$. Here the value of
$\eps(p)$ is already forced by the counit axiom: since $p= 1p+\eps(p)h$
and $h\ne 0$, we must have $\eps(p)=0$. The antipode axiom yields
$0= g^{-1}p+S(p)h$, which requires $S(p)= -g^{-1}ph^{-1}$. If
desired, we can change to a situation in which either $g=1$ or $h=1$,
since $\Delta(g^{-1}p)= 1\otimes g^{-1}p+ g^{-1}p\otimes g^{-1}h$ and
$\Delta(ph^{-1})= gh^{-1}\otimes ph^{-1}+ ph^{-1}\otimes 1$. The term
{\it primitive element\/} is used when both $g=h=1$, that is, $\Delta(p)=
1\otimes p+ p\otimes 1$.

When defining a Hopf algebra structure on an algebra $A$ given by
generators and relations, it suffices to check the Hopf algebra axioms on
a set of algebra generators for $A$. This is obvious for the counit and
coassociativity axioms, which require that various algebra homomorphisms
coincide. As for the antipode axiom, it suffices to check it on
monomials in the algebra generators, and that follows from the case of a
single generator by induction on the length of a monomial.

\begin{construction} 
\label{yycon1.1}
Let $n\in\ZZ$ and $q\in\kx$, and set $A= k\langle x^{\pm1},y \mid
xy=qyx\rangle$.
\begin{enumerate}
\item There is a unique Hopf algebra structure on $A$ under which $x$ is
grouplike and $y$ is skew primitive, with $\Delta(y)= y\otimes1+
x^n\otimes y$. This Hopf algebra satisfies \Hzrnat, and we shall denote it
$A(n,q)$.
\item Let $m\in\ZZ$ and $r\in\kx$. Then $A(m,r)\cong A(n,q)$ if and only
if either $(m,r)= (n,q)$ or $(m,r)= (-n,q^{-1})$.
\end{enumerate}
\end{construction}

\begin{remark*} The Hopf algebra $A(1,q^2)$ is well known -- it is the
quantized enveloping algebra of the positive Borel subalgebra of
$\mathfrak{sl}_2(k)$ (e.g., \cite[I.3.1, I.3.4]{BG}). Moreover, $A(2,q)$
appears as the quantized Borel subalgebra of the variant
$\check{U}_{q^{-1}}(\mathfrak{sl}_2(k))$ \cite[\S3.1.2, p.~57]{KS}.

Because of (b) above, we may always assume that $n\ge0$ when
discussing $A(n,q)$.
\end{remark*}

\begin{proof} (a) It is clear that there exist unique $k$-algebra
homomorphisms
$\eps:A\rightarrow k$ and $\Delta: A\rightarrow A\otimes A$ and a unique
$k$-algebra anti-automorphism $S$ on $A$ such that
\begin{align}
\eps(x) &= 1  &\Delta(x) &= x\otimes x  &S(x) &= x^{-1}  \notag\\
\eps(y) &=0  &\Delta(y) &= y\otimes 1+ x^n\otimes y  &S(y) &= -x^{-n}y. 
\notag
\end{align}
The Hopf algebra axioms are easily checked on $x$ and $y$; we leave that
to the reader. That $A$ satisfies \Hzr\ is clear. Condition \nat\ 
follows from the fact that $_Hk$ is also an $(A/\langle y\rangle)$-module 
corresponding to $k[x^{\pm1}]/\langle x-1\rangle$, and the latter module 
has a non-split self-extension.

(b) Label the canonical generators of $A(m,r)$ as $u$, $u^{-1}$, $v$. If
$(m,r)= (-n,q^{-1})$, there is a Hopf algebra isomorphism $A(m,r)
\rightarrow A(n,q)$ sending $x\mapsto u^{-1}$ and $y\mapsto v$. 

Now assume we are given a Hopf algebra isomorphism $\phi: A(m,r)
\rightarrow A(n,q)$. Since $u$ generates the group of grouplikes in
$A(m,r)$, its image $\phi(u)$ must generate the group of grouplikes in
$A(n,q)$. As that group consists of the powers of $x$, the only
possibilities are $\phi(u)=x$ and $\phi(u)= x^{-1}$. By the first
paragraph, we may assume that $\phi(u)=x$. Let $f=\phi(v)$. Since 
$A(m,r)$ is generated by $u^{\pm1}$ and $v$ with relation $uv=r vu$, 
the algebra $A(n,q)$ is generated by $x^{\pm1}$ and $f$ with relation 
$xf=rfx$. This implies that $y=\sum_{i\geq 0} c_i(x)
f^i$ where $c_i(x)\in k[x^{\pm 1}]$. By counting the $y$-degree of
$\sum_{i\geq 0} c_i(x) f^i$, one sees that $f$ has $y$-degree 1 and 
$c_i(x)=0$ for all $i>1$. Writing $f=a(x)+b(x)y$ for $a(x),b(x)\in 
k[x^{\pm 1}]$ with $b(x)\ne 0$, we have
$$ra(x)x + rb(x)yx = rfx= xf= a(x)x+ qb(x)yx.$$
Since $b(x)\ne 0$, this equation forces $r=q$. We also have
$$y= c_0(x)+ c_1(x)\bigl( a(x)+b(x)y \bigr).$$
This implies that $b(x)=\alpha x^s$ and $c_1(x)=\alpha^{-1}x^{-s}$
for some $\alpha\in\kx$, $s\in\ZZ$. Applying $\phi$ to 
$\Delta(v)=v\otimes 1+u^m\otimes v$, we obtain $\Delta(f)=
f\otimes 1+ x^m\otimes f$, or
$$a(x\otimes x)+\alpha (x\otimes x)^s(y\otimes 1+x^n\otimes y)
=(a(x)+\alpha x^s y)\otimes 1 +x^m\otimes (a(x)+\alpha x^s y).$$ 
This equation forces $n=m$ and therefore $(n,q)= (m,r)$. 
\end{proof} 

Given an automorphism $\sigma$ of a $k$-algebra $A$, we write
$A[x^{\pm1};\sigma]$ for the corresponding skew Laurent polynomial ring,
with multiplication following the rule $xa=\sigma(a)x$ for $a\in A$.

\begin{construction} 
\label{yycon1.2}
Let $n,p_0,p_1,\dots,p_s$ be positive integers and
$q\in\kx$ with the following properties:
\begin{enumerate}
\item $s\ge2$ and $1<p_1< p_2< \cdots< p_s$;
\item $p_0\mid n$ and $p_0,p_1,\dots,p_s$ are pairwise relatively prime;
\item $q$ is a primitive $\ell$-th root of unity, where $\ell=
(n/p_0)p_1p_2\cdots p_s$.
\end{enumerate}

Set $m=p_1p_2\cdots p_s$ and $m_i= m/p_i$ for $i=1,\dots,s$. Choose an
indeterminate $y$, and consider the subalgebra $A= k[y_1,\dots,y_s]
\subset k[y]$ where $y_i= y^{m_i}$ for $i=1,\dots,s$. The $k$-algebra
automorphism of $k[y]$ sending $y\mapsto qy$ restricts to an automorphism
$\sigma$ of $A$. There is a unique Hopf algebra structure on
the skew Laurent polynomial ring $B= A[x^{\pm1};\sigma]$ such that $x$ is
grouplike and the $y_i$ are skew primitive, with $\Delta(y_i)=
y_i\otimes1 + x^{m_in}\otimes y_i$ for $i=1,\dots,s$.

The Hopf algebra $B$ satisfies \Hzrnat. We shall denote it 
$B(n,p_0,\dots,p_s,q)$.
\end{construction}

\begin{remark*}
The simplest case of Construction \ref{yycon1.2}
is when $n=p_0=1$, $p_1=2$ and $p_2=3$. The resulting Hopf
algebra $B(1,1,2,3,q)$ is new, as far as we are aware.
\end{remark*}

\begin{proof} Note that $m_ip_i= m = m_jp_j$ for all $i,j>0$. We claim
that $A$ can be presented as the commutative $k$-algebra with generators
$y_1,\dots,y_s$ and relations $y_i^{p_i}= y_j^{p_j}$ for $1\le i<j\le s$.
Consequently, $B$ can be presented as the $k$-algebra with generators
$x,x^{-1},y_1,\dots,y_s$ and the following relations:
\begin{equation} 
\label{E1.2.1} \tag{E1.2.1}
\begin{aligned}
xx^{-1} &= x^{-1}x =1  \\
xy_i &= q^{m_i}y_ix  &\qquad\qquad&(1\le i\le s)\\
y_iy_j &= y_jy_i  &&(1\le i<j\le s)  \\
y_i^{p_i} &= y_j^{p_j}  &&(1\le i<j\le s).  
\end{aligned}
\end{equation}

Let $R= k[x_1,\dots,x_s]$ be a polynomial ring in $s$ indeterminates, and
$I$ the ideal of $R$ generated by $x_i^{p_i}- x_j^{p_j}$ for $1\le i<j\le
s$. There is a surjective
$k$-algebra homomorphism $\phi: R/I \rightarrow A$ sending each $x_i+I
\mapsto y_i$, and we need to show that $\phi$ is an isomorphism. It
suffices to exhibit elements of $R/I$ which span it (as a vector space
over $k$) and which are mapped by $\phi$ to linearly independent elements
of $A$. To this end, use multi-index notation $x^d$ for monomials in $R$,
and write $\xbar^d= x^d+I$ for the corresponding cosets in $R/I$. Set
$$D= \{ d= (d_1,\dots,d_s)\in \Znonneg^s \mid d_i<p_i \text{\ for\ }
i=2,\dots,s\},$$ 
and observe that the elements $\xbar^d$ for $d\in D$
span $R/I$. The map
$\phi$ sends
$$x^d= x_1^{d_1}x_2^{d_2} \cdots x_s^{d_s} \longmapsto y_1^{d_1}y_2^{d_2}
\cdots y_s^{d_s}= y^{\mu\cdot d} \in k[y],$$
where $\mu= (m_1,\dots,m_s)$ and $\mu\cdot d$ denotes the inner product
$m_1d_1+ \cdots+ m_sd_s$. Thus, it suffices to show that the map
$d\mapsto \mu\cdot d$ from $D\rightarrow \Znonneg$ is injective.

If $d,d'\in D$ and $\mu\cdot d= \mu\cdot d'$, then 
$$m_1(d_1-d'_1) + \cdots+ m_s(d_s-d'_s) = 0.$$
Since $p_i\mid m_j$ for $i\ne j$ and $p_i$, $m_i$ are relatively prime,
$p_i \mid d_i-d'_i$ for all $i$. For $i\ge2$, it follows that $d_i=d'_i$,
given that $0\le d_i,d'_i <p_i$. This leaves $m_1(d_1-d'_1)=0$, whence
$d_1=d'_1$ and consequently $d=d'$, as desired. The claim is now
established.

It is clear that there is a $k$-algebra homomorphism $\eps: B\rightarrow
k$ such that $\eps(x)=1$ and $\eps(y_i)=0$ for all $i$. 

To construct a comultiplication map, we show that $x\otimes x$,
$x^{-1}\otimes x^{-1}$, and the elements $\delta_i= y_i\otimes1 +
x^{m_in}\otimes y_i$ in $B\otimes B$ satisfy the relations \eqref{E1.2.1}.
First, note that $(x\otimes x)\delta_i= q^{m_i}\delta_i(x\otimes x)$ for
all $i$. Next, since $m \mid m_im_j$ and hence $\ell\mid m_im_jn$ for
$i\ne j$, we have
$$x^{m_in}y_j= q^{m_im_jn}y_jx^{m_in}= y_jx^{m_in}$$
and similarly $x^{m_jn}y_i= y_ix^{m_jn}$ for $1\le i<j\le s$. It follows
that $\delta_1,\dots,\delta_s$ commute with each other. Finally, 
\begin{equation} 
\label{E1.2.2} \tag{E1.2.2}
\begin{aligned}
\delta_i^{p_i} &= \sum_{r=0}^{p_i} {p_i\choose r}_{q^{m_i^2n}}
y_i^{p_i-r}(x^{m_in})^r\otimes y_i^r  \\
 &= y_i^{p_i}\otimes1+
x^{m_inp_i}\otimes y_i^{p_i}= y_i^{p_i}\otimes1+
x^{mn}\otimes y_i^{p_i},
\end{aligned}
\end{equation}
making use of the fact that the $q^{m_i^2n}$-binomial coefficients
${p_i\choose r}_{q^{m_i^2n}}$ vanish for $0< r< p_i$, since $q^{m_i^2n}$
is a primitive $p_i$-th root of unity. To see the latter, observe that on
one hand,
$q^{m_i^2np_i} =1$ because $\ell$ divides $mn= m_inp_i$. On the other
hand, if $q^{m_i^2nt}=1$ for some positive integer $t$, then $\ell\mid
m_i^2nt$ and
$mn= p_0\ell$ divides $p_0m_i^2nt$, whence $p_1p_2\cdots p_s=m$ divides
$p_0t\prod_{j\ne i} p_j^2$. Since $p_0,\dots,p_s$ are pairwise relatively
prime, it follows that $p_i\mid t$, as desired. Equations \eqref{E1.2.2}
imply that $\delta_i^{p_i}= \delta_j^{p_j}$ for all $i$, $j$. 

We have now shown that $(x\otimes
x)^{\pm1},\delta_1,\dots,\delta_s$ satisfy \eqref{E1.2.1}.
Therefore, there is a $k$-algebra homomorphism $\Delta: B\rightarrow
B\otimes B$ such that $\Delta(x)= x\otimes x$ and $\Delta(y_i)= \delta_i$
for all $i$. Coassociativity and the counit axiom are easily seen to hold
for the generators $x,x^{-1},y_1,\dots,y_s$.

Finally, we construct a $k$-algebra anti-automorphism $S$ on
$B$ such that $S(x)= x^{-1}$ and $S(y_i)= w_i := -x^{-m_in}y_i$ for all
$i$. We need to show that $x^{-1},x,w_1,\dots,w_s$ satisfy the reverse of
\eqref{E1.2.1}, i.e., the corresponding relations with products reversed.
The first relation is trivially satisfied, and it is clear that
$w_ix^{-1}= q^{m_i}x^{-1}w_i$ for all $i$. Next,
$$w_iw_j= x^{-m_in}y_ix^{-m_jn}y_j= q^{m_im_jn}x^{-(m_i+m_j)n}y_iy_j=
x^{-(m_i+m_j)n}y_iy_j $$
for $i\ne j$, because $\ell\mid m_im_jn$ in that case. Hence, $w_iw_j=
w_jw_i$ for all $i$, $j$. For each $i$, we have
$$w_i^{p_i}= (-1)^{p_i}q^{a_i}x^{-m_inp_i}y_i^{p_i} \qquad\quad
\text{where} \qquad\quad a_i= m_i^2np_i(p_i-1)/2.$$
If $p_i$ is odd, then $\ell\mid a_i$, whence $(-1)^{p_i}q^{a_i}= -1$. If
$p_i$ is even, then $(-1)^{p_i}q^{a_i}= q^{-m_i^2n(p_i/2)}= -1$ because
$q^{m_i^2n}$ is a primitive $p_i$-th root of unity (as shown above).
Thus, $w_i^{p_i}= -x^{-mn}y_i^{p_i}$ for all $i$, and so $w_i^{p_i}=
w_j^{p_j}$ for all $i$, $j$. The relations needed for the existence of
$S$ are now established.

The antipode axiom is easily checked on the generators of $B$. Therefore
the algebra $B$, equipped with the given $\eps$, $\Delta$, $S$, is a Hopf
algebra. It is clear that $B$ satisfies \Hzr, and \nat\ follows from the 
fact that $H/\langle y_1,\dots,y_s\rangle \cong k[x^{\pm1}]$. 
\end{proof}

\begin{lemma} 
\label{yylem1.3}
Let $n,p_0,\dots,p_s\in \Zpos$, $q\in\kx$ and
$n',p'_0,\dots,p'_t\in\Zpos$, $r\in\kx$ satisfy conditions {\rm (a)--(c)}
of Construction {\rm\ref{yycon1.2}}. Then 
$$B(n,p_0,\dots,p_s,q) \cong B(n',p'_0,\dots,p'_t,r) \iff
(n,p_0,\dots,p_s,q)= (n',p'_0,\dots,p'_t,r).$$
\end{lemma}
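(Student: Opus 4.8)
The plan is to recover each parameter in the tuple $(n,p_0,\dots,p_s,q)$ from internal, isomorphism-invariant structure of the Hopf algebra $B = B(n,p_0,\dots,p_s,q)$, just as was done for $A(n,q)$ in Construction \ref{yycon1.1}(b). First I would pin down the grouplikes: since $B = A[x^{\pm1};\sigma]$ with $A$ a commutative domain on which $x$ acts with infinite order (the $y_i$ are not $\sigma$-eigenvectors over $k$, so no nontrivial monomial in the $y_i$ is fixed up to scalar unless forced), the group $G(B)$ is infinite cyclic, generated by $x$ up to inversion. So an isomorphism $\phi\colon B(n',p'_0,\dots,p'_t,r)\to B(n,p_0,\dots,p_s,q)$ sends the canonical grouplike $u$ to $x^{\pm1}$; replacing $r$ by $r^{-1}$ and $n'$-data appropriately if needed (one should check the construction is symmetric under $q\mapsto q^{-1}$, analogously to \ref{yycon1.1}(b)), we may assume $\phi(u)=x$.

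Next I would identify the space of skew primitive elements and how $x$ acts on it by conjugation. Working modulo the coradical (scalars and grouplikes, i.e. $k[x^{\pm1}]$), the skew primitives of $B$ form a space spanned by the $y_i$, and $x$ acts on $y_i$ with eigenvalue $q^{m_i}$ where $m_i = m/p_i$, $m = p_1\cdots p_s$. Since $q$ is a primitive $\ell$-th root of unity with $\ell = (n/p_0)p_1\cdots p_s = (n/p_0)m$, the multiplicative order of the eigenvalue $q^{m_i}$ is $\ell/\gcd(\ell,m_i)$. The key arithmetic observation (using that the $p_j$ are pairwise coprime and $p_0\mid n$) is that $\gcd(\ell,m_i)=m_i$, so $\mathrm{ord}(q^{m_i}) = \ell/m_i = (n/p_0)p_i$. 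Thus from the \emph{multiset} of conjugation-eigenvalues of $x$ on skew primitives one reads off the multiset $\{(n/p_0)p_1,\dots,(n/p_0)p_s\}$; the number $s$ of these is the dimension of the skew-primitive space mod coradical, so $s$ is recovered. Since $\gcd$ of these integers is $n/p_0$ (the $p_i$ being pairwise coprime), we recover $n/p_0$, hence each $p_i$, hence $m$, hence $\ell$, and hence $n = p_0\ell/m$ — but wait, we only have $n/p_0$ so far, so $p_0$ and $n$ are not yet separated.

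To split $n$ from $p_0$ I would use the algebra structure more carefully. Note $x^{m_i n} = x^{(n/p_0)p_0 m_i}$ acts trivially? No — the relevant invariant is the smallest power of $x$ that is \emph{central}: $x^c$ is central iff $q^{m_i c}=1$ for all $i$, i.e. $\mathrm{ord}(q^{m_i})\mid c$ for all $i$, i.e. $(n/p_0)p_i\mid c$ for all $i$, i.e. $(n/p_0)m\mid c$ since the $p_i$ are pairwise coprime; so the center's "grouplike part" is generated by $x^\ell$ — this only re-recovers $\ell$. The actual extra datum is $n$ itself, visible in the coproduct: $\Delta(y_i) = y_i\otimes 1 + x^{m_i n}\otimes y_i$. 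Having fixed $\phi(u)=x$, for a skew primitive $f=\phi(v_i')$ mapping to $B$, write $f = a(x) + \sum b_j(x)y_j$ with $a,b_j\in k[x^{\pm1}]$; the relation $uf = (\text{eigenvalue})fu$ and the coproduct relation $\Delta(f) = f\otimes 1 + x^{m'_i n'}\otimes f$ together force, exactly as in the proof of \ref{yycon1.1}(b), that $f$ involves a single $y_j$ with $b_j$ a monomial $\alpha x^{d}$, and then matching $\Delta$ gives $m_i' n' = m_j n$ — but we also know $m_i' = m_j$ once the $p$'s are matched, so $n' = n$, and finally $p_0' = n'/(n'/p_0') = n'/(n/p_0)\cdot(p_0/p_0)$... i.e. $p_0 = n/(n/p_0)$ is now determined. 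Conversely each such equality is clearly necessary, giving the claimed biconditional.

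The main obstacle I anticipate is the eigenvalue-order computation and the separation of $n$ from $p_0$: one must be careful that the conjugation action of $x$ on the \emph{whole} skew-primitive space (not just the chosen generators $y_i$) has exactly the eigenvalues $q^{m_i}$ with the stated multiplicities — this requires knowing that $B$ has no "unexpected" skew primitives, which should follow from the coradical filtration / the explicit PBW-type basis $\{x^a y^b\}$ of $B$ together with the $\sigma$-eigenvector structure of $A\subset k[y]$. A secondary subtlety is the $q\leftrightarrow q^{-1}$ symmetry at the start: one should verify whether $B(n,p_0,\dots,p_s,q)\cong B(n,p_0,\dots,p_s,q^{-1})$ via $x\mapsto x^{-1}$, $y_i\mapsto y_i$, and confirm that this does not introduce a genuine ambiguity — since $q$ is a root of unity of order $\ell$, $q$ and $q^{-1}$ have the same order, and the rigidity argument above recovers the individual eigenvalues $q^{m_i}$, not just $q$; one checks these constraints pin down $q$ uniquely given the other data (because $\gcd(m_1,\dots,m_s)=1$, so $q$ itself is a product of powers of the $q^{m_i}$), so there is no ambiguity and the statement is a clean equality rather than an equality-up-to-inversion.
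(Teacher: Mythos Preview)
Your strategy—extracting each parameter from isomorphism-invariant structure—is sound, and the endgame matches the paper's (using $\gcd(m_1,\dots,m_s)=1$ to recover $q$ from the $q^{m_i}$, and the coproduct to recover $n$). But there are two genuine gaps. First, the proposed reduction to $\phi(u)=x$ via a $q\leftrightarrow q^{-1}$ symmetry does not work: unlike $A(n,q)\cong A(-n,q^{-1})$, there is no Hopf isomorphism $B(n,\dots,q)\to B(n,\dots,q^{-1})$ sending $x\mapsto x^{-1}$, because the exponent $m_in>0$ in $\Delta(y_i)=y_i\otimes1+x^{m_in}\otimes y_i$ breaks the symmetry. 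This is not fatal—your later coproduct comparison will itself force $\phi(u)=x$ (the alternative would give $m_jn=-m'_in'<0$)—but you must carry both cases until that point rather than reduce at the outset. Second, and more seriously, the skew primitives of $B$ modulo the coradical are \emph{not} spanned by $y_1,\dots,y_s$: every $x^ay_i$ is $(x^{a+m_in},x^a)$-skew primitive, and these are linearly independent modulo $k[x^{\pm1}]$, so that space is infinite-dimensional and you cannot read off $s$ as its dimension. This can be repaired (e.g.\ by looking instead at the set of $g\in G(B)$ admitting nontrivial $(g,1)$-skew primitives, which one computes to be $\{x^{m_in}:1\le i\le s\}$), but the computation ruling out unexpected skew primitives is real work that you have not supplied.

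The paper bypasses the coradical entirely via a cleaner invariant. Since each $q^{m_i}\ne1$, one has $[B,B]=\langle y_1,\dots,y_s\rangle$, and the coinvariant subalgebra $B_0$ for the right coaction $B\to B\otimes(B/[B,B])$ induced by $\Delta$ is exactly $k[y_1,\dots,y_s]$. Passing to integral closures gives $k[y]$, and the numerical semigroup $\{j\ge0:y^j\in B_0\}$ has $\{m_1,\dots,m_s\}$ as its set of minimal generators. Since $B_0$ is defined purely in terms of the Hopf structure, $\phi$ must carry $B'_0$ onto $B_0$, matching the $m_i$ (hence $s$ and $p_1,\dots,p_s$) immediately; the remaining parameters $n$, $p_0$, $q$ then fall out from the coproduct and conjugation relations essentially as you describe. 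This trades the delicate skew-primitive classification for an elementary commutative-algebra argument about a numerical semigroup.
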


\begin{proof} Set $B= B(n,p_0,\dots,p_s,q)$ and
$B'= B(n',p'_0,\dots,p'_t,r)$, and assume there is a Hopf algebra
isomorphism $\phi: B'\rightarrow B$. Label the generators of $B$ as
$x^{\pm1},y_1,\dots,y_s$ as in Construction \ref{yycon1.2}, and
correspondingly use $u^{\pm1},v_1,\dots,v_t$ for the generators of $B'$.
Since the groups of grouplike elements in $B$ and $B'$ are infinite
cyclic, generated by $x$ and $u$ respectively, we must have $\phi(u)=x$ or
$\phi(u)= x^{-1}$.

Since $\ell= (n/p_0)m_ip_i$ with $(n/p_0)p_i >1$, we have $q^{m_i} \ne
1$ for $i=1,\dots,s$. But $[x,y_i]= (q^{m_i}-1)y_ix$, so we obtain
$y_i\in [B,B]$. Therefore $[B,B]$ equals the ideal of $B$
generated by $y_1,\dots,y_s$. We then compute that the algebra $B_0$ of
coinvariants of $B$ for the right coaction $\rho: B\rightarrow B\otimes
\bigl( B/[B,B] \bigr)$ induced from $\Delta$ is precisely
$k[y_1,\dots,y_s]$. Likewise, the corresponding algebra $B'_0$ of
coinvariants in $B'$ must equal
$k[v_1,\dots,v_t]$. On the other hand, $B_0$ and $B'_0$ are defined
purely in terms of the Hopf algebra structures of $B$ and $B'$, so $\phi$
must map $B'_0$ isomorphically onto $B_0$. This isomorphism extends to
the integral closures of these domains, i.e., to an isomorphism
$\phihat: k[v]\rightarrow k[y]$. Observe that since $\phi$ maps 
$B'_0\cap \ker\eps$ to $B_0\cap \ker\eps$, we must have $\phihat(v)= 
\lambda y$ for some $\lambda\in \kx$. In particular, $\phihat$ is 
a graded isomorphism.

Now $m_1,\dots,m_s$ are the minimal generators of $\{j\in\Znonneg \mid
y^j\in B_0\}$, and similarly for $m'_1,\dots,m'_t$. Hence, these two sets
of integers must coincide, and since they are listed in descending order,
we conclude that $s=t$ and $m'_i=m_i$ for $i=1,\dots,s$. Recall that $m=
p_1p_2\cdots p_s$ is the least common multiple of the $m_i$, and
similarly for $m'= p'_1p'_2\cdots p'_s$. Consequently, $m'=m$, and so
$p'_i= m/m_i= p_i$ for $i=1,\dots,s$. 

Now $\phi$ sends each $v_i\mapsto \lambda_i y_i$ where $\lambda_i= 
\lambda^{m_i}$. In particular, since
$(\phi\otimes\phi)\Delta(v_1)= \Delta\phi(v_1)$, we obtain
$$\lambda_1 y_1\otimes 1 + x^{\pm m'_1n'}\otimes \lambda_1 y_1=
\lambda_1 y_1\otimes 1+ \lambda_1 x^{m_1n}\otimes y_1 \,,$$
whence $\phi(u)=x$ and $n'=n$. The relations $uv_i= r^{m'_i}v_iu$
now imply that $xy_i= r^{m'_i}y_ix$, whence $r^{m_i}= r^{m'_i}=
q^{m_i}$ for all $i$. In particular, since $r^{m_1}$ is a primitive
$\ell'/m_1$-th root of unity and $q^{m_1}$ is a primitive $\ell/m_1$-th
root of unity, we obtain $(n'/p'_0)p'_1= \ell'/m'_1= \ell/m_1=
(n/p_0)p_1$, and consequently $p'_0=p_0$. Finally, since
$\text{gcd}(m_1,\dots,m_s)=1$, there are integers $a_1,\dots,a_s$ such
that $a_1m_1+ \cdots+ a_sm_s=1$, whence
$$q= (q^{m_1})^{a_1} (q^{m_2})^{a_2} \cdots (q^{m_s})^{a_s}=
(r^{m_1})^{a_1} (r^{m_2})^{a_2} \cdots (r^{m_s})^{a_s}= r.$$
Therefore $(n,p_0,\dots,p_s,q)= (n',p'_0,\dots,p'_t,r)$.
\end{proof} 

Given a derivation $\delta$ on a $k$-algebra $A$, we write
$A[x;\delta]$ for the corresponding skew polynomial ring,
with multiplication following the rule $xa=ax+\delta(a)$ for $a\in A$.

\begin{construction} 
\label{yycon1.4}
Let $n$ be a positive integer, and set $C=k[y^{\pm1}] \bigl[ x;
(y^n-y)\frac{d}{dy} \bigr]$.
\begin{enumerate}
\item There is a unique Hopf
algebra structure on $C$ such that $y$ is grouplike and $x$ is
skew primitive, with $\Delta(x)= x\otimes y^{n-1}+ 1\otimes x$. The Hopf
algebra $C$ satisfies \Hzrnat, and we shall denote it $C(n)$.
\item For $m,n\in\Zpos$, the Hopf algebras $C(m)$ and $C(n)$
are isomorphic if and only if $m=n$. In fact, they are isomorphic as
rings only if $m=n$.
\end{enumerate}
\end{construction}

\begin{remark*}
Since $C(1) \cong A(1,1)$, we will usually assume that
$n\ge2$ when discussing $C(n)$. These Hopf algebras appear to be new, as
far as we are aware.
\end{remark*}

\begin{proof} (a) There is clearly a unique $k$-algebra
homomorphism $\eps: C\rightarrow k$ such that $\eps(x)=0$ and
$\eps(y)=1$. Since
$$[x\otimes y^{n-1}+ 1\otimes x,\, y\otimes y]= (y^n-y)\otimes y^n+
y\otimes (y^n-y)= (y\otimes y)^n- y\otimes y,$$
there is a unique $k$-algebra homomorphism $\Delta: C\rightarrow C\otimes
C$ such that $\Delta(y)=
y\otimes y$ and $\Delta(x)= x\otimes y^{n-1}+ 1\otimes x$. The relation
$xy-yx= y^n-y$ implies $xy^{-1}- y^{-1}x= y^{-1}-y^{n-2}$, whence 
$$y^{-1}(-xy^{1-n})- (-xy^{1-n})y^{-1}= (y^{-1}-y^{n-2})y^{1-n}=
y^{-n}-y^{-1}.$$
Consequently, there is a unique $k$-algebra anti-automorphism $S$ of $C$
such that $S(x)=
-xy^{1-n}$ and $S(y)= y^{-1}$.

It  is routine to check the Hopf algebra axioms for the generators $x$,
$y$, $y^{-1}$ of the algebra $C$, and it is clear that $C$
satisfies \Hzr. For \nat, observe that $C(y-1)$ is an ideal of $C$ and
$C/C(y-1)\cong k[x]$. 

(b) First note that $C(m)$ is commutative if and only if $m=1$, and 
similarly for $C(n)$. Hence, we need only consider the cases when 
$m,n\ge2$. Under that assumption, we claim that the quotients of 
$C(m)$ and $C(n)$ modulo their commutator ideals have Goldie 
ranks $m-1$ and $n-1$, respectively. Statement (b) then follows. 

Fix an integer $n\ge2$ and $H=C(n)$, and note that $[H,H]= (y^n-y)H$. Set
$\partial= (y^n-y)\frac{d}{dy}$, and let $\xi_1,\dots,\xi_{n-1}$ be the
distinct $(n-1)$st roots of unity in $k$.
For $i=1,\dots,n-1$, the ideal
$(y-\xi_i)k[y^{\pm1}]$ is a $\partial$-ideal of $k[y^{\pm1}]$, whence
$(y-\xi_i)H$ is an ideal of $H$, and $H/(y-\xi_i)H \cong k[x]$. Since the
$y-\xi_i$ generate pairwise relatively prime ideals of $k[y^{\pm1}]$
whose intersection equals the ideal generated by $y^n-y$, the
corresponding statements hold for ideals of $H$. Consequently,
$H/(y^n-y)H \cong \prod_{i=1}^{n-1} H/(y-\xi_i)H$, a direct product of
$n-1$ copies of $k[x]$. 
Therefore $H/(y^n-y)H$ has Goldie rank $n-1$, as claimed. 
\end{proof}

\begin{remark} 
\label{yyrem1.5} 
K.A. Brown has pointed out that some of the Hopf algebras constructed 
here are lifts of others in the sense of Andruskiewitsch and Schneider 
(e.g., \cite{AnS}).
First, the Hopf algebra $C(n)$ of Construction 
\ref{yycon1.4} is a lift of the Hopf algebra $A(n-1,1)$ of Construction 
\ref{yycon1.1} -- the associated graded ring of $C(n)$ with respect to 
its coradical filtration is isomorphic (as a graded Hopf algebra) to 
$A(n-1,1)$. This raises the question of lifting the $A(-,-)$ in general, 
which is easily done. For instance, given $n\in\ZZ$ and $q\in\kx$, let 
$C(n,q)$ be the $k$-algebra given by generators $y^{\pm1}$ and $x$ and 
the relation $xy= qyx+y^n-y$. It is easy to check that there is a unique 
Hopf algebra structure on $C(n,q)$ such that $y$ is grouplike and $x$ is
skew primitive, with $\Delta(x)= x\otimes y^{n-1}+ 1\otimes x$. The 
associated graded ring of this Hopf algebra, with respect to its 
coradical filtration, is isomorphic to $A(n-1,q^{-1})$. However, 
$C(n,q)$ is not new -- it turns out that $C(n,q) \cong A(n-1,q^{-1})$ 
when $q\ne 1$.
\end{remark}

\begin{proposition} 
\label{yyprop1.6}
The Hopf algebras listed in Theorem {\rm\ref{yythm0.1}(I)--(V)} are
pairwise non-isomorphic, aside from the cases $A(0,q)\cong 
A(0,q^{-1})$ for $q\in\kx$. 
\end{proposition}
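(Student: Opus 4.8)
The plan is to distinguish the five families by a sequence of increasingly fine Hopf-algebraic invariants, peeling off one family at a time.

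\medskip

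\noindent\textbf{Step 1: Separate by the group of grouplikes.}
The first coarse invariant is the group $G(H)$ of grouplike elements, together with how it sits inside $H$. For the enveloping algebras $U(\fg)$ in family (II) we have $G(H)=\{1\}$, while all of (I), (III), (IV), (V) have infinite $G(H)$; so (II) is separated off immediately (and within (II), $U$ of the abelian Lie algebra is commutative while the $[x,y]=y$ one is not). For the group algebras $k\Gamma$ in family (I), $H=kG(H)$, i.e. $H$ is spanned by its grouplikes; this fails in (III), (IV), (V) since those contain nonzero skew-primitive (but not grouplike) elements and are not spanned by grouplikes. Within (I), $\ZZ^2$ is abelian and $\ZZ\rtimes\ZZ$ is not, and these groups are non-isomorphic, so (Ia) and (Ib) are distinguished — and a Hopf algebra isomorphism $k\Gamma\cong k\Gamma'$ forces $\Gamma\cong\Gamma'$ because grouplikes are preserved. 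This reduces us to distinguishing (III), (IV), (V) from each other, and internally.

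\medskip

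\noindent\textbf{Step 2: Separate (III)/(IV) from (V) via the commutator ideal and $G(H)$.}
For $A(n,q)$ and $B(n,p_0,\dots,p_s,q)$ the group $G(H)$ is infinite cyclic (generated by $x$), whereas the proof of Construction~\ref{yycon1.4}(b) shows that $C(n)$ has $G(H)\ni y$ with $H/[H,H]$ of Goldie rank $n-1$, and — more to the point — in $C(n)$ the grouplike $y$ does \emph{not} centralize $H$ for $n\ge2$ while it generates a subgroup on which conjugation is nontrivial, whereas in $A(n,q)$ and $B$ the generator $x$ acts on the skew-primitive generators by scalars. A cleaner separator: in $C(n)$ the quotient by the commutator ideal is $\prod_{i=1}^{n-1}k[x]$, which has Krull dimension $1$ and is reduced but not a domain for $n\ge2$, whereas for $A(n,q)$ with $q$ not a root of unity $H/[H,H]\cong k[x^{\pm1}]$ and in all of (III),(IV) the commutator quotient is a (commutative) domain or a Laurent-polynomial-type ring of a shape incompatible with $\prod k[x]$. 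So (V) is separated, and within (V) Construction~\ref{yycon1.4}(b) already gives $C(m)\cong C(n)\Rightarrow m=n$ (even as rings).

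\medskip

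\noindent\textbf{Step 3: Separate (III) from (IV), and handle internal isomorphisms.}
Now only $A(n,q)$ versus $B(n,p_0,\dots,p_s,q)$ remains. The decisive invariant is the coinvariant algebra $H_0$ for the right coaction of $H/[H,H]$, used in the proof of Lemma~\ref{yylem1.3}: for $B$ this is $k[y_1,\dots,y_s]$ with $s\ge2$ generators and is \emph{not} a polynomial ring in one variable (it is singular — the minimal number of generators of the maximal graded ideal exceeds its Krull dimension $1$), whereas for $A(n,q)$ the analogous coinvariant algebra is $k[y]$, a polynomial ring in one variable (regular). Since $H_0$ is defined purely from the Hopf structure, no isomorphism can mix the two families. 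Internal to (III), Construction~\ref{yycon1.1}(b) gives $A(m,r)\cong A(n,q)\iff (m,r)\in\{(n,q),(-n,q^{-1})\}$, which is exactly the exception allowed in the statement; internal to (IV), Lemma~\ref{yylem1.3} gives rigidity of the parameters. Assembling Steps 1--3 yields the proposition.

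\medskip

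\noindent\textbf{Main obstacle.}
The routine parts are cataloguing $G(H)$ and the commutator quotients; the delicate point is making the coinvariant-algebra argument airtight across families with overlapping crude invariants — in particular ruling out an isomorphism between some $A(n,q)$ with $q$ a root of unity (where $H/[H,H]$ can degenerate) and some $B(n',\dots)$. The fix is to use not the commutator quotient alone but the intrinsic coinvariant subalgebra $H_0$ and its regularity (one generator versus $s\ge2$ generators for its maximal graded ideal), since that invariant is insensitive to whether $q$ is a root of unity and cleanly separates a polynomial ring in one variable from the singular rings $k[y_1,\dots,y_s]$.
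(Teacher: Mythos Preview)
Your approach is essentially correct and reaches the goal, but it is organized differently from the paper's proof and uses a different key invariant to isolate family (IV).

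The paper first sorts by commutativity and cocommutativity: the commutative algebras in the list ($k\ZZ^2$, $U(\fg)$ abelian, and the $A(n,1)$) are distinguished among themselves by the rank of $G(H)$; the noncommutative cocommutative ones ($k(\ZZ\rtimes\ZZ)$, $U(\fg)$ non-abelian, and the $A(0,q)$ with $q\ne1$) likewise by $G(H)$. For the remaining algebras the paper's decisive invariant is \emph{global dimension}: each $B(n,p_0,\dots,p_s,q)$ has $\gldim=\infty$ (because its subring $k[y_1,\dots,y_s]$ with $s\ge2$ does), while every other algebra on the list has $\gldim=2$; this separates (IV) from all of (I)--(III), (V) in one stroke. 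Finally the abelianization distinguishes $A(n,q)$ from $C(m)$, just as in your Step~2.

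Your route --- $G(H)$ trivial versus not, then $H=kG(H)$ versus not, then abelianization a domain versus not, then regularity of the coinvariant subalgebra $H_0$ for the $H/[H,H]$-coaction --- is a legitimate alternative. Your Step~3 coinvariant argument is really the same phenomenon the paper exploits, viewed from a different angle: $k[y_1,\dots,y_s]$ is singular (equivalently, has infinite global dimension) precisely because $s\ge2$. The paper's choice has the practical advantage that global dimension dispatches (IV) against all other families simultaneously, whereas you must first peel off (V) before running the coinvariant comparison.

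One small gap to close: in Step~3 your claim that the coinvariant algebra of $A(n,q)$ equals $k[y]$ holds only for $q\ne1$. When $q=1$ the algebra $A(n,1)$ is commutative, so $H/[H,H]=H$, the coaction is $\Delta$ itself, and the coinvariants collapse to $k$. That case is of course trivially handled (commutative versus noncommutative, since every $B(-)$ is noncommutative), but you should say so explicitly rather than asserting that the coinvariant invariant is ``insensitive'' across all $q$. Also, the first half of your Step~2 (the remarks about $y$ not centralizing $H$ and conjugation) is muddled and unnecessary; simply delete it in favor of the clean abelianization argument you give immediately afterward.
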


\begin{proof} The commutative Hopf algebras in this list are the group
algebra $k[\ZZ^2]= k[x^{\pm1},y^{\pm1}]$, the enveloping algebra
$U(\gfrak)= k[x,y]$, and the $A(n,1)= k[x^{\pm1},y]$ for $n\ge0$. These
three types are pairwise non-isomorphic because their groups of grouplike
elements are different -- they are free abelian of ranks $2$, $0$, and
$1$, respectively. Further, $A(m,1)\not\cong A(n,1)$ for distinct
$m,n\in\Znonneg$ by Construction \ref{yycon1.1}(b).

The noncommutative, cocommutative Hopf algebras in the list are the group
algebra $k[\ZZ\rtimes\ZZ]$, the enveloping algebra $U(\gfrak)$ with
$\gfrak$ non-abelian, and the $A(0,q)$  with $q\ne 1$. The three types are
again pairwise non-isomorphic because their groups of grouplike elements
are different -- namely $\ZZ\rtimes\ZZ$, the trivial group, and the
infinite cyclic group. Moreover, $A(0,q) \cong A(0,r)$ (for $q,r\in\kx$)
 if and only if $q= r^{\pm1}$ by Construction \ref{yycon1.1}(b). 

Each $B(-)= B(n,p_0,\dots,p_s,q)= k[y_1,\dots,y_s][x^{\pm1};\sigma]$ as
algebras. Since $s\ge2$, the subalgebra $k[y_1,\dots,y_s]$ has infinite
global dimension, from which it follows that $B(-)$ has
infinite global dimension (e.g., \cite[Theorem 7.5.3(ii)]{MR}). However,
all the other Hopf algebras in the list have global dimension $2$.
Therefore no
$B(-)$ is isomorphic to any of the others. Moreover, the
$B(-)$ themselves are pairwise non-isomorphic by Lemma
\ref{yylem1.3}.

The abelianization $A(n,q)/[A(n,q),A(n,q)]$ is isomorphic to
either $k[x^{\pm1}]$ (when $q\neq 1$) or $k[x^{\pm1},y]$ (when
$q=1$). As shown in the proof of Construction \ref{yycon1.4}, the 
abelianization $C(m)/[C(m),C(m)]$, when $m\ge2$, is isomorphic to a 
direct product of $m-1$ copies of $k[x]$. Therefore $C(m)$, when 
$m\ge2$, is not isomorphic to any $A(n,q)$.  Taking account of 
Constructions \ref{yycon1.1}(b) and \ref{yycon1.4}(b), the proposition 
is proved. 
\end{proof}

\begin{remark} 
\label{yyrem1.7}
In \cite[Question K]{Br3}, Brown asked whether a noetherian
Hopf algebra which is a domain over a field of characteristic zero must
have finite global dimension. A similar question was asked by 
Wu and the second-named author in 
\cite[Question 0.4]{WZ2}. The Hopf algebras $B(n,p_0,\dots,p_s,q)$
provide a negative answer to this question, since they have infinite
global dimension as shown in the proof of
Proposition \ref{yyprop1.6}. The Hopf algebras $B(n,p_0,\dots,p_s,q)$
also provide a negative answer to \cite[Question J]{Br3}. 
\end{remark}

\section{Commutative Hopf domains of GK-dimensions $1$ and $2$}
\label{yysec2}

While our main interest is in Hopf algebras of GK-dimension $2$, these 
may have subalgebras and/or quotients of GK-dimension $1$, and it is 
helpful to have a good picture of that situation. Thus, we begin by 
classifying commutative Hopf algebras which are domains of 
GK-dimension $1$ (= transcendence degree $1$) over $k$. 

Recall that the antipode of any commutative (or cocommutative) Hopf 
algebra $H$ satisfies $S^2= \id$ \cite[Corollary 1.5.12]{Mon}. From 
this, we can see that $H$ is the directed union of its affine Hopf
subalgebras, as follows. Any finite subset $X\subseteq H$ must be 
contained in a finite dimensional subcoalgebra, say $C$, of 
$H$ \cite[Theorem 5.1.1]{Mon}. Since $S$ is an anti-coalgebra morphism, 
$S(C)$ is a subcoalgebra of $H$, and hence so is $D:= C+S(C)$. 
Moreover, $S(D)\subseteq D$. The subalgebra $A$ of $H$ generated by 
$D$ is thus a Hopf subalgebra containing $X$. Choose a
basis $Y$ for $C$. Then $A$ is generated by $Y\cup S(Y)$, proving that $A$
is affine.

\begin{proposition}  
\label{yyprop2.1}
Assume that the Hopf algebra $H$ is a commutative domain with 
$\GKdim H = 1$. Then $H$ is isomorphic to one of the following:
\begin{enumerate}
\item An enveloping algebra $U(\fg)$, where $\dim_k \fg =1$.
\item A group algebra $k\Gamma$, where $\Gamma$ is infinite cyclic.
\item A group algebra $k\Gamma$, where $\Gamma$ is a non-cyclic 
torsionfree abelian group of rank $1$, i.e., a non-cyclic subgroup 
of ${\mathbb Q}$.
\end{enumerate}
\end{proposition}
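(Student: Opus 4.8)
The plan is to exploit the observation made just before the statement: $H$ is a directed union $\bigcup_i H_i$ of \emph{affine} commutative Hopf subalgebras. First I would discard those $H_i$ with $H_i = k$; since $\GKdim H = 1 > 0$ the remaining subsystem is nonempty and cofinal, so $H$ is the directed union of its nontrivial affine Hopf subalgebras. Each such $H_i$ is an affine commutative $k$-domain with $\GKdim H_i \le \GKdim H = 1$, and $H_i \ne k$ forces $\GKdim H_i \ge 1$ (an affine commutative domain of GK-dimension $0$ is finite-dimensional over $k$, hence equals $k$); so $\GKdim H_i = 1$. By Cartier's theorem $H_i$ is reduced, hence $H_i \cong \cal{O}(G_i)$ for an affine algebraic group $G_i$, which is irreducible (as $H_i$ is a domain), therefore connected, and of dimension one. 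The classification of connected one-dimensional algebraic groups over an algebraically closed field of characteristic zero then gives $G_i \cong \mathbb{G}_a$ or $G_i \cong \mathbb{G}_m$; accordingly $H_i \cong k[z]$ with $z$ primitive, or $H_i \cong k[z^{\pm1}]$ with $z$ grouplike.

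Next I would show that all the nontrivial $H_i$ are of the \emph{same} one of these two types. Given two of them, directedness puts both inside a common nontrivial $H_\ell \cong \cal{O}(G_\ell)$. But $\cal{O}(\mathbb{G}_a)$ has no nontrivial grouplike elements (equivalently $\mathbb{G}_a$ has no nontrivial characters), and $\cal{O}(\mathbb{G}_m)$ has no nonzero primitive elements (equivalently $\Hom(\mathbb{G}_m,\mathbb{G}_a)=0$); each is an immediate direct computation in $k[z]$ resp. $k[z^{\pm1}]$. Hence $H_\ell$, and therefore each of the two given $H_i$, is of a single type, and that type does not depend on $i$.

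Now split into the two cases. If every nontrivial $H_i$ is $\cong k[z]$ with canonical generator primitive, then $H = \bigcup_i H_i$ is generated by primitive elements $x_i$; any two of them lie in a common $H_\ell \cong k[z]$, and the primitive elements of $k[z]$ form the line $kz$ (additive polynomials in characteristic zero are linear), so all the $x_i$ are scalar multiples of one another. Thus $H = k[x_1] \cong U(\fg)$ with $\dim_k \fg = 1$: case (a). If instead every nontrivial $H_i$ is $\cong k[z^{\pm1}]$ with canonical generator grouplike, then $H$ is generated as an algebra by grouplike elements, hence spanned by grouplike elements, so $H = k\Gamma$ for $\Gamma = G(H)$ (grouplikes being linearly independent). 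Here $\Gamma$ is abelian since $H$ is commutative, torsionfree since $H$ is a domain (a grouplike of finite order $n>1$ would give $k[g]\cong k[X]/(X^n-1)$, not a domain), and of torsionfree rank one because $\GKdim k\Gamma$ equals the rank of $\Gamma$ and equals $1$. So $\Gamma$ is a rank-one torsionfree abelian group, i.e. a subgroup of $\QQ$, yielding case (b) if $\Gamma$ is cyclic and case (c) otherwise.

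The real obstacle is the first paragraph: passing from ``affine commutative Hopf domain of GK-dimension one'' to ``$\cal{O}(\mathbb{G}_a)$ or $\cal{O}(\mathbb{G}_m)$'', which leans on Cartier's reducedness theorem and on the classification of one-dimensional connected algebraic groups in characteristic zero. Once those inputs are available, the directedness ``collapse'' of the second paragraph and the two short case analyses finish the argument. (One should also note the converse — that the three families really are commutative Hopf domains of GK-dimension one — but this is routine and is already implicit in the constructions.)
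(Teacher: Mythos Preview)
Your proof is correct and follows essentially the same strategy as the paper's: write $H$ as a directed union of affine Hopf subalgebras, classify each piece as $\cO(\mathbb{G}_a)$ or $\cO(\mathbb{G}_m)$ via the classification of one-dimensional connected algebraic groups, run a coherence argument to force a single type, and conclude. One small remark: invoking Cartier's theorem is unnecessary, since each $H_i$ is a subring of the domain $H$ and hence already a domain; the paper also organizes the argument slightly differently (handling the affine case first, so that in the non-affine case only the group-algebra type survives), but the substance is the same.
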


\begin{proof} If $H$ is affine, then $H\cong \cO(G)$ for some connected
$1$-dimensional algebraic group $G$ over $k$. There are only two
possibilities: either $G\cong \kx$ or $G\cong k^+$ \cite[Theorem
III.10.9]{Bo}. These correspond to cases (b) and (a), respectively.

Now suppose that $H$ is not affine, and view $H$ as the directed 
union of its affine Hopf subalgebras. 

Since $H$ is a domain, its only finite dimensional subalgebra is $k1$.
Hence, any nontrivial affine Hopf subalgebra of $H$ has GK-dimension
$1$, and so must have one of the forms (a) or (b). Among these Hopf 
algebras, there are no proper embeddings of the type 
$U(\fg)\hookrightarrow U(\fg')$, because the only primitive elements 
of $U(\fg')$ are the elements of $\fg'$, and there are no embeddings 
of the type $k\Gamma \hookrightarrow U(\fg)$, because the only units 
of $U(\fg)$ are scalars. Thus, $H$ must be a directed union
$\bigcup_{i\in I} H_i$ of Hopf subalgebras $H_i \cong k\Gamma_i$, 
where the $\Gamma_i$ are infinite cyclic groups.

For each index $i$, the group $G(H_i)$ is isomorphic to $\Gamma_i$, 
and is a $k$-basis for $H_i$. The group $\Gamma= G(H)$ is the directed 
union of the $G(H_i)$, so it is torsionfree abelian of rank $1$, and 
it is a $k$-basis for $H$. Therefore $H=k\Gamma$. We conclude by 
noting that $\Gamma$ is not cyclic, since $H$ is not affine.
\end{proof} 

Turning to the case of GK-dimension $2$, we need to describe the
$2$-dimensional connected algebraic groups over $k$. While the result 
is undoubtedly known, we have not located it in the literature, and 
so we sketch a proof.

\begin{lemma}  
\label{yylem2.2} 
Let $G$ be a $2$-dimensional connected algebraic group over $k$. 
Then $G$ is isomorphic {\rm(}as an algebraic group{\rm)} to either
\begin{enumerate}
\item 
One of the abelian groups $(\kx)^2$, $(k^+)^2$, $k^+\times \kx$, or
\item 
One of the semidirect products $k^+\rtimes \kx$ where $\kx$
acts on $k^+$ by $b.a= b^na$, for some positive integer $n$.
\end{enumerate}
\end{lemma}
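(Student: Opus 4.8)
The plan is to analyze $G$ via its (connected) unipotent radical $R_u(G)$ and the reductive quotient $G/R_u(G)$, using the structure theory of algebraic groups over an algebraically closed field of characteristic zero. First I would observe that a $2$-dimensional connected group is automatically solvable: a connected reductive group of dimension $2$ would be a torus (hence abelian, case (a)) or contain $\SL_2$ or $\PGL_2$, which are $3$-dimensional, so the only reductive case is $G\cong(\kx)^2$. Thus I may assume $G$ is a non-reductive connected solvable group, so $U:=R_u(G)$ is nontrivial, of dimension $1$ or $2$, and is a connected unipotent group; in characteristic zero every connected unipotent group is isomorphic (as a variety, and here as a group in the low-dimensional cases) to a vector group, so $U\cong k^+$ or $U\cong(k^+)^2$. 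By the structure of connected solvable groups, $G$ is a semidirect product $G\cong U\rtimes T$ where $T$ is a maximal torus, of dimension $2-\dim U$.

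Next I would split into the two cases. If $\dim U=2$, then $T$ is trivial and $G=U$ is a $2$-dimensional connected unipotent group; in characteristic zero such a group is abelian (its Lie algebra is a $2$-dimensional nilpotent Lie algebra, which is abelian, and $\exp$ is an isomorphism of groups), so $G\cong(k^+)^2$, giving one of the groups in (a). If $\dim U=1$, then $U\cong k^+$ and $T\cong\kx$, so $G\cong k^+\rtimes\kx$ for some action of $\kx$ on $k^+$ by algebraic group automorphisms. Every algebraic automorphism of $k^+$ is multiplication by a nonzero scalar, so an action of $\kx$ is a homomorphism $\kx\to\kx$, i.e. $b.a=b^na$ for some $n\in\ZZ$. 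If $n=0$ the action is trivial and $G\cong k^+\times\kx$, which is in (a); replacing the generator $b$ by $b^{-1}$ identifies the action $b.a=b^na$ with $b.a=b^{-n}a$, so we may take $n>0$, landing in case (b).

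The main obstacle is not any single deep theorem but making the low-dimensional structure theory self-contained and correctly invoked: specifically, (i) ruling out the semisimple/reductive possibilities by a dimension count, (ii) the characteristic-zero fact that connected unipotent groups of dimension $\le 2$ are vector groups and that $2$-dimensional ones are abelian, and (iii) the semidirect product decomposition $G=R_u(G)\rtimes T$ for connected solvable $G$ (a form of the Lie--Kolchin/Mostow structure theorem). Since the lemma only asks for a sketch, I would cite \cite{Bo} (e.g., the structure theory of solvable groups in Chapter III/IV there, paralleling the use of \cite[Theorem III.10.9]{Bo} in Proposition \ref{yyprop2.1}) for (i) and (iii), and give the one-line Lie-algebra argument for (ii), then finish with the elementary computation of $\Aut(k^+)$ and the normalization $n>0$.
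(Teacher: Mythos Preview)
Your proposal is correct and follows essentially the same route as the paper: establish that $G$ is solvable, invoke the semidirect-product decomposition $G\cong U\rtimes T$ with $U$ the unipotent radical and $T$ a maximal torus, and then case-split on $\dim U$ (equivalently $\dim T$), handling the $\dim U=1$ case via the computation of characters of $\kx$. The paper cites \cite[Corollary IV.11.6]{Bo} for solvability and \cite[Theorem III.10.6]{Bo} for the splitting, just as you propose to do.

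The one place where you genuinely diverge is the purely unipotent case $\dim U=2$. The paper does not use the exponential map; instead it extracts from \cite[Theorem III.10.6]{Bo} a closed connected central subgroup $N\cong k^+$ with $G/N\cong k^+$, picks $x\in G\setminus N$, and argues that the abelian subgroup generated by $N\cup\{x\}$ is dense in $G$, forcing $G$ abelian. Your Lie-algebra argument (a $2$-dimensional nilpotent Lie algebra is abelian, and $\exp$ is a group isomorphism in characteristic zero) is cleaner and more conceptual here; the paper's argument is more hands-on and avoids invoking the $\exp$ correspondence, but both are perfectly valid given the standing characteristic-zero hypothesis. Either way the outcome is $G\cong (k^+)^2$.
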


\begin{remark*}
Note that the three groups listed in (a) are pairwise non-isomorphic. 
The group described in (b) can be written as the set $k^+\times\kx$ 
equipped with the product $(a,b)\cdot(a',b')= (a+b^na',bb')$. The 
center consists of the elements $(0,b)$ with $b^n=1$, so the center 
is cyclic of order $n$. Thus, none of these groups is abelian, and 
no two (for different $n$) are isomorphic. It is also possible to 
form a semidirect product with the multiplication rule 
$(a,b)\cdot(a',b')= (a+b^{-n}a',bb')$, but this group is isomorphic
to the previous one, via $(a,b) \mapsto (a,b^{-1})$.
\end{remark*}

\begin{proof}[Proof of Lemma \ref{yylem2.2}]
According to \cite[Corollary IV.11.6]{Bo}, 2-dimensional groups are 
solvable. Then, by \cite[Theorem III.10.6]{Bo}, $G= T\ltimes G_u$ where 
$T$ is a maximal torus and $G_u$ is the unipotent radical of $G$.

(i) If $\dim T =2$, then $G=T$ is a torus, isomorphic to $(\kx)^2$.

(ii) If $\dim T =0$, then $G=G_u$ is unipotent. Another part of 
\cite[Theorem III.10.6]{Bo} and its proof shows that $G$ has a closed 
connected normal subgroup $N$, contained in $Z(G)$, such that $N$ 
and $G/N$ are 1-dimensional. Being unipotent, $N\cong G/N\cong k^+$. 
Choose $x\in G\setminus N$ and let $H$ be the subgroup of $G$ 
generated by $N\cup\{x\}$. Then $H$ is abelian and $H/N$ is infinite. 
The latter implies $H/N$ is dense in $G/N$, so $H$ is dense in $G$. 
Thus, $G$ is abelian. Unipotence then implies $G\cong (k^+)^2$.

(iii) If $\dim T= 1$, then also $\dim G_u=1$, so $T\cong \kx$ and
$G_u\cong k^+$ (since $G_u$ is connected \cite[Theorem III.10.6]{Bo}). 
Hence, $G\cong k^+
\rtimes_\phi \kx$, for some homomorphism $\phi$ from $\kx$ to the 
group $\Aut(k^+)$ of algebraic group automorphisms of $k^+$. But
$\Aut(k^+) \cong \kx$, where $b\in \kx$ corresponds to the 
automorphism $a\mapsto ba$. The homomorphisms $\phi$ thus amount 
to the (rational) characters of $\kx$, namely the maps
$\kx\rightarrow\kx$ given by $b\mapsto b^n$ for $n\in\ZZ$. The 
case $n=0$ corresponds to the trivial action of $\kx$ on $k^+$, in 
which case $G\cong k^+\times\kx$. The cases $n>0$ give the groups 
listed in (b), and the cases $n<0$ give these same groups again.
\end{proof}

\begin{proposition}  
\label{yyprop2.3}
Assume that the Hopf algebra $H$ is a commutative domain with 
$\GKdim H =2$. Then $H$ is noetherian if and only if $H$ is affine, 
if and only if $H$ is isomorphic to one of the following:
\begin{enumerate}
\item 
An enveloping algebra $U(\fg)$, where $\fg$ is $2$-dimensional
abelian.
\item 
A group algebra $k\Gamma$, where $\Gamma$ is free abelian of rank $2$.
\item 
A Hopf algebra $A(n,1)$, for some nonnegative integer $n$.
\end{enumerate}
\end{proposition}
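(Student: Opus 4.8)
The plan is to reduce to the affine case and then invoke classical structure theory. First I would establish the chain of equivalences by a standard argument: a commutative affine algebra is automatically noetherian (Hilbert basis theorem), so ``affine'' implies ``noetherian''; and each of the three listed algebras — $U(\fg)=k[x,y]$, $k\ZZ^2=k[x^{\pm1},y^{\pm1}]$, and $A(n,1)=k[x^{\pm1},y]$ — is visibly affine. Thus it suffices to prove two implications: that the listed algebras exhaust the affine commutative Hopf domains of GK-dimension $2$, and that a \emph{non-affine} commutative Hopf domain of GK-dimension $2$ cannot be noetherian.

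For the affine case, if $H$ is commutative affine of GK-dimension $2$ then $H\cong\cO(G)$ for a $2$-dimensional algebraic group $G$ over $k$; since $H$ is a domain, $G$ is irreducible, hence connected. Now I would apply Lemma \ref{yylem2.2}: $G$ is one of $(\kx)^2$, $(k^+)^2$, $k^+\times\kx$, or $k^+\rtimes_n\kx$ for some $n\ge 1$. Translating coordinate rings: $\cO((k^+)^2)=k[x,y]=U(\fg)$ with $\fg$ abelian, giving (a); $\cO((\kx)^2)=k[x^{\pm1},y^{\pm1}]=k\ZZ^2$, giving (b); $\cO(k^+\times\kx)=k[x^{\pm1},y]=A(0,1)$; and $\cO(k^+\rtimes_n\kx)$ — using the group law $(a,b)\cdot(a',b')=(a+b^na',bb')$ from the remark after Lemma \ref{yylem2.2} — has coordinate ring $k[x^{\pm1},y]$ with comultiplication $\Delta(x)=x\otimes x$, $\Delta(y)=y\otimes 1+x^n\otimes y$, which is exactly $A(n,1)$ (here $x$ is the coordinate dual to the $\kx$-factor and $y$ the one dual to $k^+$). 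So the affine commutative Hopf domains of GK-dimension $2$ are precisely those in (a), (b), (c).

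For the non-affine case, suppose $H$ is a commutative Hopf domain with $\GKdim H = 2$ that is not affine. As in the discussion preceding Proposition \ref{yyprop2.1}, $H$ is the directed union of its affine Hopf subalgebras $H_i$; since $H$ is not affine this union is strictly increasing, so I would exhibit a strictly ascending chain of ideals, contradicting noetherianity. The cleanest route: pass to the group $\Gamma = G(H)$ of grouplikes and the situation on each $H_i$. Each affine $H_i$ is $\cO(G_i)$ for a connected group $G_i$ of dimension $1$ or $2$; if some $H_i$ already has GK-dimension $2$ then $H_i$, being a finitely generated subalgebra with $\GKdim H_i = \GKdim H$ and $H$ a domain, the extension $H_i\subseteq H$ would be algebraic, forcing $H=H_i$ — contradiction. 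Hence every $H_i$ has GK-dimension $\le 1$, so by Proposition \ref{yyprop2.1} each nontrivial $H_i$ is either $U(\fg_i)$ or $k\Gamma_i$ with $\Gamma_i$ torsionfree abelian of rank $1$. One then argues, exactly as in the proof of Proposition \ref{yyprop2.1}, that there are no proper embeddings mixing the enveloping-algebra and group-algebra types, so $H$ is a directed union of group algebras $k\Gamma_i$, hence $H=k\Gamma$ with $\Gamma = \bigcup\Gamma_i$ torsionfree abelian of rank $\le 2$; since $\GKdim H = 2$, $\Gamma$ has rank $2$, and since $H$ is not affine, $\Gamma$ is not finitely generated. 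A non-finitely-generated group algebra over a field is never noetherian (an infinite strictly ascending chain of subgroups $\Gamma_1\subsetneq\Gamma_2\subsetneq\cdots$ yields a non-stabilizing chain of augmentation-type ideals $k\Gamma(\Gamma_1-1)\subsetneq k\Gamma(\Gamma_2-1)\subsetneq\cdots$), completing the proof.

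The main obstacle is the non-affine step: I need to rule out GK-dimension-$2$ affine Hopf subalgebras absorbing $H$, and then propagate the classification of Proposition \ref{yyprop2.1} through the directed union — in particular verifying that no chain of group algebras of rank $1$ can have an enveloping algebra above it or interleave the two types, and confirming that the resulting rank-$2$ torsionfree abelian but non-finitely-generated group algebra genuinely fails to be noetherian. The first point uses that GK-dimension is exact along finitely generated domain extensions; the last is a direct ideal-chain construction. Everything else is bookkeeping on coordinate rings via Lemma \ref{yylem2.2}.
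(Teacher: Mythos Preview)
Your treatment of the affine case is correct and matches the paper exactly: once $H$ is affine, $H\cong\cO(G)$ with $G$ connected of dimension $2$, and Lemma~\ref{yylem2.2} gives the three families.

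The difference---and the problem---is in the implication ``noetherian $\Rightarrow$ affine.'' The paper dispatches this in one line by citing Molnar's theorem \cite{Mol}: a commutative noetherian Hopf algebra over a field is finitely generated. Your alternative route through the directed union of affine Hopf subalgebras has a genuine gap. The step ``if some $H_i$ has $\GKdim H_i=2$ then the extension $H_i\subseteq H$ is algebraic, forcing $H=H_i$'' is false: algebraic (even integral) extensions of commutative domains need not be equalities. Concretely, $k[x^{\pm1},y^{\pm1}]\subsetneq k[x^{\pm1/2},y^{\pm1}]$ is a Hopf subalgebra inclusion of GK-dimension-$2$ domains; iterating gives a non-affine $H=k\bigl[\ZZ[\tfrac12]\times\ZZ\bigr]$ containing plenty of GK-dimension-$2$ affine Hopf subalgebras. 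So this case is \emph{not} ruled out, and in fact it is the only case that actually occurs: if every affine Hopf subalgebra had $\GKdim\le1$, your own argument via Proposition~\ref{yyprop2.1} would express $H$ as $k\Gamma$ with $\Gamma$ a directed union of rank-$1$ torsionfree abelian groups---but such a union still has rank $1$, giving $\GKdim H=1$, a contradiction. Thus your case split collapses to the very case you mishandle.

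One can repair the argument by analysing towers of isogenies $G_j\twoheadrightarrow G_i$ of $2$-dimensional connected groups and showing the resulting union is a group algebra of a non-finitely-generated abelian group (hence non-noetherian), but this is essentially a special-case reproof of Molnar's theorem. Citing \cite{Mol} is both shorter and what the paper does.
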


\begin{proof} If $H$ is affine, then because it is commutative, it 
is noetherian. The converse is Molnar's theorem \cite{Mol}. 

The Hopf algebras listed in (a)--(c) are clearly affine. Conversely, if $H$
is affine, then $H\cong \cO(G)$ for some $2$-dimensional connected
algebraic group $G$. The structure of $H$ follows from the descriptions of
$G$ given in Lemma \ref{yylem2.2}. Namely, the cases $G\cong (\kx)^2$,
$(k^+)^2$, or $k^+\times \kx$ give $H\cong k\Gamma$, $U(\fg)$, or $A(0,1)$,
respectively, while case (b) of Lemma \ref{yylem2.2} corresponds to 
$H\cong A(n,1)$ with $n>0$. 
\end{proof}

\section{$\Ext^1_H(k,k)$ and condition \nat}
\label{yysec3}

We discuss the relevance of $\Ext^1_H(k,k)$ (actually, just its
dimension) to the structure of general Hopf algebras $H$ over $k$. 
To simplify notation, we have written $k$ rather than $_Hk$
for the trivial left $H$-module, and we define
$$e(H) := \dim_k \Ext^1_H(k,k).$$
It follows from Lemma \ref{yylem3.1}(a) below that $e(H)$ also equals
$\dim_k \Ext^1_H(k_H,k_H)$.

If $H$ is an affine commutative Hopf algebra ${\mathcal O}(G)$,
then $e(H)$ equals the dimension of $G$.

For purposes of the present paper, the key result of this section is that
\Hzrnat\ together with affineness or noetherianness implies that $H$ has a
Hopf quotient of the form $k[t^{\pm1}]$ or $k[t]$ (Theorem \ref{yythm3.9}).

\begin{lemma}  
\label{yylem3.1}
Let $\fm= \ker\epsilon$ where $\epsilon$ is the counit of $H$. 
\begin{enumerate}
\item 
$e(H)= \dim_k (\fm/\fm^2)^*$.
\item 
If $H$ is either noetherian or affine, then $e(H) <\infty$.
\end{enumerate}
\end{lemma}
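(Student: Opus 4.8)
The plan is to recall the standard bar-resolution description of $\Ext^1$. For a trivial module $k$, we have $\Ext^1_H(k,k) \cong \Hom_k(\mathfrak m/\mathfrak m^2, k)$, i.e. the space of $k$-linear maps $\mathfrak m \to k$ that vanish on $\mathfrak m^2$. First I would establish part (a). Write $\mathfrak m = \ker\epsilon$, so $H = k1 \oplus \mathfrak m$ as a $k$-space and $\mathfrak m$ is a two-sided ideal. A left $H$-module map $\mathfrak m \to k$, where $k$ carries the trivial action, is precisely a $k$-linear functional $f\colon \mathfrak m \to k$ with $f(am) = \epsilon(a) f(m)$ for $a \in H$, $m \in \mathfrak m$; taking $a \in \mathfrak m$ shows $f$ kills $\mathfrak m^2$, and conversely any functional killing $\mathfrak m^2$ is such a map. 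So $\Hom_H(\mathfrak m, k) \cong (\mathfrak m/\mathfrak m^2)^*$. Then the short exact sequence $0 \to \mathfrak m \to H \to k \to 0$ gives a long exact sequence; since $H$ is free, $\Ext^i_H(H,k) = 0$ for $i \ge 1$, and $\Hom_H(H,k) \to \Hom_H(\mathfrak m, k)$ is the restriction map, whose image consists of functionals that extend to $H$-module maps $H \to k$ — but every such map is a scalar multiple of $\epsilon$, which vanishes on $\mathfrak m$, so the image is zero. Hence $\Ext^1_H(k,k) \cong \Hom_H(\mathfrak m, k) \cong (\mathfrak m/\mathfrak m^2)^*$, giving $e(H) = \dim_k (\mathfrak m/\mathfrak m^2)^*$. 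The same argument applied to right modules (using that $\mathfrak m$ is also a right ideal and $H = k1 \oplus \mathfrak m$ on the right) yields $e(H) = \dim_k \Ext^1_H(k_H, k_H)$, as remarked before the lemma.

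For part (b), the point is that $\mathfrak m/\mathfrak m^2$ is finite-dimensional over $k$ in either case, so its dual is too. If $H$ is affine, pick algebra generators $a_1,\dots,a_r$ of $H$; since $\epsilon$ is an algebra map, $a_i - \epsilon(a_i)1 \in \mathfrak m$, and these $r$ elements generate $\mathfrak m$ as an ideal (any element of $\mathfrak m$ is a polynomial without constant term in the $a_i - \epsilon(a_i)1$, modulo lower-order corrections). Hence they span $\mathfrak m/\mathfrak m^2$, so $\dim_k \mathfrak m/\mathfrak m^2 \le r < \infty$. If $H$ is noetherian, then $\mathfrak m$ is a finitely generated left (or two-sided) ideal, say generated by $b_1,\dots,b_s$, and again these span $\mathfrak m/\mathfrak m^2$ (write $m = \sum h_j b_j$ with $h_j \in H$; replace $h_j$ by $\epsilon(h_j)1$ at the cost of an element of $\mathfrak m^2$), so $\dim_k \mathfrak m/\mathfrak m^2 \le s < \infty$. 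Either way $e(H) = \dim_k(\mathfrak m/\mathfrak m^2)^* = \dim_k \mathfrak m/\mathfrak m^2 < \infty$.

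There is no serious obstacle here; both parts are standard homological algebra for augmented algebras, and the only mild subtlety is making sure the identification in (a) is with $\Hom_H$ of honest modules rather than cocycles, and being careful that the hypotheses of noetherian/affine are each used to get finite generation of $\mathfrak m$ in the appropriate sense. The "hardest" step, such as it is, is checking that the restriction map $\Hom_H(H,k) \to \Hom_H(\mathfrak m,k)$ has zero image, which amounts to the observation that the only $H$-linear maps $H \to k$ are scalar multiples of $\epsilon$ and these restrict to $0$ on $\mathfrak m$ — a one-line argument once set up correctly.
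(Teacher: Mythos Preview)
Your proof is correct and follows essentially the same approach as the paper: both use the long exact sequence arising from $0\to\fm\to H\to k\to 0$, identify $\Hom_H(\fm,k)\cong(\fm/\fm^2)^*$, and verify that the restriction map $\Hom_H(H,k)\to\Hom_H(\fm,k)$ vanishes; for part (b), both argue that $\fm/\fm^2$ is finite-dimensional by finite generation of $\fm$ (as a left ideal in the noetherian case, via the shifted algebra generators $a_i-\epsilon(a_i)$ in the affine case). The only cosmetic difference is that the paper dispatches the vanishing of the restriction map with the one-liner ``$\fm k=0$'' rather than characterizing $\Hom_H(H,k)$ explicitly.
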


\begin{proof} (a) Since $\fm k=0$, the restriction map $\Hom_H(H,k)
\rightarrow \Hom_H(\fm,k)$ is zero. Inspecting the long exact sequence 
for $\Ext^\bullet_H(-,k)$ relative to the short exact sequence 
$0\rightarrow \fm\rightarrow H\rightarrow k\rightarrow 0$, we see that 
the connecting homomorphism $\Hom_H(\fm,k) \rightarrow \Ext^1_H(k,k)$ 
is an isomorphism. On the other hand, all maps in $\Hom_H(\fm,k)$ 
factor through $\fm/\fm^2$, so that $\Hom_H(\fm,k)\cong 
\Hom_H(\fm/\fm^2,k)$. Thus, $\Ext^1_H(k,k) \cong (\fm/\fm^2)^*$, and 
part (a) follows.

(b) If $H$ is noetherian, then $\fm/\fm^2$ is a finitely generated 
left module over $H$, and hence also over $H/\fm =k$, whence 
$\dim_k \fm/\fm^2 <\infty$, and so $e(H)<\infty$ by (a). If $H$ is 
generated as a $k$-algebra by elements $h_1,\dots,h_n$, write each 
$h_i= \alpha_i+m_i$ for some $\alpha_i\in k$ and $m_i\in \fm$, and 
note that $H$ is also generated as a $k$-algebra by $m_1,\dots,m_n$. 
It follows that $\fm/\fm^2$ is spanned by the cosets $m_i+\fm^2$. 
Again, $\dim_k \fm/\fm^2 <\infty$ and $e(H)<\infty$. 
\end{proof}

Further information will be obtained by working with the associated 
graded ring of $H$ relative to the descending filtration given by 
powers of $\fm$. Since $\fm$ is a Hopf ideal of $H$ (because $\epsilon$ 
is a Hopf algebra homomorphism), this associated graded ring will be a 
Hopf algebra, as follows. This result is presumably well known, but 
since we could not locate it in the literature, we sketch a proof. 

Recall that a \emph{positively graded Hopf algebra} is a Hopf algebra 
$B$ which is also a positively graded $k$-algebra of the form $B=
\bigoplus_{i\ge0} B_i$, such that $\Delta_B$, $\epsilon_B$, and $S_B$ 
are graded maps of degree zero. (We assume that $B\otimes B$ is 
positively graded by total degree, that is, $(B\otimes B)_i= 
\sum_{l+m=i} B_l\otimes B_m$ for all $i\ge0$.)

\begin{lemma}  
\label{yylem3.2} 
Let $K$ be a Hopf ideal of $H$, and $B=\bigoplus_{i\geq 0} K^i/K^{i+1}$ 
the corresponding associated graded ring {\rm(}where $K^0$ means 
$H${\rm)}. Set $L= H\otimes K+K\otimes H$, 
and let $C= \bigoplus_{n\ge0} L^n/L^{n+1}$ be the corresponding 
associated graded ring of $H\otimes H$.
\begin{enumerate}
\item There is a graded algebra isomorphism $\theta: B\otimes B\rightarrow
C$ which sends 
$$(a+K^{i+1})\otimes (b+K^{j+1}) \mapsto (a\otimes b)+L^{i+j+1}$$
for all $a\in K^i$ and $b\in K^j$, $i,j\ge0$.
\item The algebra $B$ becomes a positively graded Hopf algebra with
\begin{itemize}
\item $\Delta_B= \theta^{-1}\Deltabar$, where $\Deltabar: B\rightarrow C$
is the graded algebra homomorphism induced by $\Delta$;
\item $\epsilon_B= \epsilon_{B_0}\pi$, where $\pi: B\rightarrow B_0$ is the
canonical projection;
\item $S_B$ is the graded algebra anti-endomorphism of $B$ induced by $S_H$.
\end{itemize}
\end{enumerate}
\end{lemma}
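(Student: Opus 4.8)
The plan is to prove part (a)---that the $K$-adic filtration on $H$ behaves well under the target of $\Delta$---and then to deduce (b) by pushing the structure maps of $H$ through $\theta$. The starting point for (a) is the identity $L^n=\sum_{i+j=n}K^i\otimes K^j$ for all $n\ge0$, proved by induction: since $K$ is an ideal, $(H\otimes K)(K^i\otimes K^j)=K^i\otimes K^{j+1}$ and $(K\otimes H)(K^i\otimes K^j)=K^{i+1}\otimes K^j$, so $L^{n+1}=L\cdot L^n=\sum_{i+j=n}\bigl(K^i\otimes K^{j+1}+K^{i+1}\otimes K^j\bigr)=\sum_{i+j=n+1}K^i\otimes K^j$. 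Granting this, for each $n$ the assignment $(a+K^{i+1})\otimes(b+K^{j+1})\mapsto(a\otimes b)+L^{n+1}$, for $a\in K^i$ and $b\in K^j$ with $i+j=n$, is well defined on $(B\otimes B)_n=\bigoplus_{i+j=n}(K^i/K^{i+1})\otimes(K^j/K^{j+1})$ (changing $a$ modulo $K^{i+1}$ or $b$ modulo $K^{j+1}$ moves $a\otimes b$ inside $L^{n+1}$), and the identity for $L^n$ shows it is surjective onto $L^n/L^{n+1}$. Call this map $\theta_n$; checking multiplicativity on elementary tensors shows $\theta=\bigoplus_n\theta_n$ is a graded algebra homomorphism $B\otimes B\to C$. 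So (a) reduces to the injectivity of each $\theta_n$.

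For injectivity I would pass to a finite filtration. Set $\overline{H}=H/K^{n+2}$ with the induced filtration $\overline{K}^i=(K^i+K^{n+2})/K^{n+2}$; this is finite ($\overline{K}^{n+2}=0$) and $\overline{K}^i/\overline{K}^{i+1}\cong K^i/K^{i+1}$ for $i\le n$. Over a field a finite filtration of a vector space splits, so we may write $\overline{H}=\bigoplus_{l=0}^{n+1}\overline{V}_l$ with $\overline{K}^i=\bigoplus_{l\ge i}\overline{V}_l$. Exactness of $-\otimes_k-$ identifies $(H\otimes H)/(K^{n+2}\otimes H+H\otimes K^{n+2})$ with $\overline{H}\otimes\overline{H}$, and since $K^{n+2}\otimes H+H\otimes K^{n+2}\subseteq L^{n+1}\subseteq L^n$, this carries $L^n/L^{n+1}$ isomorphically onto $\overline{L}^n/\overline{L}^{n+1}$, where $\overline{L}^n=\sum_{i+j=n}\overline{K}^i\otimes\overline{K}^j=\bigoplus_{l+m\ge n}\overline{V}_l\otimes\overline{V}_m$ inside $\bigoplus_{l,m}\overline{V}_l\otimes\overline{V}_m$. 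Hence $\overline{L}^n/\overline{L}^{n+1}=\bigoplus_{l+m=n}\overline{V}_l\otimes\overline{V}_m\cong(B\otimes B)_n$, and unwinding the identifications shows this isomorphism is exactly $\theta_n$. Thus $\theta_n$, and therefore $\theta$, is an isomorphism.

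For (b), the fact that $K$ is a Hopf ideal gives $\Delta(K)\subseteq L$, $\epsilon(K)=0$, and $S(K)\subseteq K$; since $\Delta$ is an algebra map, $S$ an algebra anti-map, and $\epsilon$ an algebra map, it follows that $\Delta(K^i)\subseteq L^i$, $S(K^i)\subseteq K^i$, and $\epsilon(K^i)=0$ for $i\ge1$. So $\Delta$, $S$, $\epsilon$ are morphisms of filtered algebras ($k$ carrying the filtration concentrated in degree $0$) and induce graded algebra maps on associated gradeds: $\Deltabar\colon B\to C$, $S_B\colon B\to B$, and $\epsilon_B\colon B\to k$, the last of which is visibly $\epsilon_{B_0}\pi$; set $\Delta_B=\theta^{-1}\Deltabar$. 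The Hopf algebra axioms for $(B,\Delta_B,\epsilon_B,S_B)$ are identities among graded algebra maps built from $\Delta_B,\epsilon_B,S_B$, the multiplication, and the unit, together with the canonical isomorphisms $\gr(H^{\otimes r})\cong B^{\otimes r}$ (for the analogous filtrations on $H^{\otimes r}$) obtained by iterating $\theta$---the different bracketings agreeing because all are given on elementary tensors by $a_1\otimes\cdots\otimes a_r\mapsto\overline{a}_1\otimes\cdots\otimes\overline{a}_r$. Since each axiom holds for $H$ as an identity of filtered maps, applying the associated-graded functor---which is monoidal via the isomorphisms $\theta$ of part (a)---yields the corresponding identity for $B$; and $B$, being positively graded with structure maps of degree zero, is then a positively graded Hopf algebra in the sense recalled before the lemma.

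The one genuine obstacle is the injectivity of $\theta_n$ in (a): well-definedness, surjectivity, and multiplicativity of $\theta$ are formal, whereas injectivity really uses that $k$ is a field, via the splitting of the finite filtration on $\overline{H}$. Everything else in (b) is bookkeeping---chiefly tracking the coherence of the iterated-$\theta$ identifications so that ``apply $\gr$ to the Hopf axioms of $H$'' is rigorous.
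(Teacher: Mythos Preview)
Your proof is correct and follows the same overall strategy as the paper. In fact, for part (a) you are considerably more careful than the paper, which simply asserts ``the equalities $\bigoplus_{i=0}^n K^i\otimes K^{n-i} = L^n$ induce linear isomorphisms $(B\otimes B)_n \rightarrow L^n/L^{n+1}$'' without further argument; your truncation to $H/K^{n+2}$ and splitting of the resulting finite filtration is a clean way to justify the injectivity the paper leaves implicit. Part (b) in both cases is the routine observation that the Hopf structure maps respect the filtrations and hence descend, with the axioms following by functoriality of $\gr$; your treatment of the coherence of the iterated $\theta$ is again more explicit than the paper's sketch.
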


\begin{proof} (a) Observe that the equalities $\bigoplus_{i=0}^n K^i\otimes
K^{n-i} = L^n$ induce linear isomorphisms $(B\otimes B)_n \rightarrow
L^n/L^{n+1}$ for all $n\ge0$. The direct sum of these isomorphisms gives a
graded linear isomorphism $\theta: B\otimes B\rightarrow C$ which acts as
described, and one checks that $\theta$ is an algebra homomorphism.

(b) Since $K$ is a Hopf ideal
of $H$, the coproduct $\Delta_H$ maps $K$ into $L$, and hence
induces a graded algebra homomorphism $\Deltabar: B\rightarrow C$. Composing
this map with $\theta^{-1}$, we obtain a graded algebra
homomorphism $\Delta_B: B \rightarrow B\otimes B$. Coassociativity of
$\Delta_B$ follows easily from coassociativity of $\Delta_H$.

It is clear that $\epsilon_B$ is a graded algebra homomorphism from $B$ to
$k$, where $k$ is concentrated in degree $0$. The counit
axiom for $B$ follows directly from that for $H$.
Finally, since $S_H$ is an algebra anti-endomorphism of $H$ mapping
$K\rightarrow K$, it induces a graded algebra anti-endomorphism $S_B$ of
$B$. All that remains is to check the antipode axiom.
\end{proof}

\begin{lemma}  
\label{yylem3.3}
Let $\fm= \ker\epsilon$, and write $\gr H$
for the graded algebra $\bigoplus_{i=0}^\infty 
\fm^i/\fm^{i+1}$.
\begin{enumerate}
\item With the operations described in Lemma {\rm \ref{yylem3.2}}, $\gr H$
is a positively graded Hopf algebra.
\item As a $k$-algebra, $\gr H$ is connected graded and generated in degree
$1$.
\item $\gr H$ is also connected as a coalgebra.
\item All homogeneous elements of degree $1$ in $\gr H$ are primitive
elements.
\item $\gr H$ is cocommutative.
\end{enumerate}
\end{lemma}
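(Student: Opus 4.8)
The plan is to prove the five parts in the order (b), (d), (e), (c), (a), since part (a) --- that $\gr H$ is a \emph{Hopf} algebra --- amounts, given Lemma~\ref{yylem3.2} applied with $K=\fm$, to nothing more than the antipode axiom, and the cleanest route to that uses (b) and (d). Part (b) is immediate: the degree-zero component is $\fm^0/\fm^1 = H/\fm \cong k$, so $\gr H$ is connected graded; and since $\fm^i$ is by definition spanned by products of $i$ elements of $\fm$, each $\fm^i/\fm^{i+1}$ is spanned by products of $i$ elements of the degree-$1$ piece $\fm/\fm^2$, so $\gr H$ is generated in degree~$1$.

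For (d) I would first record the $H$-level fact that for $a\in\fm$ one has $\Delta(a) = a\otimes 1 + 1\otimes a + w$ with $w\in\fm\otimes\fm$: applying $\epsilon\otimes\id$ and $\id\otimes\epsilon$ shows that $\Delta(a) - a\otimes 1 - 1\otimes a$ lies in $(\fm\otimes H)\cap(H\otimes\fm) = \fm\otimes\fm$. Since $\fm\otimes\fm\subseteq L^2$ in the notation of Lemma~\ref{yylem3.2}, passing to $L^1/L^2$ and applying $\theta^{-1}$ gives $\Delta_B(a+\fm^2) = (a+\fm^2)\otimes 1 + 1\otimes(a+\fm^2)$, so every class in $(\gr H)_1$ is primitive. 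Part (e) is then formal: by (b) the algebra $\gr H$ is generated by its degree-$1$ part, and by (d) the two algebra homomorphisms $\Delta_B$ and $\tau\circ\Delta_B$ (with $\tau$ the flip on $\gr H\otimes\gr H$) agree on that part, hence everywhere, so $\gr H$ is cocommutative.

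For (c) I would observe that the grading filtration $F_n = \bigoplus_{i\le n}(\gr H)_i$ is a coalgebra filtration, because $\Delta_B$ has degree zero and hence $\Delta_B(F_n)\subseteq\sum_{p+q=n}(\gr H)_p\otimes(\gr H)_q\subseteq\sum_{p\le n}F_p\otimes F_{n-p}$; then I invoke the standard fact that the coradical of a coalgebra is contained in the bottom term of any coalgebra filtration (see \cite{Mon}), which puts the coradical of $\gr H$ inside $F_0 = (\gr H)_0 = k1$, so $\gr H$ is connected as a coalgebra. For (a), by Lemma~\ref{yylem3.2} only the antipode axiom remains, and by the reduction recalled at the beginning of Section~\ref{yysec1} it suffices to verify it on the degree-$1$ generators. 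For $a\in\fm$, note $S(\fm)\subseteq\fm$ (since $\epsilon\circ S = \epsilon$); applying $m\circ(S\otimes\id)$ to $\Delta(a) = a\otimes 1 + 1\otimes a + w$ and using the antipode axiom in $H$ gives $0 = S(a) + a + m(S\otimes\id)(w)$ with $m(S\otimes\id)(w)\in S(\fm)\fm\subseteq\fm^2$, so $S_B$ acts as $-\id$ on $(\gr H)_1$. Together with the primitivity from (d) this makes both antipode equations hold on the degree-$1$ generators, hence on all of $\gr H$, completing (a).

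The content of the lemma is concentrated in the two elementary $H$-level computations --- that $\Delta(a)\equiv a\otimes 1 + 1\otimes a$ modulo $\fm\otimes\fm$, and that $S(a)\equiv -a$ modulo $\fm^2$ --- together with the standard coradical input used in (c); the only thing one must chase carefully is the passage of these facts through the algebra isomorphism $\theta$ of Lemma~\ref{yylem3.2} between $\gr(H)\otimes\gr(H)$ and $\gr(H\otimes H)$. I do not expect any genuine obstacle.
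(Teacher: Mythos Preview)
Your proof is correct. Parts (b), (d), and (e) match the paper's arguments closely: for (d) you compute at the $H$-level while the paper argues directly in $\gr H$ using that $\Delta_B$ is graded of degree zero and $B_0=k$ (so $\Delta_B(x)=1\otimes y+z\otimes 1$, and the counit forces $y=z=x$), but the content is the same. For (a), you actually verify the antipode axiom on the degree-$1$ generators, whereas the paper simply defers to Lemma~\ref{yylem3.2}, whose proof leaves that check to the reader; your treatment is more explicit.

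The one genuinely different route is in (c). The paper first invokes (e) together with the algebraic closedness of $k$ to conclude that $\gr H$ is pointed, and then argues by degree that the only grouplike element is $1$. Your argument via the coalgebra filtration $F_n=\bigoplus_{i\le n}(\gr H)_i$ and the standard fact that the coradical lies in $F_0$ is cleaner and more general: it requires neither cocommutativity nor that $k$ be algebraically closed, and it makes (c) logically independent of (e). The paper's approach, on the other hand, avoids citing the coradical-filtration lemma and is entirely self-contained once pointedness is granted.
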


\begin{proof} Let $B := \gr H$ be the graded algebra defined in  
Lemma \ref{yylem3.2} with $K=\fm$.

(a) See Lemma \ref{yylem3.2}.

(b) It is clear that the algebra $B$ is generated in degree
$1$, and it is connected graded because $B_0=H/\fm=k$.

(d) Let $x\in B_1$. Since $B_0= k$, Lemma \ref{yylem3.2}
shows that we can write $\Delta_B(x)= 1\otimes y+ z\otimes 1$ for some
$y,z\in B_1$. Applying the counit axiom together with the fact that
$\epsilon_B(y)= \epsilon_B(z)=0$, we find that $y=z=x$. This shows that $x$
is a primitive element.

(e) Since $B$ is generated by $B_1$, cocommutativity follows from
(d).

(c) Since $B$ is cocommutative and $k$ is algebraically closed, $B$ is
pointed \cite[p. 76]{Mon}. Thus, any simple subcoalgebra of $B$ has the
form $kg$ for a grouplike element $g$. To prove that $B$ is a connected
coalgebra, it remains to show that $g=1$.
Write $g= x_0+\cdots+x_n$ for some $x_i\in B_i$, with $x_n\ne 0$. Then
$$\sum_{i=0}^n \Delta_B(x_i)= \Delta_B(g)= g\otimes g= \sum_{l,m=0}^n
x_l\otimes x_m \,,$$
with $x_n\otimes x_n\ne 0$. Since the homogeneous components of
$\Delta_B(g)$ have degree at most $n$, we can have $x_n\otimes x_n\ne 0$
only when $n=0$. Hence, $g\in k1$, and so $g=1$ because $\epsilon(g)=1$.
This proves that $B$ is indeed connected.
\end{proof}

In the following result, the dimension of an infinite dimensional vector
space $V$ is considered to be the symbol $\infty$, rather than an infinite
cardinal, so that $\dim_k V^*= \dim_k V$.

\begin{proposition}  
\label{yyprop3.4}
Let $\fm= \ker\epsilon$, and write $\gr H$ for
the positively graded Hopf algebra $\bigoplus_{i=0}^\infty 
\fm^i/\fm^{i+1}$.
\begin{enumerate}
\item $\gr H\cong U(\fg)$ for some positively graded Lie algebra $\fg$
which is generated in degree $1$. 
\item $e(H)= \dim_k (\gr H)_1 =\dim_k \fg_1 \leq \dim_k \fg
\leq \GKdim H$.
\end{enumerate}
\end{proposition}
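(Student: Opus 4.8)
I would deduce (a) from the classical structure theory of connected cocommutative Hopf algebras. Set $B=\gr H$. By Lemma \ref{yylem3.3}(a),(c),(e), $B$ is a cocommutative Hopf algebra over the characteristic-zero field $k$ which is connected as a coalgebra, so the Milnor--Moore theorem (see \cite{Mon}) gives a Hopf algebra isomorphism $B\cong U(\fg)$, where $\fg=P(B)$ is the Lie algebra of primitive elements of $B$ under the commutator bracket. Since $\Delta_B$ is a graded map, the homogeneous components of a primitive element are again primitive, so $\fg$ is a graded subspace of $B$; moreover $\fg_0=0$ because $B_0=k$ contains no nonzero primitive, so $\fg=\bigoplus_{i\ge1}\fg_i$. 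As the canonical map $U(\fg)\to B$ is graded, the isomorphism $U(\fg)\cong B$ is graded; in particular $U(\fg)_1=\fg_1$ is identified with $B_1=(\gr H)_1$. Finally, let $\fg'$ be the Lie subalgebra of $\fg$ generated by $\fg_1$. Then $U(\fg')\subseteq U(\fg)=B$ contains $\fg_1=B_1$, which by Lemma \ref{yylem3.3}(b) generates $B$ as an algebra; hence $U(\fg')=B=U(\fg)$, forcing $\fg'=\fg$ (e.g.\ by comparing PBW bases). Thus $\fg$ is generated in degree $1$.

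For (b), by Lemma \ref{yylem3.1}(a) and the convention that $\dim_k V^{*}=\dim_k V$, I get $e(H)=\dim_k(\fm/\fm^2)^{*}=\dim_k(\fm/\fm^2)=\dim_k(\gr H)_1=\dim_k\fg_1$, using $(\gr H)_1=\fm/\fm^2$ and the graded identification from (a); the middle inequality $\dim_k\fg_1\le\dim_k\fg$ is trivial. For the last inequality I fix a positive integer $n\le\dim_k\fg$ and pick $n$ members $z_1,\dots,z_n$ of a homogeneous basis of $\fg$, totally ordered; since these extend to a homogeneous basis of $\fg$, the ordered monomials $z_{i_1}\cdots z_{i_r}$ with $i_1\le\cdots\le i_r$ and $i_j\in\{1,\dots,n\}$ form part of the PBW basis of $U(\fg)=\gr H$, hence are linearly independent, and each is homogeneous of degree $\sum_j\deg z_{i_j}$. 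Lift each $z_i$ to some $\tilde z_i\in\fm^{\deg z_i}$. A leading-term argument for the $\fm$-adic filtration shows the ordered monomials $\tilde z_{i_1}\cdots\tilde z_{i_r}$ remain linearly independent in $H$: in a relation $\sum_\alpha c_\alpha\tilde M_\alpha=0$, writing $D_\alpha$ for the $\fm$-adic degree of $\tilde M_\alpha$ and $D_0$ for the least $D_\alpha$ with $c_\alpha\neq0$, one has $\sum_{D_\alpha=D_0}c_\alpha\tilde M_\alpha\in\fm^{D_0+1}$, and reducing modulo $\fm^{D_0+1}$ gives a linear dependence among distinct degree-$D_0$ ordered monomials in $\gr H$, forcing those $c_\alpha$ to vanish; iterating kills all $c_\alpha$. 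Consequently, with $W=k\tilde z_1+\cdots+k\tilde z_n$ (an $n$-dimensional subspace of $H$), $\dim_k(k1+W+\cdots+W^m)$ is at least the number $\binom{m+n}{n}$ of ordered monomials of length $\le m$, a polynomial in $m$ of degree $n$; hence $\GKdim H\ge\GKdim k\langle W\rangle\ge n$. Since $n$ was arbitrary (at most $\dim_k\fg$), this gives $\GKdim H\ge\dim_k\fg$, reading $\dim_k\fg=\infty$ via the convention above.

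Apart from the last inequality, the argument is bookkeeping with the grading together with an appeal to Milnor--Moore. The inequality $\dim_k\fg\le\GKdim H$ is the one real point, since passing to $\gr H$ can strictly decrease GK-dimension (e.g.\ $\gr H\cong k[x]$ when $H=U(\fg)$ for the non-abelian $2$-dimensional $\fg$); what makes it work is precisely that a PBW-type family in $\gr H$ lifts to a linearly independent family in $H$, which the $\fm$-adic leading-term argument delivers.
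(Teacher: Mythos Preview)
Your proof of (a) is essentially the paper's: both invoke Lemma~\ref{yylem3.3} and the structure theorem for connected cocommutative Hopf algebras (the paper calls it Cartier--Kostant, you call it Milnor--Moore), then argue that $\fg$ is graded and generated in degree~$1$ in the same way.

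For the inequality $\dim_k\fg\le\GKdim H$ in (b), your argument is correct but genuinely different from the paper's. The paper lifts only a finite-dimensional subspace $W\subseteq\fg_1$ to $V\subseteq\fm$ with $V\cap\fm^2=0$, then compares the growth of $(k+V)^n$ in $H$ with that of $(k+W)^n$ in $\gr H$; the latter is bounded below by $a_1 n^{\dim_k L(W)}$ where $L(W)$ is the Lie subalgebra generated by $W$, and one lets $W$ exhaust $\fg_1$. You instead lift an arbitrary finite set of homogeneous elements of $\fg$ (not just degree~$1$) and run a clean $\fm$-adic leading-term argument to transport PBW independence from $\gr H$ to $H$, obtaining the polynomial-algebra lower bound $\binom{m+n}{n}$ directly. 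Your route is more transparent and avoids the intermediate passage through $L(W)$ and the growth estimate $\GKdim U(L(W))=\dim_k L(W)$; the paper's route has the minor advantage that lifting degree-$1$ elements is trivial (just choose a complement to $\fm^2$ in $\fm$), whereas your lifts $\tilde z_i\in\fm^{\deg z_i}$ require the observation that the product of such lifts has the expected leading term in $\gr H$, which you handle correctly. Your closing remark about $\gr H$ possibly having smaller GK-dimension than $H$ nicely isolates why this step needs care.
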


\begin{proof} (a) By Lemma \ref{yylem3.3}, $B := \gr H$ is a connected
cocommutative Hopf algebra, generated in degree $1$ as an algebra. The
Cartier-Kostant Theorem (e.g., \cite[Theorem 5.6.5]{Mon}) thus implies 
that $B \cong U(\fg)$, where $\fg$ is the Lie algebra of primitive 
elements of $B$. 

We next claim that $\fg$ is a positively graded Lie algebra, $\fg=
\bigoplus_{i\ge1} \fg_i$, where $\fg_i= \fg\cap B_i$ for $i\ge1$. To
see this, consider $x\in\fg$, and write $x= x_0+\cdots+x_n$ with $x_i\in
B_i$ for $i=0,\dots,n$. Then
$$ \sum_{i=0}^n \Delta(x_i)= \Delta(x)= x\otimes1+ 1\otimes x= \sum_{i=0}^n
(x_i\otimes1 + 1\otimes x_i).$$
Since each $\Delta(x_i) \in (B\otimes B)_i$, we see that
$\Delta(x_i)= x_i\otimes1+ 1\otimes x_i$. Thus, $x_i\in\fg$ for all $i$.
Moreover, $B_0=k$ contains no nonzero primitive elements, so $x_0=0$.
Thus, $x$ lies in $\sum_{i\ge1} \fg_i$, establishing the claim.

If $\fh$ is the Lie subalgebra of $\fg$ generated by $\fg_1$, then 
$$B_1= \fg_1 \subseteq \fh\subseteq U(\fh)\subseteq U(\fg)= B.$$
Since $B$ is generated by $B_1$, we have $U(\fh)= U(\fg)$, whence
$\fh=\fg$. Therefore $\fg$ is generated in degree $1$.

(b) In view of Lemmas \ref{yylem3.1}(a) and \ref{yylem3.3}(d), we have
$$e(H)= \dim_k \fm/\fm^2 =\dim_k B_1 =\dim_k \fg_1\leq
\dim_k \fg.$$

To prove that $\dim_k \fg\leq \GKdim H$, we may assume that $\GKdim
H=d<\infty$.  Suppose on the contrary that $\dim_k \fg> d$. Then we
pick a  finite dimensional subspace $W\subseteq \fg_1= \fm/\fm^2$ such that 
$\GKdim U(L(W)) =\dim_k L(W)> d$, where $L(W)$ is the 
Lie subalgebra of $\fg$ generated by $W$. Lift $W$ to a finite dimensional
vector subspace $V\subseteq \fm$ such that $V\cap \fm^2 =0$
and $(V+\fm^2)/\fm^2=W$. Then, for arbitrarily small $b>0$, there
is an $a>0$ such that $\dim_k (k+V)^{n}\leq a n^{d+b}$ for all $n>0$.
Note that 
\begin{align*}
\dim_k (k+V)^n &=\dim_k \sum_{i=0}^n V^i
\geq \sum_{i=0}^n \dim_k (V^i+\fm^{i+1})/\fm^{i+1}  \\
 &=\sum_{i=0}^n \dim_k W^i =\dim_k (k+W)^n \geq a_1 n^{\dim_k L(W)},
\end{align*}
for some $a_1>0$ and all $n\ge0$, since $\dim_k L(W)=\GKdim U(L(W))$. Thus
we have
$$a n^{d+b} \ge a_1 n^{\dim_k L(W)}$$
for all $n>0$. Then $d+b\geq \dim_k L(W)$ and hence $d\geq\dim_k L(W)$
since $b$ can be arbitrarily small.  

We now have $\dim_k W \le \dim_k L(W) \le d$ for any finite dimensional
subspace $W\subseteq \fg_1$ such that $\dim_k L(W) >d$. Since the result
holds for all finite dimensional subspaces of $\fg_1$ containing such a
$W$, we must have $\dim_k \fg_1 \le d$. Taking the case $W= \fg_1$, we
obtain $\dim_k \fg \le d$, contradicting our assumption. Therefore $\dim_k
\fg\leq \GKdim H$.
\end{proof}

\begin{lemma}  
\label{yylem3.5} 
Let $\fm= \ker\epsilon$ and $J_{\fm}=\bigcap_{i\geq 0} \fm^i$, and write
$\gr H$ for the graded Hopf algebra $\bigoplus_{i=0}^\infty 
\fm^i/\fm^{i+1}$. If $\gr H$
is commutative, then $H/J_\fm$ is commutative.
\end{lemma}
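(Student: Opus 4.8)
\emph{Plan.} The plan is to show that every commutator $ab-ba$ with $a,b\in H$ already lies in $J_\fm=\bigcap_{i\ge0}\fm^i$; since $J_\fm$ is an intersection of two-sided ideals it is itself a two-sided ideal, so this gives commutativity of $H/J_\fm$. Writing $a=\epsilon(a)1+a'$ and $b=\epsilon(b)1+b'$ with $a',b'\in\fm$ reduces this to the statement $a,b\in\fm\ \Rightarrow\ ab-ba\in J_\fm$, and I would prove the latter by establishing, by induction on $n$, that $ab-ba\in\fm^{n}$ for all $a,b\in\fm$. The base case ($n=1$, or even $n=2$) is trivial since $\fm\cdot\fm\subseteq\fm^2$, and the commutativity of $\gr H$ will be fed in only at the very end.

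The inductive step is where the Hopf structure enters, via the coproduct. First I would record that for $a\in\fm$ one can write $\Delta(a)=a\otimes1+1\otimes a+r_a$ with $r_a\in\fm\otimes\fm$: indeed, applying $\epsilon\otimes\id$ and $\id\otimes\epsilon$ to the defining equation and invoking the counit axiom forces $r_a\in\ker(\epsilon\otimes\id)\cap\ker(\id\otimes\epsilon)=(\fm\otimes H)\cap(H\otimes\fm)=\fm\otimes\fm$. Now assume $xy-yx\in\fm^{n}$ for all $x,y\in\fm$, fix $a,b\in\fm$, and put $c=ab-ba\in\fm^{n}$. Since $\Delta$ is an algebra homomorphism, $\Delta(c)=[\Delta(a),\Delta(b)]$, and expanding with $\Delta(a)=a\otimes1+1\otimes a+r_a$, $\Delta(b)=b\otimes1+1\otimes b+r_b$, and using that $a\otimes1$ commutes with $1\otimes b$, the cross terms collapse to
$$\Delta(c)-c\otimes1-1\otimes c=[a\otimes1+1\otimes a,\,r_b]+[r_a,\,b\otimes1+1\otimes b]+[r_a,r_b].$$
Writing $L=H\otimes\fm+\fm\otimes H$ (so $L^m=\sum_{p+q=m}\fm^p\otimes\fm^q$, as in Lemma \ref{yylem3.2}), I claim each term on the right lies in $L^{n+1}$: the first two because $r_a,r_b\in\fm\otimes\fm$ and the induction hypothesis gives $[a,\fm],[b,\fm]\subseteq\fm^{n}$, and the third because $[\fm\otimes\fm,\fm\otimes\fm]\subseteq\fm^{n}\otimes\fm^{2}+\fm^{2}\otimes\fm^{n}\subseteq L^{n+2}$ by the usual commutator identity together with the induction hypothesis. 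This bookkeeping with the $\fm$-adic and $L$-adic filtrations is routine but is the step that needs the most care.

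To finish, I would pass to associated graded algebras. Since $c\in\fm^{n}$ and $\Delta(c)\equiv c\otimes1+1\otimes c\pmod{L^{n+1}}$, applying Lemma \ref{yylem3.2}(a),(b) shows that the class $\bar c\in(\gr H)_{n}$ satisfies $\Delta_{\gr H}(\bar c)=\bar c\otimes1+1\otimes\bar c$; that is, $\bar c$ is a primitive element of $\gr H$. But by Proposition \ref{yyprop3.4}(a) we have $\gr H\cong U(\fg)$ with $\fg$ positively graded and generated in degree $1$, so commutativity of $\gr H$ forces $\fg$ abelian, hence $\fg=\fg_1$ is concentrated in degree $1$; in particular the primitive elements of $\gr H$ all lie in degree $1$. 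Since $n\ge2$, this forces $\bar c=0$, i.e.\ $c\in\fm^{n+1}$, completing the induction. Therefore $ab-ba\in\bigcap_{n}\fm^{n}=J_\fm$ for all $a,b\in\fm$, and $H/J_\fm$ is commutative.

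The only genuinely non-formal ingredient is this last rigidity point — that in the polynomial Hopf algebra $U(\fg_1)=\gr H$ there are no primitive elements in degree $\ge2$ — combined with the observation that the "error term" $\Delta(c)-c\otimes1-1\otimes c$ gains one unit of $L$-adic filtration at each inductive stage. I expect the main obstacle to be purely organizational: keeping the filtration estimates on $[a\otimes1+1\otimes a,\,r_b]$, $[r_a,\,b\otimes1+1\otimes b]$, and $[r_a,r_b]$ straight, and confirming that every commutator that appears is of the form $[x,y]$ with $x,y\in\fm$ so the induction hypothesis genuinely applies to it.
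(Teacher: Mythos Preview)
Your proposal is correct and follows essentially the same strategy as the paper: both argue by induction on $n$ that $[\fm,\fm]\subseteq\fm^{n}$, using the Hopf structure to show that the image of a commutator in $(\gr H)_n$ is primitive and hence zero (since all primitives of $\gr H$ live in degree $1$). The only cosmetic differences are that the paper phrases the induction hypothesis as ``$H/\fm^n$ is commutative'' and bounds the error terms via a general estimate $[L^i,L^j]\subseteq L^{i+j+n-2}$, whereas you work directly with the sharper containment $r_a\in\fm\otimes\fm$ and compute the commutators of simple tensors explicitly.
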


\begin{proof} Proposition \ref{yyprop3.4} shows that 
$B:= \gr H \cong U(\fg)$ for a positively graded Lie
algebra $\fg$ generated in degree $1$. By hypothesis, $U(\fg)$ is
commutative, whence $\fg$ is abelian and $\fg=\fg_1$.

To prove that $H/J_\fm$ is commutative, it suffices to show that $H/\fm^n$
is commutative for all $n\ge0$. Note that $H/\fm^2=
k1\oplus \fm/\fm^2$ is automatically commutative. 

Now assume that $H/\fm^n$ is commutative for some $n\ge2$. Adopt the
notation of Lemma \ref{yylem3.2}, with $K=\fm$. Since
$$(H\otimes H)/(H\otimes\fm^n+ \fm^n\otimes H) \cong (H/\fm^n)\otimes
(H/\fm^n)$$
is commutative, so is $(H\otimes H)/L^n$. In particular, it follows
that $[L,L] \subseteq L^n$. An easy induction then yields $[L^i,L^j]
\subseteq L^{i+j+n-2}$ for all $i,j\ge1$.

Consider any elements $y_1,y_2 \in \fm$, and set $x_i= y_i+\fm^2 \in B_1=
\fg$ for $i=1,2$. Since the $x_i$ are primitive elements, we see that each
$\Delta(y_i)= y_i\otimes 1+ 1\otimes y_i+ u_i$ for some $u_i\in L^2$.
From the previous paragraph, $[u_i,L] \subseteq L^{n+1}$ for $i=1,2$, 
and $[u_1,u_2] \in L^{n+2}$. It follows that
$$\Delta([y_1,y_2])= [\Delta(y_1),\Delta(y_2)] \equiv [y_1,y_2]\otimes 1 +
1\otimes [y_1,y_2] \pmod{L^{n+1}},$$
and so $z:= [y_1,y_2]+\fm^{n+1}$ is a primitive element in $B_n$. As $B=
\gr H \cong U(\fg)$ with $\fg$ in degree $1$, all primitive elements of $B$
are in degree $1$. Consequently, $z=0$ and $[y_1,y_2] \in \fm^{n+1}$. Since
$\fm$ generates $H$, this proves that $H/\fm^{n+1}$ is commutative,
establishing the induction step. This finishes the proof. 
\end{proof}

\begin{proposition}  
\label{yyprop3.6} 
Let $H$ be either noetherian or affine. Set $\fm=
\ker\epsilon$, and write
$\gr H$ for the graded Hopf algebra $\bigoplus_{i=0}^\infty 
\fm^i/\fm^{i+1}$. Suppose that $\bigcap_{i\geq
0} \fm^i=0$. Then the following are equivalent:
\begin{enumerate}
\item
$H$ is commutative.
\item
$e(H)=\GKdim H$. 
\item
$\gr H$ is commutative.
\end{enumerate}
\end{proposition}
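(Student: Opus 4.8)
The plan is to establish the cycle of implications (a)$\Rightarrow$(b)$\Rightarrow$(c)$\Rightarrow$(a), exploiting the hypothesis $J_{\fm}:=\bigcap_{i\ge0}\fm^i=0$, which lets us compare $H$ directly with $\gr H$ via the filtration by powers of $\fm$. The implication (c)$\Rightarrow$(a) is essentially free: by Lemma \ref{yylem3.5}, commutativity of $\gr H$ forces $H/J_\fm$ to be commutative, and since $J_\fm=0$ this says $H$ itself is commutative.

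For (a)$\Rightarrow$(b): if $H$ is commutative, then so is $\gr H\cong U(\fg)$ from Proposition \ref{yyprop3.4}(a), hence $\fg$ is abelian, $\fg=\fg_1$, and $U(\fg)$ is a polynomial ring on $\dim_k\fg_1$ generators. The key point is that, since $J_\fm=0$, the filtration of $H$ by powers of $\fm$ is separated, so $\GKdim H=\GKdim\gr H$ (the associated graded of a separated filtered algebra has GK-dimension at least that of the original, and here one also gets the reverse inequality because $\gr H$ is a commutative affine/noetherian domain of the appropriate transcendence degree — more simply, for $H$ commutative one checks $\GKdim\gr H\le\GKdim H$ directly from the filtration, and equality holds because a polynomial ring in $d$ variables has GK-dimension $d$). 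Then $\GKdim H=\GKdim U(\fg)=\dim_k\fg_1=e(H)$ by Proposition \ref{yyprop3.4}(b).

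For (b)$\Rightarrow$(c): by Proposition \ref{yyprop3.4}(b) we always have $e(H)=\dim_k\fg_1\le\dim_k\fg\le\GKdim H$, so the hypothesis $e(H)=\GKdim H$ forces $\dim_k\fg_1=\dim_k\fg$, i.e.\ $\fg=\fg_1$. Since $\fg$ is generated in degree $1$ and equals its own degree-$1$ part, there is no room for nonzero brackets: $[\fg_1,\fg_1]\subseteq\fg_2=0$, so $\fg$ is abelian and $\gr H\cong U(\fg)$ is commutative. The main obstacle in this whole argument is the GK-dimension bookkeeping in (a)$\Rightarrow$(b) — one must be careful that $\GKdim\gr H$ really computes $\GKdim H$ when the $\fm$-adic filtration is separated; since we are in the commutative case this reduces to standard facts (e.g.\ via \cite[Ch.~6]{MR} on filtered algebras, or directly, since for a commutative affine $H$ the associated graded is a quotient of a polynomial ring and a comparison of Hilbert-type growth functions gives $\GKdim\gr H\ge\GKdim H$, while the reverse follows from lifting a transcendence basis). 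Once that is in hand, the equivalence closes up cleanly.
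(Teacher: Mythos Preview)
Your arguments for (b)$\Rightarrow$(c) and (c)$\Rightarrow$(a) coincide with the paper's.

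For (a)$\Rightarrow$(b) your route differs, and the justification you offer has a real gap. The paper does not compare $\GKdim H$ with $\GKdim\gr H$ at all: since $H$ is commutative and (by Molnar if necessary) affine, one has $H\cong\cO(G)$ for an algebraic group $G$; algebraic groups are smooth, so $\GKdim H=\dim G=\dim_k\fm/\fm^2=e(H)$ in one stroke. Note that this step does not use the hypothesis $J_\fm=0$; that hypothesis is invoked only in (c)$\Rightarrow$(a), via Lemma \ref{yylem3.5}.

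The principle you appeal to --- ``the associated graded of a separated filtered algebra has GK-dimension at least that of the original'' --- is the standard statement for \emph{ascending} filtrations, and it does not transfer to descending $\fm$-adic filtrations merely under separatedness. Your approach can nonetheless be rescued by ordinary commutative dimension theory, without invoking smoothness: for a commutative affine domain $H$ over $k$ one has $\GKdim H=\dim H$, every maximal ideal has full height so $\dim H=\dim H_\fm$, and the embedding-dimension bound gives $\dim H_\fm\le\dim_k\fm/\fm^2=e(H)$. Combined with $e(H)\le\GKdim H$ from Proposition \ref{yyprop3.4}(b), this yields equality. So your strategy is viable, and arguably slightly more elementary than the paper's (it avoids citing smoothness of algebraic groups), but you should replace the filtered-versus-graded GK-dimension claim with this explicit commutative-algebra chain; and you should drop the reference to $J_\fm=0$ in this implication, since it plays no role here.
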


\begin{proof} By Lemma \ref{yylem3.1}, $e(H)= \dim_k \fm/\fm^2 <\infty$.

(a)$\Longrightarrow$(b): Assuming $H$ is commutative, it is
affine either by assumption or by Molnar's theorem \cite{Mol}. Hence,
$H\cong \cO(G)$ for some algebraic group $G$. Since algebraic groups are
smooth and homogeneous as varieties \cite[Proposition I.1.2]{Bo},
$H$ is regular and equidimensional. Consequently,
$$\GKdim H= \dim G= \gldim H= \dim_k \fm/\fm^2= e(H).$$

(b)$\Longrightarrow$(c): With $\fg$ as in Proposition
\ref{yyprop3.4}, condition (b) implies that $\dim_k\fg= \dim_k\fg_1$.
This implies that $\fg$ is abelian. Therefore $\gr H\cong U(\fg)$ is
commutative.

(c)$\Longrightarrow$(a): Lemma \ref{yylem3.5}.
\end{proof}

The next lemma, which was stated in \cite[Lemma
4.3]{LWZ} without proof, is easily seen as follows. 

\begin{lemma}
\label{yylem3.7} 
The commutator ideal $[H,H]$ is a Hopf ideal of $H$,
and so $H/[H,H]$ is a commutative Hopf algebra.
\end{lemma}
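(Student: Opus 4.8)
The plan is to show directly that $[H,H]$ is a coideal and is stable under the antipode; combined with the fact that it is obviously a two-sided ideal, this makes it a Hopf ideal, so that the quotient $H/[H,H]$ inherits a Hopf algebra structure (necessarily commutative, since the commutator ideal has been killed). The only thing one has to check is compatibility with $\Delta$, $\epsilon$, and $S$.

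First I would note that $\epsilon([a,b]) = \epsilon(a)\epsilon(b) - \epsilon(b)\epsilon(a) = 0$ for all $a,b\in H$, since $\epsilon$ is an algebra homomorphism into the commutative ring $k$; hence $\epsilon([H,H]) = 0$. Next, for the coproduct, I would use that $\Delta\colon H\to H\otimes H$ is an algebra homomorphism, so $\Delta([a,b]) = [\Delta(a),\Delta(b)]$, which lies in the commutator ideal $[H\otimes H, H\otimes H]$ of $H\otimes H$. The key elementary observation is that this commutator ideal is contained in $[H,H]\otimes H + H\otimes[H,H]$: indeed, for simple tensors, $[a\otimes b,\, c\otimes d] = ac\otimes bd - ca\otimes db$, and writing $ac = ca + [a,c]$ and $bd = db + [b,d]$ one gets $ac\otimes bd - ca\otimes db = [a,c]\otimes bd + ca\otimes[b,d]$, which visibly sits in $[H,H]\otimes H + H\otimes[H,H]$; since the latter is a two-sided ideal of $H\otimes H$, it contains the whole commutator ideal. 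Therefore $\Delta([H,H]) \subseteq [H,H]\otimes H + H\otimes[H,H]$, which is the coideal condition. Finally, since $S$ is an algebra anti-homomorphism, $S([a,b]) = S(b)S(a) - S(a)S(b) = -[S(a),S(b)] \in [H,H]$, so $S([H,H])\subseteq[H,H]$.

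Having verified these three conditions, I would conclude that $[H,H]$ is a Hopf ideal, so $H/[H,H]$ is a Hopf algebra in the standard way, and it is commutative because every commutator maps to zero. There is no real obstacle here — the proof is entirely formal; the one point worth stating carefully is the containment $[H\otimes H, H\otimes H] \subseteq [H,H]\otimes H + H\otimes[H,H]$, which is exactly the computation above and is the reason the argument works.

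\begin{proof}
Since $[H,H]$ is by definition the two-sided ideal of $H$ generated by all commutators $[a,b] = ab - ba$, it suffices to check that $\epsilon$, $\Delta$, and $S$ behave appropriately on these generators.

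As $\epsilon$ is a $k$-algebra homomorphism and $k$ is commutative, $\epsilon([a,b]) = \epsilon(a)\epsilon(b) - \epsilon(b)\epsilon(a) = 0$ for all $a,b\in H$, so $\epsilon([H,H]) = 0$.

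Since $\Delta$ is a $k$-algebra homomorphism, $\Delta([a,b]) = [\Delta(a),\Delta(b)]$ lies in the commutator ideal of $H\otimes H$. We claim this commutator ideal is contained in the ideal $L := [H,H]\otimes H + H\otimes[H,H]$. Indeed, for simple tensors,
$$[a\otimes b,\ c\otimes d] = ac\otimes bd - ca\otimes db = (ac - ca)\otimes bd + ca\otimes(bd - db) \in L,$$
and since $L$ is a two-sided ideal of $H\otimes H$ containing all such commutators, it contains the whole commutator ideal of $H\otimes H$. Hence $\Delta([H,H]) \subseteq [H,H]\otimes H + H\otimes[H,H]$.

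Finally, $S$ is a $k$-algebra anti-endomorphism, so $S([a,b]) = S(b)S(a) - S(a)S(b) = -[S(a),S(b)] \in [H,H]$, whence $S([H,H]) \subseteq [H,H]$.

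Therefore $[H,H]$ is a Hopf ideal of $H$, and $H/[H,H]$ inherits a Hopf algebra structure. It is commutative because every commutator in $H$ maps to zero in the quotient.
\end{proof}
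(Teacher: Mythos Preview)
Your proof is correct and follows essentially the same approach as the paper: both verify directly that $[H,H]$ is killed by $\epsilon$, stable under $S$, and satisfies $\Delta([H,H]) \subseteq [H,H]\otimes H + H\otimes[H,H]$. The only cosmetic difference is in the coideal step: the paper observes that $(p\otimes p)\Delta$ (with $p$ the quotient map) is an algebra homomorphism to the commutative ring $(H/[H,H])\otimes(H/[H,H])$ and hence has $[H,H]$ in its kernel, whereas you compute commutators of simple tensors in $H\otimes H$ explicitly; both arguments are equally short and standard.
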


\begin{proof} Since $\eps$ is a homomorphism from $H$ to a 
commutative ring, its kernel must contain $[H,H]$. It is clear from the
anti-multiplicativity of the antipode that $S([H,H]) \subseteq [H,H]$. 
If $p: H\rightarrow H/[H,H]$ is the quotient map, then $(p\otimes p)
\Delta$ is a homomorphism from $H$ to a commutative ring, so 
$$[H,H] \subseteq \ker (p\otimes p)\Delta= 
\Delta^{-1} \bigl( \ker (p\otimes p) \bigr)= \Delta^{-1} \bigl(
[H,H]\otimes H+ H\otimes [H,H] \bigr).$$
Thus, $\Delta([H,H]) \subseteq [H,H]\otimes H+ H\otimes [H,H]$.
\end{proof}

\begin{proposition}  
\label{yyprop3.8} 
Let $H$ be a Hopf algebra.
\begin{enumerate}
\item
If $e(H)\neq 0$, then 
$H/[H,H]$ is infinite dimensional.
\item 
Suppose $H$ has an affine {\rm(}or noetherian{\rm)} commutative Hopf quotient 
that is infinite dimensional. Then $e(H)\neq 0$.
\item
Suppose $H$ is either affine or noetherian.
Then $H$ satisfies \nat\ if and only if $H/[H,H]$ is
infinite dimensional.
\end{enumerate}
\end{proposition}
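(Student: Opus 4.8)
The plan is to prove the three parts of Proposition \ref{yyprop3.8} in the order given, using the earlier results on $e(H)$, $\gr H$, and the commutator Hopf ideal.

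\smallskip
\textbf{Part (a).} Suppose $H/[H,H]$ is finite dimensional. By Lemma \ref{yylem3.7}, $\Hbar := H/[H,H]$ is a commutative Hopf algebra, hence (being finite dimensional over an algebraically closed field of characteristic zero) it is semisimple and cosemisimple; in particular $\Hbar \cong (k\Gamma)^*$ for a finite group $\Gamma$, or more simply, its Jacobson radical is zero. The point I want is that $\bar\fm := \ker\epsilon_{\Hbar}$ satisfies $\bar\fm = \bar\fm^2$: indeed in a finite dimensional commutative algebra $\bar\fm$ is nilpotent, so $\bar\fm = \bar\fm^2$ forces $\bar\fm = 0$, but that cannot happen unless $\Hbar = k$. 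Hmm — let me instead argue directly: since $\Hbar$ is finite dimensional semisimple commutative, $\Hbar \cong k^{\times r}$ as an algebra, and the augmentation ideal $\bar\fm$ is a direct summand corresponding to a union of the factors, so $\bar\fm = \bar\fm^2$. Now the surjection $H \twoheadrightarrow \Hbar$ carries $\fm = \ker\epsilon_H$ onto $\bar\fm$ and $\fm^2$ onto $\bar\fm^2 = \bar\fm$, so it induces the zero map $\fm/\fm^2 \to \bar\fm/\bar\fm^2$. That alone is not a contradiction, so I need a better route: the commutator ideal $[H,H]$ is contained in $\fm$, and $H/[H,H]$ finite dimensional means $\fm/[H,H]$ is finite dimensional; but I claim $\fm^2 + [H,H] = \fm$. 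Let me take yet another approach, which I think is the intended one: use Proposition \ref{yyprop3.4}(a), $\gr H \cong U(\fg)$ with $\fg$ graded generated in degree $1$ and $\fg_1 = (\gr H)_1$ of dimension $e(H)$. If $e(H) \neq 0$, pick $0 \neq x \in \fg_1$ and lift to $y \in \fm$ with $y \notin \fm^2$. The images $y, y^2, y^3, \dots$ are linearly independent modulo higher powers of $\fm$ in $\gr H = U(\fg)$ (a polynomial-type algebra on a PBW basis), so in particular $y^j \notin \fm^{j+1}$ for all $j$, and the powers of $\bar y := y + [H,H]$ are linearly independent in the commutative domain quotient... but $H/[H,H]$ need not be a domain. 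Cleanest: $\gr(H/[H,H])$ with respect to $\bar\fm$-powers is a quotient Hopf algebra of $\gr H$, hence is $U(\fg/\fg')$ for a homomorphic image of $\fg$; by Lemma \ref{yylem3.7} (applied to the graded Hopf algebra) it is abelian, and its degree-$1$ part is still a quotient of $\fg_1$. The key observation is that the degree-$1$ part of $\gr H$ injects into the degree-$1$ part of $\gr(H/[H,H])$: since $[H,H] \subseteq \fm^2$? — \emph{no}, that's false in general. So instead I argue: $e(H) \neq 0$ gives a primitive element $0 \ne x \in (\gr H)_1$; since primitivity is preserved and $[H,H]$ maps into $\fm^2$ modulo... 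I will simply show $\dim_k (\fm/\fm^2) \le \dim_k \Hbar$ is \emph{not} what's needed — rather, $e(H) \neq 0$ implies by Proposition \ref{yyprop3.4} that $\gr H$ has a polynomial subalgebra $k[x]$ with $x$ homogeneous of degree $1$, hence $\GKdim H \geq 1$ forces nothing. Let me commit to the argument: $\gr H \cong U(\fg)$, and if $e(H) = \dim \fg_1 \neq 0$ then $\gr H$ is infinite dimensional; but $\gr H$ is a filtered-associated-graded of $H/J_\fm$ where $J_\fm = \bigcap \fm^i$, and the commutative quotient $\gr H / [\gr H, \gr H] = U(\fg^{ab})$ is still infinite dimensional since $\fg^{ab} \supseteq \fg_1 \neq 0$. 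Finally $\gr(H/[H,H])$ surjects onto a Hopf subquotient... The honest clean statement: $H/[H,H]$ surjects onto $H/(\fm^2 + [H,H])$, and $\gr$ of the commutative Hopf algebra $H/[H,H]$ at its augmentation ideal equals $U(\fg/[\fg,\fg]) = k[\fg_1]$ which is infinite dimensional because $\fg_1 \neq 0$; an associated graded being infinite dimensional forces the original to be infinite dimensional. I expect \textbf{this identification $\gr(H/[H,H]) \cong U(\fg^{ab})$ with $(\gr(H/[H,H]))_1 \cong \fg_1$} to be the main obstacle, requiring the right-exactness of $\gr$ on the Hopf surjection $H \to H/[H,H]$ together with Lemma \ref{yylem3.2}.

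\smallskip
\textbf{Part (b).} Let $\pi: H \to \Hbar$ be a surjective Hopf algebra map onto an infinite dimensional affine (or noetherian) commutative Hopf algebra. By Lemma \ref{yylem3.1}(b), $e(\Hbar) < \infty$, and $e(\Hbar) = \dim_k(\bar\fm/\bar\fm^2)^*$ where $\bar\fm = \ker\epsilon_{\Hbar}$. Since $\Hbar$ is commutative affine, $\Hbar \cong \cO(G)$ for an algebraic group $G$; infinite dimensionality of $\Hbar$ means $\dim G \geq 1$, so (as recorded right after Lemma \ref{yylem3.1}) $e(\Hbar) = \dim G \geq 1$, i.e.\ $\bar\fm \neq \bar\fm^2$. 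Now $\pi$ sends $\fm$ onto $\bar\fm$ and $\fm^2$ onto $\bar\fm^2$, inducing a surjection $\fm/\fm^2 \twoheadrightarrow \bar\fm/\bar\fm^2 \neq 0$, hence $\fm/\fm^2 \neq 0$ and $e(H) = \dim_k(\fm/\fm^2)^* \neq 0$ by Lemma \ref{yylem3.1}(a). (If only the noetherian, not affine, case is wanted, Molnar's theorem \cite{Mol} makes a commutative noetherian Hopf algebra affine, so this reduces to the affine case.)

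\smallskip
\textbf{Part (c).} Assume $H$ is affine or noetherian. If $H$ satisfies \nat, i.e.\ $e(H) \neq 0$, then $H/[H,H]$ is infinite dimensional by part (a). Conversely, if $H/[H,H]$ is infinite dimensional, then since $H/[H,H]$ is a commutative Hopf algebra (Lemma \ref{yylem3.7}) which is affine or noetherian as a quotient of $H$, part (b) applied with $\Hbar = H/[H,H]$ gives $e(H) \neq 0$, i.e.\ \nat\ holds. This is immediate once (a) and (b) are in place.
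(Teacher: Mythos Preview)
Your parts (b) and (c) are correct and match the paper's approach closely.

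Part (a), however, has a genuine gap, and it stems from a mistaken dismissal. You write ``since $[H,H] \subseteq \fm^2$? --- \emph{no}, that's false in general.'' But it \emph{is} true: for $a,b\in H$ write $a=\eps(a)1+a'$ and $b=\eps(b)1+b'$ with $a',b'\in\fm$; then $[a,b]=[a',b']\in\fm^2$, and since $\fm^2$ is a two-sided ideal, the ideal $[H,H]$ generated by commutators lies in $\fm^2$. Equivalently, $H/\fm^2$ is commutative. This is exactly the observation the paper uses: the surjection $A:=H/[H,H]\twoheadrightarrow H/\fm^2$ carries $\fm_A:=\ker\eps_A$ onto $\fm/\fm^2$ and $\fm_A^2$ to zero, so $e(H)\ne 0$ forces $e(A)\ne 0$. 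Then Proposition~\ref{yyprop3.4} applied to $A$ gives $\gr A\cong U(\fg_A)$ with $(\fg_A)_1\ne 0$, hence $\gr A$ and therefore $A$ is infinite dimensional.

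Because you rejected $[H,H]\subseteq\fm^2$, you were forced toward the harder (and unproven) identification $\gr(H/[H,H])\cong U(\fg^{\mathrm{ab}})$, which you yourself flag as ``the main obstacle'' and do not resolve. That identification is not needed once you have $[H,H]\subseteq\fm^2$: you can bypass any comparison between $\gr H$ and $\gr A$ and simply run Proposition~\ref{yyprop3.4} afresh on $A$.
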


\begin{proof} (a) Let $A=H/[H,H]$.
Since $H/\fm^2$ is commutative, there is a surjective
algebra map $A\to H/\fm^2$. This shows that $\ker(\epsilon_A)\neq
\ker(\epsilon_A)^2$, or equivalently $e(A)\neq 0$ by Lemma
\ref{yylem3.1}(a). By Proposition \ref{yyprop3.4} 
(applied to $A$ in place of $H$) and the fact that
$\dim {\mathfrak g}_1=e(A)\neq 0$, the algebra $\gr A\cong U(\mathfrak g)$
is infinite dimensional. Therefore $A$ is infinite dimensional. 

(b) Suppose that $B$ is an affine commutative Hopf quotient of $H$ and that
$B$ is infinite dimensional. Then the 
proof of (a) $\Rightarrow$ (b) in Proposition \ref{yyprop3.6} 
shows that $e(B)=\GKdim B>0$. Since $B$ is a Hopf quotient 
of $H$, $e(H)\geq e(B)>0$.

(c) Note that the condition \nat\ means $e(H)>0$.
The assertion follows from parts (a) and (b).
\end{proof}

Now we consider the case when $H$ has GK-dimension 2 (or
satisfies \Hzr).

\begin{theorem}  
\label{yythm3.9}
Let $H$ be a Hopf algebra of GK-dimension at most 2, 
and assume that $H$ is either
affine or noetherian. Then $H$ satisfies \nat\ if and only if  $H$ has 
a Hopf quotient isomorphic to either $k[t^{\pm1}]$ with $t$ grouplike 
or $k[t]$ with $t$ primitive.
\end{theorem}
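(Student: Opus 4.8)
The plan is to prove the nontrivial direction: assuming \nat\ (equivalently $e(H)>0$), produce a Hopf quotient isomorphic to $k[t^{\pm1}]$ or $k[t]$. By Proposition \ref{yyprop3.8}(c), condition \nat\ is equivalent to $H/[H,H]$ being infinite dimensional; set $A=H/[H,H]$, a commutative Hopf algebra. Since $H$ is affine or noetherian, so is $A$, and by Molnar's theorem $A$ is affine in either case, hence $A\cong\cO(G)$ for an algebraic group $G$. Because $\GKdim H\le 2$ and $A$ is a quotient, $\GKdim A\le 2$, and since $A$ is infinite dimensional we have $1\le\dim G\le 2$. Now I would pass to the identity component $G^\circ$, which is a connected algebraic group of dimension $1$ or $2$, and whose coordinate ring $\cO(G^\circ)$ is a Hopf quotient of $A$, hence of $H$.

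The next step is to locate a $1$-dimensional connected quotient group of $G^\circ$ (equivalently, a $1$-dimensional connected Hopf subalgebra quotient). If $\dim G^\circ=1$, take $G^\circ$ itself. If $\dim G^\circ=2$, invoke Lemma \ref{yylem2.2}: $G^\circ$ is one of $(\kx)^2$, $(k^+)^2$, $k^+\times\kx$, or a semidirect product $k^+\rtimes\kx$. In every case there is a surjective morphism of algebraic groups onto $\kx$ or $k^+$: the abelian cases admit an obvious projection, and for $k^+\rtimes\kx$ the quotient by the normal subgroup $k^+$ is $\kx$. Dually, $\cO(G^\circ)$ has a Hopf quotient isomorphic to $\cO(\kx)=k[t^{\pm1}]$ or $\cO(k^+)=k[t]$, with $t$ grouplike in the first case and primitive in the second. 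Composing quotient maps $H\twoheadrightarrow A\twoheadrightarrow\cO(G^\circ)\twoheadrightarrow k[t^{\pm1}]$ or $k[t]$ gives the desired Hopf quotient.

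For the converse, suppose $H$ has a Hopf quotient $B$ isomorphic to $k[t^{\pm1}]$ or $k[t]$. Each of these is an infinite dimensional affine commutative Hopf algebra, so by Proposition \ref{yyprop3.8}(b) we get $e(H)\ge e(B)>0$, i.e.\ \nat\ holds. (Concretely, $e(k[t^{\pm1}])=\dim\kx=1$ and $e(k[t])=\dim k^+=1$.)

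The main obstacle I anticipate is purely bookkeeping: making sure the commutative Hopf quotient $A=H/[H,H]$ really is affine so that the algebraic-group dictionary applies — this is where one must cite Molnar's theorem for the noetherian case — and then checking that passing to the identity component $G^\circ$ corresponds to a genuine \emph{Hopf} quotient of $A$ (it does, since $\cO(G^\circ)$ is the quotient of $\cO(G)$ by the Hopf ideal defining the finite group $G/G^\circ$, or equivalently $\cO(G^\circ)$ is a Hopf subalgebra-free quotient). Everything else — the enumeration of $2$-dimensional groups and the existence of a $1$-dimensional quotient — is immediate from Lemma \ref{yylem2.2}, and the converse is a one-line application of Proposition \ref{yyprop3.8}(b).
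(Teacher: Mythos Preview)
Your overall strategy matches the paper's, but there is a genuine error in the duality at the $2$-dimensional step. A surjective morphism of algebraic groups $G^\circ \twoheadrightarrow \kx$ (or $k^+$) gives, on coordinate rings, an \emph{injection} $\cO(\kx)\hookrightarrow \cO(G^\circ)$ --- a Hopf \emph{subalgebra}, not a Hopf quotient. The contravariant dictionary runs the other way: Hopf quotients of $\cO(G^\circ)$ correspond to closed \emph{subgroups} of $G^\circ$, via restriction of functions. So from your projection $k^+\rtimes\kx \twoheadrightarrow \kx$ you cannot conclude that $\cO(G^\circ)$ surjects onto $k[t^{\pm1}]$.

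The repair is immediate and is exactly what the paper does: instead of quotient groups, observe that each $2$-dimensional connected group in Lemma~\ref{yylem2.2} contains a $1$-dimensional closed connected subgroup $G_1$ (e.g.\ $\kx\times 1$ inside $(\kx)^2$, or the normal copy of $k^+$ inside $k^+\rtimes\kx$). Then $\cO(G^\circ)\twoheadrightarrow\cO(G_1)$ is a surjective Hopf algebra map, and $\cO(G_1)\cong k[t^{\pm1}]$ or $k[t]$ as required. Your treatment of the passage from $G$ to $G^\circ$ is correct (since $G^\circ$ is itself a closed subgroup, restriction gives the Hopf quotient $\cO(G)\twoheadrightarrow\cO(G^\circ)$), and the converse via Proposition~\ref{yyprop3.8}(b) is fine.
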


\begin{proof} One implication is Proposition \ref{yyprop3.8}(b).
For converse we assume that \nat\ holds. By Proposition \ref{yyprop3.8}(a), 
$H/[H,H]$ is infinite dimensional. By Lemma \ref{yylem3.7}, 
$H/[H,H]$ is a commutative Hopf algebra. Since $H$ is affine or 
noetherian, $H/[H,H]$ has one of these properties. By Molnar's 
theorem \cite{Mol}, $H/[H,H]$ is necessarily affine. Thus, 
$H/[H,H]\cong \cO(G)$ for some infinite algebraic group $G$
of dimension at most $2$. Now
$\cO(G^\circ)$ is a Hopf quotient of $\cO(G)$, where $G^\circ$ is the
connected component of the identity in $G$, and so $\cO(G^\circ)$ is a Hopf
quotient of $H$.
If $\dim G^\circ =1$, then $G^\circ$ is isomorphic to either $k^\times$ or
$k^+$, and the assertion follows. Since $G^\circ$ is infinite, it remains
to consider the case when $\dim G^\circ =2$. In this case, Lemma
\ref{yylem2.2} implies that $G^\circ$ has a one-dimensional closed
connected subgroup, say $G_1$. Then $\cO(G_1)$ is a Hopf quotient of
$\cO(G^\circ)$, isomorphic to $k[t^{\pm1}]$ or $k[t]$.
\end{proof}

\section{Hopf quotients and other preliminaries}
\label{yysec4}

Our analysis of $H$ under \Hzrnat\ is first divided up according to the
GK-dimension of the commutator quotient $H/[H,H]$. This quotient can have
GK-dimension 2 only when $[H,H]=0$, that is, when
$H$ is commutative. That case is covered in Section \ref{yysec2}. In case
$H$ is affine or noetherian, the only other possibility is
$\GKdim H/[H,H] =1$, by Theorem \ref{yythm3.9}.

We now set up some general machinery for Hopf quotients of $H$, which we
will apply later to the quotients arising from Theorem \ref{yythm3.9}.

\begin{subsec}  
\label{yysub4.1}
Let $K\subset H$ be a Hopf ideal, which will be fixed in later sections, 
and let $\Hbar$ be the Hopf quotient algebra $H/K$, with quotient map 
$\pi: H\rightarrow \Hbar$. It is obvious that $H$ is a right and left 
comodule algebra over $\Hbar$ via 
$$\rho=(\id\otimes\pi)\Delta: H\rightarrow H\otimes\Hbar$$ 
and 
$$\lambda=(\pi\otimes\id)\Delta: H\rightarrow \Hbar\otimes H$$
respectively. Using these two maps, we can define two subalgebras
of coinvariants as follows:
$$H_0= H^{\co\rho}:=\{h\in H\mid \rho(h)=h\otimes 1\}$$
and
$${_0H}= {}^{\co\lambda}H:=\{h\in H\mid \lambda(h)= 1\otimes h\}.$$
\end{subsec}

There are various useful relations among $\pi$, $\rho$, $\lambda$ on
the one hand, and the comultiplication and antipode maps on the other, as
follows.

\begin{lemma}  
\label{yylem4.2}
Let $\tau:H\otimes H\rightarrow H\otimes H$ be the flip, and let $S$
denote  the antipodes in both $H$ and $\Hbar$. 
\begin{enumerate}
\item $(\lambda\otimes\id)\Delta = (\pi\otimes\Delta)\Delta=
(\id\otimes\Delta)\lambda$.
\item $(\id\otimes\rho)\Delta = (\Delta\otimes\pi)\Delta=
(\Delta\otimes\id)\rho$.
\item $(\id\otimes\rho)\lambda = (\pi\otimes\rho)\Delta=
(\lambda\otimes\pi)\Delta= (\lambda\otimes\id)\rho$.
\item $\rho S = \tau(S\otimes S)\lambda$.
\item $\lambda S =
\tau(S\otimes S)\rho$.
\end{enumerate}
\end{lemma}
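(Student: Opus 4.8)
\textbf{Proof plan for Lemma \ref{yylem4.2}.}
The plan is to prove all five identities by direct manipulation, using only coassociativity of $\Delta$, the fact that $\pi$ is a Hopf algebra homomorphism (so $\Delta_{\Hbar}\pi = (\pi\otimes\pi)\Delta$), and the definitions $\lambda = (\pi\otimes\id)\Delta$, $\rho = (\id\otimes\pi)\Delta$. Each identity is an equality of linear maps $H\to H^{\otimes 3}$ (or $H^{\otimes 2}$ in (d),(e)), so it suffices to evaluate both sides on an arbitrary $h\in H$ using Sweedler notation $\Delta(h) = \sum h_1\otimes h_2$, $\Delta^{(2)}(h) = \sum h_1\otimes h_2\otimes h_3$, and compare. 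For instance, in part (a) all three expressions evaluate to $\sum \pi(h_1)\otimes h_2\otimes h_3$: the middle term is $(\pi\otimes\Delta)\Delta(h)$ by definition of $\Delta^{(2)}$; the left term $(\lambda\otimes\id)\Delta(h) = \sum\lambda(h_1)\otimes h_2 = \sum \pi(h_{11})\otimes h_{12}\otimes h_2$, which equals $\sum\pi(h_1)\otimes h_2\otimes h_3$ by coassociativity; the right term is handled symmetrically. Parts (b) and (c) are proved the same way — (b) is the mirror image of (a) with $\rho$ on the right instead of $\lambda$ on the left, and (c) interpolates, with every expression equal to $\sum\pi(h_1)\otimes h_2\otimes\pi(h_3)$.

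For parts (d) and (e), the extra ingredient is that $S$ is an anti-coalgebra morphism, i.e. $\Delta S = \tau(S\otimes S)\Delta$, together with the compatibility $\pi S_H = S_{\Hbar}\pi$ (which holds because $\pi$ is a Hopf algebra map, so it intertwines the two antipodes). For (d): $\rho S(h) = (\id\otimes\pi)\Delta S(h) = (\id\otimes\pi)\tau(S\otimes S)\Delta(h) = \sum S(h_2)\otimes \pi(S(h_1)) = \sum S(h_2)\otimes S(\pi(h_1))$, while $\tau(S\otimes S)\lambda(h) = \tau(S\otimes S)\sum\pi(h_1)\otimes h_2 = \sum S(h_2)\otimes S(\pi(h_1))$; these agree. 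Part (e) is the same computation with the roles of the two tensor factors swapped.

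I do not anticipate a genuine obstacle here: the lemma is a collection of routine Sweedler-calculus verifications, and the only things that could trip one up are bookkeeping of which factor $\pi$ lands on and remembering that $S$ reverses the order of tensor factors on $\Delta$. The one point worth flagging explicitly is the identity $\pi S_H = S_{\Hbar}\pi$, which is not literally in the hypotheses but is immediate from the uniqueness of the antipode (any bialgebra map between Hopf algebras automatically commutes with the antipodes); I would state this as a one-line remark before doing (d) and (e). Given the routine nature, I would write the proof compactly: do (a) in full Sweedler detail as the template, then note that (b) and (c) follow by the identical method (or by applying (a) to $H^{\mathrm{op}}$ / using the symmetry), and do (d) in detail with (e) left to the reader as its mirror image.
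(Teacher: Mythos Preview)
Your proposal is correct and matches the paper's proof in substance: the paper also derives (a)--(c) by composing $\pi\otimes\id\otimes\id$, $\id\otimes\id\otimes\pi$, and $\pi\otimes\id\otimes\pi$ with coassociativity, and derives (d)--(e) from $\Delta S = \tau(S\otimes S)\Delta$ together with $\pi S = S\pi$. The only difference is cosmetic---you use Sweedler notation while the paper phrases everything as equalities of composed maps---and your explicit mention of $\pi S_H = S_{\Hbar}\pi$ is a point the paper uses silently in the line $\tau(\pi\otimes\id)(S\otimes S)\Delta = \tau(S\otimes S)(\pi\otimes\id)\Delta$.
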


\begin{proof} First, compose the map $\pi\otimes\id\otimes\id$ with 
$(\id\otimes\Delta)\Delta=(\Delta\otimes\id)\Delta$,
to get 
$$(\pi\otimes\Delta)\Delta=(\lambda\otimes\id)\Delta.$$ 
On the other hand,
$$(\pi\otimes\Delta)\Delta= (\id\otimes\Delta)(\pi\otimes\id)\Delta=
(\id\otimes\Delta)\lambda.$$ 
This gives (a), and (b) holds
symmetrically.

Next, composing the map $\pi\otimes\id\otimes\pi$ with 
$(\id\otimes\Delta)\Delta=(\Delta\otimes\id)\Delta$, we have
$$(\pi\otimes\rho)\Delta=(\lambda\otimes\pi)\Delta.$$ 
Since $\pi\otimes\rho=
(\id\otimes\rho)(\pi\otimes\id)$, we have $(\pi\otimes\rho)\Delta=
(\id\otimes\rho)\lambda$, and similarly $(\lambda\otimes\pi)\Delta=
(\lambda\otimes\id)\rho$. This establishes (c).

Finally, if we compose the map $\id\otimes\pi$ with 
$\Delta S= \tau(S\otimes S)\Delta$, we obtain
$$\rho S= \tau(\pi\otimes\id)(S\otimes S)\Delta= \tau(S\otimes
S)(\pi\otimes\id)\Delta= \tau(S\otimes S)\lambda.$$
This yields (d), and (e) holds symmetrically.
\end{proof}

\begin{lemma}  
\label{yylem4.3} 
Retain the notation as above.
\begin{enumerate}
\item 
$\Delta(H_0) \subseteq H\otimes H_0$ and
$\Delta({}_0H) \subseteq {}_0H\otimes H$.
\item
$S(H_0) \subseteq {}_0H$ and $S({}_0H) \subseteq H_0$.
\item
If $H_0= {}_0H$, then $H_0$ is a Hopf subalgebra of $H$.
\item
$H_0= k1+(H_0\cap K)$ and $_0H= k1+({}_0H\cap K)$.
\end{enumerate}
\end{lemma}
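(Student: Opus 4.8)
\textbf{Proof proposal for Lemma \ref{yylem4.3}.}
The plan is to prove the four parts in order, using the comodule-algebra structure maps $\rho,\lambda$ from \ref{yysub4.1} together with the identities assembled in Lemma \ref{yylem4.2}. For part (a), I would take $h\in H_0$, so $\rho(h)=h\otimes 1$, and apply the identity $(\Delta\otimes\id)\rho=(\id\otimes\rho)\Delta$ from Lemma \ref{yylem4.2}(b): the left side is $\Delta(h)\otimes 1$, so writing $\Delta(h)=\sum h_1\otimes h_2$ we get $\sum h_1\otimes\rho(h_2)=\sum h_1\otimes h_2\otimes 1$. Applying $\epsilon$ to the first tensor factor (and using that $\epsilon\otimes\id\otimes\id$ composed with $\Delta\otimes\id$ is the identity-ish projection) yields $\rho(h_2)=h_2\otimes 1$ after summing against $h_1$, i.e. one concludes $\Delta(h)\in H\otimes H_0$; more precisely, apply $(\epsilon\otimes\id\otimes\id)$ to both sides to see $\sum\epsilon(h_1)\rho(h_2)=\rho(h)=h\otimes 1$ is not quite what is needed, so instead I would argue that the equality $\sum h_1\otimes\rho(h_2)=\sum h_1\otimes h_2\otimes 1$ literally says $(\id\otimes\rho)\Delta(h)=(\Delta(h))\otimes 1$, and since $\id\otimes\rho$ is injective on $H\otimes H$ (as $\rho$ is injective, being split by $\id\otimes\epsilon_{\Hbar}$), this forces each component of $\Delta(h)$ in the second slot to be a coinvariant. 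The statement for $_0H$ is symmetric, using Lemma \ref{yylem4.2}(a).

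For part (b), I would take $h\in H_0$ and compute $\lambda(S(h))$ using Lemma \ref{yylem4.2}(e): $\lambda S=\tau(S\otimes S)\rho$, so $\lambda(S(h))=\tau(S\otimes S)(h\otimes 1)=\tau(S(h)\otimes 1)=1\otimes S(h)$, which says exactly $S(h)\in{}_0H$. The reverse inclusion $S({}_0H)\subseteq H_0$ follows symmetrically from Lemma \ref{yylem4.2}(d). Part (c) is then immediate: if $H_0={}_0H$, then by (b) this common subalgebra is stable under $S$, by (a) it is stable under $\Delta$ (landing in $H_0\otimes H_0$), it contains $1$ and is closed under $\epsilon$ trivially, so it is a Hopf subalgebra.

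Part (d) is the one requiring a genuine idea rather than a diagram chase, and I expect it to be the main obstacle. The inclusion $k1+(H_0\cap K)\subseteq H_0$ is trivial since $1\in H_0$; the content is the reverse. Given $h\in H_0$, I want to write $h=\epsilon(h)1+(h-\epsilon(h)1)$ and show $h-\epsilon(h)1\in K$. Now $\pi(h-\epsilon(h)1)\in\Hbar$, and I would show $\pi(h)=\epsilon(h)1_{\Hbar}$ using the coinvariance: applying $\epsilon_H\otimes\id_{\Hbar}$ to $\rho(h)=h\otimes 1$ gives $\pi(h)=(\epsilon_H\otimes\id)\rho(h)=\epsilon_H(h)1_{\Hbar}$, because $(\epsilon_H\otimes\id)\rho=(\epsilon_H\otimes\id)(\id\otimes\pi)\Delta=\pi(\epsilon_H\otimes\id)\Delta=\pi$. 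Wait — one must instead apply $\id\otimes\epsilon_{\Hbar}$ to recover $h$, and apply $\epsilon_H\otimes\id_{\Hbar}$ to recover $\pi$; from $\rho(h)=h\otimes 1$ the latter gives $\pi(h)=\epsilon_H(h)1_{\Hbar}$, hence $h-\epsilon_H(h)1\in\ker\pi=K$ and also lies in $H_0$ (since $H_0$ is a subalgebra containing $1$). The statement for $_0H$ is symmetric via $\lambda$. I would double-check the counit normalization $\epsilon_{\Hbar}\circ\pi=\epsilon_H$ and the splitting $(\id\otimes\epsilon_{\Hbar})\rho=\id_H$, both of which are immediate from $\pi$ being a Hopf map and the counit axiom; these make all four parts routine once the right identity from Lemma \ref{yylem4.2} is selected.
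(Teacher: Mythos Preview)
Your proposal is correct and follows essentially the same approach as the paper: parts (a)--(c) use the identities from Lemma~\ref{yylem4.2} exactly as the paper does (the paper phrases (a) by choosing the $a_j$ linearly independent rather than invoking injectivity of $\id\otimes\rho$, but this is the same argument). For (d) you compute $\pi(h)=\eps_H(h)1_{\Hbar}$ directly via $(\eps_H\otimes\id_{\Hbar})\rho=\pi$, whereas the paper first observes $\Delta(H_0)\subseteq H_0\otimes 1+H\otimes K$ and then applies $\eps\otimes\id$; both routes amount to the same observation that coinvariance forces $\pi(h)$ to be a scalar.
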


\begin{proof} Parts (a)--(c) are generalizations of 
\cite[Proposition 2.1(c,e,h)]{BZ}.
 
(a) If $h\in H_0$, then Lemma \ref{yylem4.2}(b) implies
$$(\id\otimes\rho)\Delta(h)= \Delta(h)\otimes 1.$$ 
Now write $\Delta(h)=\sum_j a_j\otimes b_j$ with the $a_j$ linearly 
independent over $k$. Then $\sum_j a_j\otimes \rho(b_j)= 
\sum_j a_j\otimes b_j\otimes 1$, and consequently $\rho(b_j)= 
b_j\otimes 1$ for all $j$. Thus all $b_j\in H_0$, and so $\Delta(h)
\in H\otimes H_0$, proving the first inclusion. The second is symmetric.

(b) If $h\in H_0$, then in view of Lemma \ref{yylem4.2}(e), $\lambda S(h)= 
\tau(S\otimes S)(h\otimes 1)= 1\otimes S(h)$, so $S(h)\in {}_0H$. 
This establishes the first inclusion. The second is symmetric.

(c) From part (a), we get 
$$\Delta(H_0)\subseteq (H_0\otimes H)\cap (H\otimes
H_0)= H_0\otimes H_0,$$ and from part (b) we get $S(H_0) \subseteq H_0$,
which implies the assertion.

(d) Note that $\rho(H_0)= H_0\otimes 1$ in $H\otimes\Hbar$ implies
$\Delta(H_0)\subseteq H_0\otimes 1+ H\otimes K$ in $H\otimes H$. Hence,
$$H_0= (\eps\otimes\id)\Delta(H_0)\subseteq k1+kK.$$ 
Since $K\subseteq \ker\eps$, it follows that 
$$H_0\cap \ker\eps \subseteq H_0\cap K\subseteq H_0\cap \ker\eps .$$ 
Therefore $H_0= k1+(H_0\cap \ker\eps)= k1+ (H_0\cap K)$, and 
similarly for $_0H$.
\end{proof}

We end this section by recording two known results that will be needed later.

\begin{lemma}  
\label{yylem4.4}
Let $G$ be a totally ordered group
{\rm(}e.g., $G=\ZZ^n${\rm)} and $A$ a $G$-graded domain.
\begin{enumerate}
\item
Every invertible element in $A$ is homogeneous.
\item
Suppose $A$ is a Hopf algebra such that $A\otimes A$ is a domain.
If $x\in A$ such that $\Delta(x)$ is invertible in $A\otimes A$ {\rm(}in
particular, if
$x$ is invertible in $A${\rm)}, then $x\in A_g$ and $\Delta(x)\in
A_g\otimes A_g$ for some
$g\in G$.
\item
Continue the assumptions of part {\rm(b)}.
If, further, $A_g$ is $1$-dimensional, then $x$ is a scalar
multiple of a grouplike element.
\end{enumerate}
\end{lemma}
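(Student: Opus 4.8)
The plan is to prove the three parts in sequence, using (a) as the engine for (b) and (b) directly for (c). For part (a), I would take a unit $u$ of $A$ with inverse $v$ and write $u=\sum_{g\in S}u_g$ and $v=\sum_{h\in T}v_h$ as their decompositions into nonzero homogeneous components, where $S,T\subseteq G$ are finite and nonempty. The key point is that in a totally ordered group $gh<g'h'$ whenever $g\le g'$ and $h\le h'$ with at least one inequality strict; hence, writing $g_+,g_-$ and $h_+,h_-$ for the largest and smallest elements of $S$ and $T$, the only homogeneous component of $uv$ in the (maximal) degree $g_+h_+$ is $u_{g_+}v_{h_+}$ and the only one in the (minimal) degree $g_-h_-$ is $u_{g_-}v_{h_-}$, both nonzero since $A$ is a domain. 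Because $uv=1$ is homogeneous of degree $e$, this forces $g_+h_+=e=g_-h_-$, and then cancellation in $G$ together with $g_-\le g_+$ and $h_-\le h_+$ forces $g_-=g_+$ and $h_-=h_+$; so $S$ and $T$ are singletons and $u,v$ are homogeneous.

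For part (b), I would give $G\times G$ the lexicographic order, which again makes it a totally ordered group, and observe that $A\otimes A$ becomes a $(G\times G)$-graded algebra with $(A\otimes A)_{(g,g')}=A_g\otimes A_{g'}$; it is a domain by hypothesis. Since $0$ is never a unit of the nonzero ring $A\otimes A$, one has $\Delta(x)\ne0$, hence $x\ne0$; and if $x$ is a unit of $A$ then $\Delta(x)\Delta(x^{-1})=\Delta(1)=1$, which disposes of the parenthetical claim. Applying part (a) to $A\otimes A$, the unit $\Delta(x)$ is homogeneous, say $\Delta(x)\in A_g\otimes A_{g'}$; writing $\Delta(x)=\sum_i a_i\otimes b_i$ with $a_i\in A_g$ and $b_i\in A_{g'}$, I would then apply the counit axioms: the map $\eps$ need not be graded, but it is $k$-linear, so $x=(\id\otimes\eps)\Delta(x)=\sum_i\eps(b_i)a_i\in A_g$ and likewise $x=(\eps\otimes\id)\Delta(x)\in A_{g'}$. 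Since $x\ne0$ and $A_g\cap A_{g'}=0$ for $g\ne g'$, this gives $g=g'$, hence $x\in A_g$ and $\Delta(x)\in A_g\otimes A_g$.

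For part (c), from (b) we have $0\ne x\in A_g$, so one-dimensionality of $A_g$ gives $A_g=kx$ and therefore $\Delta(x)\in A_g\otimes A_g=k(x\otimes x)$; write $\Delta(x)=c(x\otimes x)$ with $c\in k$, necessarily $c\ne0$. Applying $\eps\otimes\id$ gives $x=c\,\eps(x)\,x$, so $c\,\eps(x)=1$ (using $x\ne0$ and that $A$ is a domain), whence $\eps(x)\ne0$. Setting $g_0:=cx=\eps(x)^{-1}x$, one checks $\eps(g_0)=c\,\eps(x)=1$ and $\Delta(g_0)=c\,\Delta(x)=c^2(x\otimes x)=g_0\otimes g_0$, so $g_0$ is grouplike and $x=\eps(x)\,g_0$ is a scalar multiple of it.

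The one step that needs a genuine idea — the main obstacle, modest as it is — is the passage in (b) from homogeneity of $\Delta(x)$ to homogeneity of $x$ itself together with equality of the two tensor-factor degrees; the resolution is exactly that the $k$-linear (though non-graded) maps $\id\otimes\eps$ and $\eps\otimes\id$ send $A_g\otimes A_{g'}$ into $A_g$ and into $A_{g'}$, respectively, and the rest follows because $x\ne0$. The remaining content is routine bookkeeping with totally ordered groups and integral domains.
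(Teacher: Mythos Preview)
Your proof is correct and follows essentially the same route as the paper's: part (a) is the standard totally-ordered-group argument (which the paper merely asserts), part (b) applies (a) to the $(G\times G)$-graded domain $A\otimes A$ and uses the counit axiom exactly as the paper does, and part (c) spells out what the paper calls ``clear.'' Your write-up simply fills in details the paper leaves to the reader.
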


\begin{proof}
(a) This follows from the hypothesis that $G$ is totally ordered.

(b) The $G$-grading on $A$ induces a $(G\times G)$-grading on 
$A\otimes A$. Since $G\times G$ is totally ordered and
$\Delta(x)$ is invertible, part (a) implies that
$\Delta(x)$ is homogeneous, so $\Delta(x)\in A_g\otimes A_h$
for some $g,h\in G$. By the counit property, $x\in A_g\cap A_h$, and so
$g=h$.

(c) This is clear. 
\end{proof}

\begin{lemma}  
\label{yylem4.5} 
All domains of
GK-dimension $\leq 1$ over $k$ are commutative.
\end{lemma}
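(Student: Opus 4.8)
The plan is to reduce to the affine case and then combine the Small--Stafford--Warfield structure theory for affine algebras of Gelfand--Kirillov dimension one with Tsen's theorem.

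First I would dispose of the passage from the general case to the affine case. If $A$ is a domain with $\GKdim A\le 1$, then $A$ is the directed union of its finitely generated $k$-subalgebras, each of which is again a domain of Gelfand--Kirillov dimension at most $1$ (subalgebras do not increase GK-dimension). Since any two elements of $A$ lie in a common finitely generated subalgebra, it suffices to prove commutativity when $A$ is affine. If $\GKdim A=0$, then $\dim_k A<\infty$; for $0\ne a\in A$, left multiplication by $a$ is an injective, hence bijective, $k$-linear endomorphism of $A$, so $A$ is a finite-dimensional division algebra over $k$, and since $k$ is algebraically closed each $a$ generates a finite field extension $k(a)=k$, forcing $A=k$. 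So we may assume $A$ is affine with $\GKdim A=1$.

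Next I would invoke the structure theory of affine algebras of GK-dimension one: by the theorem of Small, Stafford and Warfield such an algebra is PI, and for a prime (in particular, a domain) such algebra $A$ one knows further that $A$ is a finitely generated module over its center $Z$, with $Z$ an affine commutative domain; a module-finite extension preserves GK-dimension, so $Z$ has transcendence degree one over $k$. Let $F=\Fract(Z)$. Then the central localization $A\otimes_Z F$ is a finite-dimensional $F$-algebra which, being the localization of the domain $A$ at the central Ore set $Z\setminus\{0\}$, is again a domain; a finite-dimensional domain over a field is a division ring, so $D:=A\otimes_Z F$ is a division algebra containing $A$. Its center $E$ is a commutative domain inside $D$, hence a field, and $[E:F]\le\dim_F D<\infty$, so $E$ is a finitely generated field extension of $k$ of transcendence degree one.

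Finally I would apply Tsen's theorem: a field of transcendence degree one over the algebraically closed field $k$ is $C_1$, so $\operatorname{Br}(E)=0$; since $D$ is a finite-dimensional central division algebra over $E$, its Brauer class is trivial, which forces $D=E$. Hence $D$ is commutative, and therefore so is its subring $A$. The main obstacle here is the input from the literature, namely the Small--Stafford--Warfield theorem together with the finiteness of a prime affine GK-dimension-one algebra over its affine center; granting those, the remaining steps -- central localization and the appeal to Tsen -- are routine, and in characteristic zero (as assumed throughout) there are no separability subtleties in passing to the center $E$.
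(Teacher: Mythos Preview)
Your argument is correct. The paper does not actually give a proof of this lemma: it disposes of the GK-dimension zero case in one line (exactly as you do) and then simply cites the literature for the GK-dimension one case, pointing to Artin--Stafford and to Lu--Wu--Zhang where the statement appears explicitly.

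Your proof supplies the details that the paper leaves to citation, via the standard route: reduce to affine, invoke Small--Stafford--Warfield to get a prime PI algebra module-finite over its affine center, localize to a finite-dimensional division algebra over a function field of transcendence degree one, and kill it with Tsen. This is essentially the classical argument underlying the cited references, so there is no genuine divergence in method. One small remark: you attribute to SSW both the PI conclusion and the module-finiteness over the center; the latter is indeed in that paper for the semiprime case, but it is worth being explicit that you are using this stronger consequence and not just the PI statement, since the step ``$A$ finite over $Z$'' is where the GK-dimension-one hypothesis does real work beyond merely producing a polynomial identity.
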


\begin{proof} Since $k$ is algebraically closed, the only domain of
GK-dimension zero over $k$ is $k$ itself. The GK-dimension one case is
well known; e.g., it is embedded in the proof of \cite[Theorem 1.1]{ASt}.
It appears explicitly in \cite[Corollary 7.8 (a)$\Rightarrow$(b)]{LWZ}.
\end{proof}

\section{Hopf quotient $\Hbar= k[t^{\pm1}]$ in general}  
\label{yysec5}

From now on through Section \ref{yysec7}, we assume that $H$ has a Hopf
quotient
$\Hbar= H/K=k[t^{\pm1}]$ for some  Hopf ideal $K\subset H$, where $t$ is
a grouplike element  in $\Hbar$. We do not impose the hypotheses \Hzrnat\
until later.

Because $\Hbar= k[t^{\pm1}]$, the right and left
$\Hbar$-comodule algebra structures given by
$\rho$ and $\lambda$ correspond to $\ZZ$-gradings on $H$,
namely 
$$H= \bigoplus_{n\in\ZZ} H_n= \bigoplus_{n\in\ZZ} {}_nH$$
where
$$\begin{aligned}
H_n &= \{h\in H\mid \rho(h)= h\otimes t^n\}  \\
{}_nH &= \{ h\in H \mid \lambda(h)= t^n\otimes h\},
\end{aligned}
$$
(cf.~\cite[Example 4.1.7]{Mon}). 
Write $\pi^r_n$ and $\pi^l_n$ for the respective projections 
from $H$ onto $H_n$ and $_nH$ in the above decompositions. Thus,
$$ \rho(h) = \sum_{n\in\ZZ} \pi^r_n(h)\otimes t^n  \quad{\text{and}}
\quad \lambda(h)= \sum_{n\in\ZZ} t^n\otimes \pi^l_n(h) $$
for $h\in H$.

Note that for
$h\in H$ we have
$$\sum_{m,n\in\ZZ} t^m\otimes \pi^r_n\pi^l_m(h)\otimes t^n=
(\id\otimes\rho)\lambda(h)= (\lambda\otimes\id)\rho(h)= \sum_{m,n\in\ZZ}
t^m\otimes \pi^l_m\pi^r_n(h)\otimes t^n$$
in view of Lemma \ref{yylem4.2}(c), which implies that
\begin{equation}
\label{E5.0.1}\tag{E5.0.1}
\pi^r_n\pi^l_m= \pi^l_m\pi^r_n 
\end{equation}
for all $m,n\in\ZZ$. Consequently, $\pi^l_m(H_n) \subseteq H_n$ and
$\pi^r_n(_mH) \subseteq {}_mH$ for all $m$, $n$, which shows that the
$\lambda$- and $\rho$-gradings on $H$ are {\it compatible\/} in the sense
that
\begin{equation}
\label{E5.0.2}\tag{E5.0.2}
H_n = \bigoplus_{m\in\ZZ} (H_n\cap {}_mH)  \quad
{\text{and}}\quad 
{}_mH =\bigoplus_{n\in\ZZ}\; ({}_mH\cap H_n)  
\end{equation}
for all $m$, $n$.

\begin{lemma}
\label{yylem5.1} 
Retain the notation as above.
\begin{enumerate}
\item
$\Delta\pi^r_n= (\id\otimes\pi^r_n)\Delta$ and 
$\Delta\pi^l_n= (\pi^l_n\otimes\id)\Delta$ for all $n\in\ZZ$.
\item
$\Delta(H_n) \subseteq H\otimes H_n$ and $\Delta({}_nH)\subseteq
{}_nH\otimes H$
 for all $n\in \ZZ$.
\item
$S\pi^r_n= \pi^l_{-n}S$ and $S\pi^l_n= \pi^r_{-n}S$ for all
$n\in \ZZ$.
\item
$S(H_n) \subseteq {}_{-n}H$ and $S({}_nH) \subseteq H_{-n}$ for
all $n\in \ZZ$.  
\end{enumerate}
\end{lemma}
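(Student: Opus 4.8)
The plan is to deduce each part from the corresponding statement in Lemma \ref{yylem4.3}, specialized to the grading situation, by using that the $\ZZ$-gradings on $H$ are recovered from $\rho$ and $\lambda$. Part (a) is really the ``graded'' version of Lemma \ref{yylem4.2}(a,b): first I would compute, for $h\in H$,
\[
(\id\otimes\rho)\Delta(h) = (\Delta\otimes\pi)\Delta(h) = (\Delta\otimes\id)\rho(h)
= \sum_{n\in\ZZ}\Delta(\pi^r_n(h))\otimes t^n,
\]
using Lemma \ref{yylem4.2}(b). On the other hand, expanding $\Delta(h)=\sum_j a_j\otimes b_j$ and applying $\id\otimes\rho$ directly gives $\sum_j a_j\otimes\rho(b_j) = \sum_{n\in\ZZ}\bigl((\id\otimes\pi^r_n)\Delta(h)\bigr)\otimes t^n$, where I group terms by the $\Hbar$-degree of the last tensor factor. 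Comparing coefficients of $t^n$ (the $t^n$ are $k$-linearly independent in $\Hbar$) yields $\Delta\pi^r_n = (\id\otimes\pi^r_n)\Delta$; the statement for $\pi^l_n$ is symmetric, using Lemma \ref{yylem4.2}(a).

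Part (b) is immediate from (a): if $h\in H_n$ then $\pi^r_n(h)=h$, so $\Delta(h) = \Delta\pi^r_n(h) = (\id\otimes\pi^r_n)\Delta(h) \in H\otimes H_n$, since $\im\pi^r_n = H_n$; the inclusion for $_nH$ is symmetric. (Alternatively this is just Lemma \ref{yylem4.3}(a) applied with $H_0$ replaced by the relevant graded piece, but deriving it from (a) is cleaner here.) For part (c), I would apply $\id\otimes\pi$ together with Lemma \ref{yylem4.2}(d), i.e. $\rho S = \tau(S\otimes S)\lambda$: for $h\in H$,
\[
\rho(S(h)) = \tau(S\otimes S)\lambda(h) = \tau(S\otimes S)\Bigl(\sum_{n\in\ZZ} t^n\otimes \pi^l_n(h)\Bigr)
= \sum_{n\in\ZZ} S(\pi^l_n(h))\otimes S(t^n) = \sum_{n\in\ZZ} S(\pi^l_n(h))\otimes t^{-n},
\]
using $S(t)=t^{-1}$ in $\Hbar$. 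Since also $\rho(S(h)) = \sum_{n\in\ZZ}\pi^r_n(S(h))\otimes t^n$, reindexing $n\mapsto -n$ and comparing coefficients of $t^{-n}$ gives $\pi^r_{-n}S = S\pi^l_n$, i.e. $S\pi^l_n = \pi^r_{-n}S$; the other identity follows symmetrically from Lemma \ref{yylem4.2}(e). Part (d) then drops out of (c) the same way (b) dropped out of (a): for $h\in H_n$ we get $S(h) = S\pi^r_n(h) = \pi^l_{-n}S(h) \in {}_{-n}H$, and symmetrically for $_nH$.

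I do not anticipate a genuine obstacle here — the whole lemma is bookkeeping with the coaction/comultiplication compatibilities already packaged in Lemma \ref{yylem4.2}. The only point requiring a little care is the passage from a coaction identity in $H\otimes H\otimes\Hbar$ (or $H\otimes\Hbar$) to an identity of operators on $H$: one must write out the $\Hbar$-component in the basis $\{t^n\}_{n\in\ZZ}$ and invoke linear independence of that basis to strip off the grading variable, exactly as in the computation \eqref{E5.0.1} preceding the lemma. Once that template is in place, each of (a)--(d) is a one-line application of the appropriate part of Lemma \ref{yylem4.2}.
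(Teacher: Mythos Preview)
Your proposal is correct and follows essentially the same approach as the paper: both arguments expand $(\id\otimes\rho)\Delta = (\Delta\otimes\id)\rho$ from Lemma~\ref{yylem4.2}(b) in the basis $\{t^n\}$ to obtain (a), deduce (b) immediately, and then use Lemma~\ref{yylem4.2}(d)/(e) together with $S(t^n)=t^{-n}$ to get (c) and hence (d). The only cosmetic difference is that the paper derives $S\pi^r_n=\pi^l_{-n}S$ first via Lemma~\ref{yylem4.2}(e) whereas you start with the symmetric identity via (d); the content is identical.
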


\begin{proof} (a) For $h\in H$, apply Lemma \ref{yylem4.2}(b) to obtain
$$\begin{aligned}  
\sum_{n\in\ZZ} \Delta\pi^r_n(h) \otimes t^n &=
(\Delta\otimes\id)\rho(h)\\
&= (\id\otimes\rho)\Delta(h)= \sum_{(h)} h_1\otimes \rho(h_2)\\
&= \sum_{(h)}\sum_{n\in\ZZ} h_1\otimes \pi^r_n(h_2) \otimes t^n, 
\end{aligned}$$
and consequently $\Delta\pi^r_n(h)= \sum_{(h)} h_1\otimes \pi^r_n(h_2)=
(\id\otimes\pi^r_n)\Delta(h)$ for $n\in \ZZ$. This establishes the first
formula, and the second is symmetric.

(b) If $h\in H_n$, then $\Delta(h)= \Delta\pi^r_n(h)=
(\id\otimes\pi^r_n)\Delta(h) \in H\otimes H_n$. This gives the first
inclusion, and the second is symmetric.

(c) For $h\in H$, apply Lemma \ref{yylem4.2}(e) to obtain
$$\sum_{m\in\ZZ} t^m\otimes \pi^l_mS(h)= \lambda S(h)= \tau(S\otimes
S)\rho(h)= \sum_{n\in\ZZ} t^{-n}\otimes S\pi^r_n(h).$$
This yields $S\pi^r_n(h)= \pi^l_{-n}S(h)$. Similarly, $S\pi^l_n(h)=
\pi^r_{-n}S(h)$.

(d) These inclusions are clear from (c). 
\end{proof}

\begin{lemma}
\label{yylem5.2}
For each $n\in\ZZ$, there is an element $t_n\in H_n\cap {}_nH$ 
such that $\pi(t_n)= t^n$. In particular, $H_n$ and $_nH$ 
are nonzero for all $n$.
\end{lemma}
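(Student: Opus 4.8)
The plan is to produce, for each $n \in \ZZ$, an element in $H$ that maps to $t^n$ under $\pi$ and simultaneously lies in the $\rho$-degree-$n$ and $\lambda$-degree-$n$ components. The natural approach is: pick any preimage of $t^n$, then project it onto the correct bigraded component, and use Lemma \ref{yylem5.1}(a) (together with \eqref{E5.0.1}) to check the projection still maps to $t^n$.

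More concretely, since $\pi \colon H \to \Hbar = k[t^{\pm 1}]$ is surjective, choose $h \in H$ with $\pi(h) = t^n$. I would first show that $\pi \circ \pi^r_m = 0$ for $m \neq n$ applied to $h$, and $\pi \circ \pi^r_n(h) = t^n$. The cleanest way: apply $\eps \otimes \id$ to the defining relation $\rho(h) = \sum_m \pi^r_m(h) \otimes t^m$. Since $\rho = (\id \otimes \pi)\Delta$ and $(\eps \otimes \id)\Delta = \id$, the left side gives $\pi(h) = t^n$, while the right side gives $\sum_m \eps(\pi^r_m(h)) t^m$. Comparing coefficients of the basis $\{t^m\}$ of $\Hbar$ yields $\eps(\pi^r_m(h)) = \delta_{mn}$. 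But I actually want $\pi(\pi^r_m(h))$, not $\eps(\pi^r_m(h))$; for that, apply $\id \otimes \pi \otimes \id$ to $(\id \otimes \rho)\Delta(h) = (\Delta \otimes \id)\rho(h)$ — or more directly, note that $\pi^r_m(h) \in H_m$ means $\rho(\pi^r_m(h)) = \pi^r_m(h) \otimes t^m$, and applying $\eps \otimes \id$ gives $\pi(\pi^r_m(h)) = \eps(\pi^r_m(h)) t^m = \delta_{mn} t^m$. Hence $\pi(\pi^r_n(h)) = t^n$ and $\pi$ kills all other $\pi^r_m(h)$.

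So set $h' = \pi^r_n(h) \in H_n$ with $\pi(h') = t^n$. Now I repeat the argument on the $\lambda$-side: since the $\rho$- and $\lambda$-gradings are compatible (\eqref{E5.0.2}), $h' = \sum_m (h' \cap {}_mH)$-components, i.e. $h' = \sum_m \pi^l_m(h')$ with each $\pi^l_m(h') \in H_n \cap {}_mH$ by \eqref{E5.0.1}. The same computation with $\lambda$ and $\eps \otimes \id$ (using $(\id \otimes \eps)\Delta = \id$, applied on the correct side of $\lambda = (\pi \otimes \id)\Delta$) shows $\pi(\pi^l_m(h')) = \delta_{mn} t^n$. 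Therefore $t_n := \pi^l_n(h') = \pi^l_n \pi^r_n(h) \in H_n \cap {}_nH$ satisfies $\pi(t_n) = t^n$, which is exactly what is claimed; nonvanishing of $H_n$ and ${}_nH$ follows since $t^n \neq 0$.

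I do not anticipate a serious obstacle here — the statement is essentially bookkeeping with the two gradings. The one point requiring a little care is getting the direction of the counit projections right: for the $\rho$-grading one uses $(\eps \otimes \id)\Delta = \id$ on the first tensor factor, while for the $\lambda$-grading, since $\lambda = (\pi \otimes \id)\Delta$ records the $\Hbar$-part in the \emph{first} slot, one must instead apply $\id \otimes \eps$ appropriately (or equivalently use $(\id \otimes \eps)\Delta = \id$). A cleaner alternative avoiding this asymmetry is to invoke Lemma \ref{yylem5.1}(a): $\pi^l_n(h') \in {}_nH$ by construction, and $\pi(\pi^l_n(h')) = \pi^l_n(\pi(h'))$ would follow if $\pi$ intertwined $\pi^l_n$ with the degree-$n$ projection on $\Hbar = k[t^{\pm1}]$ — which it does, since $\pi$ is a map of left $\Hbar$-comodules. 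I would present whichever version is shortest, most likely the comodule-map observation: $\pi$ is a morphism of $\ZZ$-graded objects for both gradings, so it commutes with $\pi^r_n$ and $\pi^l_n$, and $\pi^r_n \pi^l_n$ applied to any preimage of $t^n$ lands where we want and still maps to $t^n$.
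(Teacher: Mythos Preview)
Your proposal is correct and follows essentially the same approach as the paper: pick a preimage of $t^n$, project onto the $\rho$-degree-$n$ component, then onto the $\lambda$-degree-$n$ component, using compatibility of the two gradings. The paper's computation is marginally more direct: it uses $(\pi\otimes\id)\rho(x) = (\pi\otimes\pi)\Delta(x) = \Delta_{\Hbar}(\pi(x)) = t^n\otimes t^n$ to read off $\pi\pi^r_n(x)=t^n$ in one step, rather than passing through $\eps$ first, but this is a cosmetic difference and your final ``comodule-map'' remark amounts to exactly the same observation.
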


\begin{proof}
Fix $n$, and choose some $x\in H$ for which $\pi(x)=t^n$. Then 
$$\sum_{i\in\ZZ} \pi\pi^r_i(x)\otimes t^i= (\pi\otimes\id)\rho(x)=
(\pi\otimes\pi)\Delta(x)= \Delta(t^n)= t^n\otimes t^n,$$
whence $\pi\pi^r_n(x)= t^n$. This allows us to replace $x$ by
$\pi^r_n(x)$, that is, we may now assume that $x\in H_n$.

Since the $\lambda$- and $\rho$-gradings are compatible, all 
$\pi^l_j(x)\in H_n$ for all $j\in \ZZ$. As in the previous paragraph, 
$\pi\pi^l_n(x)= t^n$, and we may replace $x$ by $\pi^l_n(x)$. Now 
$x\in H_n\cap {}_nH$, as desired.
\end{proof}

\begin{lemma}
\label{yylem5.3}
The Hopf algebra $H$ is strongly graded with respect to both the
$\rho$-grading and the $\lambda$-grading. 
\end{lemma}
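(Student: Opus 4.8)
The plan is to show that for each $n\in\ZZ$ the equality $H_nH_{-n}=H_0$ holds (and symmetrically $_{-n}H\,_nH = {}_0H$), which is precisely the condition that $H$ be strongly graded with respect to the $\rho$-grading (resp.\ the $\lambda$-grading). Since $1\in H_0$, it suffices to produce, for each $n$, finitely many elements $a_i\in H_n$ and $b_i\in H_{-n}$ with $\sum_i a_ib_i = 1$; then $H_nH_{-n}$ is an ideal of $H_0$ containing $1$, hence equals $H_0$.

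First I would invoke Lemma \ref{yylem5.2} to fix, for each $n$, an element $t_n\in H_n\cap {}_nH$ with $\pi(t_n)=t^n$. The natural candidate for the inverse data is obtained by applying the antipode: by Lemma \ref{yylem5.1}(d), $S(t_n)\in {}_{-n}H$, and more usefully $S(t_{-n})\in H_n$. The key computation is to feed $t_n$ through the antipode axiom. Writing $\Delta(t_n)=\sum_{(t_n)} (t_n)_1\otimes (t_n)_2$, Lemma \ref{yylem5.1}(b) gives that every $(t_n)_2$ lies in $H_n$, and applying $\rho$ in the first tensor slot together with the compatibility \eqref{E5.0.1}–\eqref{E5.0.2} one sees that in the Sweedler decomposition one may arrange $(t_n)_1\in H_{n-j}$ and $(t_n)_2\in H_j$ after suitable projection — more simply, since $\pi(t_n)=t^n$ is grouplike, $(\pi\otimes\pi)\Delta(t_n)=t^n\otimes t^n$, so the ``leading'' part of $\Delta(t_n)$ is concentrated in bidegree $(n,0)$ relative to the second $\rho$-grading applied on the left. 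The antipode axiom $\sum (t_n)_1\, S((t_n)_2) = \eps(t_n)1$ is the wrong normalization for a skew-primitive-type element, so instead I would use $\sum S((t_n)_1)\,(t_n)_2 = \eps(t_n)1$; here $\eps(t_n)=\eps(\pi(t_n))=\eps(t^n)=1$. By Lemma \ref{yylem5.1}(b) applied to the $\lambda$-grading (note $t_n\in {}_nH$ so $\Delta(t_n)\in {}_nH\otimes H$, i.e.\ the first factors lie in ${}_nH$), together with Lemma \ref{yylem5.1}(d), each $S((t_n)_1)\in H_{-n'}$ for the appropriate degrees; matching up the $\rho$-degrees on both sides of $\sum S((t_n)_1)\,(t_n)_2=1$ and projecting onto the degree-$0$ component, we extract an identity $\sum_i a_i b_i = 1$ with $a_i\in H_{-n}$ and $b_i\in H_n$. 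This shows $H_{-n}H_n = H_0$ for all $n$, hence $H_n H_{-n}=H_0$ as well (replace $n$ by $-n$), so the $\rho$-grading is strong.

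The symmetric argument, using $S((t_n)_1)(t_n)_2$ relative to the $\lambda$-grading — or equivalently applying the antipode to transport the $\rho$-statement to the $\lambda$-statement via Lemma \ref{yylem5.1}(c),(d) — gives that the $\lambda$-grading is strongly graded as well. Alternatively, once one grading is known to be strong, the identity map together with the compatibility decomposition \eqref{E5.0.2} and the fact (Lemma \ref{yylem5.1}) that $S$ interchanges $H_n$ with ${}_{-n}H$ transfers strong gradedness from one side to the other: if $\sum_i a_ib_i=1$ with $a_i\in H_n$, $b_i\in H_{-n}$, then $\sum_i S(b_i)S(a_i)=1$ with $S(b_i)\in {}_nH$ and $S(a_i)\in {}_{-n}H$, giving ${}_nH\,{}_{-n}H = {}_0H$.

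The main obstacle I anticipate is the bookkeeping in the Sweedler-notation computation: one must be careful that the element $t_n$ chosen in Lemma \ref{yylem5.2} need not be grouplike (only its image $\pi(t_n)=t^n$ is), so $\Delta(t_n)$ is not simply $t_n\otimes t_n$ and has ``lower-order'' terms in other bidegrees. The point is that Lemma \ref{yylem5.1}(b) pins down the $\rho$-degree of the \emph{second} tensor factor to be exactly $n$ for every term, and Lemma \ref{yylem5.1}(d) then pins down the degree of $S$ of the first factor, so that after applying the antipode axiom and projecting to $\rho$-degree $0$ every surviving term is a product of something in $H_{-n}$ with something in $H_n$. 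Verifying that this projection still sums to $1$ (rather than to $0$) is the crux, and it follows because $1$ itself is homogeneous of $\rho$-degree $0$. I would keep the Sweedler manipulations to a minimum by phrasing the argument in terms of the projections $\pi^r_n$, $\pi^l_n$ and their intertwining relations from Lemma \ref{yylem5.1}(a).
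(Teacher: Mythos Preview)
Your approach is correct and matches the paper's. The paper avoids your projection worry by observing directly that $\Delta(t_n)\in {}_nH\otimes H_n$ (both halves of Lemma~\ref{yylem5.1}(b) apply since $t_n\in H_n\cap{}_nH$), so writing $\Delta(t_n)=\sum_i a_i\otimes b_i$ with $a_i\in{}_nH$ and $b_i\in H_n$, the antipode axiom together with Lemma~\ref{yylem5.1}(d) immediately gives $1=\sum_i S(a_i)b_i\in H_{-n}H_n$ and $1=\sum_i a_iS(b_i)\in {}_nH\,{}_{-n}H$ --- every term is already in the required degree, so there are no lower-order terms to project away.
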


\begin{proof} Let $n\in \ZZ$ and $t_n\in H_n\cap {}_nH$ as in Lemma 
\ref{yylem5.2}. Note that $\eps(t_n)= \eps(t^n) =1$. By Lemma 
\ref{yylem5.1}(b)(c), $\Delta(t_n)= \sum_i a_i\otimes b_i$ for some 
$a_i\in {}_nH$ and $b_i\in H_n$, and then $S(a_i)\in H_{-n}$ and 
$S(b_i)\in {}_{-n}H$ for all $i$. Now
$$1_H = \sum_i S(a_i)b_i \in H_{-n}H_n  \quad {\text{and}}\quad
1_H = \sum_i a_iS(b_i) \in {}_nH{}_{-n}H\,,$$ 
and thus $H_{-n}H_n= H_0$ and ${}_nH{}_{-n}H= {}_0H$. Since $n$ 
was arbitrary, 
it follows that the $\rho$- and $\lambda$-gradings are strong.
\end{proof}

The following results will be helpful in working with gradings on $H$ and
its subalgebras.

\begin{lemma}
\label{yylem5.4}
Let $A= \bigoplus_{n\in\ZZ} A_n$ be a $\ZZ$-graded $k$-algebra 
which is a domain. 
\begin{enumerate}
\item
If $\dim_k A_0<\infty$, and if $A_l\ne 0$ and
$A_{-m}\ne 0$ for some $l,m>0$, then $\dim_k A_w\leq 
1$ for all $w\in\ZZ$, and $\GKdim A = 1$. Further, $A=k[x^{\pm 1}]$ for
some nonzero homogeneous element $x$ of positive degree.
\item
If $A_n\neq 0$ for some $n\neq 0$, then $\GKdim A\geq \GKdim A_0+1$.
\item 
Suppose that $\GKdim A = 1$, and that $A_n\neq 0$ for some $n\neq 0$.
Then $\dim_k A_i \le 1$ for all $i\in\ZZ$. Further, if $\sigma$ is any graded
$k$-algebra automorphism of $A$, there is a nonzero scalar $q\in k$
such that $\sigma(a)= q^ia$ for all $a\in A_i$, $i\in\ZZ$.
\end{enumerate} 
\end{lemma}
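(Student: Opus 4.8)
\textbf{Proof proposal for Lemma \ref{yylem5.4}.}

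\emph{Part (a).} The plan is to first pin down the support of the grading. Since $A$ is a domain, $A_lA_{-m}\neq 0$, so the support $\Sigma=\{n:A_n\neq 0\}$ contains a positive and a negative element; being closed under addition (domain) and containing both signs, $\Sigma$ is a nontrivial subgroup of $\ZZ$, hence $\Sigma=d\ZZ$ for some $d>0$. After rescaling the grading by $d$ we may assume $d=1$, i.e. $A_n\neq 0$ for all $n$. Now fix any nonzero $x\in A_1$ (if $d=1$) — more precisely a nonzero element of minimal positive degree. Multiplication by $x$ gives injective maps $A_n\hookrightarrow A_{n+1}$, so $\dim_k A_n$ is nondecreasing in $n$; by the symmetric argument with a nonzero element of minimal negative degree, $\dim_k A_n$ is also nonincreasing, hence constant, equal to $\dim_k A_0$. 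The finiteness hypothesis then forces every $A_n$ finite-dimensional. To get $\dim_k A_n\leq 1$: pick $0\neq y\in A_0$; then for $0\neq x\in A_1$ the elements $x^n, x^{n-1}y,\dots$ lie in $A_n$, and using that $A$ is a domain together with a degree/leading-term argument one shows $A_0=k$ — concretely, if $A_0$ had dimension $>1$ one could produce, inside the graded fraction field, two homogeneous elements of the same degree whose ratio is a nonconstant element of degree $0$ contradicting that the degree-zero part of the fraction field must be $k$ (here one uses that $k$ is algebraically closed and $A_0$ is a finite-dimensional commutative domain, hence equals $k$). Once $A_0=k$, each $A_n$ is a $1$-dimensional $A_0=k$-module spanned by $x^n$, so $A=k[x^{\pm1}]$ and $\GKdim A=1$.

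\emph{Part (b).} Pick $0\neq x\in A_n$ with $n\neq 0$; WLOG $n>0$ (else replace $x$ by considering negative powers, or argue symmetrically). Choose a finite-dimensional generating subspace $V\subseteq A_0$ for a subalgebra realizing $\GKdim A_0$ up to $\epsilon$, and set $W=V+kx\subseteq A$. Since $A$ is a domain and $x$ is homogeneous of nonzero degree, the products $v\cdot x^j$ with $v$ a word of length $\leq i$ in $V$ and $0\leq j\leq i$ are all nonzero and live in distinct graded pieces for distinct $j$, so $\dim_k W^{2i}\geq (i+1)\dim_k(k+V)^i$, which grows like $i\cdot i^{\GKdim A_0}$ up to the $\epsilon$. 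Letting $i\to\infty$ and $\epsilon\to 0$ yields $\GKdim A\geq \GKdim A_0+1$.

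\emph{Part (c).} This is essentially a corollary of (a): if $\GKdim A=1$ and some $A_n\neq 0$ with $n\neq 0$, then $A_0$ is a graded-connected piece with $\GKdim A_0\leq \GKdim A-1=0$ by part (b), so $A_0$ is a finite-dimensional domain over $k$, hence $A_0=k$, hence $\dim_k A_0<\infty$ and part (a) applies to give $\dim_k A_i\leq 1$ for all $i$ and $A=k[x^{\pm1}]$ with $\deg x=d>0$. For the automorphism statement: a graded automorphism $\sigma$ preserves each $A_i$; on $A_d=kx$ it acts by a scalar, $\sigma(x)=qx$ for some $q\in\kx$, and then multiplicativity forces $\sigma(x^j)=q^jx^j$ on $A_{dj}$, i.e. $\sigma(a)=q^{i/d}a$ for $a\in A_i$ — after renormalizing the grading so $d=1$ (as we did in (a)), this reads $\sigma(a)=q^i a$ for all $a\in A_i$.

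\emph{Main obstacle.} The one genuinely non-formal point is establishing $A_0=k$ in part (a) (equivalently $\dim_k A_i\leq 1$): the easy multiplication argument only gives that $\dim_k A_i$ is constant, and one must invoke that $k$ is algebraically closed together with a fraction-field/degree-zero-part argument to rule out $\dim_k A_0>1$. Everything else is a routine GK-dimension growth estimate or a direct consequence of the domain hypothesis and the grading.
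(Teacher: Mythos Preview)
Your arguments for (a) and (b) are essentially correct and close in spirit to the paper's, though the paper gets $A_0=k$ in one line (a finite-dimensional domain over an algebraically closed field is $k$) rather than via the fraction-field detour you describe as the ``main obstacle''; that step is not an obstacle at all.

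The real gap is in part (c). You write ``$\dim_k A_0<\infty$ and part (a) applies'', but part (a) requires nonzero homogeneous components of \emph{both} signs, i.e.\ $A_l\neq0$ and $A_{-m}\neq0$ for some $l,m>0$. The hypothesis of (c) gives only $A_n\neq0$ for a single $n\neq0$, and nothing prevents $A$ from being, say, $\NN$-graded: take $A=k[y]$ with $\deg y=1$, which satisfies all hypotheses of (c) but none of (a). In particular your conclusion ``$A=k[x^{\pm1}]$'' is simply false for such $A$. The paper repairs this by first observing that $\GKdim A=1$ forces $A$ to be commutative (Lemma~\ref{yylem4.5}), then passing to the graded ring of fractions $B$ of $A$: in $B$ one can invert any nonzero homogeneous element, so $B$ has nonzero components of both signs, and now (a) applies to $B$ to give $B=k[x^{\pm1}]$ with $\deg x=d>0$ and $\dim_k A_i\le\dim_k B_i\le1$.

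Your treatment of the automorphism statement inherits this problem and adds another. Since $A$ need not equal $k[x^{\pm1}]$, you must first extend $\sigma$ to $B$. Once $\sigma(x)=rx$ in $B$, the expression ``$\sigma(a)=q^{i/d}a$'' is not well-defined, and ``renormalizing the grading so $d=1$'' changes the statement you are asked to prove. The correct move, which the paper makes, is to choose a $d$-th root $q$ of $r$ in $k$ (here algebraic closedness is used); then $\sigma(x^l)=r^lx^l=q^{dl}x^l$ with $\deg x^l=dl$, so $\sigma(b)=q^ib$ for all $b\in B_i$, and hence for all $a\in A_i\subseteq B_i$.
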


\begin{proof} 
(a) Since $k$ is algebraically closed and $A_0$ is a finite dimensional
domain over $k$, we have $A_0=k$.

Since $A$ is a domain, $\dim_k A_w\leq \dim_k A_{w+v}$ for $v,w\in\ZZ$
with
$A_v\neq 0$, because multiplication by any nonzero element of $A_v$
embeds $A_w$ into $A_{w+v}$. Suppose
$A_w\neq 0$ for some
$w\ne 0$. Then $A_{lwm-w} \supseteq A_w^{lm-1} \ne 0$. Further,
$A_{lw(-m)}
\supseteq A_{-m}^{lw} \ne 0$ if $w>0$, while $A_{lw(-m)}
\supseteq A_l^{w(-m)} \ne 0$ if $w<0$. Hence,
$$\dim_k A_w\leq \dim_k A_{lwm}\leq \dim_k A_{lwm+lw(-m)}=\dim_k A_0=1.$$

Now consider an arbitrary finite dimensional subspace $V\subseteq A$, and
note that $V\subseteq \bigoplus_{i=-s}^s A_i$ for some $s>0$.
Consequently, $V^n \subseteq \bigoplus_{i=-ns}^{ns} A_i$ and so $\dim_k
V^n \le 2ns+1$ for all $n>0$. Therefore $\dim_k
V^n$ grows at most linearly with $n$. On the other hand, if $V=kx$ for
some nonzero element $x\in A_l$, then $\dim_k V^n=1$ for all $n>0$.
Therefore
$\GKdim A =1$.

For the last statement, let $x\in A$ be a nonzero homogeneous element of
minimal positive degree and $y\in A$ a nonzero homogeneous element of
largest negative degree. Then $xy$ has degree 0, and we may assume that 
$y=x^{-1}$. If $z$ is a nonzero homogeneous element, then 
$\deg z$ must be a multiple of $\deg x$ by the minimality of
$\deg x$. Therefore $A=k[x^{\pm 1}]$.

(b) Pick a nonzero element $x\in A_n$. Then $x^i\in A_{in}$ for $i\ge 0$, 
from which we see that the nonnegative powers $x^i$ are left linearly 
independent over $A_0$.

Let $B$ be an affine subalgebra of $A_0$, choose a finite dimensional
generating subspace $V$ for $B$ with $1\in V$, and set $W= V+kx$. 
For any $n>0$, the power $W^n$ contains $\sum_{i=0}^n V^{n-i}x^i$, and 
so $\dim_k W^n \ge \sum_{i=0}^n \dim_k V^{n-i}$ because the $x^i$ are 
left linearly independent over $A_0$. Consequently, $\dim_k W^n \ge 
\dim_k Z^n$ for all $n>0$, where $Z=V+kz$ is a finite dimensional 
generating subspace for a polynomial ring $B[z]$. Since $\GKdim B[z]
= 1+\GKdim B$, it follows that $\GKdim A \geq 1+\GKdim B$.

(c) Let $B$ be the graded ring of fractions of $A$ (which exists because
$A$ is commutative). Then
$B$ is a domain over $k$ with
$\GKdim B =1$, and $B_l$ and $B_{-m}$ are nonzero for some $l,m>0$. Part
(b) implies that $\GKdim B_0=0$, whence $B_0=k$. Now part (a) implies that
$B=k[x^{\pm 1}]$ for some nonzero homogeneous element $x$ of positive
degree. In particular, $\dim_k A_i\le \dim_k B_i \le 1$ for all $i\in
\ZZ$. 

Any graded $k$-algebra automorphism $\sigma$ of $A$ extends uniquely to a
graded $k$-algebra automorphism of $B$, which we also denote $\sigma$.
Clearly there is a nonzero scalar $r\in k$ such that $\sigma(x)= rx$. Let
$d=\deg x$, and let $q\in k$ be a $d$-th root of $r$. Then $\sigma(x^l)=
r^lx^l= q^{dl}x^l$ for all $l\in \ZZ$. Since $\deg x^l= dl$, it follows
that $\sigma(b)= q^ib$ for all nonzero $b\in B_i$, $i\in\ZZ$. This
equation holds trivially when $b=0$, yielding the desired description of
$\sigma$.
\end{proof}

\section{$\Hbar=k[t^{\pm 1}]$ and $H_0={_0 H}$}
\label{yysec6}
 
In this section, we classify all affine or noetherian Hopf algebras 
under the hypothesis \Hzrnat\ in the case that $H$ has a Hopf 
quotient $\Hbar=k[t^{\pm 1}]$ with $t$ grouplike and $H_0={_0 H}$. 
The assumptions \Hzrnat, $\Hbar=k[t^{\pm 1}]$, and $H_0={_0 H}$ are 
to hold throughout this section; finiteness conditions are only
imposed in Theorem \ref{yythm6.3}. Recall from Lemma \ref{yylem4.3}(c) 
that $H_0$ must be a Hopf subalgebra of $H$ in these circumstances.

\begin{lemma}
\label{yylem6.1} Either $H_0=k[y]$ with $y$ primitive, or
$H_0\cong k\Gamma$ where $\Gamma$ is a torsionfree abelian group 
of rank $1$.
\end{lemma}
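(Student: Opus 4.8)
The plan is to identify $H_0$ as a commutative Hopf algebra that is an integral domain of Gelfand--Kirillov dimension exactly one, and then to read off the two possibilities directly from Proposition~\ref{yyprop2.1}.

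First I would note that $H_0$ is a domain, being a subalgebra of $H$, and that $H_0$ is a Hopf subalgebra of $H$ by Lemma~\ref{yylem4.3}(c), using the standing hypothesis $H_0={}_0H$ (as already recorded at the start of the section). To bound its GK-dimension from above, I would apply Lemma~\ref{yylem5.4}(b) to $A=H$ with the $\rho$-grading: since $H_n\neq 0$ for some (indeed every) $n\neq 0$ by Lemma~\ref{yylem5.2}, we get $\GKdim H\geq \GKdim H_0+1$, and as $\GKdim H=2$ by \Hzr, this forces $\GKdim H_0\leq 1$. Since a domain over $k$ of GK-dimension at most one is commutative (Lemma~\ref{yylem4.5}), $H_0$ is commutative.

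To see that $\GKdim H_0=1$ and not $0$, I would argue by contradiction: if $\GKdim H_0=0$ then $H_0=k$ because $k$ is algebraically closed, and then Lemma~\ref{yylem5.4}(a)---with $\dim_k H_0<\infty$ and $H_l\neq 0\neq H_{-m}$ for some $l,m>0$ (Lemma~\ref{yylem5.2})---would give $\GKdim H=1$, contradicting \Hzr. Hence $H_0$ is a commutative Hopf algebra that is a domain of GK-dimension one, and Proposition~\ref{yyprop2.1} applies. It produces three cases: $H_0\cong U(\fg)$ with $\dim_k\fg=1$, which is precisely $k[y]$ with $y$ primitive; $H_0\cong k\Gamma$ with $\Gamma$ infinite cyclic; or $H_0\cong k\Gamma$ with $\Gamma$ a non-cyclic torsionfree abelian group of rank one. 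The last two amalgamate into the statement that $H_0\cong k\Gamma$ with $\Gamma$ torsionfree abelian of rank one, which finishes the proof. No step requires real computation; the only thing to watch is the GK-dimension bookkeeping that pins $\GKdim H_0$ to exactly one, using that $H_n\ne 0$ for all $n$.
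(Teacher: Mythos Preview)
Your proof is correct and follows essentially the same route as the paper's: use Lemma~\ref{yylem5.2} (the paper cites the strong grading via Lemma~\ref{yylem5.3}) to see that $H_n\ne0$ for all $n$, apply Lemma~\ref{yylem5.4}(a)(b) and Lemma~\ref{yylem4.5} to pin $\GKdim H_0$ to exactly $1$ and conclude commutativity, and then invoke Proposition~\ref{yyprop2.1}, using that $H_0$ is a Hopf subalgebra by Lemma~\ref{yylem4.3}(c). The only cosmetic difference is that the paper phrases the lower-bound step as ``$\dim_k H_0=\infty$'' rather than ``$\GKdim H_0\ne0$'', but these are equivalent here.
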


\begin{proof} By Lemma \ref{yylem5.3}, the $\ZZ$-graded algebra 
$H=\bigoplus_n H_n$ is strongly graded. In particular, $H_n\ne 0$ 
for all $n$. Lemma \ref{yylem5.4}(a)(b) then implies that 
$\dim_k H_0= \infty$ and $\GKdim H_0\le 1$, and so by  
Lemma \ref{yylem4.5}, $H_0$ is commutative.
Since $H_0$ is a domain and $k$ is algebraically closed, the only finite
dimensional subalgebra of $H_0$ is $k1$. Thus, $\GKdim H_0=1$, and the
desired conclusion follows from Proposition \ref{yyprop2.1}.
\end{proof}

\begin{lemma}
\label{yylem6.2}
As a $k$-algebra, $H=
H_0[x^{\pm1};\phi]$ where $x\in H_1$ and $\phi$ is a $k$-algebra
automorphism of
$H_0$. Moreover, $x$ can be chosen to be a grouplike element of $H$
satisfying $\pi(x)=t$.
\end{lemma}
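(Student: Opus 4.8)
The plan is to exploit the strong $\ZZ$-grading from Lemma \ref{yylem5.3}. Since $H = \bigoplus_n H_n$ is strongly graded, $H_1 H_{-1} = H_0$ and $H_{-1}H_1 = H_0$, so $H_1$ is an invertible $H_0$-bimodule. First I would show that $H_1$ is actually free of rank one on each side, i.e., that there exists $x \in H_1$ with $H_1 = H_0 x = x H_0$. The key point is that $H_0$ is commutative (shown in Lemma \ref{yylem6.1}) and is either $k[y]$ or a group algebra of a rank-one torsionfree abelian group; in either case $H_0$ is a commutative domain all of whose finitely generated projective (equivalently, invertible) modules are free. Granting this, pick any $0 \ne x \in H_1$; then $H_1 = x H_0$ forces $H_0 x = x H_0$, so conjugation by $x$ induces a $k$-algebra automorphism $\phi$ of $H_0$ via $x h = \phi(h) x$. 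Because the grading is strong, $H_n = H_1^n H_0 = x^n H_0$ for all $n \ge 0$ and similarly $H_{-n} = x^{-n}H_0$ using the inverse of $x$ in $H_1^{-1}$; putting these together gives $H = \bigoplus_n x^n H_0 = H_0[x^{\pm 1};\phi]$ as claimed for the algebra structure.

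The second half is to arrange that $x$ can be taken grouplike with $\pi(x) = t$. Here I would invoke Lemma \ref{yylem5.2} to get $t_1 \in H_1 \cap {}_1H$ with $\pi(t_1) = t$, and then analyze $\Delta(t_1)$. By Lemma \ref{yylem5.1}(b), $\Delta(t_1) \in {}_1H \otimes H_1$, and since $H_1 = x H_0$ and ${}_1 H$ is similarly a cyclic $H_0$-module (using $H_0 = {}_0H$), one can write $\Delta(t_1) = \sum_i a_i \otimes b_i$ with controlled shape. The hope is that after replacing $t_1$ by $x := u^{-1} t_1$ for a suitable unit $u \in H_0$ (or more precisely adjusting within the coset $t_1 + (\text{terms})$), the element $x$ becomes grouplike: $\Delta(x) = x \otimes x$. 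The mechanism is that $\bar x := \pi(x) = t$ already satisfies $\Delta_{\bar H}(\bar x) = \bar x \otimes \bar x$, and one lifts this; the obstruction to the lift living in degree $(1,1)$ lies in a cohomology-type group that one shows vanishes, using that $H_0$ is a group algebra or polynomial algebra. Concretely, writing $\Delta(t_1) = t_1 \otimes g + (\text{correction})$ and comparing with coassociativity and the counit, together with $\eps(t_1) = \eps(t) = 1$, should pin down the correction and let us normalize.

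The main obstacle I expect is the last normalization step — showing $x$ can be chosen grouplike rather than merely a skew-primitive-like element. The freeness of $H_1$ over $H_0$ and the resulting crossed-product description $H_0[x^{\pm 1};\phi]$ are fairly routine once commutativity of $H_0$ is in hand. But controlling $\Delta(x)$ requires genuinely using the Hopf structure: one must show that among all generators of $H_1$ as an $H_0$-module, there is one that is grouplike. I would handle this by first establishing that $\Delta(t_1) - t_1 \otimes t_1 \in {}_1H \otimes H_1$ lies in a summand that can be absorbed by multiplying $t_1$ by a unit of $H_0$, exploiting that units of $H_0$ are either scalars times group elements (in the $k\Gamma$ case) or just scalars (in the $k[y]$ case) and that in the latter case $\phi$ and the coproduct interact rigidly. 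If a direct computation is unwieldy, an alternative is to note that the group $G(H)$ of grouplikes surjects onto $G(\bar H) = \langle t \rangle$ (a standard fact, since a grouplike lifting $t$ can be built from the coradical filtration, or since $H$ is pointed by later results), and any grouplike preimage $x$ of $t$ automatically lies in $H_1 \cap {}_1H$ by the grading definitions and generates $H_1$ because $H_1 = H_0 x$ (as $x$ is a unit of $H$ in degree $1$). That last route is probably cleanest: show $G(H) \twoheadrightarrow G(\bar H)$, pick $x \in G(H)$ with $\pi(x) = t$, check $x \in H_1$, and observe $x$ is invertible in $H$, hence $H_1 = H_0 x$ and everything else follows.
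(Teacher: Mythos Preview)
Your first half --- establishing $H = H_0[x^{\pm1};\phi]$ --- is essentially the paper's argument. The paper phrases it slightly differently (it observes $H_0$ is a Bezout domain and embeds $H_1$ into $H_0$ via multiplication by a nonzero element of $H_{-1}$, concluding $H_1$ is cyclic), but your version via ``invertible implies free of rank one over $H_0$'' amounts to the same thing. One small sloppiness: you write ``pick any $0\ne x\in H_1$; then $H_1 = xH_0$'' --- you mean pick a generator $x$, not an arbitrary nonzero element.

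The second half, however, has a genuine gap. Your first proposed route (absorb the obstruction into a unit of $H_0$, with the obstruction living in ``a cohomology-type group that one shows vanishes'') is too vague to be a proof, and it is not clear what cocycle computation you have in mind. Your second route --- asserting that $G(H) \twoheadrightarrow G(\Hbar)$ is ``a standard fact'' --- is not justified: surjectivity of grouplikes under a Hopf quotient is \emph{not} automatic, and appealing to pointedness of $H$ would be circular, since in this paper pointedness (Proposition~\ref{yyprop0.2}(g)) is deduced from the classification, which rests on the present lemma.

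The key idea you are missing is the paper's use of Lemma~\ref{yylem4.4}(b). Once you know $x$ is a unit in $H$ (from the first half), $\Delta(x)$ is a unit in $H\otimes H$. The paper verifies $H\otimes H$ is a domain (an iterated skew-Laurent ring over $H_0\otimes H_0$, which is either a polynomial ring or a group algebra of a torsionfree abelian group). Since $H\otimes H$ is $(\ZZ\times\ZZ)$-graded and $\ZZ\times\ZZ$ is totally ordered, every unit is homogeneous; combined with $\Delta(x)\in H\otimes H_1$ from Lemma~\ref{yylem5.1}(b), this forces $\Delta(x)\in H_1\otimes H_1 = (x\otimes x)(H_0\otimes H_0)$. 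Writing $\Delta(x)=(x\otimes x)w$ with $w$ a unit of $H_0\otimes H_0$, a short case analysis of units (scalars if $H_0=k[y]$, scalar multiples of group elements if $H_0=k\Gamma$) together with the counit axiom pins down $w=1\otimes 1$. No further adjustment of $x$ beyond the initial scalar normalization to achieve $\pi(x)=t$ is needed.
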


\begin{proof} By Lemma \ref{yylem5.3}, $H_1H_{-1}= H_0$, and so there are
elements $x_1,\dots,x_r\in H_1$ and $y_1,\dots,y_r\in H_{-1}$ such that
$\sum_i x_iy_i=1$. In particular, it follows that $H_1= \sum_i x_iH_0$.

In view of Lemma \ref{yylem6.1}, $H_0$ is isomorphic to either a
polynomial ring $k[y]$ or a group ring $k\Gamma$, where $\Gamma$ is free
abelian of rank $1$. In either case,
$H_0$ is a directed union of commutative PIDs, and hence a Bezout domain.
At least one $y_j\ne 0$, and left
multiplication by $y_j$ embeds $H_1$ in $H_0$.
Thus, $H_1$ must be a cyclic right $H_0$-module, say $H_1= xH_0$.

Since $xH_{-1}= xH_0H_{-1}=
H_1H_{-1}= H_0$, we see that $x$ is right invertible, and thus invertible
(because $H$ is a domain). It follows that $H_n= x^nH_0= H_0x^n$ for all
$n\in \ZZ$. In particular, from $xH_0= H_0x$ we see that conjugation by
$x$ restricts to a
$k$-algebra automorphism $\phi$ of
$H_0$. Since $H= \bigoplus_n H_n$ is a free left $H_0$-module with basis
$\{x^n\mid n\in\ZZ\}$, we conclude that $H= H_0[x^{\pm1};\phi]$.

By Lemma \ref{yylem4.3}(d), $\pi(H_0)=k1$, and so $\pi(H_1)= k\pi(x)$. By
Lemma \ref{yylem5.2}, there is some $t_1\in H_1$ such that $\pi(t_1)=t$,
and hence $\pi(x)$ must be a nonzero scalar multiple of $t$. After
replacing
$x$ by a scalar multiple of itself, we may assume that $\pi(x)=t$. In
particular, it follows that
$\epsilon(x)= \epsilon(t) =1$.

Next, observe that $H\otimes H= (H_0\otimes
H_0)[z_1^{\pm1},z_2^{\pm1};\phi_1,\phi_2]$ is an iterated skew-Laurent
polynomial ring over $H_0\otimes H_0$, with respect to the commuting
automorphisms
$\phi_1= \phi\otimes\id$ and $\phi_2= \id\otimes\phi$. Since $H_0\otimes
H_0$ is isomorphic to either a polynomial ring $k[y]\otimes k[y] \cong
k[y,z]$ or a group algebra
$k(\Gamma\times\Gamma)$ over a torsionfree abelian group, it is a domain,
and thus $H\otimes H$ is a domain. Lemma
\ref{yylem4.4}(b) now implies that
$\Delta(x)\in H_n\otimes H_n$ for some $n\in\ZZ$. On the other hand,
$\Delta(x) \in H\otimes H_1$ by Lemma \ref{yylem5.1}(b), so $n=1$ and
$\Delta(x)\in H_1\otimes H_1= xH_0\otimes xH_0$. Thus, $\Delta(x)=
(x\otimes x)w$ for some $w\in H_0\otimes H_0$. Since $\Delta(x)$ and
$x\otimes x$ are invertible in $H\otimes H$, so is $w$. In view of the
skew-Laurent polynomial structure of $H\otimes H$, it follows that $w$ is
invertible in $H_0\otimes H_0$.

If $H_0\cong k[y]$, then $w= \alpha1\otimes1$ for some $\alpha\in \kx$, and
$\Delta(x)= \alpha x\otimes x$. Since $\epsilon(x)=1$, the counit property
implies $x=\alpha x$, forcing
$\alpha=1$. Therefore $x$ is grouplike in this case.

Finally, suppose that $H_0\cong k\Gamma$, so that $H_0\otimes H_0\cong
k(\Gamma\times\Gamma)$. Since the units in $k(\Gamma\times\Gamma)$ are
scalar multiples of elements of $\Gamma\times\Gamma$, we conclude that $w=
\alpha u\otimes v$ for some grouplike elements $u,v\in H_0$. Now
$\Delta(x)= \alpha xu\otimes xv$. The counit property implies $x= \alpha
xu= \alpha xv$, whence $\alpha=1$ in $k$ and $u=v=1$ in $H_0$. Therefore
$\Delta(x)=x\otimes x$, and so
$x$ is grouplike in this case also.
\end{proof}

\begin{theorem}
\label{yythm6.3}
Assume that $H$ is a Hopf algebra satisfying \Hzrnat\ and having a Hopf
quotient $\Hbar=k[t^{\pm 1}]$ with
$t$ grouplike and $H_0={_0 H}$. Then $H$ is affine if and only if $H$ is
noetherian, if and only if $H$ is isomorphic to one of the following Hopf
algebras:
\begin{enumerate}
\item
$A(0,q)$, for some nonzero $q\in k$.
\item
A group algebra $k\Lambda$, where $\Lambda$ is either free abelian of rank
$2$ or the semidirect product $\ZZ\rtimes\ZZ= \langle x,y\mid xyx^{-1}=
y^{-1}\rangle$.
\end{enumerate}
\end{theorem}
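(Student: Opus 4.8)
The plan is to feed the structural description of Lemma~\ref{yylem6.2}, namely $H=H_0[x^{\pm1};\phi]$ with $x$ grouplike, $\pi(x)=t$, and $\phi$ equal to conjugation by $x$ on $H_0$, into the dichotomy of Lemma~\ref{yylem6.1} for $H_0$, and then to cut down the remaining data using affineness/noetherianness together with $\GKdim H=2$. One direction is immediate: $A(0,q)=k[y][x^{\pm1};\phi]$ is a skew Laurent extension of $k[y]$, hence noetherian, and is obviously affine; and $k\ZZ^2$ and $k(\ZZ\rtimes\ZZ)$ are group algebras of polycyclic groups, hence both affine and noetherian. So it remains to show that if $H$ is affine or noetherian then $H$ is isomorphic to one of these three, and Lemma~\ref{yylem6.1} splits this into two cases.

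\emph{Case $H_0=k[y]$ with $y$ primitive.} Since $x$ is grouplike, $\Delta(\phi(y))=(x\otimes x)\Delta(y)(x\otimes x)^{-1}=\phi(y)\otimes 1+1\otimes\phi(y)$, so $\phi(y)$ is again a primitive element of $k[y]$; as these are exactly the scalar multiples of $y$, we get $\phi(y)=qy$ for some $q\in\kx$, i.e.\ $xy=qyx$. Hence $H\cong k\langle x^{\pm1},y\mid xy=qyx\rangle$ as an algebra, and its coalgebra structure ($x$ grouplike, $y$ primitive) matches that of $A(0,q)$, so $H\cong A(0,q)$. (This used no finiteness hypothesis, which is consistent since $A(0,q)$ is affine and noetherian.)

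\emph{Case $H_0\cong k\Gamma$ with $\Gamma$ torsionfree abelian of rank $1$.} Conjugation by the grouplike element $x$ carries grouplikes to grouplikes, so $\phi$ restricts to a group automorphism $\sigma$ of $\Gamma=G(H_0)$. Consequently the $k$-basis $\{gx^n\mid g\in\Gamma,\ n\in\ZZ\}$ of $H$ consists of grouplike elements, $G(H)$ is the group $\Lambda:=\Gamma\rtimes_\sigma\ZZ$, and $H=k\Lambda$ as a Hopf algebra. Now I would use finiteness to force $\Gamma\cong\ZZ$: if $H$ is noetherian then $\Lambda$ has ACC on subgroups (a standard consequence: subgroups $M$ give right ideals $\sum_{m\in M}(m-1)k\Lambda$ with quotient the permutation module $k[M\backslash\Lambda]$, and distinct comparable $M$ give distinct such ideals), whereas a non-cyclic torsionfree rank-one group is a strictly ascending union of infinite cyclic subgroups; if $H$ is affine then $\Lambda$ is finitely generated, and since $\GKdim H=2<\infty$ the group $\Lambda$ has polynomial growth, hence is virtually nilpotent, hence polycyclic-by-finite, so its subgroup $\Gamma$ is again finitely generated. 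Either way $\Gamma\cong\ZZ=\langle y\rangle$, and then $\sigma=\pm\id$, giving $\Lambda\cong\ZZ^2$ when $\sigma=\id$ and $\Lambda\cong\ZZ\rtimes\ZZ=\langle x,y\mid xyx^{-1}=y^{-1}\rangle$ when $\sigma=-\id$. Together with the previous case and the first paragraph, this yields the asserted equivalences.

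The hard part is this last finiteness step, i.e.\ excluding non-cyclic rank-one subgroups $\Gamma$ of $\QQ$. For noetherian $H$ the ACC-on-subgroups argument is clean, but for affine $H$ the GK-dimension hypothesis is genuinely needed: $k[BS(1,p)]$ with $BS(1,p)=\langle a,x\mid xax^{-1}=a^p\rangle$ is affine and also satisfies the standing hypotheses of this section ($\Hbar=k[t^{\pm1}]$, $H_0={}_0H$), yet it has infinite GK-dimension because $BS(1,p)$ has exponential growth. Concretely, if $\Gamma$ were non-cyclic then $\Lambda$ being finitely generated would force $\sigma$ to be multiplication by a rational $q$ with $|q|\neq 1$, whence $\Lambda$ contains a copy of $\ZZ[1/p]\rtimes\ZZ$ and $\GKdim H=\infty$, contradicting $\GKdim H=2$.
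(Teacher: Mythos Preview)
Your proof is correct and follows essentially the same route as the paper: split by Lemma~\ref{yylem6.1}, use Lemma~\ref{yylem6.2} to get $H=H_0[x^{\pm1};\phi]$ with $x$ grouplike, and then identify the few remaining possibilities. The differences are only in implementation details. In the $k[y]$ case you observe that conjugation by a grouplike preserves primitivity, whence $\phi(y)\in ky$; the paper instead writes $\phi(y)=qy+b$ and applies $\eps$ to kill $b$. In the noetherian case you pass to ACC on subgroups of $\Lambda$; the paper more directly notes that $H=H_0[x^{\pm1};\phi]$ noetherian forces $H_0=k\Gamma$ noetherian, hence $\Gamma\cong\ZZ$. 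In the affine case you invoke Gromov's theorem; the paper uses Milnor's theorem, which suffices since $\Lambda=\Gamma\rtimes\ZZ$ is solvable and is a bit more elementary. Your closing remarks about Baumslag--Solitar groups are illuminating but not needed for the argument.
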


\begin{proof} Note first that the Hopf algebras listed in (a) and (b)
are both affine and noetherian. Conversely, assume that $H$ is either
affine or noetherian.

Continue with the notation above, and consider first the case where $H_0=
k[y]$ with $y$ primitive. The automorphism $\phi$ of $H_0$ must send
$y\mapsto qy+b$ for some $q\in\kx$ and $b\in k$, and so $xy=(qy+b)x$.
Applying $\epsilon$ to the latter equation yields $0=b$, so $xy=qyx$. In
view of Lemma \ref{yylem6.2}, we obtain $H\cong A(0,q)$ in this case.

Now assume that $H_0= k\Gamma$ for some torsionfree abelian group $\Gamma$
of rank $1$. Note that since $x$ is grouplike, conjugation by $x$ maps
grouplike elements of $H_0$ to grouplike elements. Hence, $\phi$ restricts
to an automorphism of $\Gamma$, and therefore $H= k\Lambda$ for some
semidirect product $\Lambda= \Gamma\rtimes\ZZ$.

If $H$ is noetherian, then in view of Lemma
\ref{yylem6.2}, $H_0$ must be noetherian. Consequently, $\Gamma$ is infinite
cyclic, and case (b) holds. 

Finally, assume that $H$ is affine, which implies that $\Lambda$ is a
finitely generated group. Note also that $\Lambda$ is solvable. Since
$\GKdim H=2<\infty$, the group $\Lambda$ cannot have exponential growth, and
a theorem of Milnor
\cite[Theorem]{Mil} (or see
\cite[Corollary 11.6]{KL}) shows that $\Lambda$ must be polycyclic. This
forces $\Gamma$ to be polycyclic, whence $\Gamma\cong\ZZ$. Therefore we are
in case (b) again.
\end{proof}

\section{$\Hbar=k[t^{\pm 1}]$ and $H_0\neq {_0 H}$}
\label{yysec7}

In this section, we continue to assume that $H$ satisfies \Hzrnat\ and that
$H$ has a Hopf quotient $\Hbar=k[t^{\pm 1}]$ with $t$ grouplike, but we
now assume that $H_0\neq {_0 H}$. 

\begin{lemma}
\label{yylem7.1} Both $H_0$ and $_0H$ have GK-dimension $1$. 
Consequently, these algebras are commutative.  
\end{lemma}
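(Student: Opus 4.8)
The plan is to feed the two $\ZZ$-gradings from Section~\ref{yysec5} into Lemma~\ref{yylem5.4}. I will argue for $H_0$ using the $\rho$-grading $H=\bigoplus_{n}H_n$; the statement for ${}_0H$ then follows by the symmetric argument with the $\lambda$-grading $H=\bigoplus_{n}{}_nH$, since every ingredient below is equally available on that side and $H$ is a domain for either grading.

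First I recall that by Lemma~\ref{yylem5.2} we have $H_n\ne 0$ for all $n\in\ZZ$; in particular $H_1\ne 0$ and $H_{-1}\ne 0$. Next I claim $\dim_k H_0=\infty$: otherwise Lemma~\ref{yylem5.4}(a), applied to the $\ZZ$-graded domain $A=H$ with $l=m=1$, would give $H\cong k[x^{\pm1}]$ and hence $\GKdim H=1$, contradicting \Hzr. Also, applying Lemma~\ref{yylem5.4}(b) to $H$ (legitimate since $H_1\ne 0$) gives $2=\GKdim H\ge\GKdim H_0+1$, so $\GKdim H_0\le 1$. Since $H_0$ is a domain over the algebraically closed field $k$, its only finite-dimensional subalgebra is $k1$; as $\dim_k H_0=\infty$, this forces $\GKdim H_0=1$. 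The same reasoning applied to the $\lambda$-grading yields $\GKdim {}_0H=1$. Finally, $H_0$ and ${}_0H$ are domains of GK-dimension $1$ over $k$, hence commutative by Lemma~\ref{yylem4.5}.

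I do not expect a genuine obstacle here: the only point requiring care is the dichotomy packaged in Lemma~\ref{yylem5.4}(a), namely that one must exclude $\dim_k H_0<\infty$, which would collapse $H$ into an honest Laurent polynomial ring of the wrong GK-dimension. I would also note that the standing hypothesis $H_0\ne {}_0H$ of this section plays no role in this particular statement; it only enters the finer structure analysis that follows.
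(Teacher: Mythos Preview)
Your proof is correct and follows essentially the same route as the paper's: both arguments combine Lemma~\ref{yylem5.2} with Lemma~\ref{yylem5.4}(a) to force $\dim_k H_0=\infty$, then invoke Lemma~\ref{yylem5.4}(b) for the upper bound $\GKdim H_0\le 1$, use the domain-over-algebraically-closed-field observation for the lower bound, and finish with Lemma~\ref{yylem4.5}. Your closing remark that the hypothesis $H_0\ne{}_0H$ is not used here is also accurate.
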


\begin{proof} Since $\GKdim H>1$, Lemmas \ref{yylem5.2} and 
\ref{yylem5.4}(a) imply that $H_0$ and $_0H$ are infinite dimensional 
over $k$. Since they  are domains, and $k$ is algebraically closed, the 
only finite dimensional subalgebra of either $H_0$ or $_0H$ is $k$. 
Thus, $H_0$ and $_0H$ have GK-dimension at least $1$.

By Lemmas \ref{yylem5.2} and \ref{yylem5.4}(b), $\GKdim H_0 \leq 1$. 
Therefore $\GKdim H_0=1$, and  similarly for $_0H$. The consequence 
follows from Lemma \ref{yylem4.5}.
\end{proof}

In the following results, we will need the notation $H_{i,j}=
H_i\cap{_j H}$, for $i,j\in \ZZ$. In view of \eqref{E5.0.2}, 
$H=\bigoplus_{m,n\in\ZZ} H_{m,n}$ is a $\ZZ^2$-graded algebra. 
Similarly, $H_0= \bigoplus_{n\in \ZZ} H_{0,n}$ and $_0H= 
\bigoplus_{n\in \ZZ} H_{n,0}$ are $\ZZ$-graded algebras. We have 
$H_0 \nsubseteq {}_0H$ or ${}_0H \nsubseteq H_0$, say $H_0 
\nsubseteq {}_0H$. Then $H_{0,n}\ne 0$ for some $n\ne0$. By 
Lemmas \ref{yylem7.1} and \ref{yylem5.4}(c),
$$\dim_k H_{0,i} \le 1$$
for all $i\in\ZZ$. In particular, $\dim_k H_{0,0} \le 1$, and so 
${}_0H \nsubseteq H_0$. Proceeding as with $H_0$, we find that
$$\dim_k H_{i,0} \le 1$$
for $i\in\ZZ$. 

\begin{lemma}
\label{yylem7.2} The intersections $H_{n,n}$ are all
one-dimensional, and
$$\bigoplus_{n\in\ZZ}\, H_{n,n}= k[z^{\pm1}]$$
where $z\in H_{1,1}$ is a grouplike element of $H$ such that
$\pi(z)=t$. Further,
$$H= H_0[z^{\pm 1};\sigma_r]= {}_0H[z^{\pm 1};\sigma_l]$$
where $\sigma_r$ and $\sigma_l$ are graded $k$-algebra automorphisms of
$H_0$ and $_0H$, respectively.
\end{lemma}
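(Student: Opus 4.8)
The plan is to exploit the two compatible $\ZZ$-gradings on $H$ simultaneously, using the diagonal grading to locate a single grouplike element $z$ that simultaneously generates $H$ over $H_0$ on one side and over ${}_0H$ on the other. First I would establish that $\bigoplus_n H_{n,n}$ is nonzero in each degree: by Lemma \ref{yylem5.2} there is $t_n \in H_n \cap {}_nH = H_{n,n}$ with $\pi(t_n)=t^n$, so each $H_{n,n}\ne 0$, and since $H$ is a domain the diagonal subalgebra $D := \bigoplus_n H_{n,n}$ is a $\ZZ$-graded domain with $D_0 = H_{0,0}$ finite-dimensional (indeed $\dim_k H_{0,0}\le 1$ as recorded just before the lemma) and with nonzero pieces in positive and negative degrees. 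By Lemma \ref{yylem5.4}(a), $D = k[x^{\pm1};\text{id}] = k[x^{\pm1}]$ for some homogeneous $x$ of minimal positive degree, so $\dim_k H_{n,n}\le 1$ for all $n$; combined with $H_{n,n}\ne 0$ this gives $\dim_k H_{n,n}=1$. The element $t_1 \in H_{1,1}$ is then (up to scalar) a generator of $D$ in degree $1$, so $D = k[t_1^{\pm1}]$ — write $z$ for a scalar multiple of $t_1$ to be pinned down below.

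Next I would show $z$ is grouplike. By Lemma \ref{yylem5.1}(b) and its left analogue, $\Delta(z) \in {}_1H \otimes H_1$, and since $z\in H_1\cap {}_1H$ we can run the argument of Lemma \ref{yylem6.2}: $H\otimes H$ is a domain (it is an iterated skew-Laurent extension of $H_0\otimes H_0$ or ${}_0H\otimes{}_0H$, which by Lemma \ref{yylem7.1} are commutative domains), $\Delta(z)$ is invertible, so by Lemma \ref{yylem4.4}(b) applied to the $\ZZ^2$-grading, $\Delta(z)\in H_{a,b}\otimes H_{a,b}$ for some $(a,b)$; the counit property forces $(a,b)=(1,1)$, and since $H_{1,1}$ is $1$-dimensional, Lemma \ref{yylem4.4}(c) says $z$ is a scalar multiple of a grouplike element. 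Choosing the scalar so that $\pi(z)=t$ (possible since $\pi(t_1)=t$ and $\pi(H_{0,0})\subseteq k$) makes $z$ itself grouplike with $\eps(z)=1$.

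Then I would prove the skew-Laurent decompositions. Because the $\rho$-grading is strong (Lemma \ref{yylem5.3}), $H_1 H_{-1} = H_0$, and since $z\in H_1$ is invertible with $z^{-1}\in H_{-1}$, we get $H_n = H_0 z^n = z^n H_0$ for all $n$: the inclusion $z^n H_0 \subseteq H_n$ is clear, and conversely $h\in H_n$ gives $h z^{-n}\in H_0$. Conjugation by the invertible grouplike $z$ is a $k$-algebra automorphism of $H$ carrying $H_0$ to $H_0$ (it preserves the $\rho$-grading), call its restriction $\sigma_r$; it also preserves the $\lambda$-grading, hence is graded on $H_0 = \bigoplus_n H_{0,n}$. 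Since $H = \bigoplus_n H_n = \bigoplus_n H_0 z^n$ is a free left $H_0$-module on $\{z^n\}$, we conclude $H = H_0[z^{\pm1};\sigma_r]$. The identical argument with the $\lambda$-grading (using ${}_1H {}_{-1}H = {}_0H$ from Lemma \ref{yylem5.3}) and conjugation by $z$ gives $H = {}_0H[z^{\pm1};\sigma_l]$ with $\sigma_l$ a graded automorphism of ${}_0H$.

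The main obstacle is the grouplike-ness of $z$: one must be careful that the domain hypothesis genuinely propagates to $H\otimes H$ in the case $H_0\cong k\Gamma$, where $\Gamma$ is only known (via Lemma \ref{yylem7.1} and Proposition \ref{yyprop2.1}) to be a torsionfree abelian group of rank $1$, not necessarily finitely generated; but such a group algebra is a directed union of copies of $k[\ZZ]$, hence a domain, and likewise $H_0\otimes H_0$. The rest is bookkeeping with the two gradings and routine verification that the conjugation automorphisms are graded.
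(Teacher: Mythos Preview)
Your proposal is essentially correct and follows the same strategy as the paper, but two points need adjustment.

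First, your ordering is backwards in one place: you invoke that $H\otimes H$ is a domain (to apply Lemma~\ref{yylem4.4}) \emph{before} establishing the skew-Laurent structure, yet it is precisely the decomposition $H = H_0[z^{\pm1};\sigma_r]$ that lets one write $H\otimes H$ as an iterated skew-Laurent ring over $H_0\otimes H_0$ and thereby conclude it is a domain. Since your skew-Laurent argument uses only the invertibility of $z$ (not its grouplike-ness), simply reorder: prove the skew-Laurent structure first, then that $H\otimes H$ is a domain, then that $z$ is grouplike. This is exactly the order the paper uses.

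Second, your ``obstacle'' paragraph misfires. You invoke Proposition~\ref{yyprop2.1} to classify $H_0$ as $k[y]$ or $k\Gamma$, but that proposition applies only to Hopf algebras, and in this section $H_0 \ne {}_0H$, so Lemma~\ref{yylem4.3}(c) is unavailable and $H_0$ is not known to be a Hopf subalgebra of $H$. The paper sidesteps this entirely: since $\dim_k H_{0,i}\le 1$ for all $i$ (recorded just before the lemma), $H_0\otimes H_0 = \bigoplus_{m,n} H_{0,m}\otimes H_{0,n}$ is a $\ZZ^2$-graded algebra in which each nonzero homogeneous component is one-dimensional and spanned by a regular element; as $\ZZ^2$ is totally ordered, this forces $H_0\otimes H_0$ to be a domain. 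No classification of $H_0$ is needed.
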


\begin{proof}
As noted above, $\dim_k H_{0,i} \le 1$ for all $i\in\ZZ$. Thus, 
$H_{0,0}=k$ and $H_0\otimes H_0= \bigoplus_{m,n\in\ZZ} H_{0,m} \otimes
H_{0,n}$ is a
$\ZZ^2$-graded algebra in which each nonzero homogeneous component is
$1$-dimensional, spanned by a regular element. Since $\ZZ^2$ is an
ordered group, it follows that
$H_0\otimes H_0$ is a domain.

For each $n\in\ZZ$, Lemma \ref{yylem5.2} provides us with a nonzero 
element $t_n\in H_{n,n}$ such that $\pi(t_n)= t^n$. Left 
multiplication by $t_{-n}$ provides an embedding of $H_{n,n}$ 
into $H_{0,0}$. Hence, $H_{n,n}$ must be $1$-dimensional. In 
particular, $H_{1,1}= kz$ where $z=t_1$. Note that 
$\eps(z)= \eps(t) =1$. Now $t_{-1}z$ and $zt_{-1}$ are nonzero 
elements of $H_{0,0} =k$, hence units. Therefore $z$ is a unit in $H$, 
with $z^{-1}\in H_{-1,-1}$. The one-dimensionality of the 
spaces $H_{n,n}$ now forces $H_{n,n}= kz^n$ for all $n$, 
whence $\bigoplus_{n\in\ZZ} H_{n,n}= k[z^{\pm1}]$.

The existence of a unit $z\in H_1$ implies that $H_n= H_0z^n= z^nH_0$ for
all $n$, and thus $H= \bigoplus_{n\in\ZZ} H_0z^n$. Further, $zH_0z^{-1}=
H_0$, and so conjugation by $z$ restricts to a $k$-algebra automorphism
$\sigma_r$ of $H_0$. We conclude that $H= H_0[z^{\pm 1};\sigma_r]$, and 
similarly $H= {}_0H[z^{\pm 1};\sigma_l]$ where $\sigma_l$ is the 
restriction to $_0H$ of conjugation by $z$. Since $z\in {}_1H$, we have
$z({}_nH)z^{-1}= {}_nH$ for all $n$, whence $\sigma_r(H_{0,n})= z(H_0\cap
{}_nH)z^{-1}= H_{0,n}$ for all $n$. This shows that $\sigma_r$ is a
graded automorphism of $H_0$. Similarly, $\sigma_l$ is a graded
automorphism of $_0H$.

Observe that $H\otimes H$ is an iterated skew-Laurent polynomial ring
over $H_0\otimes H_0$, with respect to the commuting automorphisms
$\sigma_r\otimes\id$ and $\id\otimes\sigma_r$. Since $H_0\otimes H_0$ is
a domain, so is $H\otimes H$. Now recall that $H= \bigoplus_{m,n\in\ZZ}
H_{m,n}$ is a
$\ZZ^2$-graded algebra, with
$\dim H_{1,1}=1$ as noted above. Hence, Lemma \ref{yylem4.4}(c) says
that $z$ is a scalar multiple of a grouplike element. Since $\eps(z)=1$,
we conclude that $z$ itself is grouplike.
\end{proof}

The situation of Lemma \ref{yylem7.2} does occur, even in the commutative
case. For instance, let $H= A(n,1)$ with $n\ge1$, and take $K=
\langle y\rangle$ and $t=x+K$. Here $\lambda(y)= t^n\otimes y$ and
$\rho(y)= y\otimes 1$, so $x\in H_0\setminus {}_0H$. The element $z$ of
the  lemma is just $x$.

\begin{lemma}
\label{yylem7.3}
If $H$ is not commutative, 
then either $H_0=H_{0,\geq 0}$ or $H_0=H_{0,\leq 0}$.
\end{lemma}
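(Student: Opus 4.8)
The plan is to prove the contrapositive: assuming that \emph{neither} $H_0 = H_{0,\ge 0}$ \emph{nor} $H_0 = H_{0,\le 0}$, I will show that $H$ is commutative. The hypothesis means that $H_{0,l}\ne 0$ and $H_{0,-m}\ne 0$ for some $l,m>0$. Now $H_0=\bigoplus_{n\in\ZZ}H_{0,n}$ is a $\ZZ$-graded domain (a subalgebra of the domain $H$) with $H_{0,0}=k$, so Lemma \ref{yylem5.4}(a) applies and gives $H_0=k[w^{\pm1}]$ for some nonzero homogeneous $w\in H_{0,d}$ with $d>0$; in particular $\dim_k H_{0,d}=1$.

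The second step promotes $w$ to a grouplike element. Since $w$ is a unit of $H$, $\Delta(w)$ is a unit of $H\otimes H$, which is a domain by the argument in the proof of Lemma \ref{yylem7.2}. Viewing $H$ as $\ZZ^2$-graded via the pair of gradings, $H=\bigoplus_{i,j}H_{i,j}$, Lemma \ref{yylem4.4}(b) forces $\Delta(w)\in H_{0,d}\otimes H_{0,d}$, and then Lemma \ref{yylem4.4}(c) --- using $\dim_k H_{0,d}=1$ --- shows that $w$ is a scalar multiple of a grouplike element. Replacing $w$ by that grouplike multiple (which lies in the same line $H_{0,d}$ and still generates $H_0$), I may assume $H_0=k[w^{\pm1}]$ with $w$ grouplike.

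The last step extracts commutativity from the structure theorem. By Lemma \ref{yylem7.2}, $H=H_0[z^{\pm1};\sigma_r]$ where $z$ is grouplike with $\pi(z)=t$ and $\sigma_r$ is a \emph{graded} $k$-algebra automorphism of $H_0$. Since $H_{0,d}=kw$ is the degree-$d$ component of $H_0=k[w^{\pm1}]$, gradedness forces $\sigma_r(w)=\alpha w$ for some $\alpha\in\kx$ (equivalently, invoke Lemma \ref{yylem5.4}(c)). Hence the skew-Laurent relation reads $zw=\sigma_r(w)z=\alpha wz$; applying the counit $\epsilon$ and using that $z$ and $w$ are grouplike gives $1=\alpha$. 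Thus $z$ and $w$ commute, $\sigma_r=\id$, and $H=k[w^{\pm1}][z^{\pm1}]=k[w^{\pm1},z^{\pm1}]$ is commutative, as required.

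I do not anticipate a serious obstacle: every step is a direct appeal to results already in hand (Lemmas \ref{yylem4.4}, \ref{yylem5.4}, \ref{yylem7.1}, \ref{yylem7.2}). The one point demanding care is the chain of reductions in the second step --- confirming that $H\otimes H$ is a domain so that Lemma \ref{yylem4.4} is available, pinning $w$ into a \emph{one}-dimensional bihomogeneous component, and verifying that rescaling $w$ to be grouplike does not disturb the presentation $H_0=k[w^{\pm1}]$ --- together with the observation in the last step that the graded automorphism $\sigma_r$ must scale the single generator $w$, rather than send it to $w^{-1}$ (which lies in the distinct component $H_{0,-d}$).
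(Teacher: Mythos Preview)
Your proof is correct, and the overall architecture matches the paper's: argue by contrapositive, invoke Lemma~\ref{yylem5.4}(a) to identify $H_0$ with a Laurent polynomial ring $k[w^{\pm1}]$ in a homogeneous generator, then use the skew-Laurent presentation $H=H_0[z^{\pm1};\sigma_r]$ from Lemma~\ref{yylem7.2} to reduce to showing $\sigma_r$ is trivial.

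The endgames differ, however. The paper does \emph{not} upgrade the generator to a grouplike element. Instead, writing $x$ for the generator (so $x\in H_{0,d}={}_dH\cap H_0$), it observes that $\lambda(x)=t^d\otimes x$, whence $\Delta\pi(x)=t^d\otimes\pi(x)$ in $\Hbar\otimes\Hbar$. Allowing a priori $zxz^{-1}=qx^m$ with $m\in\{\pm1\}$, it applies $\Delta\pi$ to the relation $zx=qx^mz$ and compares both sides in $k[t^{\pm1}]\otimes k[t^{\pm1}]$, which forces $m=1$ and $q=1$ simultaneously. Your route instead invests an extra step---Lemma~\ref{yylem4.4} plus the fact that $H\otimes H$ is a domain---to make $w$ grouplike, after which the gradedness of $\sigma_r$ (already recorded in Lemma~\ref{yylem7.2}) gives $\sigma_r(w)=\alpha w$, and a single application of $\epsilon$ to $zw=\alpha wz$ yields $\alpha=1$. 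Your argument is slightly more structural and arguably cleaner at the finish, at the cost of the grouplike upgrade; the paper's avoids that upgrade by pushing the computation down to the commutative quotient $\Hbar$.
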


\begin{proof} Suppose to the contrary that $H_0\neq H_{0,\geq 0}$ 
and $H_0\neq H_{0,\leq 0}$. By Lemmas \ref{yylem7.2} and
\ref{yylem5.4}(a),
$H_0=k[x^{\pm 1}]$ for some nonzero homogeneous element $x$ of positive
degree, say degree $d$. Then $x\in {}_dH$, so $\lambda(x)= t^d\otimes x$,
and consequently the element $\pi(x)\in \Hbar$ satisfies $\Delta\pi(x)=
t^d\otimes \pi(x)$.

Further, we have $H=H_0[z^{\pm 1};\sigma_r]$ as in Lemma
\ref{yylem7.2}. Hence,
$zxz^{-1}= qx^m$ for some $q\in\kx$ and $m\in\{\pm1\}$. Applying
$\Delta\pi$ to the equation $zx=qx^mz$, we obtain
$$(t\otimes t)(t^d\otimes \pi(x))= q(t^{md}\otimes \pi(x)^m)(t\otimes
t),$$ 
from which we see that $m=1$ and $q=1$. But now $H$
is commutative, contradicting our assumptions.
\end{proof}

For the rest of this section, we assume that $H$ is not commutative. By
symmetry, we may further assume that 
$H_0=H_{0,\geq 0}$. This is possible because interchanging $t$ and
$t^{-1}$ in $\Hbar$ results in interchanging $_nH$ and $_{-n}H$ for all
$n$. Recall that $\GKdim H_0=1$ whereas $H_{0,0}=k$, whence $H_{0,j}$ must
be nonzero for infinitely many $j>0$.

We will need the following well known facts about nonzero additive
submonoids $M$ of $\Znonneg$. First, if $n$ is the greatest common
divisor of the positive elements of $M$, then $M= nS= \{ns\mid s\in
S\}$ for a submonoid $S\subseteq \Znonneg$, and the greatest common
divisor of the positive elements of $S$ is $1$. Second, $S$ has a unique
minimal set of generators, consisting of those positive elements of $S$
which cannot be written as sums of two positive elements of $S$. We shall
refer to these elements as the \emph{minimal generators} of $S$. Third,
$S$ has only finitely many minimal generators, and $\Znonneg \setminus S$
is finite.

\begin{lemma}
\label{yylem7.4} 
Set $n=\gcd \{j>0 \mid H_{0,j}\neq 0\}$ and $S= \{i\ge0
\mid H_{0,ni} \neq 0\}$. Let $z\in H_{1,1}$ and $\sigma_r \in \Aut H_0$ 
as in Lemma {\rm\ref{yylem7.2}}.
\begin{enumerate}
\item
There is a scalar $q\neq 0,1$ in $k$ such that
$\sigma_r(h)=q^i h$ for all $h\in H_{0,ni}$, $i\geq 0$.
\item
If $m$ is a minimal generator of $S$, then each $h\in H_{0,nm}$ is a skew
primitive element satisfying
$\Delta(h)= h\otimes1+ z^{nm}\otimes h$.
\item
If $S= \Znonneg$, then $H\cong A(n,q)$.
\item If $S\ne\Znonneg$, then $q$ is a root of unity.
\end{enumerate}
\end{lemma}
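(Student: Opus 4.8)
The plan is to dispatch (a) quickly, deduce (b) from coassociativity by a bigrading argument, and then read off (c) and (d).

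\emph{Part (a).} By Lemma \ref{yylem7.2}, $H=H_0[z^{\pm1};\sigma_r]$ with $\sigma_r$ a graded $k$-algebra automorphism of the $\ZZ$-graded domain $H_0$; by Lemma \ref{yylem7.1}, $\GKdim H_0=1$, and since $H_{0,0}=k$ we have $H_{0,nj}\neq0$ for infinitely many $j>0$. Lemma \ref{yylem5.4}(c) applied to $H_0$ then shows $\sigma_r$ acts on each graded piece by a fixed power of a scalar, and because the nonzero pieces lie in degrees divisible by $n$ this takes the form $\sigma_r(h)=q^ih$ on $H_{0,ni}$ for some $q\in\kx$. If $q=1$ then $\sigma_r=\id$, so $z$ is central and $H=H_0[z^{\pm1}]$ is commutative, contradicting our standing assumption; hence $q\neq0,1$.

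\emph{Part (b).} Fix a minimal generator $m$ of $S$ and $0\neq h\in H_{0,nm}$, so $m\geq1$. The key point is the identity
$$(\rho\otimes\id)\Delta \;=\; (\id\otimes\lambda)\Delta ,$$
obtained by applying $\id\otimes\pi\otimes\id$ to coassociativity $(\Delta\otimes\id)\Delta=(\id\otimes\Delta)\Delta$. By Lemmas \ref{yylem5.1}(b) and \ref{yylem4.3}(a) we have $\Delta(h)\in{}_{nm}H\otimes H_0$, so by \eqref{E5.0.2} I write $\Delta(h)=\sum_{a,b}w_{a,b}$ with $w_{a,b}\in H_{a,nm}\otimes H_{0,nb}$ ($a\in\ZZ$, $b\in S$); by Lemma \ref{yylem7.2} and the bound $\dim_k H_{0,i}\leq1$, each $w_{a,b}$ lies in a subspace of dimension at most $1$. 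Evaluating the identity above on this decomposition inserts $t^a$ into the middle tensor slot of $w_{a,b}$ on the left and $t^{nb}$ on the right; comparing homogeneous components (for the grading of $H\otimes\Hbar\otimes H$ by the $\rho$-degree of the first tensor factor and the $\lambda$-degree of the third) forces $a=nb$ whenever $w_{a,b}\neq0$. By Lemma \ref{yylem7.2}, $H_{nb,nm}=z^{nb}H_{0,n(m-b)}$ is nonzero only if $b\in S$ and $m-b\in S$; since $m$ is a minimal generator, this leaves only $b=0$ and $b=m$. Hence $\Delta(h)=h'\otimes1+z^{nm}\otimes h''$ for some $h',h''\in H_{0,nm}$, and applying $\eps\otimes\id$ and $\id\otimes\eps$, using $\eps(H_{0,nm})=0$ (valid since $m\geq1$) and $\eps(z)=1$, yields $h'=h''=h$.

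\emph{Parts (c) and (d).} If $S=\Znonneg$, pick $0\neq y\in H_{0,n}$; then $H_0=k[y]$ (a polynomial ring, $H_0$ being a domain) with unique minimal generator $m=1$, so (a) and (b) give $zy=qyz$ and $\Delta(y)=y\otimes1+z^n\otimes y$. With the presentation $H=k[y][z^{\pm1};\sigma_r]$ of Lemma \ref{yylem7.2}, this is precisely the presentation of $A(n,q)$ in Construction \ref{yycon1.1}, via $x\mapsto z$. If instead $S\neq\Znonneg$, then $S$ has at least two minimal generators $m_1<m_2$ (one generator would force $S=m_1\Znonneg$, hence $m_1=1$ and $S=\Znonneg$, since the $\gcd$ of the positive elements of $S$ is $1$). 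Pick $0\neq y_i\in H_{0,nm_i}$; since $y_1^{m_2}$ and $y_2^{m_1}$ both span the one-dimensional space $H_{0,nm_1m_2}$, we have $y_1^{m_2}=c\,y_2^{m_1}$ with $c\in\kx$, and I apply $\Delta$. Using (a), the summands of $\Delta(y_1)=y_1\otimes1+z^{nm_1}\otimes y_1$ (from (b)) satisfy $(z^{nm_1}\otimes y_1)(y_1\otimes1)=q^{nm_1^2}(y_1\otimes1)(z^{nm_1}\otimes y_1)$, so the $q$-binomial theorem gives
$$\Delta(y_1)^{m_2}=\sum_{r=0}^{m_2}\binom{m_2}{r}_{q^{nm_1^2}}y_1^rz^{nm_1(m_2-r)}\otimes y_1^{m_2-r},$$
and similarly for $\Delta(y_2)^{m_1}$ with the roles of $1$ and $2$ exchanged. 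On each side the summands are separated by the $\lambda$-degree of their second tensor factor; the $r=1$ term on the left (nonzero, as $m_2\geq2$) has second-factor $\lambda$-degree $nm_1(m_2-1)$, which equals no $nm_2(m_1-s)$ with $0\leq s\leq m_1$ (that would require $s=m_1/m_2\notin\ZZ$). Hence its coefficient $\binom{m_2}{1}_{q^{nm_1^2}}=1+q^{nm_1^2}+\cdots+q^{nm_1^2(m_2-1)}$ vanishes; as $\ch k=0$, this forces $q^{nm_1^2}\neq1$ and $q^{nm_1^2m_2}=1$, so $q$ is a root of unity.

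\emph{Main obstacle.} The heart of the argument is part (b), and within it the realization that the right way to use coassociativity is to apply $\id\otimes\pi\otimes\id$, obtaining $(\rho\otimes\id)\Delta=(\id\otimes\lambda)\Delta$; this rigidifies $\Delta(h)$ so that the $\rho$-degree of its first tensor factor matches the $\lambda$-degree of its second, after which minimality of $m$ and the counit finish (b). The remainder is bookkeeping with the bigrading and the skew-Laurent structure of Lemma \ref{yylem7.2}, together with, for (d), a degree count in the $q$-binomial expansion.
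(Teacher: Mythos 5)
Your proof is correct, and parts (a), (c), (d) are essentially the paper's own arguments: (a) is Lemma \ref{yylem5.4}(c) plus the noncommutativity assumption, (c) reads off the presentation $H=k[y][z^{\pm1};\sigma_r]$ against Construction \ref{yycon1.1}, and (d) extracts the vanishing of ${m_2\choose 1}_{q^{nm_1^2}}$ from $\Delta(y_1^{m_2})=c\,\Delta(y_2^{m_1})$ by comparing the $\lambda$-degrees of the second tensor factors, exactly as the paper does with its two minimal generators $i<j$. Where you genuinely diverge is the organization of (b): the paper writes $\Delta(h_m)=\sum_{s,t}\beta^m_{st}\,z^{n(m-s)}h_s\otimes h_t$ in an explicit basis, uses the counit to pin down the border coefficients, and then kills the interior coefficients by comparing the $(s,0,t)$-components of the two sides of coassociativity; you instead apply $\id\otimes\pi\otimes\id$ to coassociativity to obtain $(\rho\otimes\id)\Delta=(\id\otimes\lambda)\Delta$ and compare bigraded components directly, so that minimality of $m$ eliminates everything except the $b=0$ and $b=m$ pieces in one stroke. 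This is a tidier packaging of the same mechanism -- your middle-slot comparison of $t^a$ against $t^{nb}$ is what the paper's comparison of $z$-powers in the $(\cdot,0,\cdot)$-components becomes after applying $\pi$ -- and it avoids the coefficient bookkeeping. The one step you assert without proof is $\eps(H_{0,nm})=0$ for $m\ge1$; it is true but needs a line: by Lemma \ref{yylem4.3}(d), $\pi(h)\in k1$ for $h\in H_0$, while $h\in{}_{nm}H$ gives $(\pi\otimes\pi)\Delta(h)=t^{nm}\otimes\pi(h)$, i.e.\ $\Delta_{\Hbar}(\pi(h))=t^{nm}\otimes\pi(h)$, which for a scalar $\pi(h)$ forces $\pi(h)=0$ when $nm\ne0$, whence $\eps(h)=\eps_{\Hbar}\pi(h)=0$. (The paper derives the analogous fact $\eps(h_i)=0$ instead from the relation $zh_j=q^jh_jz$ with $q^j\ne1$.) With that sentence added, your argument is complete.
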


\begin{proof} (a) By
Lemma \ref{yylem5.4}(c), there is a nonzero scalar $s\in k$ such that
$\sigma_r(h)= s^jh$ for all $h\in H_{0,j}$, $j\ge 0$. Setting $q= s^n$,
we have $\sigma_r(h)=q^i h$ for all $h\in H_{0,ni}$, $i\geq 0$. 

Now since we have assumed that $H= H_0[z^{\pm1};\sigma_r]$ is not
commutative, $\sigma_r$ is not the identity on $H_0$. But $H_0=
\bigoplus_{i\ge 0} H_{0,ni}$ by choice of $n$, so we must have $q\ne 1$.

(b) Since the spaces $H_{0,ni}$ are at most one-dimensional, we can
choose elements $h_i$ such that $H_{0,ni}= kh_i$ for all $i\ge 0$. In
particular, we choose $h_0=1$. 
Since we are assuming that $H$ is noncommutative, $\sigma_r$ is
nontrivial, and there is some $j>0$ such that $h_j\ne 0$ and $q^j\ne 1$.
Since $zh_j= q^jh_jz$ and $\eps(z)=1$, it follows that $\eps(h_j)=0$. For
all $i>0$, we have $h_i^j \in H_{0,nij}= kh_j^i$, and so $\eps(h_i) =0$.

Now $H_0= \bigoplus_{i\ge0} kh_i$ and $H=
H_0[z^{\pm1};\sigma_r]= \bigoplus_{i\ge0} k[z^{\pm1}]h_i$. Since $z\in
{}_1H$, it follows that $_{ni}H= \bigoplus_{s\ge0} kz^{n(i-s)}h_s$. By
Lemma \ref{yylem5.1}(b), each $\Delta(h_i) \in {}_{ni}H\otimes H_0$, and
so
$$\Delta(h_i)= \sum_{s,t\ge0} \beta^i_{st}z^{n(i-s)}h_s\otimes h_t$$
for some scalars $\beta^i_{st} \in k$. Whenever $h_s=0$ or $h_t=0$, we
choose $\beta^i_{st} =0$. In particular, this means that when
$h_i=0$, we must have $\beta^i_{st} =0$ for all $s$, $t$.
The counit axiom now implies that
$$h_i= \sum_{t\ge0} \beta^i_{0t}h_t= \sum_{s\ge0} \beta^i_{s0}
z^{n(i-s)}h_s \,,$$
from which we see that if $h_i\ne 0$, then $\beta^i_{0i}= \beta^i_{i0}=1$,
while
$\beta^i_{0t} =0$ for $t\ne i$ and $\beta^i_{s0}=0$ for $s\ne i$.
Therefore
$$\Delta(h_i)= h_i\otimes1+ z^{ni}\otimes h_i +u_i 
\qquad\text{where}\qquad u_i= \sum_{s,t\ge1}
\beta^i_{st}z^{n(i-s)}h_s\otimes h_t \,.$$
This formula also holds, trivially, when $h_i=0$.

Assume that $m$ is a minimal generator of $S$. We must show that
$u_m=0$. Observe that
\begin{align*}
h_m\otimes1\otimes1 &+ z^{nm}\otimes
h_m\otimes 1+ z^{nm}\otimes z^{nm}\otimes h_m+ z^{nm}\otimes u_m \\
 &\qquad + \sum_{s,t\ge1} \beta^m_{st} z^{n(m-s)}h_s \otimes \bigl(
h_t\otimes1+ z^{nt}\otimes h_t+ u_t \bigr) \\
 &= (\id\otimes\Delta)\Delta(h_m)= (\Delta\otimes\id)\Delta(h_m) \\
 &= h_m\otimes 1\otimes 1+ z^{nm}\otimes
h_m\otimes 1+ u_m\otimes1+ z^{nm}\otimes z^{nm}\otimes h_m \\
 &\qquad + \sum_{s,t\ge1} \beta^m_{st} (z\otimes z)^{n(m-s)} \bigl(
h_s\otimes1+ z^{ns}\otimes h_s+ u_s \bigr) \otimes h_t \,.
\end{align*}
To analyze the terms of these equations, we view $H\otimes H\otimes H$ as
the direct sum of the spaces $k[z^{\pm1}]h_r \otimes k[z^{\pm1}]h_s
\otimes k[z^{\pm1}]h_t$, which we refer to as the
\emph{$(r,s,t)$-components} of this tensor product. Fix $s,t\ge1$, and
note that the terms
$h_{s'}\otimes u_{t'}$ and $u_{s'}\otimes h_{t'}$ have no
$(s,0,t)$-components. Hence, comparing $(s,0,t)$-components in the above
equation, we find that
$$\beta^m_{st} z^{n(m-s)}h_s \otimes z^{nt}\otimes h_t = \beta^m_{st}
z^{n(m-s)}h_s \otimes z^{n(m-s)} \otimes h_t \,.$$
By our choice of coefficients, $\beta^m_{st}$ cannot be nonzero unless
$h_s$ and
$h_t$ are both nonzero, that is, $s,t\in S$. In this case, we obtain
$z^{nt}= z^{n(m-s)}$ from the equation above. But then $m=s+t$,
contradicting the assumption that
$m$ is a minimal generator of
$S$. Therefore $\beta^m_{st}=0$ for all $s,t\ge 1$, yielding $u_m=0$ as
desired.

(c) If $S= \Znonneg$, then $1$ is the minimal generator of $S$, and $H_0=
k[y]$ where $y= h_1$. By part (b), $y$ is a skew primitive element
satisfying
$\Delta(y)= y\otimes 1+ z^n\otimes y$, and therefore
$H\cong A(n,q)$ in this case.

(d) If $S\ne\Znonneg$, then $1\notin S$ and $S$ has at least two minimal
generators. Choose minimal generators $i<j$. Then $h_i^j$
and $h_j^i$ are nonzero elements of $H_{0,nij}$, and so $h_i^j= \lambda
h_j^i$ for some $\lambda\in\kx$. Since $z^{ni}h_i= q^{ni^2}h_iz^{ni}$ and
$z^{nj}h_j= q^{nj^2}h_jz^{nj}$, we have, because of part (b),
\begin{align*}
\Delta(h_i^j) &= (h_i\otimes1+ z^{ni}\otimes h_i)^j= \sum_{s=0}^j
{j\choose s}_{q^{ni^2}} h_i^{j-s} z^{nis}\otimes h_i^s \ \in\ 
\bigoplus_{s=0}^j H\otimes H_{0,nis}
\\
 \Delta(h_j^i) &= (h_j\otimes1+ z^{nj}\otimes h_j)^i= \sum_{t=0}^i
{i\choose t}_{q^{nj^2}} h_j^{i-t} z^{njt}\otimes h_j^t \ \in\ 
\bigoplus_{t=0}^i H\otimes H_{0,njt}. 
\end{align*}
Note that the expansion of $\Delta(h_j^i)$ contains no term from
$H\otimes H_{0,ni}$, because $0<ni<nj$. Since $\Delta(h_i^j)= \lambda
\Delta(h_j^i)$, it follows that
${j\choose 1}_{q^{ni^2}} =0$, from which we conclude that $q$ must be a
root of unity.
\end{proof}

When  $S\ne \Znonneg$ in Lemma \ref{yylem7.4}, it must have at least two
minimal generators.
This leads us to the Hopf algebras $B(n,p_0,\dots,p_s,q)$, after the
following lemma.

\begin{lemma}
\label{yylem7.5}
Let $\xi\in\kx$. If there is an integer $a\ge2$ such that ${a\choose
r}_\xi =0$ for $0<r<a$, then $\xi$ is a primitive $a$-th root of unity.
\end{lemma}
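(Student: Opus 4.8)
The plan is to reduce the statement to ordinary binomial coefficients by way of the quantum binomial theorem. First I would feed the hypothesis into the single instance $r=1$: since $\binom{a}{1}_\xi = 1+\xi+\cdots+\xi^{a-1}$, the assumption gives $1+\xi+\cdots+\xi^{a-1}=0$. If $\xi=1$ this sum would be $a\neq 0$ in $k$ (as $\ch k=0$), so $\xi\neq 1$; multiplying the relation by $\xi-1$ then yields $\xi^a=1$. Hence $\xi$ is a root of unity, and if $d$ denotes its multiplicative order we have $d\mid a$ and $d\ge 2$. Everything now comes down to showing that $d$ cannot be strictly less than $a$.

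Assume for contradiction that $d<a$, so $a/d\ge 2$. I would invoke the identity
$$\prod_{i=0}^{a-1}(1+\xi^i t)=\sum_{r=0}^{a}\xi^{\binom{r}{2}}\binom{a}{r}_\xi\, t^r$$
in $k[t]$ (the quantum binomial theorem; it follows by an immediate induction on $a$ from the Pascal-type recursion $\binom{a}{r}_\xi=\binom{a-1}{r}_\xi+\xi^{a-r}\binom{a-1}{r-1}_\xi$). By hypothesis every middle coefficient on the right vanishes, so the right-hand side is just $1+\xi^{\binom{a}{2}}t^a$. On the left-hand side, because $\xi$ has order $d$ and $d\mid a$, the scalars $\xi^0,\xi^1,\dots,\xi^{a-1}$ run exactly through the $d$-th roots of unity, each occurring $a/d$ times; using the elementary identity $\prod_{\zeta^d=1}(1+\zeta t)=1+(-1)^{d-1}t^d$ (obtained by evaluating $X^d-1=\prod_{\zeta^d=1}(X-\zeta)$ at $X=-t$ and tracking signs), the left-hand side equals $\bigl(1+(-1)^{d-1}t^d\bigr)^{a/d}$. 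Comparing the coefficient of $t^d$: on the left it is $\binom{a/d}{1}(-1)^{d-1}=(a/d)(-1)^{d-1}$, which is nonzero since $\ch k=0$ and $a/d\ge 2$; on the right it is $0$, since $0<d<a$. This contradiction forces $d=a$, that is, $\xi$ is a primitive $a$-th root of unity.

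The argument has no genuine obstacle; the only things needing care are bookkeeping — checking that both exponents $r=1$ and $r=d$ really lie in the open range $(0,a)$ (which uses $a\ge 2$ and $1<d<a$ respectively) — and recording the two auxiliary polynomial identities above, both standard. If one prefers, the second half can be replaced by a one-line appeal to the $q$-Lucas theorem: for $\xi$ a primitive $d$-th root of unity with $d\mid a$ and $d<a$ one gets $\binom{a}{d}_\xi=\binom{a/d}{1}\binom{0}{0}_\xi=a/d\neq 0$, again contradicting the hypothesis.
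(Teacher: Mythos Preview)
Your proof is correct. The first half---using $r=1$ to conclude $\xi^a=1$---is exactly what the paper does. For the second half, the paper takes a shorter route that coincides with your alternative remark: if $\xi$ has order $b$ with $a=bc$, then $\binom{a}{b}_\xi = \binom{c}{1} = c$, which is nonzero unless $c=1$; the paper simply cites this identity from \cite{GLt}. Your main argument via the quantum binomial theorem and the factorization $\prod_{i=0}^{a-1}(1+\xi^i t) = \bigl(1+(-1)^{d-1}t^d\bigr)^{a/d}$ is a genuinely different and more self-contained path: it avoids any appeal to $q$-Lucas or to the literature, at the cost of establishing two polynomial identities. Both approaches ultimately detect the same obstruction---a nonvanishing coefficient $\binom{a}{d}_\xi$ (equivalently the $t^d$-coefficient in your product)---but yours derives it from scratch while the paper's invokes it as a known fact.
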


\begin{proof} The assumption ${a\choose 1}_\xi =0$ implies
that $\xi^{a-1}+ \xi^{a-2} +\cdots+1 =0$, and so $\xi^a=1$. Hence, $\xi$
is a primitive
$b$-th root of unity, for some positive divisor $b$ of $a$. If $a=bc$,
then ${a\choose b}_\xi= {c\choose 1} \ne 0$ (see 
\cite[2.6(iii)]{GLt}, for instance), which is not possible for $b<a$ by
our hypotheses. Therefore $b=a$.
\end{proof}

We are now ready to prove the main result of this section.

\begin{theorem}
\label{yythm7.6} 
Assume that $H$ is a Hopf algebra satisfying \Hzrnat\ and having a Hopf
quotient $\Hbar=k[t^{\pm 1}]$ with
$t$ grouplike and $H_0\neq {_0 H}$. If $H$ is not commutative, 
then $H$ is isomorphic to one of the Hopf algebras $A(n,q)$ or
$B(n,p_0,\dots,p_s,q)$.
\end{theorem}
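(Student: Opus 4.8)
The plan is to combine the structural analysis already carried out in Lemmas \ref{yylem7.1}--\ref{yylem7.5} and split into the two cases dictated by the monoid $S$ in Lemma \ref{yylem7.4}. The set-up is fixed: by the discussion preceding Lemma \ref{yylem7.4} we may assume $H_0 = H_{0,\ge 0}$, we have $H = H_0[z^{\pm 1};\sigma_r]$ with $z$ grouplike and $\pi(z)=t$, and $H_0 = \bigoplus_{i\ge 0} H_{0,ni}$ is a commutative domain of GK-dimension $1$ with each $H_{0,ni}$ at most one-dimensional. Writing $n = \gcd\{j>0\mid H_{0,j}\ne 0\}$ and $S = \{i\ge 0\mid H_{0,ni}\ne 0\}$, Lemma \ref{yylem7.4}(a) gives the scalar $q\ne 0,1$ with $\sigma_r(h)=q^i h$ on $H_{0,ni}$. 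If $S = \Znonneg$, Lemma \ref{yylem7.4}(c) already gives $H\cong A(n,q)$ and we are done, so the substance is the case $S\ne\Znonneg$.

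So assume $S\ne\Znonneg$. First I would record that $\Znonneg\setminus S$ is finite and that $S$ has a unique finite minimal generating set, say with minimal generators listed in increasing order; by Lemma \ref{yylem7.4}(d), $q$ is a root of unity, say a primitive $\ell$-th root of unity. The goal is to extract the data $(p_0,p_1,\dots,p_s)$ and verify conditions (a)--(c) of Construction \ref{yycon1.2}. Choose generators $h_i$ with $H_{0,ni} = kh_i$ (and $h_0 = 1$); by Lemma \ref{yylem7.4}(b) each $h_i$ corresponding to a minimal generator $i$ is skew primitive with $\Delta(h_i) = h_i\otimes 1 + z^{ni}\otimes h_i$. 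The key algebraic input will be the relations among the $h_i$: since $H_0$ is a commutative domain with Hilbert-type grading determined by $S$, and since $h_ih_j\in H_{0,n(i+j)}$, the products of the minimal-generator elements must satisfy precisely the "overlap" relations coming from additive identities among the minimal generators of $S$. I expect the cleanest route is: identify $H_0$ with its normalization, which (being a $1$-dimensional commutative graded domain whose value monoid is $nS$) is $k[w]$ with $w$ in degree $n\cdot(\text{something})$, and track how the minimal generators of $S$ sit inside $\Znonneg$. Concretely, if $m_1>m_2>\dots>m_s$ are the minimal generators, one shows they are pairwise "coprime after scaling" in the sense required, extracts $m = \operatorname{lcm}$, sets $p_i = m/m_i$, and shows $p_0$ is the integer with $\ell = (n/p_0)m$.

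Next I would pin down the root-of-unity condition (c) of Construction \ref{yycon1.2} quantitatively. The relation $z^{ni}h_i = q^{ni^2}h_iz^{ni}$ from the proof of Lemma \ref{yylem7.4}(d), together with the fact that $h_i^{p_i}$ and $h_j^{p_j}$ are nonzero scalar multiples of the same element of $H_{0,nm}$, forces (via the binomial expansion of $\Delta(h_i^{p_i})$ and Lemma \ref{yylem7.5}) that $q^{n m_i^2}$ is a primitive $p_i$-th root of unity for each $i$; chasing the exponents then yields the order of $q$ as claimed and the pairwise coprimality of the $p_i$. Once the numerical data $(n,p_0,\dots,p_s,q)$ is in hand and satisfies (a)--(c), I would construct the isomorphism $B(n,p_0,\dots,p_s,q)\to H$ by sending the canonical generators $x^{\pm 1}\mapsto z^{\pm 1}$ and $y_i\mapsto$ (scalar multiple of) $h_{m_i}$, check that the defining relations \eqref{E1.2.1} are satisfied (the commutation relations follow from $\sigma_r$, the $y_i^{p_i}=y_j^{p_j}$ relations from the degree-$nm$ computation above), check surjectivity (the $h_{m_i}$ generate $H_0$ since the $m_i$ generate $S$), and check injectivity by comparing GK-dimensions or by comparing the graded pieces, which both sides compute to be at most one-dimensional in each $\ZZ^2$-degree. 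The coproduct is automatically respected on generators by Lemma \ref{yylem7.4}(b), and the counit and antipode then follow since a Hopf structure is determined by the algebra structure and $\Delta$ here.

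The main obstacle, I expect, is the combinatorial bookkeeping in the $S\ne\Znonneg$ case: showing that the minimal generators of $S$ are exactly of the form $m/p_i$ for pairwise coprime $p_i$ (as opposed to some more complicated numerical semigroup), and that the constraint forced by $q$ being a root of unity is strong enough to rule out everything except the semigroups arising in Construction \ref{yycon1.2}. In other words, the delicate point is not the Hopf-algebraic machinery — that is in place — but proving that an arbitrary $1$-dimensional commutative graded domain $H_0$ with a grading automorphism $\sigma_r$ of finite order, sitting inside our Hopf algebra, must have value semigroup of this very rigid shape. I anticipate this comes down to: (i) the vanishing of enough $q$-binomial coefficients forces $q^{nm_i^2}$ to have order exactly $p_i := m/m_i$ for each minimal generator $m_i$; (ii) hence the $m_i/\gcd$ pattern is constrained; and (iii) a counting/coprimality argument closes the loop. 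Everything else — the reductions, the strong grading, the skew-Laurent structure, the grouplikeness of $z$ — is supplied verbatim by the preceding lemmas.
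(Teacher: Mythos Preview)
Your proposal is correct and tracks the paper's proof closely: split on $S=\Znonneg$ versus $S\ne\Znonneg$, embed $H_0$ in $k[y]$ via its graded field of fractions, use the skew-primitivity of the minimal-generator elements from Lemma~\ref{yylem7.4}(b), expand $\Delta$ of suitable powers, and invoke Lemma~\ref{yylem7.5} to determine the orders of the $q^{nm_i^2}$ and thereby read off the data of Construction~\ref{yycon1.2}.

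One refinement worth noting, since you flag the semigroup combinatorics as the main obstacle: the paper does \emph{not} set $p_i := m/m_i$ a priori. Instead it defines $p_i$ as the order of $\xi_i = q^{nm_i^2}$ and then, for each pair $i<j$, compares $\Delta(y_i^a)=\Delta(y_j^b)$ at the \emph{pairwise} level $m_ia=m_jb=\mathrm{lcm}(m_i,m_j)$ with $a,b$ coprime. Because no intermediate power of $y_i$ lands in a degree hit by a power of $y_j$ (and vice versa) below this lcm, the component comparison is clean and Lemma~\ref{yylem7.5} yields $a=p_i$, $b=p_j$; this simultaneously gives $m_ip_i=m_jp_j$ and the coprimality of $p_i,p_j$, and only then does one deduce $p_i=m/m_i$. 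Your proposed order---set $p_i=m/m_i$ first and compare at the full lcm $m$---runs into overlapping intermediate terms, and the a priori definition need not give coprime $p_i$ (for instance the numerical semigroup $\langle 3,5,7\rangle$ would produce $35,21,15$); it is exactly the pairwise Hopf constraint that excludes such semigroups. Once you organize the argument pairwise, the ``rigid shape'' of $S$ that you worry about falls out automatically.
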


\begin{proof} We adopt the notation of Lemma \ref{yylem7.4} and its
proof. If $S= \Znonneg$, the lemma shows that $H\cong A(n,q)$, and we are
done. Assume now that $S\ne \Znonneg$, and let $m_1,\dots,m_s$ be the
distinct minimal generators of $S$, arranged in descending order. We must
have $s\ge2$, and Lemma \ref{yylem7.4}(d) shows that $q$ is a root of
unity, say a primitive $\ell$-th root of unity, for some positive integer
$\ell$. Hence, each of the elements
$\xi_i := q^{m_i^2n}$ is a primitive
$p_i$-th root of unity, for some positive integer $p_i$.

As in the proof of Lemma \ref{yylem5.4}(c), the graded ring of
fractions of $H_0$ can be identified with a Laurent polynomial ring
$k[y^{\pm1}]$ where
$y$ is a homogeneous element of degree $n$. Hence, we may choose $h_j=
y^j$ for $j\in S$. Set $y_i= h_{m_i}= y^{m_i}$ for $i=1,\dots,s$, and
recall from Lemma \ref{yylem7.4}(b) that
$$\Delta(y_i)= y_i\otimes1+ z^{m_in}\otimes y_i$$
for such $i$.

Next, let $1\le i<j\le s$. We claim that
\begin{enumerate}
\item $p_i,p_j\ge2$;
\item $m_ip_i= m_jp_j$;
\item $p_i$ and $p_j$ are relatively prime.
\end{enumerate}
There are
relatively prime positive integers $a$ and $b$ such that $m_ia= m_jb$,
and $y_i^a= y_j^b$. Neither of $m_i$ or $m_j$
divides the other (since they are distinct minimal generators of $S$), so
$a,b\ge2$. Since $m_ia= m_jb= \text{lcm}(m_i,m_j)$, none of
$m_i,2m_i,\dots,(a-1)m_i$ can be divisible by $m_j$, and none of
$m_j,2m_j,\dots,(b-1)m_j$ can be divisible by $m_i$. Now 
$$\sum_{r=0}^a {a\choose r}_{\xi_i} y_i^{a-r} z^{m_inr}\otimes y_i^r=
\Delta(y_i^a)= \Delta(y_j^b)= \sum_{t=0}^b {b\choose t}_{\xi_j} y_j^{b-t}
z^{m_jnt}\otimes y_j^t \,.$$
Comparing components in $H\otimes H_{0,nl}$ for $l\ge0$, we find that
${a\choose r}_{\xi_i} =0$ for $0<r<a$ and ${b\choose t}_{\xi_j} =0$ for
$0<t<b$. By Lemma \ref{yylem7.5}, $\xi_i$ is a primitive $a$-th root of
unity and
$\xi_j$ is a primitive $b$-th root of unity. Consequently, $a=p_i$ and
$b=p_j$, from which (a), (b), (c) follow.

Now set $m= m_1p_1= \cdots= m_sp_s= \text{lcm}(m_1,\dots,m_s)$. Since the
$m_i$ were chosen in descending order, the $p_i$ must be in ascending
order: $p_1< p_2< \cdots< p_s$. Further, since the
$p_i$ are pairwise relatively prime, $p_j\mid m_i$ for
all $i\ne j$. Since $m_1,\dots,m_s$ generate $S$, their greatest common
divisor is $1$, and we conclude that $m= p_1p_2\cdots p_s$. Observe that
$q^{m_imn}= q^{m_i^2np_i} =1$ and so $\ell\mid m_imn$ for all $i$, whence
$\ell\mid mn$. For any $i$, this yields $\ell\mid m_inp_i$, whence $\ell=
m'_in'p'_i$ for some positive integers $m'_i\mid m_i$, $n'\mid n$, and
$p'_i\mid p_i$. In particular, $\ell\mid m_i^2np'_i$, and so
$\xi_i^{p'_i}= q^{m_i^2np'_i} =1$. Since $\xi_i$ is a primitive $p_i$-th
root of unity, $p'_i= p_i$. Thus, all $p_i\mid\ell$, and hence
$m\mid\ell$. At this stage, we have $\ell= mn_0$ for some positive
integer $n_0\mid n$. 

Set $p_0=n/n_0$, so that $\ell= (n/p_0)p_1p_2\cdots p_s$. We next show
that $p_0$ is relatively prime to $p_i$ for each $i=1,\dots,s$. Write
$\text{lcm}(p_0,p_i)= p_0c_i$ for some positive integer $c_i$. Then
$n_0p_i$ divides $n_0p_0c_i= nc_i$, whence $\ell$ divides
$nc_i\prod_{j\ne i} p_j^2= m_i^2nc_i$. This implies
$\xi^{c_i}= q^{m_i^2nc_i}=1$ and so $p_i\mid c_i$. It follows that $p_0$
and $p_i$ are relatively prime, as claimed. Therefore $n$, $p_0$, $p_1$,
\dots, $p_s$, $q$ satisfy the hypotheses (a), (b), (c) of Construction
\ref{yycon1.2}. 

As $k$-algebras, $H_0= k[y_1,\dots,y_s] \subset k[y]$ and $H=
H_0[z^{\pm1}; \sigma_r]$ (by Lemma \ref{yylem7.2}), where $\sigma_r$ is
the restriction to $H_0$ of the $k$-algebra automorphism of $k[y]$ such
that $y\mapsto qy$ and $z$ is a grouplike element of $H$. Further,
$y_1,\dots,y_s$ are skew primitive, with $\Delta(y_i)= y_i\otimes1+
z^{m_in}\otimes y_i$. Therefore $H\cong B(n,p_0,\dots,p_s,q)$.
\end{proof}

\section{Hopf quotient $\Hbar= k[t]$ in general}
\label{yysec8}

In this section, \Hzrnat\ is not imposed.

Throughout the section, we assume that $\Hbar=k[t]$ with $t$ primitive.
In this case, the $\Hbar$-comodule algebra structures
$\rho$ and $\lambda$ on $H$ are given by derivations, as follows.

\begin{lemma} 
\label{yylem8.1}
There exist locally nilpotent $k$-linear derivations $\delta_r$ and $
\delta_l$ on $H$ such that
$$\rho(h) = \sum_{n=0}^\infty \dfrac{1}{n!}
\delta_r^n(h)\otimes t^n  
\qquad\quad{\text{and}}\qquad\quad
\lambda(h) = \sum_{n=0}^\infty \dfrac{1}{n!}
t^n\otimes \delta_l^n(h)$$
for $h\in H$. Moreover, $\delta_r$ and $\delta_l$ commute.
\end{lemma}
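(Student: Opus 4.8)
The plan is to exploit the fact that $\Hbar=k[t]$ is free as a $k$-module on the powers $t^n$, so that $\rho$ and $\lambda$ decompose into homogeneous components indexed by $n$, and to read off the four assertions (the two formulas, the derivation property, local nilpotence, and commuting) from the comodule axioms and from Lemma \ref{yylem4.2}(c).

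First I would write $\rho(h)=\sum_{n\ge0}\rho_n(h)\otimes t^n$, where $\rho_n\colon H\to H$ is the $k$-linear map obtained by projecting $H\otimes k[t]=\bigoplus_n H\otimes t^n$ onto its $n$-th summand. The counit axiom $(\id\otimes\epsilon_{\Hbar})\rho=\id$, together with $\epsilon_{\Hbar}(t^n)=\delta_{n,0}$ (which holds because $t$ is primitive, so $\epsilon_{\Hbar}(t)=0$), gives $\rho_0=\id$. Next, coassociativity of the coaction, $(\rho\otimes\id)\rho=(\id\otimes\Delta_{\Hbar})\rho$, combined with $\Delta_{\Hbar}(t^n)=\sum_{i+j=n}\binom{n}{i}t^i\otimes t^j$, yields upon comparing coefficients of $t^m\otimes t^n$ in $H\otimes\Hbar\otimes\Hbar$ the identities $\rho_m\rho_n=\binom{m+n}{m}\rho_{m+n}$ for all $m,n\ge0$. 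Setting $\delta_r:=\rho_1$ and taking $m=1$ gives $\rho_{n+1}=\tfrac{1}{n+1}\,\delta_r\rho_n$, so by induction (here is where $\ch k=0$ enters) $\rho_n=\tfrac1{n!}\delta_r^n$ for all $n$, which is the asserted formula for $\rho$.

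Then I would dispatch the remaining properties. Since $\rho$ is a $k$-algebra homomorphism, $\rho(gh)=\rho(g)\rho(h)$; comparing the coefficients of $t^1$ on both sides (the $t^1$-component of the right-hand product picks out exactly the $(1,0)$ and $(0,1)$ terms, in which $\rho_0=\id$) gives $\delta_r(gh)=\delta_r(g)h+g\,\delta_r(h)$, so $\delta_r$ is a derivation. Local nilpotence is immediate: for any $h$ the element $\rho(h)$ is a finite sum, so $\delta_r^n(h)=n!\,\rho_n(h)=0$ for $n\gg0$. The identical argument applied to $\lambda$, written as $\lambda(h)=\sum_n t^n\otimes\lambda_n(h)$, produces a locally nilpotent derivation $\delta_l:=\lambda_1$ with $\lambda_n=\tfrac1{n!}\delta_l^n$. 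Finally, commutativity comes from Lemma \ref{yylem4.2}(c), namely $(\id\otimes\rho)\lambda=(\lambda\otimes\id)\rho$; expanding both sides with the formulas already obtained gives
$$\sum_{m,n\ge0}\frac{1}{m!\,n!}\,t^m\otimes\delta_r^n\delta_l^m(h)\otimes t^n=\sum_{m,n\ge0}\frac{1}{m!\,n!}\,t^m\otimes\delta_l^m\delta_r^n(h)\otimes t^n,$$
and comparing coefficients of $t^m\otimes(-)\otimes t^n$ in $\Hbar\otimes H\otimes\Hbar$ yields $\delta_r^n\delta_l^m=\delta_l^m\delta_r^n$ for all $m,n$; the case $m=n=1$ is the assertion.

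I do not expect a serious obstacle here: everything reduces to comparing coefficients of the $k$-basis $\{t^n\}$ of $k[t]$. The only points needing a little care are (i) the use of characteristic zero to divide by $n!$ and run the induction establishing $\rho_n=\tfrac1{n!}\delta_r^n$ (and likewise for $\lambda_n$), and (ii) keeping straight which structure map is invoked for which conclusion — the counit and coassociativity of the coaction for the formula, multiplicativity of $\rho$ for the derivation property, finiteness for local nilpotence, and the compatibility relation of Lemma \ref{yylem4.2}(c) for the commuting — so that each of the four claims is obtained cleanly.
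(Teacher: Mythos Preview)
Your proposal is correct and follows essentially the same approach as the paper: write $\rho(h)=\sum_n d_n(h)\otimes t^n$, use the counit axiom for $d_0=\id$, coassociativity for $d_md_n=\binom{m+n}{m}d_{m+n}$ whence $d_n=\tfrac{1}{n!}\delta_r^n$, multiplicativity of $\rho$ for the derivation property, finiteness of the sum for local nilpotence, and Lemma~\ref{yylem4.2}(c) for commutativity. The only cosmetic difference is ordering (the paper verifies the derivation property before the induction, you after), and the paper reads off $\delta_r\delta_l=\delta_l\delta_r$ from the single $t\otimes(-)\otimes t$ coefficient rather than the general $t^m\otimes(-)\otimes t^n$ one.
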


\begin{proof}
There exist $k$-linear maps $d_0,d_1,\dots$ on $H$ such that
$$\rho(h)= \sum_{n=0}^\infty d_n(h)\otimes t^n$$
for $h\in H$, where $d_n(h)=0$ for $n\gg0$. Since $\eps(t)=0$, the 
counit axiom for $\rho$ implies that $h=d_0(h)$ for $h\in H$, so 
that $d_0= \id_H$. Since $\rho$ is an algebra homomorphism,
$$\sum_{n=0}^\infty d_n(hh')\otimes t^n= \sum_{i,j=0}^\infty 
d_i(h)d_j(h')\otimes t^{i+j}$$
for $h,h'\in H$. Comparing $(-)\otimes t$ terms in this equation, 
we find that 
$$d_1(hh')= d_0(h)d_1(h')+ d_1(h)d_0(h')$$
for $h,h'\in H$. Thus, $\delta_r := d_1$ is a derivation on $H$.

From the coassociativity of $\rho$, we obtain
\begin{align*}
\sum_{i,n=0}^\infty d_id_n(h)\otimes t^i\otimes t^n &= 
(\rho\otimes\id)\rho(h)= (\id\otimes\Delta)\rho(h) \\
 &= \sum_{m=0}^\infty \sum_{i=0}^m {m\choose i} d_m(h)
\otimes t^i\otimes t^{m-i}
\end{align*}
for $h\in H$, whence $d_id_n= {{n+i}\choose i} d_{n+i}$ for all 
$i$, $n$. An easy induction then shows that $d_n= (1/n!)d_1^n$ 
for all $n$. This establishes the asserted formula for $\rho$. 
In particular, it follows that $\delta_r$ is locally nilpotent.

The asserted formula for $\lambda$ is obtained symmetrically.

Since $(\id\otimes\rho)\lambda= (\lambda\otimes\id)\rho$ 
(Lemma \ref{yylem4.2}(c)), we get
$$\sum_{m,n=0}^\infty \dfrac{1}{m!n!} t^m\otimes \delta_r^n\delta_l^m(h)
\otimes t^n= \sum_{m,n=0}^\infty \dfrac{1}{m!n!} t^m\otimes
\delta_l^m\delta_r^n(h) \otimes t^n$$
for $h\in H$. Comparing $t\otimes(-)\otimes t$ terms, we conclude that
$\delta_r\delta_l= \delta_l\delta_r$.
\end{proof}

In particular, it follows from Lemma \ref{yylem8.1} that $H_0= 
\ker\delta_r$ and $_0H= \ker\delta_l$. 

\begin{lemma}
\label{yylem8.2}
\begin{enumerate}
\item
$\Delta\delta_r= (\id\otimes\delta_r)\Delta$ and $\Delta\delta_l=
(\delta_l\otimes\id)\Delta$.
\item
$\Delta(\ker\delta_r^n) \subseteq H\otimes \ker\delta_r^n$ and
$\Delta(\ker\delta_l^n) \subseteq \ker\delta_l^n\otimes H$ for all $n\ge
0$.
\item
$\delta_r S= -S \delta_l$ and $\delta_l S = -S\delta_r$.
\item
$S(\ker\delta_r^n) \subseteq \ker\delta_l^n$ and
$S(\ker\delta_l^n) \subseteq \ker\delta_r^n$ for all $n\ge 0$.
\end{enumerate}
\end{lemma}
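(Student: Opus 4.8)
The plan is to follow the template of the proof of Lemma~\ref{yylem5.1}, with the graded projections $\pi^r_n,\pi^l_n$ there replaced by the operators $\frac1{n!}\delta_r^n$ and $\frac1{n!}\delta_l^n$ supplied by Lemma~\ref{yylem8.1}. In each part I would feed an element $h\in H$ into the relevant identity from Lemma~\ref{yylem4.2}, expand $\rho$ and $\lambda$ using the series of Lemma~\ref{yylem8.1}, and then compare the coefficients of $t^{1}$ (for the ``degree-one'' statements (a) and (c)) or of $t^{n}$ (for the iterated forms needed in (b) and (d)).

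For (a), applying Lemma~\ref{yylem4.2}(b) to $h$ gives $\sum_{n\ge0}\frac1{n!}\Delta(\delta_r^n(h))\otimes t^n=(\Delta\otimes\id)\rho(h)=(\id\otimes\rho)\Delta(h)=\sum_{(h)}\sum_{n\ge0}\frac1{n!}h_1\otimes\delta_r^n(h_2)\otimes t^n$, and equating the coefficients of $t$ yields $\Delta\delta_r=(\id\otimes\delta_r)\Delta$; the formula for $\delta_l$ is symmetric, using Lemma~\ref{yylem4.2}(a). Iterating this (equivalently, comparing $t^n$-coefficients directly) gives $\Delta\delta_r^n=(\id\otimes\delta_r^n)\Delta$ for all $n$. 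Part (b) then follows by the standard linear-independence argument: if $h\in\ker\delta_r^n$ and $\Delta(h)=\sum_j a_j\otimes b_j$ with the $a_j$ linearly independent over $k$, then $0=\Delta(\delta_r^n(h))=\sum_j a_j\otimes\delta_r^n(b_j)$ forces each $b_j\in\ker\delta_r^n$, so $\Delta(h)\in H\otimes\ker\delta_r^n$; the other inclusion is symmetric.

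For (c), I would first note that since $t$ is primitive the antipode axiom forces $S(t)=-t$, hence $S(t^n)=(-1)^n t^n$. Applying Lemma~\ref{yylem4.2}(d) to $h$ then gives $\sum_{n\ge0}\frac1{n!}\delta_r^n(S(h))\otimes t^n=\rho S(h)=\tau(S\otimes S)\lambda(h)=\sum_{n\ge0}\frac{(-1)^n}{n!}S(\delta_l^n(h))\otimes t^n$, and comparing coefficients of $t$ gives $\delta_r S=-S\delta_l$ (comparing $t^n$-coefficients gives $\delta_r^n S=(-1)^n S\delta_l^n$); the identity $\delta_l S=-S\delta_r$ is symmetric, from Lemma~\ref{yylem4.2}(e). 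Finally (d) follows from $\delta_l^n S=(-1)^n S\delta_r^n$: for $h\in\ker\delta_r^n$ we get $\delta_l^n(S(h))=(-1)^n S(\delta_r^n(h))=0$, so $S(h)\in\ker\delta_l^n$, and the remaining inclusion is symmetric.

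The only place that needs genuine attention is the sign bookkeeping in (c): one must use that primitivity of $t$ gives $S(t)=-t$, which is the sole source of the minus signs in (c) and hence in (d). Apart from that, every step is a direct transcription of the corresponding step in Lemma~\ref{yylem5.1}, so I anticipate no real obstacle.
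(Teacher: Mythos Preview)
Your proposal is correct and follows essentially the same approach as the paper: feed the series expansions of $\rho$ and $\lambda$ from Lemma~\ref{yylem8.1} into the identities of Lemma~\ref{yylem4.2}(b,d), compare $t$-coefficients to get (a) and (c), and iterate for (b) and (d). Your explicit mention of $S(t)=-t$ is exactly what the paper encodes by writing $(-t)^n$ on the right-hand side in part (c).
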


\begin{proof} (a) For $h\in H$, apply Lemma \ref{yylem4.2}(b) to obtain
$$\begin{aligned} 
\sum_{n=0}^\infty \frac{1}{n!} \Delta\delta_r^n(h)\otimes t^n &=
(\Delta\otimes\id)\rho(h)= (\id\otimes\rho)\Delta(h)= \sum_{(h)}
h_1\otimes \rho(h_2) \\
 &=\sum_{(h)} \sum_{n=0}^\infty \frac{1}{n!}
h_1\otimes \delta_r^n(h_2)\otimes t^n.  
\end{aligned}$$
It follows that $\Delta\delta_r(h)= \sum_{(h)} h_1\otimes \delta_r(h_2)=
(\id\otimes\delta_r)\Delta(h)$, establishing the first identity. The
second is symmetric.

(b) From (a), we also have $\Delta\delta_r^n=
(\id\otimes\delta_r^n)\Delta$ and $\Delta\delta_l^n=
(\delta_l^n\otimes\id)\Delta$ for all $n\ge0$. Part (b) follows.

(c) For $h\in H$, apply Lemma \ref{yylem4.2}(d) to obtain
$$\sum_{m=0}^\infty \frac{1}{m!} \delta_r^mS(h) \otimes t^m = \rho S(h)=
\tau(S\otimes S)\lambda(h)= \sum_{n=0}^\infty \frac{1}{n!}
S\delta_l^n(h) \otimes (-t)^n.$$
Comparing $(-)\otimes t$ terms, we find that $\delta_r S(h)= -S
\delta_l(h)$.
This establishes the first formula, and the second is symmetric.

(d) From (c), we also have $\delta_r^n S= (-1)^nS
\delta_l^n$ and $\delta_l^n S = (-1)^nS\delta_r^n$ for all $n\ge 0$. Part
(d) follows.
\end{proof}

We include the full hypotheses in the following result because of its
independent interest.

\begin{theorem}
\label{yythm8.3}
Let $H$ be a Hopf algebra over a field $k$ of characteristic zero, with a
Hopf quotient $\Hbar= H/K$ which is a polynomial ring $k[t]$ with 
$t$ primitive. Assume that the rings of coinvariants for the right and
left $\Hbar$-comodule algebra structures $\rho: H\rightarrow H\otimes
\Hbar$ and $\lambda: H\rightarrow \Hbar\otimes H$ coincide. 
\begin{enumerate}
\item
There exists an element $x\in H$ such that $\rho(x)= x\otimes1+ 1\otimes
t$ and $x+K =t$. 
\item
As a $k$-algebra, $H$ is a skew polynomial ring of the
form $H=H_0[x;\partial]$ where $H_0$ is the ring of $\rho$-coinvariants
in $H$ and $\partial$ is a $k$-linear derivation on $H_0$. 
\end{enumerate}
\end{theorem}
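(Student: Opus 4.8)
The plan is to prove (a) first, and to deduce (b) as the standard consequence of possessing a \emph{slice} for the locally nilpotent derivation $\delta_r$ of Lemma~\ref{yylem8.1}. Recall from that lemma and its successor that $\rho(h)=\sum_{n\ge0}\tfrac1{n!}\delta_r^n(h)\otimes t^n$, $\lambda(h)=\sum_{n\ge0}\tfrac1{n!}t^n\otimes\delta_l^n(h)$, that $\delta_r$ and $\delta_l$ are commuting locally nilpotent derivations with $H_0=\ker\delta_r$ and ${}_0H=\ker\delta_l$, that $\delta_l S=-S\delta_r$, and that $\delta_r(K),\delta_l(K)\subseteq K$ (the last because $K$ is a coideal with $\pi(K)=0$, so $\rho(K)\subseteq K\otimes\Hbar$). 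The hypothesis is $\ker\delta_r=\ker\delta_l$, which by Lemma~\ref{yylem4.3}(c) makes $H_0$ a Hopf subalgebra of $H$. Note that the equation $\rho(x)=x\otimes1+1\otimes t$ is equivalent to $\delta_r(x)=1$, since then $\delta_r^n(x)=\delta_r^{n-1}(1)=0$ for $n\ge2$ automatically; thus (a) amounts to finding a slice $x$ of $\delta_r$ with $\pi(x)=t$.

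For the reduction, pick any $x_0\in H$ with $\pi(x_0)=t$. From $\rho=(\id_H\otimes\pi)\Delta$ one gets $(\eps_H\otimes\id_{\Hbar})\rho=\pi$, hence $\pi(h)=\sum_{n\ge0}\tfrac1{n!}\eps(\delta_r^n h)\,t^n$ for all $h\in H$. Comparing coefficients in $t=\pi(x_0)$ forces $\eps(x_0)=0$, $\eps(\delta_r x_0)=1$, and $\eps(\delta_r^n x_0)=0$ for $n\ge2$; substituting $h=\delta_r(x_0)$ into the same identity gives $\pi(\delta_r x_0)=1$, i.e.\ $a:=\delta_r(x_0)-1\in K$. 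Now if some $c'\in H$ satisfies $\delta_r(c')=a$, then the same coefficient computation applied to $h=c'$ shows $\pi(c')=\eps(c')\,1$, so $c:=c'-\eps(c')1\in K$ and $x:=x_0-c$ satisfies $\pi(x)=t$ and $\delta_r(x)=1$. Thus (a) has been reduced to the single statement $1\in\im\delta_r$.

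Establishing $1\in\im\delta_r$ is the crux, and this is where the hypothesis $H_0={}_0H$ is genuinely used. For a bare locally nilpotent derivation one need not have $1$ in the image (e.g.\ $\delta=z\,\partial/\partial y$ on $k[y,z]$), but a $k^+$-coaction arising from a Hopf quotient is far more rigid: the comodule-algebra axioms pin down $\delta_r$ on algebra generators to be divisible by a unit. The route I would take uses that $H_0$ is now a Hopf subalgebra, together with the commuting partner $\delta_l$ (with $\ker\delta_l=\ker\delta_r$, $\delta_l S=-S\delta_r$, $S(K)=K$): the symmetric pair $(\delta_r,\delta_l)$ — equivalently, the coincidence of the left and right coinvariants — is precisely what lets one ``integrate'' $a$ inside $K$, i.e.\ show that the obstruction class of $a$ (a Hochschild-type / comodule-splitting obstruction for the $\Hbar$-extension $H_0\subseteq H$) vanishes. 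I expect this vanishing, rather than the surrounding bookkeeping, to be the genuinely difficult point.

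Granting the slice $x$ (with $\delta_r(x)=1$ and $\pi(x)=t$), part (b) follows by the usual argument. Put $F_n:=\ker\delta_r^{n+1}$; then $F_0=H_0$, $\delta_r(F_n)\subseteq F_{n-1}$, $H=\bigcup_nF_n$ by local nilpotence, and $x^n\in F_n$ with $\delta_r^n(x^n)=n!$. Since $\delta_r$ kills $H_0$ it is two-sided $H_0$-linear, so $\delta_r^n(bx^n)=n!\,b$ for $b\in H_0$; an induction on $n$, via $h-\tfrac1{n!}\delta_r^n(h)\,x^n\in F_{n-1}$ for $h\in F_n$, gives $F_n=\bigoplus_{i=0}^nH_0x^i$, and applying $\delta_r^n$ to a dependence $\sum b_ix^i=0$ shows the sum is direct; hence $H=\bigoplus_{i\ge0}H_0x^i$ is free as a left $H_0$-module on the powers of $x$. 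Finally $\delta_r(xb-bx)=\delta_r(x)b-b\delta_r(x)=0$ for $b\in H_0$, so $\partial(b):=xb-bx\in H_0$ defines a $k$-linear derivation of $H_0$, and together with the freeness this identifies $H$ with the skew polynomial ring $H_0[x;\partial]$.
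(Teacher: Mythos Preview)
Your reduction of (a) to the claim $1\in\im\delta_r$ is clean and correct, and your derivation of (b) from the existence of a slice is essentially the paper's argument. However, there is a genuine gap: you explicitly do not prove $1\in\im\delta_r$. You identify it as ``the crux'' and gesture at an obstruction-theoretic argument, but none is given, and your outline does not make clear how the hypothesis $H_0={}_0H$ would enter. Without this, the proof is incomplete --- as you yourself note, $1\in\im\delta_r$ can fail for a general locally nilpotent derivation.

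Here is how the paper handles the missing step. Set $I:=\delta_r(\ker\delta_r^2)$, an ideal of $H_0$ (since $\delta_r$ is an $H_0$-bimodule map from $\ker\delta_r^2$ to $H_0$); the goal is $1\in I$. Using $\Delta\delta_r=(\id\otimes\delta_r)\Delta$ and $\Delta(\ker\delta_r^2)\subseteq H\otimes\ker\delta_r^2$, one gets $\Delta(I)\subseteq H\otimes I$, and since $I\subseteq H_0$ with $H_0$ a Hopf subalgebra (this uses $H_0={}_0H$), in fact $\Delta(I)\subseteq H_0\otimes I$. Now choose $v\in(\ker\delta_r^2)\setminus H_0$; after replacing $v$ by a suitable component of $\Delta(v)$ and rescaling, one can arrange $\eps(\delta_r v)=1$. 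Write $\Delta(v)=\sum_i a_i\otimes b_i$ with the $b_i\in\ker\delta_r^2$ linearly independent and with $b_m,\dots,b_n$ linearly independent modulo $H_0$. Then $\Delta(\delta_r v)=\sum_{i\ge m} a_i\otimes\delta_r(b_i)$ lies in $H_0\otimes H_0$ (because $\delta_r v\in H_0$ and $\Delta(H_0)\subseteq H_0\otimes H_0$ --- again using $H_0={}_0H$), so the $a_i$ for $i\ge m$ lie in $H_0$, whence $S(a_i)\in{}_0H=H_0$. The antipode axiom applied to $\delta_r v$ gives
\[
1=\eps(\delta_r v)\cdot 1=\sum_{i\ge m} S(a_i)\,\delta_r(b_i)\in I,
\]
since each $\delta_r(b_i)\in I$ and each $S(a_i)\in H_0$. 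Thus $1\in\im\delta_r$, completing (a). Note that the hypothesis $H_0={}_0H$ is used precisely to ensure $\Delta(H_0)\subseteq H_0\otimes H_0$ and $S(H_0)\subseteq H_0$, which are exactly what make the displayed sum land in $I$.
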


\begin{proof} By Lemma \ref{yylem4.3}(c), $H_0$ is a Hopf subalgebra 
of $H$.

Now $H= \bigcup_{n=0}^\infty \ker\delta_r^n$ because $\delta_r$ is
locally nilpotent. If $\ker\delta_r^2= H_0= \ker\delta_r$, then
$\ker\delta_r^{n+1}= \ker\delta_r^n$ for all $n\ge 0$, from which it
would follow that $\ker\delta_r^n = \ker\delta_r$ for all $n\ge0$, and
consequently $H=H_0$. However, there must be some $u\in H$ with
$\pi(u)=t$, and $(\pi\otimes\id)\rho(u)= (\pi\otimes\pi)\Delta(u)=
\Delta(t)= t\otimes1+ 1\otimes t$, whence $\rho(u)\ne u\otimes 1$, that
is, $u\notin H_0$. Thus, $\ker\delta_r^2\ne H_0$.

Now $I := \delta_r(\ker\delta_r^2)$ is a nonzero ideal of $H_0$, because
$\delta_r$ gives an $H_0$-bimodule homomorphism from $\ker\delta_r^2$ to
$H_0$. In view of Lemma \ref{yylem8.2}(a)(b),
$$\Delta(I)= \Delta\delta_r(\ker\delta_r^2)=
(\id\otimes\delta_r)\Delta(\ker\delta_r^2) \subseteq
(\id\otimes\delta_r)(H\otimes\ker\delta_r^2)= H\otimes I.$$
Moreover, $I\subseteq H_0$ and so $\Delta(I)\subseteq H_0\otimes H_0$.
Therefore $\Delta(I)\subseteq H_0\otimes I$.

Choose $v\in (\ker\delta_r^2)\setminus H_0$, and write $\Delta(v)=
\sum_i a_i\otimes b_i$ for some $a_i\in H$ and $b_i\in \ker\delta_r^2$.
Then $\Delta\delta_r(v)= \sum_i a_i
\otimes \delta_r(b_i)$ by Lemma \ref{yylem8.2}(a), and the counit 
axiom yields $0\ne
\delta_r(v)= \sum_i a_i\eps \delta_r(b_i)$. Hence, $\eps\delta_r(b_j) \ne
0$ for some $j$. Since $b_j$ must lie in $(\ker\delta_r^2)\setminus
H_0$, we may replace $v$ by $\eps\delta_r(b_j)^{-1}b_j$, that is, there
is no loss of generality in assuming that $\eps\delta_r(v) =1$.

Continuing with our previous notation, we write $\Delta(v)= \sum_{i=1}^n
a_i\otimes b_i$ for some $a_i\in H$ and $b_i\in \ker\delta_r^2$, but
now assuming that the $b_i$ are $k$-linearly independent. Moreover, we
may assume that there is some $m\le n$ such that $b_i\in H_0$ for $i<m$
while $b_m,\dots,b_n$ are linearly independent modulo $H_0$.
Hence, $\delta_r(b_m),\dots,\delta_r(b_n)$ are linearly independent
elements of $I$. Now 
$$\Delta\delta_r(v)= \sum_{i=m}^n a_i
\otimes \delta_r(b_i)$$
(applying Lemma \ref{yylem8.2}(a) once more).
This sum lies in $H_0\otimes H_0$, because
$\delta_r(v)\in H_0$. Since
$\delta_r(b_m),\dots,\delta_r(b_n)$ are linearly independent elements of
$H_0$, it follows that $a_m,\dots,a_n\in H_0$, whence $S(a_i)\in H_0$
for $i=m,\dots,n$. Consequently, the antipode axiom implies that
$$1_H= \eps\delta_r(v)1_H = \sum_{i=m}^n S(a_i)\delta_r(b_i)
\in I.$$

Therefore, there exists $x\in \ker\delta_r^2$ such that $\delta_r(x)=1$.
Since we may replace $x$ by $x-\eps(x)$, we may also assume that
$\eps(x)=0$. Now $\rho(x)= x\otimes 1+ 1\otimes t$, and so
$$\Delta\pi(x)= (\pi\otimes\pi)\Delta(x)= (\pi\otimes\id)\rho(x)=
\pi(x)\otimes1+ 1\otimes t$$
in $\Hbar\otimes\Hbar$.
Then $\pi(x)= (\eps\otimes\id)\Delta\pi(x)=t$, because $\eps\pi(x)=
\eps(x) =0$. 

Note that because $\rho(x^n)= \rho(x)^n= (x\otimes 1+ 1\otimes t)^n$, we
have $\delta_r^n(x^n)= n!$ for all $n\ge0$. In particular, $x^n\in
\ker\delta_r^{n+1}$. It follows that the $H_0$-bimodule embedding
$(\ker\delta_r^{n+1})/(\ker\delta_r^n) \rightarrow H_0$ induced by
$\delta_r^n$ is an isomorphism, and so
$(\ker\delta_r^{n+1})/(\ker\delta_r^n)$ is a free left or right
$H_0$-module of rank $1$, with the coset of $x^n$ as a basis element.
Therefore $H= \bigoplus_{n=0}^\infty H_0x^n= \bigoplus_{n=0}^\infty
x^nH_0$ as left and right $H_0$-modules.

Finally, since the bimodule isomorphism
$(\ker\delta_r^2)/H_0 \rightarrow H_0$ induced by
$\delta_r$ sends the coset of $x$ to the central element $1$, we
conclude that $xh-hx\in H_0$ for all $h\in H_0$. Therefore the inner
derivation $[x,-]$ restricts to a $k$-linear derivation $\partial$ on
$H_0$, and $H= H_0[x;\partial]$ as a $k$-algebra.  
\end{proof}

\section{$\Hbar= k[t]$ under \Hzrnat}
\label{yysec9}

Assume \Hzrnat, and that $\Hbar=k[t]$ with $t$ primitive.

\begin{lemma}
\label{yylem9.1}
If $A$ is a $k$-algebra supporting a locally nilpotent $k$-linear 
derivation $\delta$ with $\dim_k\ker\delta <\infty$,
then $\GKdim A \le 1$. 
\end{lemma}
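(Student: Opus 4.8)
The plan is to filter $A$ by the kernels of the powers of $\delta$ and control their dimensions, then transfer this control to the growth of powers of a finite-dimensional subspace. Since $\delta$ is locally nilpotent, $A=\bigcup_{n\ge0}\ker\delta^{n+1}$. For each $n\ge0$ the map $\delta^n$ carries $\ker\delta^{n+1}$ into $\ker\delta$ and has kernel exactly $\ker\delta^n$, so it induces a $k$-linear injection $\ker\delta^{n+1}/\ker\delta^n\hookrightarrow\ker\delta$. Writing $d=\dim_k\ker\delta<\infty$, this gives $\dim_k\bigl(\ker\delta^{n+1}/\ker\delta^n\bigr)\le d$ for every $n$, and summing yields $\dim_k\ker\delta^{n+1}\le(n+1)d$ for all $n\ge0$.

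Next I would estimate $\dim_k V^n$ for an arbitrary finite-dimensional subspace $V\subseteq A$. Enlarging $V$, we may assume $1\in V$ and, since $V$ is finite-dimensional and $A$ is the union of the spaces $\ker\delta^m$, that $V\subseteq\ker\delta^{s}$ for some $s\ge1$. Because $\delta$ is a derivation, the iterated Leibniz rule gives, for $a_1,\dots,a_n\in V$,
$$\delta^m(a_1\cdots a_n)=\sum_{i_1+\cdots+i_n=m}\binom{m}{i_1,\dots,i_n}\,\delta^{i_1}(a_1)\cdots\delta^{i_n}(a_n),$$
and each summand vanishes unless every $i_j\le s-1$, which is impossible once $m>n(s-1)$. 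Hence $V^n\subseteq\ker\delta^{\,n(s-1)+1}$, so $\dim_k V^n\le\dim_k\ker\delta^{\,n(s-1)+1}\le\bigl(n(s-1)+1\bigr)d$. Thus $\dim_k V^n$ grows at most linearly in $n$, so $\limsup_n\log_n\dim_k V^n\le1$; taking the supremum over all such $V$ gives $\GKdim A\le1$.

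I do not expect any real obstacle here: the argument is bookkeeping, and the only points requiring care are the injection $\ker\delta^{n+1}/\ker\delta^n\hookrightarrow\ker\delta$ (hence the linear bound on $\dim_k\ker\delta^{m}$) and the containment $V^n\subseteq\ker\delta^{\,n(s-1)+1}$ coming from the Leibniz expansion. Note that the characteristic-zero hypothesis is not actually used in this lemma; only the local nilpotence of $\delta$ and the finiteness of $\dim_k\ker\delta$ are needed.
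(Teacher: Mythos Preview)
Your proof is correct and follows essentially the same approach as the paper: bound $\dim_k\ker\delta^m$ linearly via the injections $\ker\delta^{n+1}/\ker\delta^n\hookrightarrow\ker\delta$, then use the Leibniz rule to trap $V^n$ inside some $\ker\delta^{O(n)}$. Your containment $V^n\subseteq\ker\delta^{\,n(s-1)+1}$ is slightly sharper than the paper's $V^n\subseteq\ker\delta^{ns}$ (obtained from $(\ker\delta^r)(\ker\delta^s)\subseteq\ker\delta^{r+s}$), but this makes no difference for the conclusion, and your observation that characteristic zero is not used matches the paper's own remark.
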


\begin{remark*}
No restriction on the base field $k$ is needed for this lemma. 
\end{remark*}

\begin{proof} 
Set $d= \dim_k\ker\delta$, and note that for all $n\ge 2$, the map
$\delta^{n-1}$ induces an embedding $(\ker\delta^n)/(\ker\delta^{n-1})
\rightarrow
\ker\delta$. Hence,
$\dim_k\ker\delta^n \le nd$ for all $n\ge 0$.

Now consider an arbitrary finite dimensional subspace $V\subseteq A$.
Since $\delta$ is locally nilpotent, $V\subseteq \ker\delta^m$ for some
$m>0$. Leibniz' Rule implies that $(\ker\delta^r)(\ker\delta^s)
\subseteq \ker\delta^{r+s}$ for all $r,s>0$, whence $V^n\subseteq
\ker\delta^{nm}$ for all $n>0$. Consequently, $\dim_k V^n \le nmd$ for
all $n>0$, proving that these dimensions grow at most linearly with $n$.
\end{proof}

\begin{lemma}
\label{yylem9.2} If $u\in H\setminus {}_0H$, then $1,u,u^2,\dots$ are
left {\rm(}or right{\rm)} linearly independent over $_0H$. 
\end{lemma}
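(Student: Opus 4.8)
The plan is to work with the locally nilpotent $k$-linear derivation $\delta_l$ supplied by Lemma~\ref{yylem8.1}, for which $_0H=\ker\delta_l$. For a nonzero element $h\in H$ let $v(h)$ be the largest integer $n\ge 0$ with $\delta_l^n(h)\ne 0$; this is well defined and finite because $\delta_l$ is locally nilpotent, and $v(h)=0$ exactly when $h\in {}_0H$. In particular $v(u)\ge 1$, since $u\notin {}_0H$.

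The key step is to show that $v$ is additive on products: $v(ab)=v(a)+v(b)$ for all nonzero $a,b\in H$. The inequality $v(ab)\le v(a)+v(b)$ is immediate from Leibniz' rule, because in the expansion of $\delta_l^n(ab)$ with $n>v(a)+v(b)$ every term carries a factor $\delta_l^i(a)$ with $i>v(a)$ or a factor $\delta_l^{n-i}(b)$ with $n-i>v(b)$, hence vanishes. For the reverse inequality, the same expansion of $\delta_l^{v(a)+v(b)}(ab)$ collapses to the single surviving term $\binom{v(a)+v(b)}{v(a)}\,\delta_l^{v(a)}(a)\,\delta_l^{v(b)}(b)$, which is nonzero because the binomial coefficient is a nonzero scalar (here $\ch k=0$ is used) and $H$ is a domain (from \Hzr). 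This is the step I expect to need the most care, and it is precisely where both the characteristic-zero and the domain hypotheses enter; everything else is bookkeeping.

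Granting additivity of $v$, I would finish as follows. Suppose $\sum_{j=0}^m c_j u^j=0$ with $c_j\in {}_0H$, not all zero. For each $j$ with $c_j\ne 0$ we have $v(c_j u^j)=v(c_j)+j\,v(u)=j\,v(u)$, and since $v(u)\ge 1$ these integers are pairwise distinct; let $M$ be the largest of them, attained at a unique index $j_0$. Applying $\delta_l^M$ kills every summand except $c_{j_0}u^{j_0}$, so $0=\delta_l^M\bigl(\sum_j c_j u^j\bigr)=\delta_l^M(c_{j_0}u^{j_0})\ne 0$, a contradiction. (If there is only one nonzero summand, then $c_{j_0}u^{j_0}=0$ already contradicts $H$ being a domain, as $u\ne 0$.) Hence $1,u,u^2,\dots$ are left linearly independent over $_0H$. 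For the right-hand version one argues identically, starting from a relation $\sum_j u^j c_j=0$ and using $v(u^j c_j)=j\,v(u)+v(c_j)=j\,v(u)$.
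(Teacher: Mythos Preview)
Your proof is correct and follows essentially the same approach as the paper's. The only difference is cosmetic: the paper computes $\delta_l^{nd}(u^n)$ by expanding $\lambda(u^n)=\lambda(u)^n$ and reading off the top $t$-coefficient, while you invoke the Leibniz rule directly to establish the additivity $v(ab)=v(a)+v(b)$; both arguments use the domain and characteristic-zero hypotheses at exactly the same step, and both finish by applying the top power of $\delta_l$ to a putative dependence relation.
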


\begin{proof}
By assumption, there is some $d>0$ such that $\delta_l^d(u)
\ne 0$ while $\delta_l^{d+1}(u) =0$. Then for any positive integer $n$,
we have
$$\lambda(u^n)= \lambda(u)^n= \bigl(\; \sum_{i=0}^d \frac{1}{i!}
t^i\otimes \delta_l^i(u) \bigr)^n= \sum_{j=0}^{nd} t^j\otimes h_j$$
for some $h_j\in H$ with $h_{nd}= \bigl( \frac{1}{d!} \delta_l^d(u)
\bigr)^n\ne 0$. Consequently,
$\delta_l^{nd}(u^n)= (nd)!h_{nd}\ne 0$ while $\delta_l^{nd+1}(u^n)= 0$.

Now if $a_0+ a_1u+ \cdots+ a_nu^n =0$ for some $a_i\in {}_0H$, an
application of $\delta_l^{nd}$ yields $a_n\delta_l^{nd}(u^n) =0$ and
hence $a_n=0$. Continuing inductively yields all $a_i=0$, verifying that
the $u^i$ are left linearly independent over $_0H$. Right linear
independence is proved the same way.
\end{proof}

\begin{lemma}
\label{yylem9.3} $\GKdim H_0 =\GKdim {}_0H= 1$. 
As a consequence, $H_0$ and $_0H$ are commutative. 
\end{lemma}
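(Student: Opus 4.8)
The plan is to trap $\GKdim H_0$ and $\GKdim{}_0H$ between $1$ and $1$, using Lemmas~\ref{yylem9.1} and~\ref{yylem9.2}, and then read off commutativity from Lemma~\ref{yylem4.5}. First I would fix $u\in H$ with $\pi(u)=t$: applying $\pi\otimes\id$ to the formula of Lemma~\ref{yylem8.1} for $\rho$ and comparing with $(\pi\otimes\id)\rho=(\pi\otimes\pi)\Delta=\Delta_{\Hbar}\pi$, the $(-)\otimes t$ terms give $\pi(\delta_r(u))=1$; the mirror computation applied to $\lambda$ gives $\pi(\delta_l(u))=1$. Hence $\delta_r(u)\ne0$ and $\delta_l(u)\ne0$, so $u\notin H_0=\ker\delta_r$ and $u\notin{}_0H=\ker\delta_l$.

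For the lower bounds: $H_0$ is a domain over the algebraically closed field $k$, so if $\dim_kH_0<\infty$ then $H_0=k$, whence $\dim_k\ker\delta_r=1$ and Lemma~\ref{yylem9.1} would force $\GKdim H\le1$, contradicting \Hzr. Therefore $\dim_kH_0=\infty$ and $\GKdim H_0\ge1$; likewise $\GKdim{}_0H\ge1$.

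For the upper bounds I would prove $\GKdim H\ge\GKdim H_0+1$, which with $\GKdim H=2$ forces $\GKdim H_0\le1$. Since $u\notin H_0$, the left-right mirror of Lemma~\ref{yylem9.2} (its proof goes through verbatim with $\delta_r$ in place of $\delta_l$) shows that $1,u,u^2,\dots$ are left linearly independent over $H_0$. Then the argument of Lemma~\ref{yylem5.4}(b) applies word for word with $H$, $H_0$, $u$ in the roles of $A$, $A_0$, $x$: its only input is that the powers of $x$ are left linearly independent over $A_0$, and it yields $\GKdim H\ge1+\GKdim B$ for every affine subalgebra $B\subseteq H_0$; taking the supremum over such $B$ gives $\GKdim H\ge1+\GKdim H_0$, hence $\GKdim H_0=1$. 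The identical reasoning with $\delta_l$, ${}_0H$ and Lemma~\ref{yylem9.2} as stated gives $\GKdim{}_0H=1$. Finally $H_0$ and ${}_0H$ are domains of GK-dimension $1$ over $k$, hence commutative by Lemma~\ref{yylem4.5}.

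The step needing the most care is confirming that the proof of Lemma~\ref{yylem5.4}(b) nowhere uses the $\ZZ$-grading of $A$, only the linear independence of the powers of $x$ over $A_0$; granting that, there is no serious obstacle, and the lemma is essentially an assembly of Lemmas~\ref{yylem9.1}, \ref{yylem9.2}, and the method of~\ref{yylem5.4}(b).
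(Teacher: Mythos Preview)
Your proposal is correct and follows essentially the same route as the paper: both arguments use Lemma~\ref{yylem9.1} for the lower bound (infinite dimensionality over $k$), pick $u$ with $\pi(u)=t$ and verify $u\notin H_0,\,{}_0H$ via the computation $(\pi\otimes\pi)\Delta(u)=\Delta_{\Hbar}(t)$, invoke Lemma~\ref{yylem9.2} for linear independence of the powers of $u$, and then transplant the argument of Lemma~\ref{yylem5.4}(b) to obtain $\GKdim H\ge 1+\GKdim H_0$. Your observation that the proof of Lemma~\ref{yylem5.4}(b) uses the grading only to secure left linear independence of the powers of $x$ over $A_0$ is exactly right, and the paper relies on the same transplant.
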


\begin{proof}
Since $\GKdim H>1$, Lemma \ref{yylem9.1} implies that $H_0$ and $_0H$
are infinite dimensional over $k$. Since they are domains, and $k$
is algebraically closed, the only finite dimensional subalgebra of either
$H_0$ or $_0H$ is $k$. Thus, $H_0$ and $_0H$ have GK-dimension at
least $1$.

Choose an element $u\in H$ such that $\pi(u)=t$. As in the proof of
Theorem \ref{yythm8.3}, we see that $u\notin H_0$, and similarly 
$u\notin {}_0H$. By Lemma \ref{yylem9.2}, the powers $u^i$ are left 
linearly independent over $_0H$. The argument of Lemma \ref{yylem5.4}(b)
therefore shows that $\GKdim {}_0H =1$, and similarly for $H_0$. 
Lemma \ref{yylem4.5} says $H_0$ and $_0 H$ are commutative.
\end{proof}

\begin{proposition}
\label{yyprop9.4} Retain the hypotheses as above. Then $H_0= {}_0H$.
\end{proposition}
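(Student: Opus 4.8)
The plan is to work with the two commuting locally nilpotent $k$-derivations $\delta_r,\delta_l$ supplied by Lemma \ref{yylem8.1}, for which $H_0=\ker\delta_r$ and ${}_0H=\ker\delta_l$, together with the descriptions of $H_0$ and ${}_0H$ coming from Lemma \ref{yylem9.3} and Proposition \ref{yyprop2.1}. Since $\delta_r\delta_l=\delta_l\delta_r$, the derivation $\delta_l$ stabilizes $H_0=\ker\delta_r$ and restricts there to a locally nilpotent $k$-derivation, and symmetrically $\delta_r$ restricts to one of ${}_0H$; both restrictions have the same kernel, namely $H_0\cap{}_0H$. The key auxiliary fact I will prove is that a commutative $k$-domain $R$ with $\GKdim R\le 1$ carrying a \emph{nonzero} locally nilpotent $k$-derivation $\delta$ must be a polynomial ring $k[x]$. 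Indeed, any $a\in R\setminus\ker\delta$ is transcendental over $\ker\delta$ — a minimal algebraic relation over $\ker\delta$, differentiated by $\delta$ and using $\ch k=0$ together with the fact that $R$ is a domain, would drop in degree — so $\ker\delta$ has transcendence degree $0$ over $k$ and hence equals $k$; then choosing $x\in\ker\delta^2$ with $\delta(x)=1$ one gets $\delta^n(x^n)=n!$, so each $\delta^n\colon\ker\delta^{n+1}\to\ker\delta=k$ is onto, whence $\ker\delta^{n+1}=k\oplus kx\oplus\cdots\oplus kx^n$ and $R=k[x]$. In particular, a group algebra $k\Gamma$ with $\Gamma$ a nontrivial torsionfree abelian group, having a non-scalar unit, admits no nonzero locally nilpotent $k$-derivation.

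Next I run a short case analysis on the shapes of $H_0$ and ${}_0H$ allowed by Proposition \ref{yyprop2.1} (each is $\cong k[y]$ or a group algebra). If $H_0$ is a group algebra, then $\delta_l|_{H_0}=0$, so $H_0\subseteq{}_0H$; the non-scalar unit of $H_0$ is then a unit of ${}_0H$, so ${}_0H$ is not a polynomial ring and hence is also a group algebra, giving symmetrically ${}_0H\subseteq H_0$ and so $H_0={}_0H$. The case in which ${}_0H$ is a group algebra is symmetric. There remains the case $H_0\cong k[y]$, ${}_0H\cong k[y']$; since a locally nilpotent $k$-derivation of a one-variable polynomial ring is a scalar multiple of $d/dy$, we have $\delta_l|_{H_0}=c\,d/dy$ and $\delta_r|_{{}_0H}=c'\,d/dy'$ for scalars $c,c'$. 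If $c=0$ then $H_0\subseteq{}_0H$, so $H_0\cap{}_0H=H_0$ is infinite-dimensional; but $H_0\cap{}_0H=\ker(\delta_r|_{{}_0H})$, which equals $k$ when $c'\ne0$, forcing $c'=0$ and therefore $H_0={}_0H$. The subcase $c'=0$ is symmetric.

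The main obstacle is the last subcase, $c\ne0\ne c'$, where $H_0\cap{}_0H=\ker(\delta_l|_{H_0})=k$. After rescaling I may assume $\delta_l(y)=\delta_r(y')=1$; since also $\delta_r(y)=0=\delta_l(y')$, the Leibniz rule gives $\delta_r([y,y'])=0=\delta_l([y,y'])$, so $[y,y']\in\ker\delta_r\cap\ker\delta_l=H_0\cap{}_0H=k$, say $[y,y']=\beta$. The counit $\epsilon$ is an algebra homomorphism into the commutative ring $k$, so it kills the commutator $[y,y']$; hence $\beta=0$ and $y,y'$ commute. Because $\delta_r(y')=1$, the argument in the proof of Theorem \ref{yythm8.3} shows $H=\bigoplus_{n\ge0}H_0(y')^n$, a free left $H_0$-module; thus $H$ is generated as a $k$-algebra by $y$ and $y'$, is commutative, and (being freely generated by $y,y'$) is a polynomial ring $k[y,y']$. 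In particular $H$ is affine, so Proposition \ref{yyprop2.3} applies, and among its three cases only $U(\fg)$ with $\fg$ the $2$-dimensional abelian Lie algebra is a polynomial ring; thus $H\cong U(\fg)$. But $U(\fg)$ is cocommutative, which forces $\lambda=\tau\rho$ (with $\tau$ the flip on $H\otimes\Hbar$), and hence $H_0={}_0H$ — contradicting $H_0\cap{}_0H=k$. This excludes the last subcase, so $H_0={}_0H$ in every case.
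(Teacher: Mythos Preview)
Your argument contains a genuine, though repairable, gap: you invoke Proposition~\ref{yyprop2.1} to classify $H_0$ and ${}_0H$ as either polynomial rings or group algebras, but that proposition applies only to \emph{Hopf algebras}. At this point in the argument, neither $H_0$ nor ${}_0H$ is known to be a Hopf subalgebra of $H$ --- Lemma~\ref{yylem4.3}(c) gives that conclusion only under the hypothesis $H_0={}_0H$, which is precisely what you are trying to prove. So your case split into ``group algebra'' versus ``polynomial ring'' is not justified as stated, and the step ``${}_0H$ is not a polynomial ring and hence is also a group algebra'' has no foundation.

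The fix is easy and already implicit in your write-up: organize the cases by whether $\delta_l|_{H_0}$ and $\delta_r|_{{}_0H}$ vanish. If $\delta_l|_{H_0}=0$ then $H_0\subseteq{}_0H$; should $\delta_r|_{{}_0H}\ne0$, your auxiliary fact gives ${}_0H=k[y']$ with $\ker(\delta_r|_{{}_0H})=k$, but this kernel equals $H_0\cap{}_0H=H_0$, contradicting $\GKdim H_0=1$. Hence $\delta_r|_{{}_0H}=0$ too, and $H_0={}_0H$. The symmetric case is identical, and the remaining case is your ``$c\ne0\ne c'$'', where your auxiliary fact legitimately supplies $H_0=k[y]$ and ${}_0H=k[y']$ without any appeal to Proposition~\ref{yyprop2.1}. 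With this restructuring your proof is correct.

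Your route in the main case is, however, quite different from the paper's and considerably longer. Once one has $y\in H_0$ with $\delta_l(y)=1$, the paper computes $\Delta(\pi(y))$ in $\Hbar\otimes\Hbar$ two ways --- as $(\pi\otimes\id)\rho(y)=\pi(y)\otimes1$ and as $(\id\otimes\pi)\lambda(y)=1\otimes\pi(y)+t\otimes1$ --- and reads off the contradiction $t\otimes1=0$ in three lines. You instead construct a second element $y'\in{}_0H$, show $[y,y']=0$ via $\epsilon$, rebuild $H$ as the polynomial ring $k[y,y']$, pass through Proposition~\ref{yyprop2.3} to identify $H$ as $U(\fg)$ for $\fg$ abelian, and finally use cocommutativity. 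This works, and the commutator computation is pleasant, but it is substantially more machinery than the paper's direct two-line contradiction via $\pi$.
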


\begin{proof} Suppose that $H_0 \nsubseteq {}_0H= \ker\delta_l$. Since
$\delta_l$ commutes with $\delta_r$, it leaves $H_0=\ker\delta_r$
invariant. Thus, $\delta_l$ restricts to a nonzero locally nilpotent
derivation $\delta$ on $H_0$. Set $H_{00}= H_0\cap {}_0H= \ker\delta$, a
subalgebra of $H_0$. 

Choose $u\in H_0\setminus {}_0H$. By Lemma \ref{yylem9.2}, the powers
$u^i$ are left linearly independent over $_0H$ and hence over $H_{00}$.
The argument of Lemma \ref{yylem5.4}(b) then shows that $\GKdim H_0 > \GKdim H_{00}$,
whence $\GKdim H_{00}=0$. Since $H_{00}$ is a domain and $k$ is
algebraically closed, it follows that $H_{00} =k$.

Since $\delta$ is locally nilpotent, some $\delta^d(u)$ will be a nonzero
element of $H_{00}$. Hence, by taking a suitable scalar multiple of
$\delta^{d-1}(u)$, we can obtain an element $y\in H_0$ such that
$\delta(y)=1$. This implies $\lambda(y)= 1\otimes y+ t\otimes 1$, whereas
$\rho(y)= y\otimes 1$ because $y\in H_0$. Set $\ybar= \pi(y)\in \Hbar$
and compute $\Delta(\ybar)$ in the following two ways:
$$\begin{aligned} 
\Delta(\ybar) &= (\pi\otimes\pi)\Delta(y)=
(\pi\otimes\id)\rho(y)= \ybar\otimes1 \\
\Delta(\ybar) &= (\pi\otimes\pi)\Delta(y)= (\id\otimes\pi)\lambda(y)=
1\otimes\ybar+ t\otimes 1. 
\end{aligned}$$
The first equation implies that 
$\ybar= (\eps\otimes\id)\Delta(\ybar)=\eps(\ybar)1_{\Hbar}$. We now have
$$\eps(\ybar)(1\otimes1)=  \Delta(\ybar)= \eps(\ybar)(1\otimes1)+
t\otimes 1$$ 
and consequently $t\otimes1 =0$, which is impossible.

Therefore $H_0\subseteq {}_0H$. The reverse inclusion is obtained
symmetrically. 
\end{proof}

\begin{subsec}  
\label{yysub9.5}
By Proposition \ref{yyprop9.4} and Theorem \ref{yythm8.3}, 
$H=H_0[x;\partial]$  (as a $k$-algebra) for some  $k$-linear 
derivation $\partial$ on $H_0$, where $x$ is an element of $H$ such 
that $\rho(x)= x\otimes1+ 1\otimes t$ and $\pi(x)=t$.

Since $K$ is an ideal of $H$, its contraction $H_0\cap K$ must be a
$\partial$-ideal of $H_0$. Consequently, $(H_0\cap K)H= H(H_0\cap
K)=\bigoplus_{n=0}^\infty (H_0\cap K)x^n$ is an ideal of $H$, contained
in $K$. By Lemma \ref{yylem4.3}(d), $H_0= k1+(H_0\cap K)$. Hence, 
$$H= k[x] \oplus (H_0\cap K)H.$$
Since $\pi$ maps $k[x]$ isomorphically 
onto $k[t]= \Hbar$, the first two equalities below follow. The other 
two are symmetric (or follow from the fact that $H_0={_0 H}$).
\begin{equation}
\label{E9.5.1}\tag{E9.5.1}
K= (H_0\cap K)H= H(H_0\cap K)= ({}_0H\cap K)H= H({}_0H\cap K).
\end{equation}
\end{subsec}

For our main result, we need to know that affineness descends from $H$
to $H_0$ in the present circumstances. This requires the following lemma.

\begin{lemma}  \label{yylem9.6}
Let $A$ be a commutative domain over $k$ with
transcendence degree $1$, and $\delta$ a $k$-linear derivation on $A$.
Assume there exist $a_1,\dots,a_r\in A$ such that $A$ is generated by
$\{\delta^i(a_j)\mid i\ge0,\ j=1,\dots,r\}$. Then $A$ is affine over
$k$. 
\end{lemma}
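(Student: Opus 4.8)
The plan is to embed $A$ into the ring of regular functions on an affine curve with only finitely many ``points at infinity'', using that $\delta$ extends to a geometrically meaningful derivation of $\Fract(A)$, and then to extract affineness of $A$ from the $\delta$-orbit structure of the generators via finiteness of integral closure.

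First I would extend $\delta$ to $F:=\Fract(A)$ by the quotient rule. If $\delta$ vanishes on $A$ then $A=k[a_1,\dots,a_r]$ is affine, so assume some $\delta(a_j)\neq0$ and relabel so that $\delta(a_1)\neq0$; set $a:=a_1$ and $f:=\delta(a)$. Since $\ch k=0$, $a$ is transcendental over $k$, and since $A$ has transcendence degree $1$, the extension $F/k(a)$ is algebraic, hence separable; thus $F$ carries a unique $k$-derivation $d/da$ with $(d/da)(a)=1$, and $\delta=f\cdot(d/da)$. Next I would check that $F$ is finitely generated over $k$: differentiating implicitly the minimal polynomials over $k(a)$ of $f$ and of each $a_j$ yields $\delta(f)\in k(a,f)$ and $\delta(a_j)\in f\cdot k(a,a_j)$, so the subfield $F_0:=k(a,f,a_2,\dots,a_r)$ is $\delta$-stable; it contains every $a_j$, hence every $\delta^i(a_j)$, so $A\subseteq F_0$; and since $a,f,a_2,\dots,a_r\in A$ this forces $F=F_0$, a finitely generated extension of $k$ of transcendence degree $1$.

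Now let $X$ be the smooth projective curve over $k$ with function field $F$; its closed points correspond to the discrete valuation subrings $\mathcal{O}_{X,P}\subset F$ that are trivial on $k$, each with residue field $k$. The geometric input is the identity $v_P\bigl((d/da)(\pi_P)\bigr)=-\operatorname{ord}_P(da)$ for a uniformizer $\pi_P$ at $P$, where $\operatorname{ord}_P(da)$ is the order at $P$ of the (nonzero, since $a\notin k$) meromorphic differential $da$, together with the elementary fact that a $k$-derivation $D$ of $F$ with $v_P(D\pi_P)\geq 0$ maps $\mathcal{O}_{X,P}$ into itself (transparent after passing to the completion $k[[\pi_P]]$). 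Hence, if $S$ denotes the union of the zero locus of $da$, the polar locus of $f$, and the polar loci of $a_1,\dots,a_r$ --- a finite, nonempty set of closed points of $X$ --- then for every $P\notin S$ the ring $\mathcal{O}_{X,P}$ is $\delta$-stable and contains each $a_j$, hence contains every $\delta^i(a_j)$ and therefore contains $A$. Thus $A\subseteq\bigcap_{P\notin S}\mathcal{O}_{X,P}=\mathcal{O}_X(X\setminus S)$, an affine Dedekind domain with fraction field $F$.

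The last step is where the $\delta$-orbit structure must be used a second time, and it is the part I expect to need the most care: being contained in an affine domain does not by itself make $A$ affine. Write $A=\bigcup_{n\geq 0}A_n$ with $A_n:=k[\delta^i(a_j):0\leq i\leq n,\ 1\leq j\leq r]$, a chain of affine subdomains of $A$, each with fraction field $F$ for $n\geq 1$, and let $B_n$ be the integral closure of $A_n$ in $F$. Each $B_n$ equals $\mathcal{O}_X(X\setminus S_n)$, where $S_n$ is the finite set of poles of the (finitely many) generators of $A_n$; from $A_n\subseteq\mathcal{O}_X(X\setminus S)$ one gets $S_n\subseteq S$, and from $A_0\subseteq A_1\subseteq\cdots$ one gets $S_0\subseteq S_1\subseteq\cdots$. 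An increasing chain of subsets of the finite set $S$ stabilizes, say $S_n=S_{n_0}$ for $n\geq n_0$, so $B_n=B_{n_0}$ for all $n\geq n_0$. By finiteness of integral closure, $B_{n_0}$ is a finitely generated $A_{n_0}$-module; and since $A_{n_0}\subseteq A_n\subseteq B_n=B_{n_0}$ for $n\geq n_0$, the ring $A=\bigcup_{n\geq n_0}A_n$ is an $A_{n_0}$-submodule of $B_{n_0}$, hence --- as $A_{n_0}$ is noetherian --- a finitely generated $A_{n_0}$-module. Therefore $A$ is module-finite over the affine $k$-algebra $A_{n_0}$, and in particular $A$ is affine over $k$.
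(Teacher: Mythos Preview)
Your proof is correct but takes a genuinely different route from the paper's. Both arguments begin the same way---extending $\delta$ to $F=\Fract(A)$ and using implicit differentiation of minimal polynomials to see that $F$ is finitely generated over $k$---but then diverge. The paper stays purely algebraic: it observes that $\delta(F_1)\subseteq F_1$ where $F_1=\Fract k[a_j,\delta(a_j)]$, clears a single denominator $v\in A_1$ so that every $\delta^2(a_j)$ lies in $A_1[v^{-1}]$, notes that $A_1[v^{-1}]$ is therefore $\delta$-stable and hence contains $A$, and then simply cites the theorem (Wadsworth, Alamelu) that every $k$-subalgebra of an affine domain of transcendence degree~$1$ is affine. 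Your argument instead passes to the smooth projective model $X$ of $F$, uses the order of the differential $da$ to identify the finitely many local rings $\mathcal{O}_{X,P}$ that $\delta$ fails to preserve, and thereby embeds $A$ in the affine Dedekind domain $\mathcal{O}_X(X\setminus S)$; you then supply a self-contained finiteness argument (stabilization of pole sets $S_n\subseteq S$ plus finiteness of integral closure) that in effect reproves the needed case of the Wadsworth--Alamelu theorem. What the paper's approach buys is brevity and freedom from geometric machinery; what yours buys is independence from that external reference, at the cost of invoking the theory of curves and differentials.
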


\begin{remark*}
Here $k$ need not be algebraically closed, but we require $\chr k =0$. 
\end{remark*}

\begin{proof} 
It is known that any $k$-subalgebra of a commutative affine domain with 
transcendence degree $1$ over $k$ is itself affine 
(cf. \cite[Theorem A]{Wa} or \cite[Corollary 1.4]{Al}). Hence, 
it suffices to show that $A$ is
contained in some affine $k$-subalgebra of its quotient field $F$.

Extend $\delta$ to $F$ via the quotient rule. Set
$$A_0= k[a_1,\dots,a_r] \subseteq A_1=
A_0[\delta(a_1),\dots,\delta(a_r)] \subseteq A,$$
and let $F_i= \Fract A_i$ for $i=0,1$, so that $F_0\subseteq F_1
\subseteq F$. Note that $\delta(A_0)\subseteq A_1$, whence
$\delta(F_0)\subseteq F_1$. Because of characteristic zero, any element
of $F$ which is algebraic over $k$ must lie in $\ker\delta$. Hence, if
all of the $a_j$ were algebraic over $k$, we would have $\delta=0$ and
$A=A_0$ finite dimensional over $k$, contradicting the assumption of
transcendence degree $1$. Thus, at least one $a_j$ is transcendental
over $k$. It follows that $F_0$ has transcendence degree $1$ over $k$,
and therefore $F$ must be algebraic over $F_0$.

Consider a nonzero element $u\in F_1$, and let $f(z)= z^n+
\alpha_{n-1}z^{n-1} +\cdots+ \alpha_0 \in F_0[z]$ be its minimal
polynomial over $F_0$. Applying $\delta$ to the equation $f(u)=0$, we
obtain
$$\bigl[ \delta(\alpha_{n-1})u^{n-1} +\dots+ \delta(\alpha_1)u+
\delta(\alpha_0)
\bigr] +
\bigl[ nu^{n-1}+ (n-1)\alpha_{n-1}u^{n-2}+ \cdots+ \alpha_1 \bigr]
\delta(u) =0,$$
from which we see that $\delta(u) \in F_1$. Thus, $\delta(F_1) \subseteq
F_1$.

In particular, each $\delta^2(a_j)\in F_1$, and so there is a nonzero
element $v\in A_1$ such that $\delta^2(a_j) \in A_1v^{-1}$ for all $j$.
It follows that the algebra $A_1[v^{-1}]$ is stable under $\delta$.
Therefore $A$ is contained in the affine $k$-algebra $A_1[v^{-1}]$,
completing the proof.
\end{proof}

\begin{proposition}  
\label{yyprop9.7}
Retain the hypotheses as above, and assume that $H$ is affine or
noetherian. Then either $H_0= k[y^{\pm1}]$ with $y$ grouplike or $H_0= k[y]$
with $y$ primitive.
\end{proposition}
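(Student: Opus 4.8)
The plan is to identify $H_0$ and apply Proposition \ref{yyprop2.1}. Recall from \ref{yysub9.5} that $H = H_0[x;\partial]$ as a $k$-algebra, so $H$ is free both as a left and as a right $H_0$-module on the basis $\{x^n : n\ge 0\}$, and in particular $x^0 = 1$ is a basis element. By Proposition \ref{yyprop9.4} and Lemma \ref{yylem4.3}(c), $H_0$ is a Hopf subalgebra of $H$, and by Lemma \ref{yylem9.3} it is a commutative domain of transcendence degree $1$ over $k$. Thus $H_0$ is a commutative Hopf domain of GK-dimension $1$, so by Proposition \ref{yyprop2.1} it is isomorphic to one of: (a) $U(\fg)$ with $\dim_k\fg = 1$, that is, $k[y]$ with $y$ primitive; (b) $k\Gamma$ with $\Gamma$ infinite cyclic, that is, $k[y^{\pm1}]$ with $y$ grouplike; or (c) $k\Gamma$ with $\Gamma$ a non-cyclic torsionfree abelian group of rank $1$. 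The proposition will follow once case (c) is excluded. For that, I will use that such a $k\Gamma$ is neither affine nor noetherian: it is not affine because every finitely generated subgroup of $\QQ$ is cyclic; and it is not noetherian because, writing $\Gamma = \bigcup_i C_i$ as an ascending union of cyclic subgroups $C_i = \langle g_i\rangle$ with $g_i = g_{i+1}^{m_i}$ for some integers $m_i\ge 2$, the factorizations $g_i - 1 = (g_{i+1}-1)(g_{i+1}^{m_i-1} + \cdots + 1)$ combined with the faithful flatness of $k\Gamma$ over each $kC_{i+1}$ produce a strictly ascending chain of ideals $(g_1-1)k\Gamma \subsetneq (g_2-1)k\Gamma \subsetneq \cdots$.

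Suppose first that $H$ is affine, generated as a $k$-algebra by $g_1,\dots,g_m$. Writing each $g_i = \sum_n a_{in}x^n$ with $a_{in}\in H_0$ (using $H = \bigoplus_n H_0x^n$), let $A_0$ be the $k$-subalgebra of $H_0$ generated by $\{\partial^j(a_{in}) : j\ge 0,\ i,n\}$. Since $A_0$ is stable under $\partial$, the skew polynomial ring $A_0[x;\partial|_{A_0}]$ is a subalgebra of $H$ containing $x$ and all $a_{in}$, hence containing each $g_i$, hence equal to $H$. Comparing $H = \bigoplus_n A_0 x^n$ with $H = \bigoplus_n H_0 x^n$ and using uniqueness of coefficients in the skew-polynomial normal form, one gets $H_0 = A_0$; thus $H_0$ is generated over $k$ by the $\partial$-orbits of finitely many of its elements. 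Lemma \ref{yylem9.6}, applied with $\delta = \partial$ (legitimate since $H_0$ has transcendence degree exactly $1$), then shows $H_0$ is affine, ruling out case (c).

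Now suppose $H$ is noetherian, hence right noetherian. For any ideal $I$ of $H_0$, the computation $IH = \bigoplus_n Ix^n$ (immediate from $H = \bigoplus_n H_0x^n$ and $IH_0 = I$) shows that $IH$ is a right ideal of $H$ with $IH\cap H_0 = I$. Hence $I\mapsto IH$ is an order-preserving injection from the ideals of $H_0$ into the right ideals of $H$, so a strictly ascending chain of ideals in $H_0$ would give one in $H$; since $H$ is right noetherian, $H_0$ is noetherian. Once more this excludes case (c), and the proof is complete.

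I do not expect a genuinely hard step here; the work is in assembling the two descent arguments. The point needing the most care is the affine case — specifically the identity $H_0 = A_0$, which must be justified via the normal form for $H = H_0[x;\partial]$ rather than assumed, and the verification that the hypotheses of Lemma \ref{yylem9.6} (transcendence degree $1$, finitely many seeds) really hold. The non-noetherianness and non-affineness of the groups in case (c) are routine but should be written out as above.
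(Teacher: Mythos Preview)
Your proof is correct and follows essentially the same strategy as the paper's: reduce to Proposition \ref{yyprop2.1} and then show that $H_0$ inherits affineness or noetherianness from $H$ via the skew-polynomial presentation $H=H_0[x;\partial]$, invoking Lemma \ref{yylem9.6} in the affine case. The only differences are cosmetic: the paper splits off the commutative case separately (via Proposition \ref{yyprop2.3}) and, in the noetherian case, passes through Molnar's theorem rather than arguing directly that the rank-one non-cyclic group algebras are non-noetherian---your uniform treatment is arguably cleaner.
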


\begin{proof} If $H$ is commutative, then it is affine, and the result
follows from Proposition \ref{yyprop2.3}, with an appropriate choice 
of the Hopf ideal $K$. In case (a) of the proposition, $H= k[x,y]$ 
with $x$ and $y$ primitive. Take $K=\langle y\rangle$, and observe 
that $H_0= k[y]$ iin this case. Case (b) does not occur, since 
$k\Gamma$ has no Hopf quotient isomorphic to $k[t]$. In case (c), 
we again take $K= \langle y\rangle$ and observe that $H_0= k[y]$. 

We now assume that $H$ is
not commutative. Since $H_0$ is a commutative domain of GK-dimension $1$,
it will suffice to show that $H_0$ is affine, by Proposition
\ref{yyprop2.1}. Write $H= H_0[x;\partial]$ as in \S\ref{yysub9.5}, and
note that because of our noncommutativity assumption, $\partial\ne0$. If $H$
is noetherian, then so is $H_0$, whence $H_0$ is affine by \cite{Mol}. Now
suppose that
$H$ is affine.

By hypothesis, $H$ can be generated by finitely many elements
$h_1,\dots,h_m$. Each $h_j$ is a finite sum of terms $a_{jl}x^l$ for
$a_{jl}\in H_0$ and $l\ge0$. Hence, there is a finite sequence
$a_1,\dots,a_r$ of elements of $H_0$ such that $H$ can be generated by
$x,a_1,\dots,a_r$. 

Consider the $k$-subalgebra
$$A= k[\partial^i(a_j)\mid i\ge0,\ j=1,\dots,r] \subseteq H_0 \,,$$
and note that $\partial(A)\subseteq A$. Now
$\bigoplus_{n=0}^\infty Ax^n$ is a subalgebra of $H$. Since
it contains
$x$ and the
$a_j$, it must equal $H$. From $\bigoplus_{n=0}^\infty Ax^n=
\bigoplus_{n=0}^\infty H_0x^n$, we conclude that $A=H_0$. Therefore
$H_0$ is affine by Lemma \ref{yylem9.6}.
\end{proof}

\section{$\Hbar= k[t]$ and $H_0=k[y]$}
\label{yysec10}

Assume \Hzrnat\ throughout this section, and additionally that $\Hbar=
k[t]$ and $H_0=k[y]$ with $t$ and
$y$ primitive. We also assume that $H$ is not commutative.

\begin{subsec}  
\label{yysub10.1}
Lemma \ref{yylem4.3}(d) implies that $H_0\cap K= H_0\cap \ker\eps=
yH_0$, and so by \eqref{E9.5.1} we get $K= yH= Hy$. By \S\ref{yysub9.5}, we
also have
$H= H_0[x;\partial]$ where $\pi(x)=t$  and $\rho(x)= x\otimes1 +
1\otimes t$; in particular, $\eps(x)=0$ and $\delta_r(x)=1$. For any
$\alpha\in\kx$, we have $\pi(\alpha x)= \alpha t$ and $\rho(\alpha x)=
\alpha x\otimes1+ 1\otimes\alpha t$, while $H= H_0[\alpha x;
\alpha\partial]$ and $\Hbar= k[\alpha t]$ with $\alpha t$ primitive.
Thus, we may replace $x$ by any nonzero  scalar multiple of itself, 
as long as we
replace $t$ and $\partial$ by corresponding multiples of themselves.
Similarly, we may add any element of $H_0\cap K$ to $x$.

Now $xy-yx=\partial(y)= f$ for some nonzero $f\in H_0$. Note that $f(0)=
\eps(f)= \eps([x,y]) =0$, so that $f$ is divisible by $y$. 
Moreover, $\partial= f\frac{d}{dy}$, and so 
$fH_0$ is a $\partial$-ideal of $H_0$. Hence, $fH=Hf$ is an ideal of $H$,
and $H/fH$ is commutative. Since $f=[x,y]$, it follows that $fH= [H,H]$.
By Lemma \ref{yylem3.7}, $fH$ is thus a Hopf ideal of $H$. In particular,
$$\Delta(f)\in fH\otimes H+ H\otimes fH= \bigoplus_{i,j=0}^\infty \bigl[
fH_0x^i\otimes H_0x^j+ H_0x^i\otimes fH_0x^j \bigr].$$
On the other hand, $\Delta(f) \in \Delta(H_0) \subseteq H_0\otimes H_0$,
from which we conclude that $\Delta(f)\in fH_0\otimes H_0+ H_0\otimes
fH_0$.

Write $f= \alpha_1y+ \cdots+ \alpha_ry^r$ for some $\alpha_i\in
k$ with $\alpha_r\ne 0$, and note that the cosets of the tensors
$y^i\otimes y^j$ for $0\le i,j\le r-1$ form a basis for the quotient
$(H_0\otimes H_0)/(fH_0\otimes H_0+ H_0\otimes
fH_0)$. Computing modulo $fH_0\otimes H_0+ H_0\otimes
fH_0$, we obtain
$$\begin{aligned} 
0 &\equiv \Delta(f)= \sum_{l=1}^r \alpha_l(y\otimes1+ 1\otimes y)^l\\
&= \sum_{\substack{i,j\ge0\\ 0<i+j\le r}} {{i+j}\choose
i} \alpha_{i+j} y^i\otimes y^j \\
 &\equiv \sum_{\substack{r>i,j\ge0\\ 0<i+j\le r}}
{{i+j}\choose i} \alpha_{i+j} y^i\otimes y^j - \sum_{m=1}^{r-1}
\alpha_my^m\otimes 1 -
\sum_{n=1}^{r-1} 1\otimes \alpha_ny^n.  
\end{aligned} $$
If $r\ge2$, the last line above has only one term involving $y\otimes
y^{r-1}$, and this term has the coefficient ${r\choose 1}\alpha_r\ne 0$,
which is impossible. Hence, $r=1$, and $f= \alpha_1y$ with $\alpha_1\ne
0$.

At this point, we replace $x$ with $\alpha_1^{-1}x$, so that $xy-yx =y$,
that is,
\begin{equation}
\partial= y \frac{d}{dy}\,.  \label{E10.1.1} \tag{E10.1.1}
\end{equation}
\end{subsec}

\begin{subsec}  \label{yysub10.2}
Write $\Delta(x)= x\otimes1+ 1\otimes x +u$ for
some $u\in H\otimes H$, and observe that
$$\begin{aligned} 
(\id\otimes\rho)\Delta(x) &= x\otimes1\otimes1+ 1\otimes
x\otimes 1+ 1\otimes1\otimes t +(\id\otimes\rho)(u)  \\
(\Delta\otimes\id)\rho(x) &= x\otimes1\otimes1+ 1\otimes
x\otimes 1+ u\otimes 1+ 1\otimes1\otimes t.  
\end{aligned}
$$
In view of Lemma \ref{yylem4.2}(b), we obtain 
$(\id\otimes\rho)(u)= u\otimes1$, from which
it follows that $u\in H\otimes H_0$. We also have
$$\begin{aligned} 
(\id\otimes\Delta)\Delta(x) &= x\otimes1\otimes1+ 1\otimes
x\otimes 1+ 1\otimes1\otimes x+ 1\otimes u +(\id\otimes\Delta)(u)  \\
(\Delta\otimes\id)\Delta(x) &= x\otimes1\otimes1+ 1\otimes
x\otimes 1+ u\otimes 1+ 1\otimes1\otimes x+ (\Delta\otimes\id)(u), 
\end{aligned}
$$
and so coassociativity of $\Delta$ implies that
\begin{equation}
\label{E10.2.1}\tag{E10.2.1}
1\otimes u +(\id\otimes\Delta)(u)= u\otimes 1+ (\Delta\otimes\id)(u). 
\end{equation}
The counit axioms imply that
$$x = x+m(\id\otimes\eps)(u)  \quad{\text{and}}\quad
x = x+m(\eps\otimes\id)(u),
$$ 
and consequently
\begin{equation}
\label{E10.2.2}\tag{E10.2.2}
m(\id\otimes\eps)(u)= m(\eps\otimes\id)(u)= 0. 
\end{equation}

Next, observe that
$$\begin{aligned} y\otimes1+ 1\otimes y &= \Delta(y)= \Delta([x,y])=
[\Delta(x),\Delta(y)] \\
 &= y\otimes1+ 1\otimes y+ [u,\Delta(y)],  
\end{aligned}$$
whence $[u,\Delta(y)] =0$. Since $u$ lies in $H\otimes H_0$, it commutes
with $1\otimes y$, and thus it must also commute with $y\otimes 1$. If we
write $u= \sum_j u_j\otimes y^j$ with the $u_j\in H$, then we find that
each $u_j$ must commute with $y$. As is easily checked, the centralizer
of $y$ in $H$ is just $H_0$. Therefore $u\in H_0\otimes H_0$. This allows
us to write
$$u=\sum_{i,j\ge0} \beta_{ij}y^i\otimes y^j$$
for some $\beta_{ij}\in k$. Equation \eqref{E10.2.2} becomes
$$\sum_{i\ge0} \beta_{i0}y^i= \sum_{j\ge0} \beta_{0j}y^j =0,$$
whence $\beta_{i0}= \beta_{0j} =0$ for all $i$, $j$. Thus,
$u=\sum_{i,j\ge1} \beta_{ij}y^i\otimes y^j$. 

At this point, \eqref{E10.2.1} becomes
$$\begin{aligned}
\sum_{i,j\ge1} \beta_{ij}\otimes y^i\otimes y^j+&
\sum_{i,j\ge1} \beta_{ij}y^i\otimes (y\otimes1+ 1\otimes y)^j \\
 &= \sum_{i,j\ge1} \beta_{ij}y^i\otimes y^j\otimes1+ \sum_{i,j\ge 1}
\beta_{ij}(y\otimes1+ 1\otimes y)^i\otimes y^j,  \end{aligned}$$
and consequently
\begin{equation}
\label{E10.2.3}\tag{E10.2.3}
\sum_{i,j\ge1} \sum_{l=0}^{j-1} {j\choose l} \beta_{ij}y^i\otimes
y^l\otimes y^{j-l}= \sum_{i,j\ge1} \sum_{m=0}^{i-1} {i\choose m}
\beta_{ij} y^{i-m}\otimes y^m\otimes y^j.  
\end{equation}
Comparing terms in \eqref{E10.2.3} involving 
$y^{i-1}\otimes y\otimes y^{j-1}$, we
see that $j\beta_{i-1,j}= i\beta_{i,j-1}$ for all $i,j\ge 2$. It follows
that
$$\begin{aligned} \beta_{ij} &= \frac{j+1}{i}\beta_{i-1,j+1}=
\frac{(j+1)(j+2)}{i(i-1)}
\beta_{i-2,j+2}= \cdots  \\
 &= \frac{(j+1)\cdots(j+i-1)}{i!}\beta_{1,j+i-1}
 = \frac{1}{i+j}{{i+j}\choose i} \beta_{1,j+i-1}  
\end{aligned}$$
for all $i,j\ge2$, and similarly $\beta_{ij}= \frac{1}{i+j}{{i+j}\choose
j} \beta_{i+j-1,1}$. Set $\gamma_s= (1/s)\beta_{1,s-1}$ for all
$s\ge 2$, so that $\beta_{ij}= {{i+j}\choose i} \gamma_{i+j}$ for all
$i,j\ge1$. We now have
\begin{equation}
\begin{aligned}
u &= \sum_{i,j\ge1} {{i+j}\choose i} \gamma_{i+j} y^i\otimes
y^j  \\
 &= \sum_{s\ge1} \gamma_s \bigl[ (y\otimes1+ 1\otimes y)^s -
y^s\otimes1- 1\otimes y^s \bigr]  \notag \\
 &= g(y\otimes1+ 1\otimes y)- g\otimes1- 1\otimes
g,  \end{aligned}  \label{E10.2.4}\tag{E10.2.4}
\end{equation}
where $g= \sum_{s\ge1} \gamma_sy^s\in H_0$. Note that since $g$ has zero
constant term, $g\in K$. 

The element $x'= x-g\in H$ satisfies $\pi(x')=t$ and $\rho(x')=
x'\otimes1+ 1\otimes t$, as well as $x'y-yx'=y$. In view of \eqref{E10.2.4}
$$\Delta(x')= x\otimes1+ 1\otimes x+ u- g(y\otimes1+ 1\otimes y)=
x'\otimes1+ 1\otimes x'.$$
Thus, we may replace $x$ by $x'$, so that
$$\Delta(x)= x\otimes1+ 1\otimes x. $$
Therefore $x$ is primitive.
\end{subsec}

\begin{theorem}
\label{yythm10.3}
Assume \Hzrnat, and suppose that $\Hbar= k[t]$ and $H_0=k[y]$ with
$t$ and $y$ primitive. If $H$ is not commutative, then $H\cong U(\gfrak)$
where $\gfrak$ is a
$2$-di\-men\-sion\-al nonabelian Lie algebra over $k$.
\end{theorem}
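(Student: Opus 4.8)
The plan is to read the conclusion off from the structural facts already extracted in §\ref{yysub10.1} and §\ref{yysub10.2}, and then to promote the resulting $k$-algebra identification to an identification of Hopf algebras. So I would begin by recording what is in hand. By §\ref{yysub10.1}, after suitably rescaling $x$ we have, as a $k$-algebra, $H = k[y][x;\partial]$ with $\partial = y\frac{d}{dy}$ (see \eqref{E10.1.1}); equivalently, $H$ is the $k$-algebra on two generators $x,y$ subject to the single relation $xy - yx = y$, and $\{\,y^i x^j \mid i,j\ge 0\,\}$ is a $k$-basis of $H$. By §\ref{yysub10.2}, after the further replacement of $x$ by $x-g$ (which preserves all of the above, since $g\in H_0\cap K$ is central in $H_0=k[y]$ and $\eps(g)=0$), the element $x$ is primitive, $\Delta(x)=x\otimes1+1\otimes x$; moreover $y$ is primitive by hypothesis, and $\eps(x)=\eps(y)=0$.

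Next I would introduce the target. Let $\gfrak$ be the $2$-dimensional Lie algebra over $k$ with basis $\{X,Y\}$ and bracket $[X,Y]=Y$; this is the Lie algebra of Theorem \ref{yythm0.1}(IIb), and it is nonabelian because $[X,Y]=Y\neq 0$. Its enveloping algebra $U(\gfrak)$ is the $k$-algebra on generators $X,Y$ with the relation $XY-YX=Y$, it has PBW basis $\{\,Y^i X^j \mid i,j\ge 0\,\}$, and it carries its standard Hopf structure in which $X$ and $Y$ are primitive and $\eps(X)=\eps(Y)=0$. Since $x$ and $y$ satisfy the defining relation of $U(\gfrak)$ and generate $H$ as a $k$-algebra, the universal property of $U(\gfrak)$ yields a surjective $k$-algebra homomorphism $\psi\colon U(\gfrak)\to H$ with $\psi(X)=x$ and $\psi(Y)=y$. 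As $\psi$ carries the PBW basis element $Y^iX^j$ to $y^ix^j$, it maps the PBW basis of $U(\gfrak)$ bijectively onto the $k$-basis $\{y^ix^j\}$ of $H$; hence $\psi$ is a $k$-algebra isomorphism.

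Finally I would verify that $\psi$ is an isomorphism of Hopf algebras. The two composites $(\psi\otimes\psi)\circ\Delta_{U(\gfrak)}$ and $\Delta_H\circ\psi$ are $k$-algebra homomorphisms $U(\gfrak)\to H\otimes H$, and they agree on the algebra generators $X,Y$ (both send $X\mapsto x\otimes1+1\otimes x$ and $Y\mapsto y\otimes1+1\otimes y$), so they coincide on all of $U(\gfrak)$; similarly $\eps_H\circ\psi$ and $\eps_{U(\gfrak)}$ agree on $X,Y$, hence coincide. Compatibility of $\psi$ with the two antipodes is then automatic, since the antipode of a Hopf algebra is the unique convolution inverse of the identity. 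Therefore $H\cong U(\gfrak)$ as Hopf algebras, with $\gfrak$ two-dimensional and nonabelian, as claimed. There is no real obstacle remaining: all of the substance was carried out in §\ref{yysub10.1}–§\ref{yysub10.2}, which deliver the relation $xy-yx=y$ and the primitivity of $x$ and $y$; the only point requiring a moment's care is the standard principle (used repeatedly in Section \ref{yysec1}) that an algebra isomorphism matching up a set of algebra generators automatically respects the comultiplication, counit, and antipode, and so is a Hopf algebra isomorphism.
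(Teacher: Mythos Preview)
Your proof is correct and follows essentially the same approach as the paper: you invoke the structural results of \S\ref{yysub10.1} and \S\ref{yysub10.2} to obtain $H=k[y][x;y\frac{d}{dy}]$ with both $x$ and $y$ primitive, and then identify $H$ with $U(\gfrak)$. Your version is slightly more explicit in verifying that the algebra isomorphism respects the Hopf structure, but this is a routine elaboration rather than a different route.
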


\begin{proof} The work of
Subsections \ref{yysub10.1} and
\ref{yysub10.2} shows that there is a primitive element $x\in H$ such
that $H= k[y]\bigl[ x; y\tfrac{d}{dy}
\bigr]$. Thus, $H\cong U(\gfrak)$ where $\gfrak= kx+ky$ is a
$2$-di\-men\-sion\-al nonabelian Lie algebra over $k$.
\end{proof}

\section{$\Hbar=k[t]$ and $H_0=k[y^{\pm1}]$}
\label{yysec11}

In this section, we assume \Hzrnat, and also that $\Hbar=k[t]$ and
$H_0=k[y^{\pm1}]$ with
$t$ primitive and $y$ grouplike. As in the previous section, we also assume
that $H$ is not commutative.

\begin{subsec} \label{yysub11.1}
Lemma \ref{yylem4.3}(d) implies that $H_0\cap K= H_0\cap \ker\eps=
(y-1)H_0$, and so this time \eqref{E9.5.1} tells us that 
$K= (y-1)H= H(y-1)$. In particular, $\pi(y)=1$. From \S\ref{yysub9.5}, we
have $H= H_0[x;\partial]$ for $x\in H$ with 
$\pi(x)=t$ and $\rho(x)= x\otimes1 +1\otimes t$. As remarked in the 
beginning of the last section, we may replace $x$ by a nonzero
scalar multiple of itself, and we may add an element of $H_0\cap K$ to
$x$. Since $\pi(y)=1$, we may also replace $x$ by a power of $y$ times 
$x$. Because $\eps(t)=0$, we always have $\eps(x)=0$.

Now $xy-yx=f$ for some nonzero $f\in H_0$, and $f(1)= \eps(f) =0$. 
Similarly to the last section, we see that $fH=[H,H]$ is a Hopf ideal 
of $H$, and then that $\Delta(f)\in fH_0\otimes H_0+ H_0\otimes fH_0$.

Write $f=y^r(\alpha_0+\alpha_1y+ \cdots+ \alpha_sy^s)$ for some $r\in
\ZZ$ and $s\in\Zpos$, and some $\alpha_i\in k$ with $\alpha_0,\alpha_s \ne
0$. (We must have $s>0$ because $f(1)=0$.) The cosets of the tensors
$y^i\otimes y^j$ for $0\le i,j\le s-1$ form a basis for $(H_0\otimes
H_0)/ (fH_0\otimes H_0+ H_0\otimes fH_0)$. Computing modulo $fH_0\otimes
H_0+ H_0\otimes fH_0$, we obtain
$$0\equiv (y^{-r}\otimes y^{-r})\Delta(f)= \sum_{i=0}^s
\alpha_iy^i\otimes y^i \equiv \sum_{i=0}^{s-1} \alpha_iy^i\otimes y^i +
\alpha_s^{-1} \sum_{l,m=0}^{s-1} \alpha_ly^l \otimes \alpha_my^m .$$

Looking at $1\otimes1$ terms, we see that
$\alpha_0+\alpha_s^{-1}\alpha_0^2=0$, whence $\alpha_0=-\alpha_s$.
Looking at $y^l\otimes1$ terms for $0<l<s$, we see that
$\alpha_s^{-1}\alpha_l\alpha_0=0$, whence $\alpha_l=0$. Hence,
$f=\alpha_sy^r(y^s-1)$. 

At this point, we replace $x$ by $\alpha_s^{-1}y^{1-r}x$, which replaces
$f$ by $y^n-y$, where $n=s+1\ge2$. Thus,
\begin{equation}
\partial= (y^n-y)\frac{d}{dy}\,, \qquad n\in \ZZ_{\ge2} \,. 
\tag{E11.1.1} \label{E11.1.1}
\end{equation}
\end{subsec} 

\begin{subsec} \label{yysub11.2}
Write $\Delta(x)= x\otimes y^{n-1}+ 1\otimes x +u$ for
some $u\in H\otimes H$, and observe that
$$\begin{aligned} 
(\id\otimes\rho)\Delta(x) &= x\otimes y^{n-1}\otimes1+ 1\otimes
x\otimes 1+ 1\otimes1\otimes t +(\id\otimes\rho)(u)  \\
(\Delta\otimes\id)\rho(x) &= x\otimes y^{n-1}\otimes1+ 1\otimes
x\otimes 1+ u\otimes 1+ 1\otimes1\otimes t.  
\end{aligned}$$
In view of Lemma \ref{yylem4.2}(b), we obtain $(\id\otimes\rho)(u)=
u\otimes1$,  from which it follows that $u\in H\otimes H_0$. We also have
\begin{multline}
(\id\otimes\Delta)\Delta(x) =  \\
 x\otimes y^{n-1}\otimes y^{n-1}+
1\otimes x\otimes y^{n-1}+ 1\otimes1\otimes x+ 1\otimes u
+(\id\otimes\Delta)(u)   \notag
\end{multline}  
 \begin{multline} 
(\Delta\otimes\id)\Delta(x) =  \\
 x\otimes y^{n-1}\otimes y^{n-1}+ 1\otimes
x\otimes y^{n-1}+ u\otimes y^{n-1}+ 1\otimes1\otimes x+
(\Delta\otimes\id)(u),   \notag
\end{multline}
and so coassociativity of $\Delta$ implies that
\begin{equation}
\label{E11.2.1}\tag{E11.2.1}
1\otimes u +(\id\otimes\Delta)(u)= u\otimes
y^{n-1}+ (\Delta\otimes\id)(u).  
\end{equation}

The counit axioms imply that
$$ x = x+m(\id\otimes\eps)(u)  \quad{\text{and}}\quad
x = x+m(\eps\otimes\id)(u),
$$ 
and consequently
\begin{equation}
\label{E11.2.2}
\tag{E11.2.2}
m(\id\otimes\eps)(u)= m(\eps\otimes\id)(u)= 0.  
\end{equation}

Next, observe that
$$\begin{aligned} 
y^n\otimes y^n- y\otimes y &= \Delta(y^n-y)= \Delta([x,y])=
[\Delta(x),\Delta(y)]  \\
 &= [x\otimes y^{n-1}+ 1\otimes x +u,\, y\otimes y]  \\
 &= (y^n-y)\otimes y^n+ y\otimes (y^n-y)+ [u,\, y\otimes y],
\end{aligned}
$$ 
whence $[u,\, y\otimes y] =0$. Since $u$ lies in $H\otimes H_0$, it
commutes with $1\otimes y$, and thus it must also commute with $y\otimes
1$. As in \S\ref{yysub10.2}, this forces $u\in H_0\otimes H_0$,
and so
$$u= \sum_{i,j\in\ZZ} \beta_{ij}y^i\otimes y^j$$
for some $\beta_{ij}\in k$. Equation \eqref{E11.2.2} becomes
$$\sum_{i,j} \beta_{ij}y^i= \sum_{i,j} \beta_{ij}y^j= 0,$$
whence 
\begin{equation}
\label{E11.2.3}\tag{E11.2.3}
\sum_j \beta_{ij} =0 \quad (\text{all\ } i) 
\quad{\text{and}}\quad
\sum_i \beta_{ij} =0 \quad (\text{all\ } j).
\end{equation}

At this point, \eqref{E11.2.1} becomes
\begin{equation}
\begin{aligned}
\sum_{i,j} \beta_{ij}\otimes y^i\otimes y^j&+ \sum_{i,j}
\beta_{ij}y^i\otimes y^j\otimes y^j  \\
& =\sum_{i,j} \beta_{ij}y^i\otimes y^j\otimes y^{n-1}+ \sum_{i,j}
\beta_{ij}y^i\otimes y^i\otimes y^j. \notag
\end{aligned}  \label{E11.2.4}\tag{E11.2.4}
\end{equation}

Comparing terms in \eqref{E11.2.4} involving $1\otimes1\otimes1$, we see
that
$\beta_{00} =0$, while comparing terms involving $1\otimes y^{n-1}\otimes
y^{n-1}$ yields $\beta_{n-1,n-1} =0$. Turning to other terms $1\otimes
y^j\otimes y^j$, we obtain $\beta_{jj}+\beta_{0j} =0$ for all $j\ne
0,n-1$, while inspection of terms involving  $1\otimes y^i\otimes
y^{n-1}$ yields $\beta_{i,n-1}= \beta_{0i}$ for all $i\ne 0,n-1$.
Finally, looking at other terms $1\otimes y^i\otimes y^j$, we conclude
that
$\beta_{ij}=0$ whenever $i\ne 0,j$ and $j\ne n-1$. Set $\gamma_s=
\beta_{s,n-1}$ for all $s$. The only coefficients $\beta_{ij}$ which
could be nonzero are the following:
$$ \begin{aligned}
\beta_{0j} &= \begin{cases} \gamma_j & (j\ne 0,n-1)\\
\gamma_0 & (j=n-1) \end{cases} \\
\beta_{i,n-1} &= \gamma_i \quad (i\ne n-1)\\
\beta_{jj} & = -\gamma_j \quad(j\ne 0,n-1).
\end{aligned}
$$
Moreover, \eqref{E11.2.3} implies that $\sum_s \gamma_s =0$. Now
\begin{equation}
\begin{aligned}
u &= \sum_{j\ne 0,n-1} \gamma_j\otimes y^j+ \gamma_0\otimes
y^{n-1}+ \sum_{i\ne0,n-1} \gamma_iy^i\otimes y^{n-1}  \\
  &\qquad\qquad\quad - \sum_{j\ne 0,n-1}
\gamma_jy^j\otimes y^j \\
 &= \gamma_0\otimes y^{n-1}+ 1\otimes g+ g\otimes y^{n-1}- g(y\otimes
y),
\end{aligned}  \label{E11.2.5}\tag{E11.2.5} 
\end{equation}
where $g= \sum_{j\ne0,n-1} \gamma_jy^j \in H_0$. Note that
$g(1)+\gamma_0= \sum_j \gamma_j
=0$, whence $g+\gamma_0\in H_0\cap K$.

The element $x'= x+g+\gamma_0 \in H$ satisfies $\pi(x')= t$ and
$\rho(x')= x'\otimes 1+1\otimes t$, as well as $x'y-yx'= y^n-y$. In view
of \eqref{E11.2.5},
$$\Delta(x')= x\otimes y^{n-1}+ 1\otimes x +u +g(y\otimes y)+
\gamma_0\otimes 1= x'\otimes y^{n-1}+ 1\otimes x'.$$
Thus, we may replace $x$ by $x'$, so that
$$\Delta(x)= x\otimes y^{n-1}+ 1\otimes x. $$
Therefore $x$ is skew primitive.
\end{subsec}

The work in Subsections \ref{yysub11.1} and \ref{yysub11.2} yields
the following theorem.

\begin{theorem} 
\label{yythm11.3}
Assume \Hzrnat, and suppose that $\Hbar= k[t]$ and $H_0= k[y^{\pm1}]$ with
$t$ primitive and $y$ grouplike. If $H$ is not commutative, then $H\cong
C(n)$ for some integer $n\ge2$.  \qed
\end{theorem}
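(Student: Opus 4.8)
The plan is to observe that the explicit presentation of $H$ assembled in Subsections~\ref{yysub11.1} and~\ref{yysub11.2} is precisely the defining data of the Hopf algebra $C(n)$ of Construction~\ref{yycon1.4}. By §\ref{yysub9.5} together with the computation \eqref{E11.1.1}, $H$ is the skew polynomial ring $H_0[x;\partial]= k[y^{\pm1}]\bigl[x;(y^n-y)\tfrac{d}{dy}\bigr]$ for some integer $n\ge2$, where $y$ is a grouplike element of $H$; the inequality $n\ge2$ is exactly what forces $\partial\neq0$, consistent with the noncommutativity hypothesis. Subsection~\ref{yysub11.2} then shows that, after replacing $x$ by a suitable element of the same type (obtained by subtracting $g+\gamma_0\in H_0\cap K$), one has $\Delta(x)= x\otimes y^{n-1}+1\otimes x$, so that $x$ is skew primitive.

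Given this, the argument is a direct comparison. First I would record that the underlying algebra of $H$ is literally the algebra $C= k[y^{\pm1}]\bigl[x;(y^n-y)\tfrac{d}{dy}\bigr]$ appearing in Construction~\ref{yycon1.4}. Next I would note that the Hopf algebra structure on $H$ now satisfies exactly the conditions characterizing the Hopf structure on $C(n)$: $y$ is grouplike and $x$ is skew primitive with $\Delta(x)= x\otimes y^{n-1}+1\otimes x$ (the counit and antipode being then forced, as in the proof of Construction~\ref{yycon1.4}). Since Construction~\ref{yycon1.4}(a) asserts that there is a \emph{unique} Hopf algebra structure on $C$ with these properties, $H$ must equal $C(n)$ as a Hopf algebra, i.e.\ $H\cong C(n)$. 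Finally, because $H$ is noncommutative while $C(1)\cong A(1,1)$ is commutative, the relevant value satisfies $n\ge2$, matching the statement.

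I do not expect any real obstacle here: all the substantive work has been carried out in the two preceding subsections, and the only point requiring care is bookkeeping — confirming that the successive modifications of $x$ made in §\ref{yysub11.1} (scaling, multiplying by a power of $y$, adding an element of $H_0\cap K$) and in §\ref{yysub11.2} are mutually compatible, so that at the end a single element $x$ simultaneously generates $H$ over $k[y^{\pm1}]$ with $[x,y]= y^n-y$ and is skew primitive with the stated coproduct. Once this is verified, the theorem follows immediately.
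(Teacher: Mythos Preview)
Your proposal is correct and matches the paper's approach exactly: the paper simply states that the work of Subsections~\ref{yysub11.1} and~\ref{yysub11.2} yields the theorem, and your write-up spells out precisely how those computations assemble into the defining data of $C(n)$. The only trivial slip is a sign --- in \S\ref{yysub11.2} one replaces $x$ by $x+g+\gamma_0$, not $x-(g+\gamma_0)$ --- but this does not affect the argument.
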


\section{Proof of Theorem \ref{yythm0.1}} 
\label{yysec12}

Observe first that the algebras listed in Theorem \ref{yythm0.1}(I)--(V)
are affine and noetherian. They are pairwise non-isomorphic, up to 
$A(0,q)\cong A(0,q^{-1})$,  by Proposition \ref{yyprop1.6}.

Now assume that $H$ is a Hopf algebra satisfying \Hzrnat, and that $H$ is
either affine or noetherian. If $H$ is commutative, then it is necessarily
affine \cite{Mol}, and Proposition \ref{yyprop2.3} implies that $H$ is
isomorphic to a Hopf algebra of type (Ia), (IIa), or (III) (with $q=1$). 
It remains to consider the case that $H$ is not commutative.

By Theorem \ref{yythm3.9}, $H$ has a Hopf quotient
$\Hbar$ isomorphic to either $k[t^{\pm1}]$ with $t$ grouplike or $k[t]$
with $t$ primitive. In the first case, Theorems \ref{yythm6.3} and
\ref{yythm7.6} show that
$H$ is isomorphic to a Hopf algebra of type (Ib), (III) (with $q\neq 1$), 
or (IV). In the second, Propositions \ref{yyprop9.4} and
\ref{yyprop9.7} imply that $H_0={}_0H$ and either $H_0= k[y^{\pm1}]$ with
$y$ grouplike or $H_0= k[y]$ with $y$ primitive. Theorems \ref{yythm10.3}
and \ref{yythm11.3} then show that $H$ is isomorphic to a Hopf algebra of
type (IIb) or (V).
\hfill{$\square$}

\section{General properties: Proof of Proposition \ref{yyprop0.2}}
\label{yysec13}

The Hopf algebras classified in Theorem \ref{yythm0.1} enjoy a number of
common properties, which we summarized in Proposition \ref{yyprop0.2}. 
The {\it PI-degree}, denoted by $\pideg(A)$, of 
an algebra $A$ satisfying a polynomial identity is defined to the 
minimal integer $n$ such that $A$ can be embedded in an 
$n\times n$-matrix algebra $M_n(C)$ over a commutative ring $C$. 
(If $A$ does not satisfy a polynomial identity, then $\pideg(A)=\infty$.) 
Definitions of other relevant terms can be found, for instance, 
in \cite{BG, GLn, GW, Mon, MR}. 

\begin{proof}[Proof of Proposition \ref{yyprop0.2}]
(a) As an algebra, $H$ can be written as either a
skew-Laurent ring $R[x^{\pm1};\sigma]$ or a differential operator ring
$R[x;\delta]$, where $R$ is a commutative noetherian domain of Krull
dimension $1$. We claim that in the first case, $R$ always has a height $1$
prime ideal whose $\sigma$-orbit is finite, and that in the second, $R$ has
a height $1$ prime ideal which is $\delta$-stable. It then follows from
\cite[Theorem 6.9.13]{MR} that $\Kdim H=2$.

In case (I) (following the numbering of Theorem \ref{yythm0.1}), we can
write $H=R[x^{\pm1};\sigma]$ with
$R= k[y^{\pm1}]$ and either $\sigma(y)=y$ or $\sigma(y)=y^{-1}$. In either
case, the $\sigma$-orbit of the height $1$ prime $\langle y-1 \rangle$
contains at most two points. For case (III), we have $H=R[x^{\pm1};\sigma]$
with $R=k[y]$ and $\sigma(y)=qy$. Here, the height $1$ prime $\langle
y\rangle$ is $\sigma$-stable. Case (IV) has the form $H=R[x^{\pm1};\sigma]$
with $R= k[y_1,\dots,y_s]$ and $\sigma(y_i)= q^{m_i}y_i$ for all $i$. In
this case, $\langle y_1,\dots,y_s\rangle$ is a $\sigma$-stable height $1$
prime of $R$.

Turning to case (II), we have $H= R[x;\delta]$ with $R= k[y]$ and either
$\delta(y)=0$ or $\delta(y)=y$. In either case, $\langle y\rangle$ is a
$\delta$-stable height $1$ prime. In case (V), $H= R[x;\delta]$ with $R=
k[y^{\pm1}]$ and $\delta(y)= y^n-y$. Here, $\langle y-\lambda \rangle$ is a
$\delta$-stable height $1$ prime for any $(n-1)$st root of unity $\lambda
\in \kx$. 

This establishes the above claim, and thus $\Kdim H=2$. Moreover, when
$\gldim R=1$, it follows from \cite[Theorem 7.10.3]{MR} that $\gldim H=2$.
This covers cases (I), (II), (III), (V). In case (IV), $\gldim H =\infty$, as
already noted in the proof of Proposition \ref{yyprop1.6}.

(b) By \cite[Theorems 0.1, 0.2]{WZ1}, every affine noetherian PI Hopf algebra
is Aus\-land\-er-Gorenstein and GK-Cohen-Macaulay. In particular, this holds
for cases (I) and (IV), and the commutative subcase of (IIa). In these
cases, the GK-Cohen-Macaulay condition implies that the trivial module
$_Hk$ has grade $2$, and hence $\Ext^2_H({}_Hk,{}_HH) \ne 0$. Since $H$ is
AS-Gorenstein, it follows that $\injdim H =2$. In the remaining cases,
$\gldim H=2$, and we get $\injdim H=2$ because $H$ is noetherian. 

In the noncommutative subcase of (IIb), $H=k[y][x;yd/dy]$. Since $k[y]$ is
Auslander-regular, GK-Cohen-Macaulay, and connected graded, the
desired properties pass to $H$ by
\cite[Theorem I.15.3, Lemma I.15.4(a)]{BG}.

For case (III), view $H$ as the localization of $A= k[x][y;\sigma]$ with
respect to the regular normal element $x$, where $\sigma(x)=qx$. As in the
previous paragraph, $A$ is Auslander-regular and GK-Cohen-Macaulay by
\cite[Theorem I.15.3, Lemma I.15.4(a)]{BG}. These properties pass to $H$ by
\cite[Lemma II.9.11(b)]{BG}.

In case (V), finally, observe that $H\cong A/\langle yz-1\rangle$ where
$$A= k[y,z][x;(y^n-y)(yz\partial/\partial y- z^2\partial/ \partial z)].$$
As above, $A$ is Auslander-regular and GK-Cohen-Macaulay by
\cite[Theorem I.15.3, Lemma I.15.4(a)]{BG}. Since $yz-1$ is a central
regular element in $A$, \cite[Lemma I.15.4(b)]{BG} implies that $H$ is
Auslander-Gorenstein and GK-Cohen-Macaulay. (Actually, $H$ is
Auslander-regular, because
$\gldim H=2$.)

(c) Normal separation holds trivially in the commutative case, and it holds
for rings module-finite over their centers by \cite[Proposition 9.1]{GW}.
This covers cases (I) and (IV), the commutative subcase of (IIa), and the
subcase of (III) when $q$ is a root of unity. The solvable enveloping algebra
in case (IIb) is well known to have normal separation \cite[p. 217, second
paragraph]{GW}.

Next, consider case (III), with $q$ not a root of unity. Then $y$ is normal
in $H$, and the localization
$H[y^{-1}]$, a generic quantum torus, is a simple ring \cite[Corollary
1.18]{GW}. It follows that all nonzero prime ideals of $H$ contain $y$.
Since $H/\langle y\rangle$ is commutative, normal separation follows.

Case (V) remains. By \cite[Proposition 2.1]{GW}, the localization
$k(y)[x;\delta]$ of $H$ is simple, and hence all nonzero prime ideals of
$H$ have nonzero contractions to $k[y^{\pm1}]$. On the other hand, any
prime of $H$ contracts to a prime $\delta$-ideal of $k[y^{\pm1}]$, by
\cite[Theorem 3.22]{GW}. Since $\delta(y)= y^n-y$, the nonzero prime
$\delta$-ideals of $k[y^{\pm1}]$ are the ideals $\langle y-\lambda \rangle$
for $\lambda\in\kx$ with $\lambda^{n-1}=1$. For any such $\lambda$, the
commutator $[x,y-\lambda] = y^n-y$ is a multiple of $y-\lambda$, from which
we see that $y-\lambda$ is normal in $H$. Moreover, $H/\langle y-\lambda
\rangle$ is commutative. As in the previous case, we conclude that $\Spec
H$ has normal separation.

(d) This follows from part (c) by \cite[Theorem 12.17]{GW}.

(e) The algebra $H$ is affine as well as noetherian by Theorem
\ref{yythm0.1}, Auslander-Gorenstein and GK-Cohen-Macaulay with finite
GK-dimension by parts (a),(b) above, and $\Spec H$ has normal separation by
part (c). Therefore part (e) follows from \cite[Theorem 1.6]{GLn}.

(f) In all of our cases, $H$ can be viewed as a constructible algebra in the
sense of \cite[\S9.4.12]{MR}, and therefore $H$ satisfies the
Nullstellensatz \cite[Theorem 9.4.21]{MR}. It follows that locally
closed prime ideals of $H$ are primitive, and primitive ideals of $H$ are
rational \cite[Lemma II.7.15]{BG}. On the other hand, when $H$ is a
PI-algebra, Posner's theorem implies that rational primes of $H$ are
maximal, and therefore locally closed. Thus, we obtain the Dixmier-Moeglin
equivalence in cases (I) and (IV), and in case (III) when $q$ is a root of
unity. (We have given the proof in this manner because we could not locate
a reference in the literature for the relatively well known fact that
affine noetherian PI-algebras satisfy this equivalence.)

In Dixmier's development of the equivalence for enveloping algebras, the
solvable case is covered in \cite[Theorem 4.5.7]{Di}. This gives us the
equivalence in case (II).

For case (III), observe that the torus $(\kx)^2$
acts rationally on $H$ by
$k$-algebra automorphisms, such that $\langle 0\rangle$ and $\langle
y\rangle$ are the only primes of $H$ stable under the action.
Since we also have the Nullstellensatz, the Dixmier-Moeglin equivalence
holds by \cite[Theorem II.8.4]{BG}.

Finally, consider case (V). Our work in part (c) above shows that all
nonzero prime factors of $H$ are commutative. Thus, all nonzero rational
prime ideals of $H$ are maximal, and thus locally closed. It only remains
to note that the zero ideal is locally closed, because any nonzero prime
contains one of the finitely many primes $\langle y-\lambda \rangle$
for $\lambda\in\kx$ with $\lambda^{n-1}=1$.

(g) In all cases of Theorem \ref{yythm0.1}, $H$ is generated as a 
$k$-algebra by grouplike and skew primitive elements. It thus follows 
from \cite[Lemma 5.5.1]{Mon} that $H$ is pointed. 

(h) By Proposition 
\ref{yyprop3.4}(b) $\Ext^1_H(_Hk,_Hk)$ is either 1-dimensional 
or 2-di\-men\-sion\-al. If $\dim \Ext^1_H(_Hk,_Hk)$ is 2 which is 
the GK-dimension of $H$, then Proposition \ref{yyprop3.6} says 
that $H$ is commutative. Therefore $\Ext^1_H(_Hk,_Hk)$ is 
1-dimensional if and only if $H$ is not commutative.
\end{proof}

\begin{remark}
\label{yyrem13.1} 
One might have expected these Hopf algebras to satisfy other
properties such as unique factorization. However, any unique factorization
ring in the sense of Chatters and Jordan \cite{CJ} is a maximal order
\cite[Theorem 2.4]{CJ}, and the Hopf algebras $B$ of Construction
\ref{yycon1.2} are not -- simply observe that $B$ is contained
in the larger, and equivalent, order $k[y][x^{\pm1};\sigma]$. Thus, $B$ is
not a unique factorization ring.
\end{remark}

\begin{remark} 
\label{yyrem13.2}
Our original expectation was that for the Hopf algebras $H$ in 
Theorem \ref{yythm0.1}, the \emph{integral order} $\io(H)$ 
\cite[Definition 2.2]{LWZ} would equal the PI-degree of $H$. 
It can be shown that $\io(H)=\pideg(H)$ in cases (I), (II), (III), 
and (V). In case (IV), however, it turns out that $\pideg(H)=\ell$ 
while $\io(H)= \ell/\gcd(d,\ell)$ where
$$d := \ell+m(s-1)- \sum_{i=1}^s m_i$$
(in the notation of Construction \ref{yycon1.2}). For example, if 
$H= B(7,1,3,5,q)$, then $\pideg(H)= \ell= 105$ while $d= 112$ and 
so $\io(H)= 15$. 

In any case, $\io(H)$ divides $\pideg(H)$.
\end{remark}

\section*{Acknowledgement} 
We thank K.A.~Brown for helpful comments, suggestions, and discussions.

\providecommand{\MR}{\relax\ifhmode\unskip\space\fi MR }
\providecommand{\MRhref}[2]{%
   \href{http://www.ams.org/mathscinet-getitem?mr=#1}{#2} }
\providecommand{\href}[2]{#2}

\end{document}